\newcommand{\bb}[1]{\mathbb{#1}}
\newcommand{\R}{\bb{R}}
\newcommand{\Z}{\bb{Z}}
\newcommand{\N}{\bb{N}}
\newcommand{\V}{\bb{V}}
\newcommand{\U}{\bb{U}}
\newcommand{\W}{\bb{W}}
\newcommand{\E}{\bb{E}}
\newcommand{\X}{\bb{X}}
\newcommand{\T}{\bb{T}}
\newcommand{\rank}[3]{\mathrm{rank}({#1}_{#2}\rightarrow{#1}_{#3})}
\newcommand{\VInterval}{I}
\newcommand{\FunctionalFamily}{\mathcal{F}}
\newcommand{\KolmogP}{r_2}
\newcommand{\KolmogR}{r_1}
\newcommand{\Borel}{\mathcal{B}}
\newcommand{\vmin}{v_{\min}}
\newcommand{\vmax}{v_{\max}}
\newcommand{\bigO}{\mathcal{O}}
\newcommand{\Lebesgue}{\mu}
\newcommand{\Frac}{\mathrm{frac}}
\newcommand{\iid}{i.i.d}
\newcommand{\SampleSpace}{\Omega}
\newcommand{\gammaSpace}{\mathcal{G}} % ?
\newcommand{\sAlgebra}{\mathcal{A}}
\newcommand{\pattern}{\phi}
\newcommand{\functional}{\rho}
\newcommand{\normalizedfunctional}{\overbar{\functional}}
\newcommand{\kernel}{k}
\newcommand{\FunctionalSpace}{\mathcal{H}}
\newcommand{\FunctionalDomain}{\T}
\newcommand{\overbar}[1]{\mkern 2mu\overline{\mkern-2mu#1\mkern-2mu}\mkern 2mu}
\newcommand{\restr}[2]{{% we make the whole thing an ordinary symbol
		\left.\kern-\nulldelimiterspace % automatically resize the bar  with \right
		#1 % the function
		%\vphantom{\big|} % pretend it's a little taller at normal size
		\right\vert_{#2} % this is the delimiter
}}
\newcommand{\pushforward}[1]{(#1)_\star}
\newcommand{\pers}{\mathrm{pers}}
\newcommand{\indicator}[1]{1_{#1}}
\newcommand{\Uniform}{\mathcal{U}}
\DeclareMathOperator{\support}{supp}
\newcommand{\Proj}{\pi}
\newcommand{\RowVec}[1]{\left(#1\right)}
\newcommand{\nOne}{n_1}
\newcommand{\nzero}{n_0}
\newcommand{\ScalarProd}[2]{{#1}^T {#2}}
\newcommand{\ScalarProdR}[2]{{#1}\cdot{#2}}
\newcommand{\density}[1]{f_{#1}}
\newcommand{\TPK}{P}
\newcommand{\BiTPK}{\tilde{\TPK}}
\newcounter{parcounter}
\newcommand{\stepparagraph}[1]{\stepcounter{parcounter} \paragraph{Step \arabic{parcounter}} \textbf{#1}\newline}
\newtheorem{theorem}{Theorem}[section]
\newtheorem{proposition}[theorem]{Proposition}
\newtheorem{lemma}[theorem]{Lemma}
\newtheorem{example}[theorem]{Example}
\newtheorem{remark}[theorem]{Remark}
\title{Topological signatures of periodic-like signals}
\author{
\href{https://wreise.github.io/}{Wojciech Reise}\\
	DataShape, Inria Saclay
\And
\href{https://bertrand.michel.perso.math.cnrs.fr/}{Bertrand Michel}\\
\'Ecole Centrale de Nantes
\And
\href{https://geometrica.saclay.inria.fr/team/Fred.Chazal/}{Fr\'ed\'eric Chazal}\\
	DataShape, Inria Saclay
}
\begin{document}

\maketitle

\begin{abstract}
	We present a method to construct signatures of periodic-like data. Based on topological considerations, our construction encodes information about the order and values of local extrema. Its main strength is robustness to reparametrisation of the observed signal, so that it depends only on the form of the periodic function. The signature converges as the observation contains increasingly many periods.  We show that it can be estimated from the observation of a single time series using bootstrap techniques.
\end{abstract}

\begin{keywords}
	 {Persistent homology; Dependent data; Time series, Functional data, Limit theorems}
	 \textbf{\textit{M}athematical Subject Classification:} {57M50, 60F17, 62M05, 60G10}
\end{keywords}

%\tableofcontents

\section{Introduction}
We consider the problem of constructing a descriptor of a periodic function $\pattern: \R\rightarrow\R$, based on an observation of a reparameterized and noisy signal. Specifically, we assume that $\pattern$ is 1-periodic and we let $\gamma:[0,T]\rightarrow [0,R]$ be an increasing bijection, $W:[0,1]\rightarrow \R$ a noise process. We consider an observation $S$ of the form
\begin{equation}
\label{eq:model_continuous_signal}
S:[0,T]\rightarrow\R,\qquad t\mapsto(\pattern\circ\gamma)(t) + W(t).
\end{equation}
Our aim is to construct a signature $F:S\mapsto F(S)$ which contains information about $\pattern$ while remaining robust to $W$ and to changes in $\gamma$.

Time series or functional observations of the form~\eqref{eq:model_continuous_signal} appear in many applications, where $\pattern$ is somehow characteristic of a population: child growth dynamics~\cite{ramsayAppliedFunctionalData2002}, physiological signals~\cite{goldbergerPhysioBankPhysioToolkitPhysioNet2000}, bird migration curves~\cite{suStatisticalAnalysisTrajectories2014}. The reparametrisation $\gamma$ is the main source of variability in the pointwise evaluations of the signals, as in the `phase variation' model in Functional data analysis (FDA), see~\cite{marronFunctionalDataAnalysis2015} for a review. The problems typically considered in FDA consist in aligning a population of curves or computing a representative curve, for which methods with guarantees have been proposed~\cite{gasserAlignmentCurvesDynamic1997, khorramTrainableTimeWarping2019, tangPairwiseCurveSynchronization2008}. Underlying most of the models is the assumption that the start and end points ($\gamma(0)$ and $\gamma(T)$ here) are common for all curves.

In applications like magnetic odometry~\cite{bonisTopologicalPhaseEstimation2022} or gait analysis~\cite{boisTopologicalDataAnalysisbased2022}, a single observation is composed of several periods of $\pattern$ and the number of periods varies across observations. For example, in the former, $S$ is the magnetic signal recorded in a moving car and the problem consists in inferring its displacement. The periodic function $\pattern$ models the magnetic signature of the angular position, $\gamma$, of that cars' wheel. The problem consists in estimating $\gamma$ from $S$. There is little reason for two observations to have the same number of periods, unless the initial angular position of the wheel and the trajectory are exactly the same across those two observations. Therefore, in contrast with FDA, the assumption of common endpoints is not satisfied and the problem changes from describing the whole signal, to that of describing its constituent parts, that is, the periods of $\pattern$.

Techniques from topological data analysis (TDA) are said to describe the `shape of data' and have been increasingly used to extract geometric or topological information from observations~\cite{chazalIntroductionTopologicalData2021a}. The arguably most popular TDA technique for analyzing a time series consists in computing the homology of the time-delay embedding (TDE) of the time series, in order to verify whether the underlying phenomenon is periodic or not~\cite{pereaTopologicalTimeSeries2019}. In applications, it has also been used to understand dynamical systems behind climate change~\cite{ghilReviewArticleDynamical2023}, to identify market crashes~\cite{gideaTopologicalDataAnalysis2018a} or to propose biomarkers to detect seizures~\cite{fernandezTopologicalBiomarkersRealtime2022}.
The TDE of a time series $(S_{n})_{n=1}^N$ is a point cloud in $\R^d$, where each point is of the form $(S_n, S_{n+\tau},\ldots, S_{n+(d-1)\tau})$ for parameters $d,\tau\in\N$. If $S$ is periodic, a simplicial complex constructed on the TDE at the right scale will have a non-trivial homology group in dimension one, as illustrated in the top row in Figure~\ref{fig:takens_ellipse_change}. In signals with phase variation however, the length of the periodic structure changes and so does the geometry of the TDE, as shown in the bottom row in Figure~\ref{fig:takens_ellipse_change}. This is corroborated by the fact that the geometry of the delay embedding contains information about the frequencies supporting the signal~\cite[section 5]{pereaTopologicalTimeSeries2019}.

Techniques other than the TDE have been proposed to extract topological information from time series. In~\cite{corcoranModellingTopologicalFeatures2017}, the swarm behavior over time has been described with the zig-zag persistent homology of sublevel sets of a density estimator. In~\cite{khasawnehChatterDetectionTurning2016}, the authors count revolutions of a machine in an industrial process by counting the number of `significant' changes in a binary signal, where the significance of a change is defined in terms of persistence of homology generators.

Building on the invariance of homology to reparametrisation of the domain, we propose to use the persistent homology of sublevel sets of the signal to describe this last. This descriptor summarizes the height, order and number of local extrema. The idea of quantifying the shape of the curve is not new: for example, the landmark method extracts visual features like local extrema or inflection points~\cite{perngLandmarksNewModel2000}.

In many statistical applications, it is convenient to map a persistence diagram to a vector or a function, via a functional representation~\cite{chazalIntroductionTopologicalData2021a}. Numerous functionals~\cite{carrierePersLayNeuralNetwork2020a, adams_persistence_2017} are `linear in the diagram' and their properties have been well-studied~\cite{divolChoiceWeightFunctions2019}. In our case, it seems natural to renormalize the functionals by the total persistence of the diagram, a proxy for the number of periods. Building on~\cite{divolChoiceWeightFunctions2019} and a recent characterization of the stability of total persistence for H\"older regular processes~\cite{perez_c0-persistent_2022}, we study the robustness of the signatures we propose.

Guarantees on the estimation of functionals of persistence diagrams, in both asymptotic and non-asymptotic cases, have been provided in~\cite{chazal_stochastic_2014, berry_functional_2018}, under the assumption that the persistence diagrams (or functionals thereof) in the collection are all independent.
In a setting motivated by magnetic odometry problem~\cite{bonisTopologicalPhaseEstimation2022}, we have a single time series of which we would like to estimate the signature. The natural procedure is to construct a sample by taking contiguous vectors from that observation, what leads to a collection of shorter and dependent observations. We study two reparametrization models inspired by~\cite{marronFunctionalDataAnalysis2015} and, building on the theory of strong mixing~\cite{doukhanMixing1995, dedecker_weak_2007}, we show that the dependence between observations decreases. When the $\beta$-mixing coefficients decrease sufficiently fast, the estimators of the functionals also converge in the dependent setting~\cite{radulovicBootstrapEmpiricalProcesses1996, buhlmannBlockwiseBootstrapGeneral1995, kosorok_introduction_2008}, not unlike in the independent setting~\cite{chazal_stochastic_2014}. So far, estimation of topological signatures from dependent data has been less explored:~\cite{krebsLimitTheoremsPersistent2021} gives a concentration inequality for persistent Betti numbers from dependent data.
\begin{figure}
	\centering
	\includegraphics[width=0.98\textwidth]{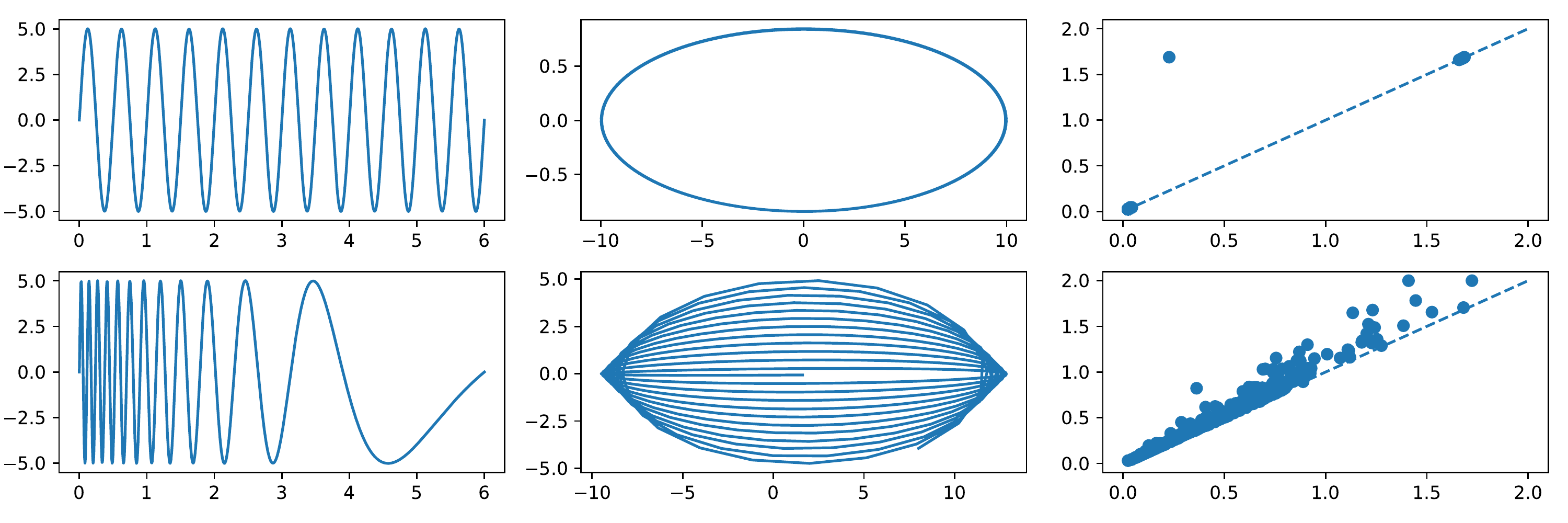}
	\caption{On the top row, from left to right, a periodic function, a projection of the sliding window embedding on the first two principal components, and the persistence diagram in dimension 1 of the Vietoris-Rips complex of the embedding. We can see that the embedding looks circular, what is reflected in the persistence diagram: there is one, persistent $H_1$ generator. The bottom row shows the analogue, for a reparametrisation of the signal in the top row. Due to the homeomorphism, the (projection of the) embedding looks different and this is also reflected in the diagram, which is not that of a closed curve.}
	\label{fig:takens_ellipse_change}
\end{figure}

\subsection*{Contributions and outline}
In this article, we present a topology-inspired signature of reparametrized periodic functions of the form~\eqref{eq:model_continuous_signal}, which is robust to reparametrisation and noise, and which can be estimated from data. In Section~\ref{sec:continuous_signatures}, we concisely introduce the signature and show its key invariance properties. In Section~\ref{sec:signatures_discrete}, we introduce models for reparametrisations and we discuss the guarantees of estimation of the signatures defined for time-series. Section~\ref{sec:persistence_results} contains background on the persistent homology and states stability properties essential for the previous sections. Our main contributions are the the following:

\begin{enumerate}
	\item We demonstrate that the signature converges as the number of observed periods grows, in case there is no additive noise (Theorem~\ref{thm:convergence_to_limit}). In the process, we provide a characterization of the persistence diagram of sublevel sets of several periods of a function.
	\item We show that the signature is invariant under changes of the distribution of $\gamma$ for fixed endpoints $\gamma(0)$ and $\gamma(T)$ (Theorem~\ref{thm:signature_stability_to_reparametrisation}). Recent results on regularity of total persistence allow us to obtain quantitative stability bounds.
	\item We provide a technique to estimate the signature from a single time--series observation (Theorem~\ref{thm:main_clt}). We show that two reparametrisation models that we consider exhibit exponentially-decaying mixing properties.
\end{enumerate}
Finally, Section~\ref{sec:numerical_experiments} provides a simple numerical illustration of the signature and its invariance properties.

\section{Signatures of reparametrized periodic functions and their properties}
\label{sec:continuous_signatures}
The signatures we propose are functions constructed using the local minima and maxima of the signal. We define those signatures with persistent homology of the sublevel sets of the signal and its functional representations. In Section~\ref{sec:persistence_intro_short}, we state the properties of persistence diagrams and the corresponding functionals, with the aim of justifying the quantities of interest. For the sake of readability, we defer the details of the construction of the persistence diagrams, the functionals, their properties and proofs of some propositions to Sections~\ref{sec:persistence_results} and~\ref{sec:functionals} respectively and we include an illustration of these concepts in Figure~\ref{fig:pipeline_illustration}.

\subsection{Normalized functionals of truncated persistence}
\label{sec:persistence_intro_short}
The persistence diagram $D(S)$ of $S\in C([0,T],\R)$ is a multiset of points in $\R^2$, where the coordinates are the values of local extrema of $S$. Each point can be interpreted as a local minimum paired with a local maximum. That pairing is constructed by tracking the evolution of connected components in sublevel sets $S^{-1}(\rbrack-\infty,t\rbrack)$ as $t$ changes. It is done in a consistent manner, so that if $S$ is reparametrised, the persistence diagram remains unchanged, see Lemma~\ref{lemma:invariance_to_reparametrisation}. In addition, if $S$ is periodic, the multiplicity of any point in the diagram reflects the number of periods in $S$, except for some extra points due to incomplete periods close to the lower- and upper- endpoints of the domain $[0,T]$, see Lemma~\ref{lemma:limit_diagram}.
Therefore, for a periodic function $\pattern$, the only component of the parametrisation that the persistence diagram depends on is the starting point, $\gamma(0)$, and the number of observed periods, $\gamma(T)-\gamma(0)$.
\begin{figure}
	\centering
	\includegraphics[width=0.3\textwidth]{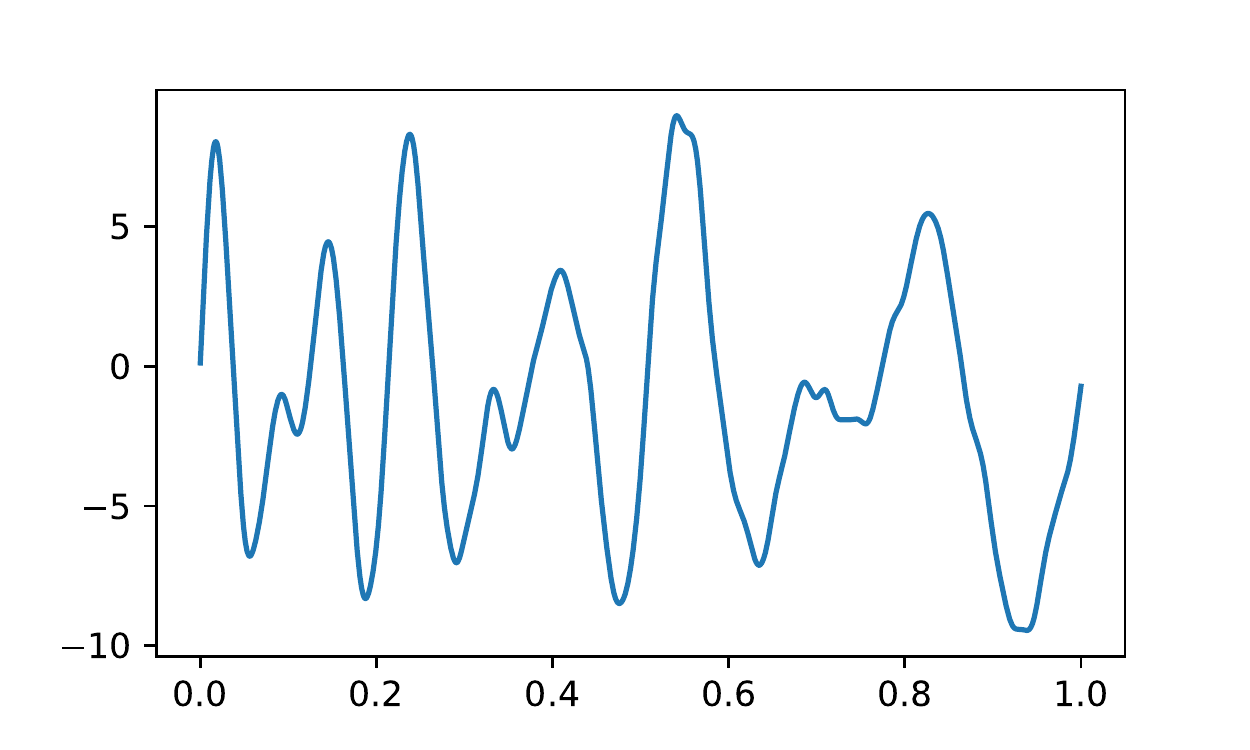}
	\includegraphics[width=0.3\textwidth]{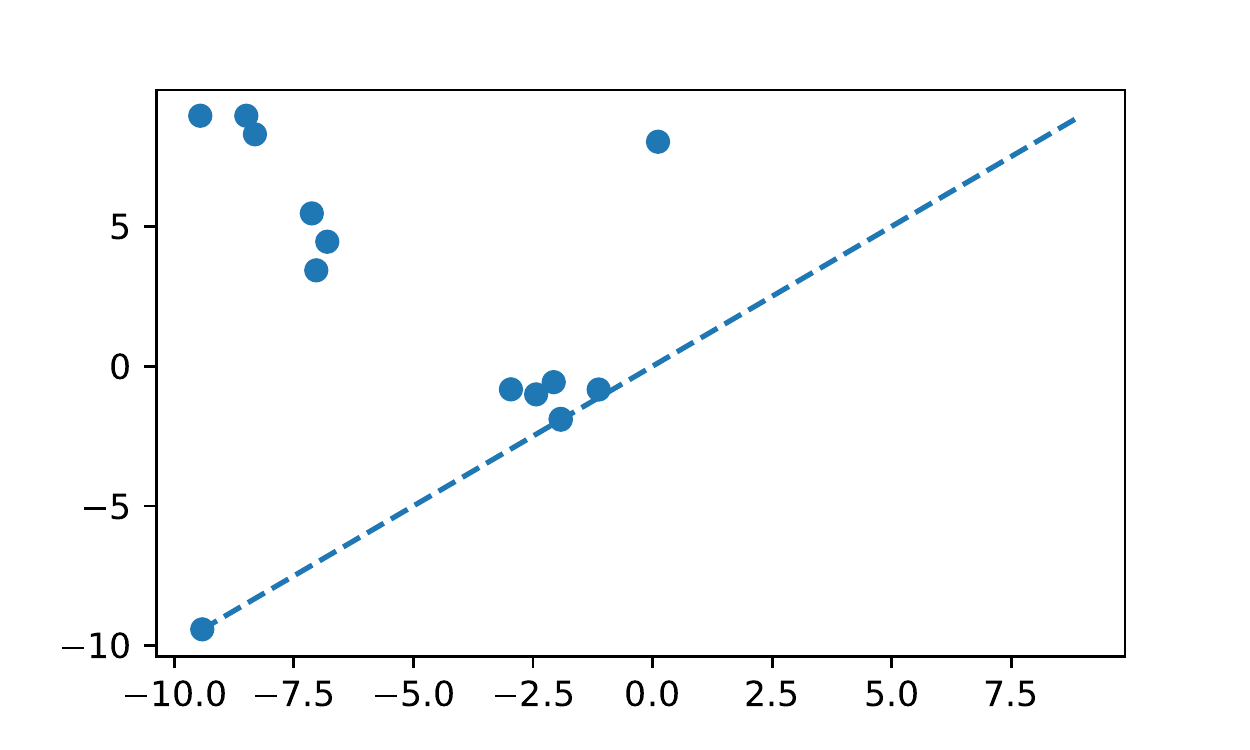}
	\includegraphics[width=0.3\textwidth]{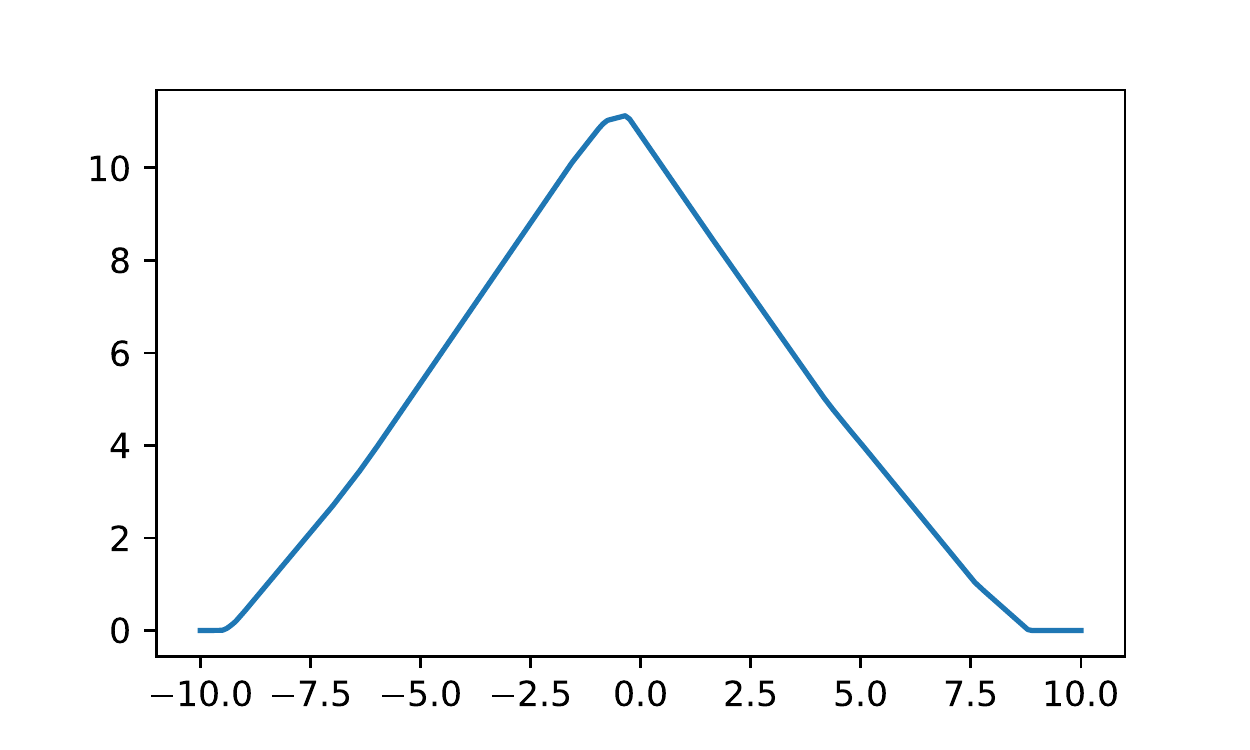}
	\caption{An example of a noisy observation of a reparametrised periodic function (left), its persistence diagram (center) and a functional summary (right): the persistence silhouette.}
	\label{fig:pipeline_illustration}
\end{figure}

\begin{lemma}[Invariance to reparametrisation]
	\label{lemma:invariance_to_reparametrisation}
	Consider a continuous function $f:\R\rightarrow\R$ (not necessarily periodic) and let $\gamma_1,\gamma_2:[0,T]\rightarrow \R$ be two increasing and continuous functions, such that $\gamma_1(0)=\gamma_2(0)$ and ${\gamma_1(T)=\gamma_2(T)}$.
	Then,
	\begin{equation*}
	D(f\circ\gamma_1) = D(f\circ\gamma_2).
	\end{equation*}
\end{lemma}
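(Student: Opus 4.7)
My strategy is to exhibit a single homeomorphism of $[0,T]$ that carries the sublevel set filtration of $f\circ\gamma_1$ onto that of $f\circ\gamma_2$ at every level simultaneously, and then invoke the functoriality of persistent homology.

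First I would observe that because $\gamma_1,\gamma_2$ are continuous and strictly increasing on $[0,T]$ with a common image $[\gamma_1(0),\gamma_1(T)] = [\gamma_2(0),\gamma_2(T)]$, each is a homeomorphism from $[0,T]$ onto that shared interval. Consequently $\phi := \gamma_2^{-1}\circ\gamma_1$ is a well-defined, orientation-preserving homeomorphism of $[0,T]$ to itself. Next I would check, by unwinding definitions, that for every $t\in\R$,
\begin{equation*}
\phi\bigl((f\circ\gamma_1)^{-1}((-\infty,t])\bigr) \;=\; (f\circ\gamma_2)^{-1}((-\infty,t]),
\end{equation*}
since $s\in(f\circ\gamma_1)^{-1}((-\infty,t])$ iff $f(\gamma_1(s))\le t$ iff $f(\gamma_2(\phi(s)))\le t$ iff $\phi(s)\in(f\circ\gamma_2)^{-1}((-\infty,t])$. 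The same $\phi$ works uniformly in $t$, so it intertwines the two sublevel-set filtrations and commutes with all inclusion maps.

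Applying singular (or \v{C}ech) homology to this level-preserving homeomorphism yields an isomorphism of persistence modules over $\R$ between the sublevel-set persistence modules of $f\circ\gamma_1$ and $f\circ\gamma_2$. Since isomorphic persistence modules have identical barcodes, the persistence diagrams agree, which is what the lemma asserts. The only genuine thing to verify is that $\phi$ is a homeomorphism, which I expect to be the main (minor) obstacle: it relies on the hypotheses that the $\gamma_i$ are continuous, strictly monotone, and share endpoints, so that the intermediate value theorem gives them the same image and hence well-defined continuous inverses. Once $\phi$ is in hand, everything else is a direct application of standard properties of persistent homology and requires no further computation.
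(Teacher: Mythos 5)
Your proof is correct and follows essentially the same route as the paper: you construct the homeomorphism $\phi = \gamma_2^{-1}\circ\gamma_1$ (the paper uses its inverse $g = \gamma_1^{-1}\circ\gamma_2$), verify that it carries one sublevel-set filtration to the other uniformly in $t$, and conclude that the induced persistence modules are isomorphic and hence have the same diagram.
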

\begin{proof}
	For any $t\in\R$, the homeomorphism $g\coloneqq (\gamma_1^{-1}\circ\gamma_2): [0,T]\rightarrow [0,T]$ maps the $t$-sublevel set of $f\circ\gamma_2$ to $f\circ\gamma_1$. Indeed,
	\begin{align*}
	(f\circ\gamma_1)^{-1}(\rbrack -\infty, t\rbrack)
	&= \{y\in[0,T]\mid (f\circ\gamma_1)(y)\leq t\} \\
	&= \{y=g(x)\mid (f\circ\gamma_1)(g(x))=(f\circ\gamma_2)(y)\leq t\}\\
	&= g(\{y\in[0,T]\mid (f\circ\gamma_2)(y)\leq t\}).
	\end{align*}
	Therefore, $g$ induces an isomorphism between the two corresponding persistence modules. So the corresponding persistence diagrams are the same (as well as any invariants there--of).
\end{proof}

Consider $\pattern:\R\rightarrow\R$ a 1-periodic and continuous function. We denote by $\restr{\pattern}{A}$ the restriction of $\pattern$ to $A\subset\R$ and by $D\sqcup D'$ the union of two multisets.
\begin{lemma}[Additivity of diagrams]
	\label{lemma:limit_diagram}
	For any $R>1$, there exist persistence diagrams $D_1$ and $D'$, such that
	\begin{equation}
	\label{eq:additivity}
	D(\restr{\pattern}{[0,R]}) = \left(\bigsqcup_{k=1}^{\lfloor R-1\rfloor} D_1\right) \sqcup D',
	\end{equation}
	with $\pers_p(D')\leq 2\pers_p(D_1)$, where $\pers_p(D)=\left(\sum_{(y_1, y_2)\in D} (y_2 - y_1)^p\right)^{1/p}$.
	In addition, there exists $c\in[0,1]$ such that $D_1=D(\restr{\pattern}{[c,c+1]})$.
\end{lemma}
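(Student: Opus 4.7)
The strategy is to exploit an additivity property of sublevel-set persistence diagrams at global maxima. Specifically, if $f:[a,b]\to\R$ is continuous and $c\in[a,b]$ is a point where $f$ attains its maximum on $[a,b]$, then $D(f|_{[a,b]}) = D(f|_{[a,c]}) \sqcup D(f|_{[c,b]})$. The geometric reason is that for $t<f(c)$ the sublevel set $f^{-1}(-\infty,t]$ splits disjointly across $c$ (which is not yet included), while at $t=f(c)$ the two ``top'' components of the two halves merge by the elder rule, precisely reconciling the truncated essential pairs of the two halves with those of the whole. I would state and prove this additivity as a preliminary observation, or borrow the corresponding statement from Section~\ref{sec:persistence_results}.

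Choose $c\in[0,1]$ to be a point at which $\pattern$ attains its maximum on $[0,1]$; such a $c$ exists by continuity and compactness. By 1-periodicity, every translate $c+j$ with $j\in\Z$ is again a global maximum of $\pattern$, so the points $c, c+1, \dots, c+k$ with $k=\lfloor R-c\rfloor$ all lie in $[0,R]$ and are global maxima of $\pattern|_{[0,R]}$ there. Iterating the additivity lemma at each of these max points gives
\begin{equation*}
D(\pattern|_{[0,R]}) \,=\, D(\pattern|_{[0,c]}) \,\sqcup\, \bigsqcup_{j=0}^{k-1} D(\pattern|_{[c+j, c+j+1]}) \,\sqcup\, D(\pattern|_{[c+k, R]}).
\end{equation*}
By Lemma~\ref{lemma:invariance_to_reparametrisation} applied to the translations $t\mapsto t+j$ (increasing bijections with matching endpoints) and the periodicity of $\pattern$, each middle piece equals $D_1 := D(\pattern|_{[c, c+1]})$. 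Since $c\in[0,1]$, I have $k\in\{\lfloor R-1\rfloor,\, \lfloor R-1\rfloor+1\}$; separating $\lfloor R-1\rfloor$ of the middle copies and gathering the rest into $D' := D(\pattern|_{[0,c]}) \sqcup D_1^{\sqcup (k-\lfloor R-1\rfloor)} \sqcup D(\pattern|_{[c+k, R]})$ produces both the decomposition $D(\pattern|_{[0,R]}) = D_1^{\sqcup \lfloor R-1\rfloor} \sqcup D'$ and the identification $D_1=D(\pattern|_{[c, c+1]})$.

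For the persistence bound, I combine periodicity with a monotonicity property of $\pers_p$ under domain restriction: $\pers_p(D(f|_{[a',b']})) \leq \pers_p(D(f|_{[a,b]}))$ whenever $[a',b']\subset[a,b]$. By periodicity, $D(\pattern|_{[0,c]}) = D(\pattern|_{[1,1+c]})$ and $D(\pattern|_{[c+k,R]}) = D(\pattern|_{[c,R-k]})$, and both intervals $[1,1+c]$ and $[c,R-k]$ lie inside $[c,c+1]$ (using $c\leq 1$ and $R-k<c+1$). The sharp constant $2$ rather than the naive $3$ comes from recombining the two end pieces via additivity at $c$: since $c$ is the max on $[0, R-k]$ and $D(\pattern|_{[c+k,R]}) = D(\pattern|_{[c, R-k]})$, one has $D(\pattern|_{[0,c]}) \sqcup D(\pattern|_{[c+k,R]}) = D(\pattern|_{[0,R-k]})$, and $[0,R-k]\subset[0,c+1]$ gives $\pers_p(D(\pattern|_{[0,R-k]})) \leq \pers_p(D(\pattern|_{[0,c+1]}))$; combined with $D(\pattern|_{[0,c+1]}) = D(\pattern|_{[0,c]}) \sqcup D_1$ and the monotonicity bound $\pers_p(D(\pattern|_{[0,c]}))\leq \pers_p(D_1)$, the total end-piece contribution and the at-most-one extra copy of $D_1$ together yield $\pers_p(D') \leq 2\pers_p(D_1)$.

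The main technical obstacle is establishing the monotonicity of $\pers_p$ under restriction of the domain, which is not a routine consequence of the standard stability theorems: the bottleneck distance would give only an $\ell^\infty$-type control rather than the $\ell^p$-summed persistence we need. A direct elder-rule argument is required, showing that chopping off an end of the domain can only remove pairings from $D(f)$ or replace them by pairings of strictly smaller persistence.
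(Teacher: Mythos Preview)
Your approach matches the paper's: choose $c$ to be a global maximum of $\pattern$, decompose the persistence module as a direct sum at the translates $c,c+1,\ldots$ (the paper carries this out explicitly via direct sums of modules and rectangle measures rather than packaging it as an additivity lemma), and bound the end pieces via rank monotonicity coming from the injection on $H_0$ induced by sublevel-set inclusion for subintervals---this is exactly your ``monotonicity of $\pers_p$ under domain restriction,'' and the paper proves it the way you anticipate (injective morphism of modules $\Rightarrow$ rank inequality $\Rightarrow$ injective assignment of points with $b'\le b<d\le d'$). One simplification worth noting: for the bound the paper handles each end piece separately, using $[0,c]\subset[c-1,c]$ and $[c+N,R]\subset[c+N,c+N+1]$ to get $\pers_p^p(\text{ends})\le 2\pers_p^p(D_1)$ directly, so your recombination of the two ends into $D(\restr{\pattern}{[0,R-k]})$ is not needed.
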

In the proof, we first choose $c\in[0,1]$ to be a global maximum of $\pattern$ and define ``the period" to be $\restr{\pattern}{[c,c+1]}$. This allows us to decompose the diagram as a sum of diagrams of individual periods. Thanks to the periodicity of $\pattern$, these diagrams are the same and we obtain~\eqref{eq:additivity}.
The proof requires notions introduced in~\ref{lemma:limit_diagram} and the theory of rectangular measures introduced in~\cite{chazalStructureStabilityPersistence2016}.
\begin{proof}
	Let $M\coloneqq \max\pattern$, $c\coloneqq\inf\{x\in[0,1]\mid \pattern(x)=M\}$ and $N = \max \{n\in\N\mid c + n \leq R\}$.
	Consider the persistence modules defined by~\eqref{eq:persistent_module_definition} for $\restr{\pattern}{[0,c]}$, $\restr{\pattern}{[c, c+N]}$ and $\restr{\pattern}{[c+N,R]}$. For $t<M$, $\restr{\pattern}{[0,c]}^{-1}(\rbrack-\infty, t\rbrack)\cap \restr{\pattern}{[c,c+N]}^{-1}(\rbrack-\infty, t\rbrack)\subset \{c\}$ and $\pattern(c)=M$, so that intersection is empty and the same holds for $\restr{\pattern}{[c+N,R]}$ and $\restr{\pattern}{[c,c+N]}$. Therefore,
	\begin{equation}
	\label{eq:isomorphism}
	\begin{aligned}
	H_0(\restr{\pattern}{[0,R]}^{-1}(\rbrack -\infty, t\rbrack)) \simeq&
	H_0(\restr{\pattern}{[0,c]}^{-1}(\rbrack -\infty, t\rbrack)) \oplus H_0(\restr{\pattern}{[c,c+N]}^{-1}(\rbrack -\infty, t\rbrack))\\
	&\oplus H_0(\restr{\pattern}{[c+N,R]}^{-1}(\rbrack -\infty, t\rbrack)).
	\end{aligned}\end{equation}
	Since the isomorphism is induced by inclusions, it is an isomorphism between the persistence modules restricted to $t\in \rbrack-\infty,M\lbrack$.
	By definition~\eqref{eq:persistent_module_definition}, the persistence modules are all 0 for $t\geq M$, so both sides of~\eqref{eq:isomorphism} are trivially isomorphic for $t\geq M$. Therefore, the persistence modules (on $t\in\R$) are isomorphic.
	
	By repeating the same argument as above, we can show that the persistence module of $\restr{\pattern}{[c, c+N]}$ is the direct sum of the persistence modules of $(\restr{\pattern}{[c+n, c+n+1]})_{n=0}^{N-1}$. Then, for any $n=0,\ldots, N-1$, $g_n:x\mapsto x+n$ is an isomorphism between the sub level set of $\restr{\pattern}{[c, c+1]}$ and $\restr{\pattern}{[c+n, c+n+1]}$, so the persistence module of $\restr{\pattern}{[c, c+N]}$ is isomorphic to the direct sum of $N$ copies of $\restr{\pattern}{[c, c+1]}$. Thus,~\eqref{eq:isomorphism} becomes
	\begin{align*}
	H_0(\restr{\pattern}{[0,R]}^{-1}(\rbrack -\infty, t\rbrack)) \simeq
	&\left(\bigoplus_{n=0}^{N-1} H_0(\restr{\pattern}{[c,c+1]}^{-1}(\rbrack -\infty, t\rbrack))\right) \oplus H_0(\restr{\pattern}{[0,c]}^{-1}(\rbrack -\infty, t\rbrack))
	\\&\oplus H_0(\restr{\pattern}{[c+N,R]}^{-1}(\rbrack -\infty, t\rbrack)).
	\end{align*}
	
	The second crucial observation is that the diagram of a direct sum of two persistence modules is the union of diagrams. The case of interval decomposable modules is treated in~\cite[Proposition 2.16]{chazalStructureStabilityPersistence2016}. The persistence modules that we consider are $q$-tame~\cite[Theorem 3.33]{chazalStructureStabilityPersistence2016}, so they do not necessarily admit an interval decomposition. Recall that the persistence diagram is computed via rectangle measures~\cite[Section 3]{chazalStructureStabilityPersistence2016}, defined with ranks of inclusion morphisms. For two persistence modules $\V=(V_t)_{t\in\R}$, $\W=(W_t)_{t\in\R}$ and any $s,t\in\R$, we have that $\rank{(V\oplus W)}{s}{t} = \rank{V}{s}{t} + \rank{W}{s}{t}$. This shows that the two rectangle measures $(\mu_V + \mu_W)$ and $\mu_{V\oplus W}$ are equal and so are their persistence diagrams. If we denote by $D_1\coloneqq D(\restr{\pattern}{[c+n, c+n+1]})$ and by $D'$ the diagram of the sum of the rectangle measures of the $\restr{\pattern}{[0,c]}$ and $\restr{\pattern}{[c+N,R]}$,
	then~\eqref{eq:additivity} follows.
	
	We now need to bound the $p$-persistence of the remainder. Denote by $\U$ and $\V$ the persistence modules associated to $\restr{\pattern}{[0,c]}$ and $\restr{\pattern}{[0,c]}$ respectively. For any $t\in \R$, $\restr{\pattern}{[0,c]}^{-1}(\rbrack-\infty, t\rbrack)\subset \restr{\pattern}{[c-1,c]}^{-1}(\rbrack-\infty, t\rbrack)$ induces a map $U_t\rightarrow V_t$. We claim that it is injective
	and that it is in fact a morphism between persistence modules.
	Hence, $\rank{U}{s}{t}\leq\rank{V}{s}{t}$ for any $s<t\in\R$ and both are finite. Hence, to every point $(b,d)\in D(\restr{\pattern}{[0,c]})$ with $b<d$, we can assign a point $(b',d')\in D(\restr{\pattern}{[-1+c,c]})$ in such a way that this assignment is injective (considered with multiplicity) and such that $b'\leq b < d\leq d'$. So, $\pers_{p,\epsilon}^p(D(\restr{\pattern}{[0,c]}))\leq \pers_{p,\epsilon}^p(D(\restr{\pattern}{[-1+c,c]}))$. A similar argument shows that $\pers_{p,\epsilon}^p(D(\restr{\pattern}{[c+N,R]}))\leq \pers_{p,\epsilon}^p(D(\restr{\pattern}{[c+N,c+N+1]}))$.	
\end{proof}

As presented above, a persistence diagram is a multi-set of points in $\R^2$. To gain algebraic and statistical properties, it is often convenient to map the diagram to a functional representation.
In such a representation, we typically associate to each point $(y_1,y_2)$ from the persistence diagram a function, $\kernel_{y_1,y_2}:\FunctionalDomain\rightarrow\R$, for some metric space $\FunctionalDomain$. A functional representation is then a weighted sum of such functions, where the weights are commensurate with a measure of importance of each point, for example, the $\epsilon$-truncated persistence $w_\epsilon(y_1,y_2)= \max(y_2-y_1-\epsilon, 0)$ for some $\epsilon>0$. In this work, we will typically consider \emph{normalized functionals of $\epsilon$-truncated $p$-persistence}, for some $p>1$ and for any $t\in\FunctionalDomain$,
\begin{equation}
\label{eq:normalizedfunctional}
\normalizedfunctional_{\kernel, \epsilon, p}(S)(t) = \frac{\sum_{(y_1,y_2)\in D(S)} w_\epsilon(y_1,y_2)^p \kernel_{y_1,y_2}(t)}{\sum_{(y_1,y_2)\in D(S)} w_\epsilon(y_1,y_2)^p},
\end{equation}
if the denominator is positive and $\normalizedfunctional_{\kernel, \epsilon, p}(S)(t)=0$ otherwise. 
We omit the dependence of $\normalizedfunctional$ on $\kernel, \epsilon, p$, writing $\normalizedfunctional = \normalizedfunctional_{\kernel, \epsilon, p}$. Example functionals are shown in Figure~\ref{fig:truncated_persistence}, for the kernels introduced in Examples~\ref{example:persistence_silhouette} and~\ref{example:persistence_image}.

We will see in {Sections~\ref{sec:continuous_signatures_properties} and~\ref{sec:continuous_signatures_properties_noise}} that the normalization of the functional makes it invariant to the number of periods of $\pattern$ in $S$ to a certain extent.
The details of the construction of the persistence diagram, examples of $\normalizedfunctional$ and a study of the properties of the truncated persistence
${\pers_{p, \epsilon}(D) \coloneqq \left(\sum_{(y_1,y_2)\in D} w_\epsilon(y_1,y_2)^p\right)^{1/p}}$ are included in Section~\ref{sec:persistence_results}.

We can now define what we will call the topological signature. When $\gamma\sim\mu$ and $W\sim\nu$ are independent random variables, $S$ is also random. For each path and $t\in\T$, we can calculate $\normalizedfunctional(S)(t)\in\R$. We define the signature of $S$ point-wise as
\begin{equation}
\label{eq:signature_definition}
F(S)(t) \coloneqq \E[\normalizedfunctional(S)(t)],
\end{equation}
where the expectation is taken with respect to the law of the process, induced by the product measure of $\mu$ and $\nu$. It is clear that $\normalizedfunctional(S)(t)$ is a real-valued random variable, but we will show in Proposition~\ref{prop:measurability} that we can also consider $\normalizedfunctional(S)\in C(\FunctionalDomain,\R)$ as a random variable. 

\subsection{Properties of functionals of a periodic function}
\label{sec:continuous_signatures_properties}
We examine the consistency of the signature~\eqref{eq:signature_definition} and its invariance with respect to reparametrisations for noiseless obseravtions. That is, we consider the case $W=0$, so that~\eqref{eq:model_continuous_signal} becomes $S(t) = \pattern(\gamma(t)).$ Recall that $\gamma:[0,T]\rightarrow \R$ is a continuous and increasing function, whose distribution we denote by $\mu$.

For consistency, normalizing the functional by the total truncated $p$-persistence is akin to normalizing by the number of periods. As $\gamma(T)-\gamma(0)$ increases, the contribution of the boundary effects becomes less significant and we gain invariance to the number of observed periods. Theorem~\ref{thm:convergence_to_limit} is in fact a corollary of Lemma~\ref{lemma:limit_diagram}. It also justifies calling the limit the ``signature of a periodic function". 
\begin{theorem}[Consistency]
	\label{thm:convergence_to_limit}
	Assume that $\kernel$ satisfies~\eqref{eq:kernel_lipschitz} and~\eqref{eq:kernel_bounded_on_diag}. Then, as $R\rightarrow\infty$,
	\begin{equation*}
	\normalizedfunctional(D(\restr{\pattern}{[0,R]})) \xrightarrow{\Vert \cdot \Vert_\infty} \normalizedfunctional(D(\restr{\pattern}{[c,c+1]})).
	\end{equation*}
\end{theorem}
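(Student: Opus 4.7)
The plan is to turn the additive decomposition of Lemma~\ref{lemma:limit_diagram} into a uniform estimate, exploiting the fact that both the numerator and denominator of the normalized functional in~\eqref{eq:normalizedfunctional} are additive in the multiset of points of the diagram. Set $N \coloneqq \lfloor R - 1 \rfloor$, and for any persistence diagram $D$ write $A(D)(t) \coloneqq \sum_{(y_1,y_2)\in D} w_\epsilon(y_1,y_2)^p\, \kernel_{y_1,y_2}(t)$ and $B(D) \coloneqq \pers_{p,\epsilon}(D)^p$. Abbreviating $A_1 = A(D_1)$, $B_1 = B(D_1)$, $A' = A(D')$, $B' = B(D')$, Lemma~\ref{lemma:limit_diagram} gives
\[
\normalizedfunctional(D(\restr{\pattern}{[0,R]}))(t) = \frac{N A_1(t) + A'(t)}{N B_1 + B'}
\qquad\text{and}\qquad
\normalizedfunctional(D_1)(t) = \frac{A_1(t)}{B_1},
\]
where the second identity assumes $B_1 > 0$.

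Clearing the common denominator, the pointwise difference takes the form
\[
\normalizedfunctional(D(\restr{\pattern}{[0,R]}))(t) - \normalizedfunctional(D_1)(t) = \frac{B_1 A'(t) - B' A_1(t)}{B_1 (N B_1 + B')},
\]
so the strategy reduces to bounding the two numerator terms uniformly in $t$. From~\eqref{eq:kernel_lipschitz} and~\eqref{eq:kernel_bounded_on_diag}, together with the fact that every persistence point of $\restr{\pattern}{[0,R]}$ lies in the compact box $[\vmin,\vmax]^2$, one extracts a constant $K$ depending only on $\kernel$ and the amplitude of $\pattern$ such that $\sup_t|\kernel_{y_1,y_2}(t)| \leq K$ at every such point. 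This gives $\|A_1\|_\infty \leq K B_1$ and $\|A'\|_\infty \leq K B'$. Moreover, the closing paragraph of the proof of Lemma~\ref{lemma:limit_diagram} in fact establishes the sharper inequality $B' \leq 2 B_1$ for \emph{truncated} $p$-persistence (not only for $\pers_p$). Substituting these two estimates, the supremum norm of the difference is bounded by $2 K B' / (N B_1 + B') \leq 4 K / (N + 2)$, which vanishes as $R \to \infty$.

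The degenerate case $B_1 = 0$ is absorbed by the same inequality $B' \leq 2 B_1$: it forces $B' = 0$, so both normalized functionals are identically zero by the convention in~\eqref{eq:normalizedfunctional}. The main obstacle is really only careful bookkeeping, namely verifying that the kernel bound $K$ depends on $\pattern$ only through its amplitude (and hence not on $R$), and that the improved inequality $B' \leq 2 B_1$ for truncated $p$-persistence is genuinely available from the proof of Lemma~\ref{lemma:limit_diagram} rather than only the weaker form displayed in its statement. With these two points checked, the rest collapses into the one-line algebraic estimate above.
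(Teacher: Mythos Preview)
Your proposal is correct and takes essentially the same approach as the paper: both exploit the additive decomposition of Lemma~\ref{lemma:limit_diagram}, the linearity of numerator and denominator in~\eqref{eq:normalizedfunctional}, a uniform bound $\|\kernel_x\|_\infty \leq K$ coming from~\eqref{eq:kernel_lipschitz}--\eqref{eq:kernel_bounded_on_diag}, and the inequality $\pers_{p,\epsilon}^p(D')\leq 2\pers_{p,\epsilon}^p(D_1)$ that the proof of Lemma~\ref{lemma:limit_diagram} indeed gives for the truncated persistence. Your common-denominator computation is in fact slightly tidier than the paper's triangle-inequality split, but yields the same $\bigO(1/N)$ rate.

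One small slip: the box in which the persistence points live is $[\min\pattern,\max\pattern]^2$, not $[\vmin,\vmax]^2$ (the latter are velocity bounds from Section~\ref{sec:signatures_discrete} and play no role here). The relevant amplitude is $A_\pattern$, and your constant $K$ is precisely the paper's $C + L_\kernel A_\pattern/2$.
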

\begin{proof}
	Let $D_1=D(\restr{\pattern}{[c,c+1]})$, $D'$ be given by Lemma~\ref{lemma:limit_diagram} and let $D_R=D(\restr{\pattern}{[0,R]})$. In addition, we will write
	$\functional(D) = \functional_{\kernel, \epsilon, p}(D)= \sum_{x\in D}w_\epsilon(x)^p \kernel_{x}(t)$ for the linear (non-normalized) version of the functional $\normalizedfunctional$ from~\eqref{eq:definition_normalized_linear_functional}.
	Then, for any $t\in \FunctionalDomain$,
	\begin{align*}
	\left\vert\tfrac{\functional((R-1) D_1) + \functional(D')}{\pers_{p,\epsilon}^p(D_R)}
	- \tfrac{\functional(D_1)}{\pers_{p,\epsilon}^p(D_1)}\right\vert
	&\leq \left\vert \tfrac{\functional(D')}{\pers_{p,\epsilon}^p(D_R)}\right\vert
	+ \left\vert \tfrac{\functional((R-1)D_1)}{\pers_{p,\epsilon}^p(D_R)} - \tfrac{\functional(D_1)}{\pers_{p,\epsilon}^p(D_1)}\right\vert \nonumber\\
	&\leq \left\vert \tfrac{\functional(D')}{\pers_{p,\epsilon}^p(D_R)}\right\vert
	+ \left\vert \tfrac{\pers_{p,\epsilon}^p(D_1)\functional((R-1) D_1) - (\pers_{p,\epsilon}^p((R-1)D_1) + \pers_{p,\epsilon}^p(D'))\functional(D_1)}{\pers_{p,\epsilon}^p(D_R)\pers_{p,\epsilon}^p(D_1)}\right\vert \nonumber\\
	&\leq \tfrac{\vert\functional(D')\vert}{\pers_{p,\epsilon}^p(D_R)}
	+ \tfrac{\vert\pers_{p,\epsilon}^p(D')\functional(D_1)\vert}{\pers_{p,\epsilon}^p(D_R)\pers_{p,\epsilon}^p(D_1)},\nonumber
	\end{align*}
	where we have used that for any $N\in\N$,
	$$\pers_{p, \epsilon}^p(ND_1)\functional(D_1) = N\pers_{p, \epsilon}^p(D_1)\functional(D_1) = \pers_{p, \epsilon}^p(D_1)\functional(ND_1).$$
	Now, we observe that $\pers_{p,\epsilon}^p(D_R) = \pers_{p,\epsilon}^p((R-1)D_1) + \pers_{p,\epsilon}^p(D')\geq (R-1)\pers_{p,\epsilon}^p(D_1)$ and $\pers_{p,\epsilon}^p(D')\leq 2\pers_{p,\epsilon}^p(D_1)$ to obtain that
	\begin{equation}
	\label{eq:convergence_to_limit_proof}
	\Vert\normalizedfunctional(D(\restr{\pattern}{[0,R]})) -\normalizedfunctional(D(\restr{\pattern}{[c,c+1]}))\Vert_\infty
	\leq\tfrac{\vert\functional(D')\vert}{\pers_{p,\epsilon}^p(D_R)}
	+ \tfrac{\vert\pers_{p,\epsilon}^p(D')\functional(D_1)\vert}{\pers_{p,\epsilon}^p(D_R)\pers_{p,\epsilon}^p(D_1)}
	\leq \tfrac{\vert \functional(D')\vert + 2\vert \functional(D_1)\vert}{(R-1)\pers_{p,\epsilon}^p(D_1)}
	\end{equation}
	Using the Minkowski inequality,
	\begin{equation*}
	\vert \functional_t(D')\vert
	= \vert \sum_{x\in D'}w_\epsilon(x)^p \kernel_{x}(t)\vert
	\leq \sum_{x\in D'}\vert w_\epsilon(x)^p\vert \max_{x\in D'} \vert k_x(t)\vert
	\leq \pers_{p, \epsilon}^p(D') \max_{x\in D'}\Vert \kernel_x\Vert_\infty.
	\end{equation*}
	Because $\kernel$ is $L_\kernel$-Lipschitz by~\eqref{eq:kernel_lipschitz}, for any $x\in D'$, we have $\Vert \kernel_{x}\Vert \leq L_\kernel \Vert x - \pi(x)\Vert + \Vert\kernel_{\pi(x)}\Vert$,
	where $\pi(b,d) = (\tfrac{b+d}{2}, \tfrac{b+d}{2})$. Using~\eqref{eq:kernel_bounded_on_diag} on one hand, and the fact that the distance of any point in the diagram to $\Delta$ is bounded by $A_\pattern$, we obtain $\Vert \kernel_{x}\Vert \leq \tfrac{L_\kernel A_\pattern}{2} + C$.
	A similar bound holds for $\functional_t(D_1)$.
	Going back to~\eqref{eq:convergence_to_limit_proof}, we have that 
	\begin{align*}
	\Vert\normalizedfunctional(D(\restr{\pattern}{[0,R]})) -\normalizedfunctional(D(\restr{\pattern}{[c,c+1]}))\Vert_\infty
	&\leq \frac{(2\vert \pers_{p, \epsilon}^p(D_1)\vert + \vert \pers_{p, \epsilon}^p(D_1)\vert)\max_{x\in D'}\Vert \kernel_x\Vert_\infty}{(R-1)\pers_{p,\epsilon}^p(D_1)}\\
	&\leq \frac{4(C+L_\kernel A_\pattern)}{R-1},
	\end{align*}
	what converges uniformly to 0 as $R\rightarrow \infty$.
\end{proof}

Without noise, Lemma~\ref{lemma:invariance_to_reparametrisation} implies that the functional depends only on the number of periods. As a consequence, the signature $F$ is also robust to the distribution of reparametrisations, but only to a certain extent. Consider $\gamma_1\sim\mu_1$ and $\gamma_2\sim\mu_2$ such that the distributions of endpoints $(\gamma_1(0),\gamma_1(T))$ and $(\gamma_2(0),\gamma_2(T))$ are the same.
When $\mu_1$ and $\mu_2$ are such that we can condition on the endpoints, then
\begin{equation}
\label{eq:signature_invariance_noiseless}
F(\pattern\circ\gamma_1)=F(\pattern\circ\gamma_2).
\end{equation}
In light of Lemma~\ref{lemma:invariance_to_reparametrisation},~\eqref{eq:signature_invariance_noiseless} is not surprising, but requires a strong disintegration condition. That condition holds when $\mu_1,\mu_2$ are measures on a closed subspace of $(C([0,T]), \Vert\cdot\Vert_\infty)$. In particular, for any $\vmin>0$, an example is given by
\begin{equation}
\label{eq:gamma_closed_example}
\Gamma_{\vmin} = \{\gamma\in C([0,T],\R)\mid \gamma(s)-\gamma(t)\geq \vmin(s-t),\ \text{for all}\ s\geq t\}.
\end{equation}
We give more details in Appendix~\ref{appendix:disintegration}, notably, we restate~\eqref{eq:signature_invariance_noiseless} in more precise terms in Proposition~\ref{prop:invariance}.

We stress that relaxing the assumption on the equality of distributions is not straightforward. In short, the main problem lies in obtaining a fine control on the persistence diagram when `cutting' a domain, $[0,T_2]$, into $[0,T_1]$ and $[T_1,T_2]$, for any $0<T_1,T_2$. Specifically, we need to consider the difference between $D(\restr{\pattern}{[0,T_2]})$ and $D(\restr{\pattern}{[0,T_1]})\cup D(\restr{\pattern}{[T_1, T_2]})$. When $T_1$ is a global maximum of $\pattern$, we can reason as in the proof of Lemma~\ref{lemma:limit_diagram}. However, this is far from the general situation, in which case the cut at $T_1$ might induce some spurious points in the diagram. 

\subsection{Properties of functionals of noisy periodic functions}
\label{sec:continuous_signatures_properties_noise}
Consider now the noisy observations as in~\eqref{eq:model_continuous_signal}, we loose the invariance with respect to $\gamma$ as given in Lemmata~\ref{lemma:invariance_to_reparametrisation}~and~\ref{lemma:limit_diagram}.
We explore two strategies. For fixed endpoints, we control the differences produced by the noise (Theorem~\ref{thm:signature_stability_to_reparametrisation}). Otherwise, more generally, we can compare the functionals of noisy observations with the signature of the periodic function (Proposition~\ref{prop:functional_difference_via_bias}). Let us detail the assumptions on $W$ and $\gamma$.

We impose three conditions on the noise $W$, whose distribution we will denote by $\nu$. First, we assume that $\Vert W\Vert_\infty$ is almost--surely bounded by a constant smaller than the amplitude of the signal: there is $q>0$ such that $\Vert W\Vert_{\infty}\leq (A_\pattern - \epsilon- q)/2$, where ${A_\pattern= \max\pattern - \min\pattern}$. Second, we assume a path-wise regularity condition, which states that for some $0<\KolmogR<\KolmogP$,
\begin{equation}
\label{eq:Kolmogorov_condition}
\text{there exists } K=K_{\KolmogP, \KolmogR},
\text{ such that } \E[\vert W_t - W_s\vert^{\KolmogP}]\leq K_{\KolmogP, \KolmogR} \vert t-s\vert^{1+\KolmogR}, \text{ for all } s,t\in[0,T].
\end{equation}
Proposition 1.11 in~\cite{azais_level_2009} (which we restate in~\ref{proposition:kolmogorov_implies_holder}) shows that~\eqref{eq:Kolmogorov_condition} implies that $W$ has a version with $\alpha$-H\"older continuous sample paths, for any $0<\alpha<\KolmogR/\KolmogP$.
It is therefore a reasonable condition and $K_{p,r}$ can be explicitly calculated for homogeneous processes with exponentially-decreasing covariance functions.
Finally, we assume that $W$ is independent of $\gamma$. It implies that the law of $S$ is the image measure of the product of $\mu$ and $\nu$ by the map $(x,y)\mapsto \pattern(x)+y$.
\begin{remark}
	Difficulties in treating $W$ come both from controlling its amplitude and the regularity. The tools that we use are sensitive to many, small fluctuations. Condition~\eqref{eq:Kolmogorov_condition} allows us to control the regularity, without imposing a uniform H\"older character on all paths.
\end{remark}

For Theorem~\ref{thm:signature_stability_to_reparametrisation}, we assume that $\gamma$ has a lower-bounded modulus of variation and fixed endpoints. Specifically, let $0<T,R$ and consider
\begin{equation*}
\Gamma_{T,R,\vmin}\coloneqq\{\gamma\in C([0,T],[0,R])\mid\gamma(0)=0,\gamma(T)=R,\ 0\leq\vmin (t-s)\leq \gamma(t)-\gamma(s), \forall s\leq t\}.
\end{equation*}
The set $\Gamma_{T,R,\vmin}$ is convex. It is also included in $C([0,T],\R)$, so it can be naturally endowed with the sup-norm, for which it is a closed, complete and separable space. In particular, it is a Radon space, so that all measures on $(\Gamma_{T,R,\vmin}, \Borel(\Gamma_{T,R,\vmin}))$ are inner--regular and locally-finite. Hence, we can equip the space of probability measures on $(\Gamma_{T,R,\vmin}, \Borel(\Gamma_{T,R,\vmin}))$ with the Wasserstein distance $W_{1,\Vert\cdot\Vert_\infty}$~\cite{panaretosInvitationStatisticsWasserstein2020}.
Another reason for working with $\Gamma_{T,R,\vmin}$ is that $\gamma^{-1}$ all have the same domain, what allows us to take full advantage of the invariance properties of homology. Finally, the lower--bound on the modulus provides a relation between
$\Vert\gamma^{-1}_1 - \gamma^{-1}_2\Vert_\infty$ and $\Vert \gamma_1 - \gamma_2\Vert_\infty$.

\begin{theorem}[Stability]
	\label{thm:signature_stability_to_reparametrisation}
	Let $\mu_1,\, \mu_2$ be two probability measures on $\Gamma_{T,R,\vmin}$ and let $\gamma_k\sim\mu_k$, for $k=1,2$. If $p\geq 1+\max(\KolmogP,\KolmogP/(\KolmogR-1))$, $\normalizedfunctional = \normalizedfunctional_{\epsilon,p,\kernel}$,
	\begin{equation*}
	\Vert F(\pattern\circ\gamma_1 + W) - F(\pattern\circ\gamma_2+W) \Vert_\infty
	\leq
	\frac{\tilde{C}(K_{\KolmogP,\KolmogR})}{\vmin^\alpha}
	W_{1,\Vert\cdot\Vert_\infty}(\mu_1,\mu_2)^\alpha,
	\end{equation*}
	where ${\tilde{C}(x)=\mathcal{O}(x^{1/\KolmogP} (1+x^{1/(\KolmogR-1)}))}$ depends on $\pattern,\, \epsilon,\, p,\, q$ and $\kernel$.
\end{theorem}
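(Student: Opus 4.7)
The approach is to couple $(\gamma_1,\gamma_2)$ optimally for $W_{1,\Vert\cdot\Vert_\infty}$, pull both noisy signals back to the common domain $[0,R]$ via $\gamma_k^{-1}$ using Lemma~\ref{lemma:invariance_to_reparametrisation}, and convert the resulting sup-norm difference into a Wasserstein bound using the H\"older regularity of $W$ together with stability of the normalized functional.

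First, for arbitrary $\delta>0$, pick a coupling $\pi$ of $(\mu_1,\mu_2)$ with $\E_\pi\Vert\gamma_1-\gamma_2\Vert_\infty\leq W_{1,\Vert\cdot\Vert_\infty}(\mu_1,\mu_2)+\delta$, and draw $W$ independently. Measurability (Proposition~\ref{prop:measurability}) and Jensen's inequality give $\Vert F(S_1)-F(S_2)\Vert_\infty\leq \E\Vert\normalizedfunctional(S_1)-\normalizedfunctional(S_2)\Vert_\infty$. Set $h_k(u)\coloneqq\pattern(u)+W(\gamma_k^{-1}(u))$ on $[0,R]$, so that $S_k=h_k\circ\gamma_k$. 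Since $\gamma_k\in\Gamma_{T,R,\vmin}$ is an increasing homeomorphism onto $[0,R]$ with common endpoints, Lemma~\ref{lemma:invariance_to_reparametrisation} yields $D(S_k)=D(h_k)$, and hence $\normalizedfunctional(S_k)=\normalizedfunctional(h_k)$. The velocity lower bound $\vmin$ then gives $\Vert\gamma_1^{-1}-\gamma_2^{-1}\Vert_\infty\leq \vmin^{-1}\Vert\gamma_1-\gamma_2\Vert_\infty$ (for $t_k\coloneqq\gamma_k^{-1}(u)$ with $t_1\geq t_2$, one has $\vmin(t_1-t_2)\leq \gamma_1(t_1)-\gamma_1(t_2)=\gamma_2(t_2)-\gamma_1(t_2)$). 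Combined with the $\alpha$-H\"older version of $W$ produced by Proposition~\ref{proposition:kolmogorov_implies_holder} (random seminorm $C_W$ with $\E[C_W^{\KolmogP}]\lesssim K_{\KolmogP,\KolmogR}$), this yields
\begin{equation*}
\Vert h_1-h_2\Vert_\infty\leq C_W\,\vmin^{-\alpha}\Vert\gamma_1-\gamma_2\Vert_\infty^{\alpha}.
\end{equation*}

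Next, I would invoke the stability properties of $\normalizedfunctional$ developed in Section~\ref{sec:persistence_results}. The amplitude assumption $\Vert W\Vert_\infty\leq (A_\pattern-\epsilon-q)/2$ guarantees that the highest persistence point of $D(h_k)$ has lifetime at least $\epsilon+q$, hence a uniform lower bound $\pers_{p,\epsilon}^p(D(h_k))\geq c_0(q)>0$. Writing $\normalizedfunctional=\functional/\pers_{p,\epsilon}^p$ and using the ``difference of ratios'' identity, Lipschitz-in-$\Vert\cdot\Vert_\infty$ stability for $\functional$ and for $\pers_{p,\epsilon}^p$ (applicable because $p\geq 1+\max(\KolmogP,\KolmogP/(\KolmogR-1))$ makes the relevant persistences integrable for H\"older paths) gives a bound of the form $\Vert\normalizedfunctional(h_1)-\normalizedfunctional(h_2)\Vert_\infty\leq L(h_1,h_2)\Vert h_1-h_2\Vert_\infty$, where $L(h_1,h_2)$ is polynomial in the $\alpha$-H\"older seminorms of the $h_k$ and hence in $C_W$ (plus constants depending on $\pattern$). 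Taking expectation over $W$ via H\"older's inequality and then over $\pi$, using $\alpha\leq 1$ and Jensen on $\E\Vert\gamma_1-\gamma_2\Vert_\infty^\alpha$, and finally letting $\delta\to 0$ gives the stated bound.

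\textbf{Main obstacle.} The hard step is the stability of the \emph{normalized} functional: the ratio structure amplifies perturbations when the denominator is small, and in general $\pers_{p,\epsilon}^p$ of a H\"older function is only finite, not uniformly bounded. The amplitude condition on $W$ rescues the denominator from below, while the exponent condition $p\geq 1+\max(\KolmogP,\KolmogP/(\KolmogR-1))$ is precisely what makes the moments of the stability constant $L(h_1,h_2)\,C_W$ summable, producing the two contributions to $\tilde{C}(x)=\mathcal{O}(x^{1/\KolmogP}(1+x^{1/(\KolmogR-1)}))$: the factor $x^{1/\KolmogP}$ is the H\"older moment of $W$ itself, whereas $x^{1/(\KolmogR-1)}$ comes from the blow-up of $L$ in the H\"older seminorms of the $h_k$.
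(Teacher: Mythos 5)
Your plan follows the paper's own proof (Appendix C) in all essential respects: pull both signals back to $[0,R]$ via Lemma~\ref{lemma:invariance_to_reparametrisation}, convert $\Vert\gamma_1^{-1}-\gamma_2^{-1}\Vert_\infty$ to $\vmin^{-1}\Vert\gamma_1-\gamma_2\Vert_\infty$, use the $\alpha$-H\"older version of $W$ from Proposition~\ref{proposition:kolmogorov_implies_holder} together with Proposition~\ref{prop:functional_continuity}, lower-bound the denominator from the amplitude assumption (the paper gets $(R-2)q^p$ via Proposition~\ref{prop:lower_bound_p_persistence}, you get $q^p$ from a single high-persistence point, which suffices), take expectations via a near-optimal coupling and Jensen, and bound $\E[\Lambda_W]$ and $\E[\Lambda_W^{1+1/\alpha}]$ via Theorem~\ref{thm:holder_constant_moments} to recover $\tilde C(x)=\mathcal O(x^{1/\KolmogP}(1+x^{1/(\KolmogR-1)}))$. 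The only minor imprecision is attributing the blow-up of the Lipschitz constant to the H\"older seminorm of $h_k=\pattern+W_{\gamma_k^{-1}}$ as a whole: $\pattern$ is merely continuous, so one must (as the paper does) split $\pers_{p-j,\epsilon}(h_k)$ into a fixed, finite contribution from $\pattern$ plus a term controlled by $\Lambda_W$ alone, rather than appeal to a H\"older seminorm of $h_k$.
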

The proof of Theorem~\ref{thm:signature_stability_to_reparametrisation} is differed to Appendix~\ref{app:proof_stability}.

Two cases show that the control in Theorem~\ref{thm:signature_stability_to_reparametrisation} is satisfying. First, suppose that $\mu_k = \delta_{\gamma_k}$ for $k=1,2$, for some fixed $\gamma_1,\gamma_2\in\Gamma_{T,R,\vmin}$. Then, we obtain that the silhouette is H\"older, with respect to the distance $\Vert \gamma_1 - \gamma_2\Vert_{\infty}$. It is expected that we do not have complete invariance: for a fixed path $W$, the reparametrisation $\gamma$ can influence how the points in the persistence diagram are displaced.
Consider now the case of vanishing noise. If $K_{\KolmogP, \KolmogR}$ decreases to zero, then so does the H\"older constant $\Lambda_W$ and we have indeed that the right-hand side becomes zero.

Note that controlling $\Vert W\Vert_\infty$ is not sufficient for the stability. When $A_W<\epsilon$, the constant factor in $\tilde{C}(x)$ is $C_{\Lambda_W}=L_\kernel(1+\tfrac{8p^2 A_\pattern(A_\pattern-\epsilon)\pers_{p-2,\epsilon}^{p-2}(\pattern)}{(R-2)q^p})$. We can take the truncation parameter $\epsilon$ small, in which case $q\approx(A_\pattern-\epsilon)$ and so, for a function with a single maximum and minimum, we have $C_{\Lambda_W}\approx L_\kernel(1+8p^2)>0$, which is not zero. Even though the amplitude of the noise is smaller than the cut-off $\epsilon$, it still has an influence on the signature. Therefore, it is important that as the amplitude decreases, the noise does not become increasingly irregular: it is the case of $aW$, with $a \rightarrow0^+$. We require the almost-sure bound on $\Vert W\Vert_\infty$ for a different reason: it gives us the lower--bound on $\pers_{p, \epsilon}^p(\pattern\circ\gamma + W)$, which appears in the denominator of $\normalizedfunctional$.

For processes of decreasing amplitude but increasingly irregular, it is more advantageous to bound $\Vert W_{\gamma^{-1}_1}-W_{\gamma^{-1}_2}\Vert_\infty\leq 2\Vert W\Vert_\infty$ in the proof. In such a scenario however, we ignore the reparametrisations so the distance $\Vert \gamma_1^{-1} - \gamma_2^{-1}\Vert_\infty$ disappears from the bound.
\begin{remark}
	When both endpoints are fixed and common to all reparametrisations, there is no reason to normalize by the total persistence. The stability comes from the continuity of the functional, not the renormalisation. Proposition~\ref{prop:functional_continuity} states that linear functionals of the form $\sum_{x\in D}w_\epsilon(x)^p \kernel_x$ are also continuous for H\"older functions, so a statement analogue to Theorem~\ref{thm:signature_stability_to_reparametrisation} also holds for such functionals.
\end{remark}

We now discuss relaxing some assumptions in Theorem~\ref{thm:signature_stability_to_reparametrisation}. First, note that the lower--bound on the modulus of continuity ($\vmin>0$)  allows us to upper--bound $\Vert \gamma_1^{-1}-\gamma_2^{-1}\Vert_\infty$ by $\tfrac{1}{\vmin}\Vert \gamma_1-\gamma_2\Vert_\infty$. But, if we remove this assumption ($\vmin=0$), it is not clear whether $\Gamma_{T,R,0}$ is a complete space for $\Vert \gamma_1^{-1}-\gamma_2^{-1}\Vert_\infty$.

Second, we could also allow $R$ to vary. A simple example is to let $\gamma_k=R \tilde{\gamma}_k$, where $R$ is a random variable on a compact set of $\rbrack 0,\infty\lbrack$ and $\tilde{\gamma}_k\sim\tilde{\mu}_k$ is a random element of $\Gamma_{T,1,\vmin}$, with $\tilde{\gamma}$ independent of $R$. In that case, we do obtain the distance $W_1(\tilde{\mu}_1,\tilde{\mu}_2)$ in the bound, but it is not clear that it lower--bounds $W_1(\mu_1,\mu_2)$.

The final extension is robustness in the case where the distributions of $\gamma_k(T)-\gamma_k(0)$ are not the same for $k=1,2$. However, we are short of understanding it already in the noiseless case, as stated in Section~\ref{sec:continuous_signatures_properties} and Appendix~\ref{appendix:disintegration}.

Below, we include Proposition~\ref{prop:functional_difference_via_bias}, a much weaker and deterministic statement valid under milder hypotheses.
\begin{proposition}
	\label{prop:functional_difference_via_bias}
	Let	$(\gamma_k:[0,T]\rightarrow [0,R_k])_{k=1,2}$ be two fixed reparametrisations, for $R_k>2$. Consider perturbations ${W_1,W_2 \in C^\alpha_\Lambda([0,T],\R)}$, with $\Vert W_k\Vert_\infty<A_\pattern/2$. Then,
	\begin{equation*}
	\Vert \normalizedfunctional(\pattern\circ\gamma_1 + W_1) - \normalizedfunctional(\pattern\circ\gamma_2 + W_2)\Vert \leq L_\kernel \left( \tfrac{4A_\pattern}{\min(R_1,R_2)-2}+ P(\max(\Vert W_1\Vert_\infty,\Vert W_2\Vert_\infty))\right),
	\end{equation*}
	where the expression of $P(x)=\mathcal{O}(x)$ is given explicitly in Lemma~\ref{lemma:perturbed_pathwise_version}.
\end{proposition}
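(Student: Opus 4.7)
The plan is to decompose the difference via two applications of the triangle inequality, passing through both the noiseless reparametrised signal and the limit signature $\normalizedfunctional(\restr{\pattern}{[c,c+1]})$ associated to a single period of $\pattern$, where $c\in[0,1]$ is the argmax of $\pattern$ as in Lemma~\ref{lemma:limit_diagram}. Writing $S_k = \pattern\circ\gamma_k + W_k$,
\begin{equation*}
\Vert \normalizedfunctional(S_1) - \normalizedfunctional(S_2)\Vert
\leq \sum_{k=1,2} \Bigl(\underbrace{\Vert \normalizedfunctional(S_k) - \normalizedfunctional(\pattern\circ\gamma_k)\Vert}_{(A_k)} + \underbrace{\Vert \normalizedfunctional(\pattern\circ\gamma_k) - \normalizedfunctional(\restr{\pattern}{[c,c+1]})\Vert}_{(B_k)}\Bigr).
\end{equation*}

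For $(A_k)$, the two arguments differ only by the additive perturbation $W_k$, which is $\alpha$-H\"older with constant $\Lambda$ and sup-norm below $A_\pattern/2$. This is precisely the setting of Lemma~\ref{lemma:perturbed_pathwise_version}, and invoking that lemma gives $(A_k) \leq L_\kernel P(\Vert W_k\Vert_\infty)$. Since $P(x) = \mathcal{O}(x)$ is monotone (at least after taking the leading term), each noise contribution is bounded by $L_\kernel P(\max(\Vert W_1\Vert_\infty, \Vert W_2\Vert_\infty))$, so the two of them together contribute $L_\kernel \cdot P(\max(\Vert W_1\Vert_\infty,\Vert W_2\Vert_\infty))$ up to a constant absorbed into $P$.

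For $(B_k)$, the reparametrisation $\gamma_k:[0,T]\to[0,R_k]$ has fixed endpoints $0$ and $R_k$. By Lemma~\ref{lemma:invariance_to_reparametrisation} applied with $\gamma_k$ and the identity on $[0,R_k]$, $D(\pattern\circ\gamma_k) = D(\restr{\pattern}{[0,R_k]})$, hence $\normalizedfunctional(\pattern\circ\gamma_k) = \normalizedfunctional(\restr{\pattern}{[0,R_k]})$. The consistency Theorem~\ref{thm:convergence_to_limit} then bounds $(B_k)$ by a term of order $L_\kernel A_\pattern / (R_k - 1)$; summing over $k$ and upper-bounding $R_k - 1 \geq \min(R_1,R_2) - 2$ yields the $L_\kernel \cdot \tfrac{4A_\pattern}{\min(R_1,R_2)-2}$ contribution.

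The main technical subtlety is matching the constants: as stated in the proof of Theorem~\ref{thm:convergence_to_limit}, the consistency bound carries an additive constant $C$ bounding $\kernel$ on the diagonal, whereas the proposition's right-hand side factors $L_\kernel$ cleanly out of both summands. To obtain this form, I would re-examine the consistency argument with the hypotheses of the proposition in force (in particular the lower bound $R_k>2$ and the noise cap $\Vert W_k\Vert_\infty<A_\pattern/2$), isolating the Lipschitz contribution of $\kernel$ away from its value on the diagonal; the remaining parts of the proof are routine triangle-inequality bookkeeping.
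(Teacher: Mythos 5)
Your decomposition matches the paper's: both proofs triangulate through the noiseless restrictions $\restr{\pattern}{[0,R_k]}$ and the single-period limit $\normalizedfunctional(\restr{\pattern}{[c,c+1]})$, bounding the noise contributions with Lemma~\ref{lemma:perturbed_pathwise_version} and the domain-length contributions with Theorem~\ref{thm:convergence_to_limit}. The paper writes the middle step as a single term $\Vert \normalizedfunctional(\restr{\pattern}{[0,R_1]}) - \normalizedfunctional(\restr{\pattern}{[0,R_2]})\Vert$ and then implicitly splits it through $\normalizedfunctional(\restr{\pattern}{[c,c+1]})$, which is exactly your $(B_1)+(B_2)$.

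One step is glossed over: $(A_k) = \Vert \normalizedfunctional(\pattern\circ\gamma_k + W_k) - \normalizedfunctional(\pattern\circ\gamma_k)\Vert$ is not literally in the form of Lemma~\ref{lemma:perturbed_pathwise_version}, which compares $\normalizedfunctional(\pattern + W)$ to $\normalizedfunctional(\pattern)$ on a fixed domain. You first need the reparametrisation invariance (Lemma~\ref{lemma:invariance_to_reparametrisation}) to rewrite $(A_k) = \Vert \normalizedfunctional(\restr{\pattern}{[0,R_k]} + (W_k)_{\gamma_k^{-1}}) - \normalizedfunctional(\restr{\pattern}{[0,R_k]})\Vert$; the paper's proof makes this pullback explicit, writing $\normalizedfunctional(\pattern + (W_k)_{\gamma_k^{-1}})$. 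Once pulled back, Lemma~\ref{lemma:perturbed_pathwise_version} does apply, though note that the H\"older constant of the pulled-back noise $(W_k)_{\gamma_k^{-1}}$ is not $\Lambda$ in general but depends on the modulus of $\gamma_k^{-1}$ -- a point the paper's own proof also leaves implicit.

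On the constant-matching issue you raise: your concern is legitimate, and the paper's proof is itself loose here. The paper's intermediate chain bounds the domain-length contribution by $2\cdot\tfrac{4}{\min(R_1,R_2)}\Vert\normalizedfunctional(\restr{\pattern}{[c,c+1]})\Vert$, uses $\Vert\normalizedfunctional(\restr{\pattern}{[c,c+1]})\Vert \leq A_\pattern/2$, and absorbs the extra $C$ from Theorem~\ref{thm:convergence_to_limit}'s $\tfrac{4(C + L_\kernel A_\pattern)}{R-1}$ bound into the slack between $\min(R_1,R_2)$ and $\min(R_1,R_2)-2$; the resulting prefactor is then stated as $L_\kernel$. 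Your plan to re-derive the consistency bound under the proposition's hypotheses to isolate the $L_\kernel$ prefactor is the right instinct and is effectively what the paper's informal arithmetic is doing.
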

Note that the right--hand side is strictly positive, even in the noiseless case $W=0$ and $\mu_1=\mu_2$. It is not surprising, because the bounds we use are very crude: we remove the noise and we compare the respective signatures to the limit object $\normalizedfunctional(\pattern)$. The H\"older regularity assumption on the noise is a consequence of the fact that the statement is deterministic and pathwise: a similar proof could be carried out for signatures (in expectation), using regularity assumption~\eqref{eq:Kolmogorov_condition}.

\section{Statistical inference of signatures from time--series}
\label{sec:signatures_discrete}
We have defined the signature and studied its properties for continuous observations. In practical applications, we do not have access to $S$, but to observations in the form of a time--series $(S_n)_{n=1}^N$.
In our case, this time series is composed of samples from a continuous process. In some situations, it is reasonable to assume that we have access to a collection of independent time--series from the same model, what allows to conveniently estimate $F$. We consider the case in which we observe a single time series.
The purpose of this section is to show asymptotic statistical guarantees for signatures of windows of a discretized signal.

\subsection{Time series model}
Similarly to the continuous model~\eqref{eq:model_continuous_signal}, the observations are a reparametrisation of a 1-periodic function $\pattern$
\begin{equation}
\label{eq:model_discrete_signal}
S_n = \pattern(\gamma_n) + W_n \in \R,\qquad n=1,\ldots, N,
\end{equation}
where $(\gamma_n)_{n=1}^N$ is a strictly increasing time series and $(W_n)_{n\in\N}$ is a stationary noise time series satisfying $\E[W_n]=0$.
It is also convenient and straightforward to consider the limit of a time series of infinite length, $(S_n)_{n\in\N}$.

We will present a class of reparametrisation processes, defined as discrete integrals of another, positive time series $V_n$. Specifically, let 
\begin{equation}
\label{eq:gamma_Markov_chain}
\gamma_{n+1} = \gamma_{n} + hV_n = \gamma_0 + h\sum_{k=0}^n V_k,
\end{equation}
where $(V_n)_{n=0}^N$ is a sequence of random variables in $\VInterval \coloneqq[\vmin, \vmax]$, independent of $\gamma_0$ and $0<h$ is a time step.
This model is inspired by dynamics, where the sequence $(\gamma_n)_{n\in\N}$ could model the displacement of a body over time and $V_n$ should be thought of as the instantaneous speed, in which case $h=\tfrac{T}{N}$.
We will consider two models for $(V_n)_{n\in\N}$. In the first one, consecutive velocities are independent. Since we do not expect a moving body to change speed abruptly, we also consider $V_n$ as a Markov process on $\VInterval$.
\paragraph{Model 1: $(V_n)_{n\in\N}$ \iid}
We assume that $V_n$ are independent and follow the same, unknown distribution on $\R_+^*$, which satisfies the following property: there exists $0<a,b, c$ such that, for all $A\in \Borel(\rbrack a,b\lbrack)$ measurable, $P(V_k\in A)\geq c\Lebesgue(A)$, where $\Lebesgue$ is the Lebesgue measure.

\paragraph{Model 2: $(V_n)_{n\in\N}$ a Markov Chain}
Let $(V_n)_n$ be a Markov Chain with transition kernel $\TPK$. Specifically,
\begin{enumerate}
	\item $v\mapsto \TPK(v,A)$ is $\Borel(\VInterval)$-measurable, for all $A\in\Borel(\VInterval)$,
	\item $A\mapsto \TPK(v,A)$ is a probability measure on $(\VInterval, \Borel(\VInterval))$.
\end{enumerate}
We assume that $\TPK(x,\cdot)$ is a probability measure that has a density $\density{x}$ with respect to $\Lebesgue$ and that
\begin{enumerate}
	\item the density is lower--bounded in a small neighborhood: there exists $\eta,\mu_0>0$, such that 
	\begin{equation}
	\label{eq:borel_lower_bound_density}
		\restr{\density{v}}{[v-\eta, v+\eta]\cap I} \geq \mu_0,
	\end{equation}
	\item $v\mapsto \density{v}(x)$ is continuous for any $x\in I$.
\end{enumerate}

Note that if $\density{x}=\density{},$ for all $x\in\VInterval$, we are in a particular case of the \iid\ setting, where $P$ has density $\density{}$, $a=\vmin, b=\vmax$ and $c=\mu_0$.

\begin{example}
Set $V_0\sim\Uniform(\VInterval)$ and let $0<\eta<\tfrac{\vmax-\vmin}{4}$. 
An example of a kernel satisfying the above assumption is a truncated Gaussian kernel. The truncation is such that the support is $\VInterval$ and $\sigma=\eta$. In Figure~\ref{fig:example_reparametrisations_discrete}, we show the kernel and several sample trajectories from this model.
\end{example}

\begin{figure}
	\centering
	\includegraphics[width=0.33\textwidth]{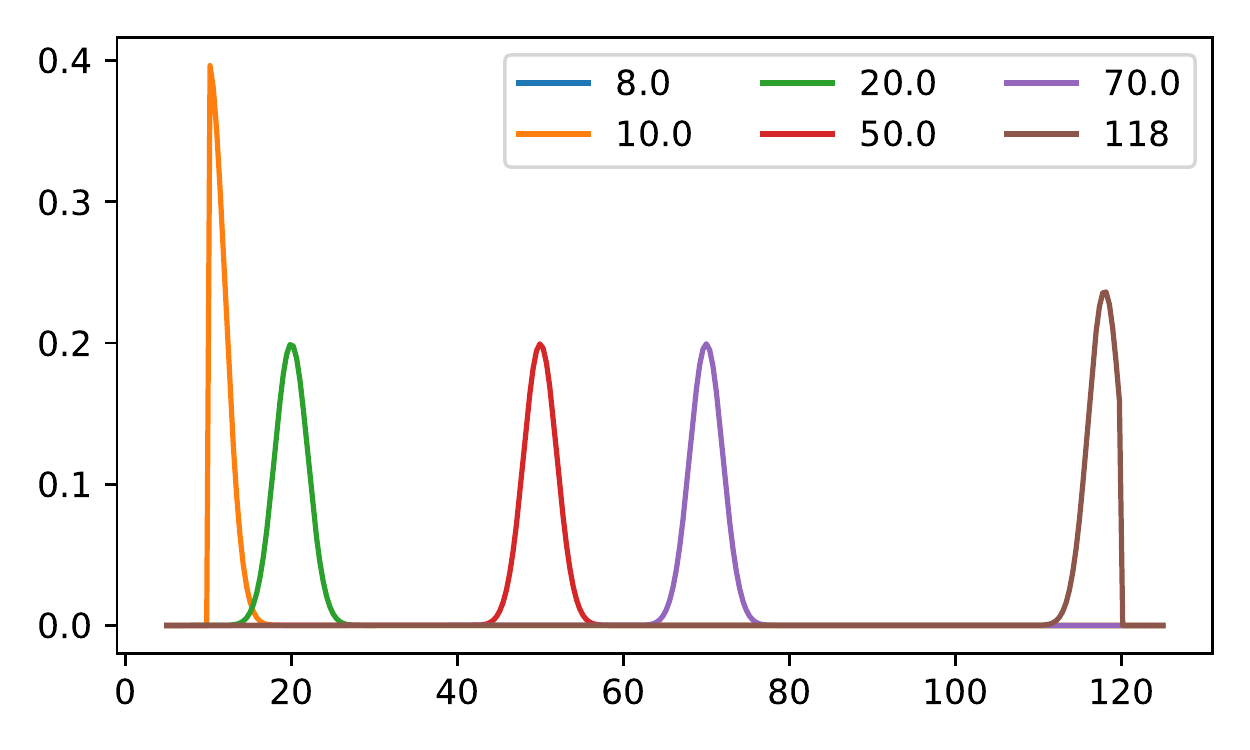}
	\includegraphics[width=0.66\textwidth]{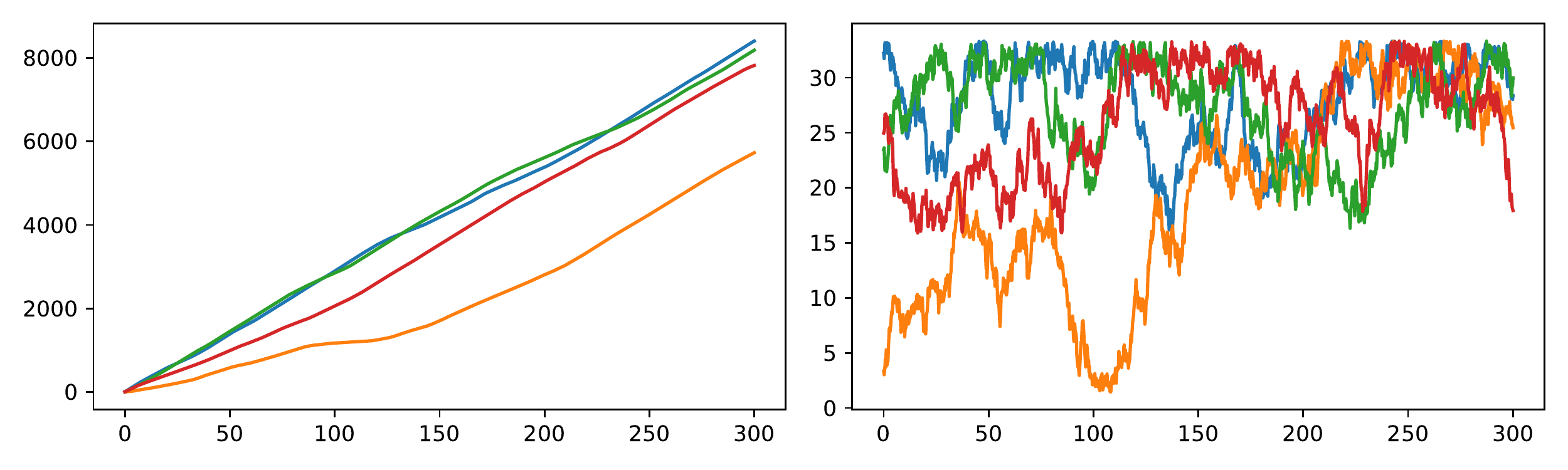}
	\caption{On the left, the truncated Gaussian Kernel centered at different point in $\VInterval$, with $\eta=2$. In the center, several reparametrisation paths, integrated from the Markov chain realisations on the right. Those were generated with $\eta= 1.1$.}
	\label{fig:example_reparametrisations_discrete}
\end{figure}

\subsection{Estimation of signatures}
Consider the situation where we observe a single time--series $(S_n)_{n=1}^N$, $S_n\in \R$. To define the functionals in this setting, we can see $(S_n)_{n=1}^N$ as a piece--wise linear function. This allows us to calculate $\normalizedfunctional$ from the resulting persistence diagram in the same way,
$\normalizedfunctional_t((S_n)_n) = \normalizedfunctional(D((S_n)_{n=1}^N)) = \frac{\sum_{x\in D}w_\epsilon(x)^p \kernel_x(t)}{\sum_{x\in D}w_\epsilon(x)^p}$. We provide more details in Section~\ref{sec:sublevelset_persistence}.

With a single observation, we cannot expect to reliably estimate $\normalizedfunctional(S)$. Persistent homology is a global descriptor, which can link two events, even if they happen far in time. Even though the descriptor $\normalizedfunctional$ effectively represents the average homological feature, it is not immediately clear that it benefits from the same properties as the empirical mean. Understanding this poses the same challenges as those explained at the end of Section~\ref{sec:continuous_signatures_properties} and Appendix~\ref{appendix:disintegration}.

Instead, we will fix a window length $M\in\N$ and we will use as a signature $F_M(S)\coloneqq F(X)=\E[\normalizedfunctional(X)]$, where $X=(S_1,\ldots S_M)$. It is a quantity which we can estimate with an empirical mean, whose distribution we can also characterize by bootstrap techniques. This choice is justified by our considerations on the continuous model, where in the noiseless case, Theorem~\ref{thm:convergence_to_limit} shows that $F_M(S)$ converges to $F(S)$, as $M\rightarrow\infty$. In the discrete (time--series) setting, the invariance to reparametrisation (Lemma~\ref{lemma:invariance_to_reparametrisation}) no longer holds: the discretisation means that the extrema of $\pattern$ are not necessarily attained by the time series and attenuation occurs at high sampling frequencies.

From $(S_n)_{n=1}^N$, we generate a sample $X_1,\ldots,X_{N-M+1}$,
\begin{equation}
\label{eq:signal_window_definition}
X_n=(S_{n},\ldots, S_{n+M-1}).
\end{equation}
The empirical counter-part of $F_M(S)$ is the empirical mean ${\hat{F}_M(S)=\tfrac{1}{N-M+1}\sum_{n=1}^{N-M+1}\normalizedfunctional(X_n)}$, whose distribution we will estimate by Moving Block Bootstrap (MBB)~\cite{buhlmannBootstrapsTimeSeries2002}. To be specific, let $L = L(N-M+1)\in\N$ be the block length. The MBB consists of sampling $B$ blocks, each composed of $L$ consecutive vectors $X$: that is, $(X_{n},\ldots, X_{n + L -1})$, for $n\in\{1,\ldots N-M+1\}$. The MBB sample is then
$$X^*_1,\ldots, X^*_{N-M+1} = X_{n_1},\ldots, X_{n_1 + L}, X_{n_2}, \ldots, X_{n_2+L},\ \ldots, X_{n_B},\ldots X_{n_B+L},$$
where $n_1,\ldots n_B\sim \Uniform(1,\ldots, N-M+1)$ are independent.
We denote by ${F^*_M(S)=\tfrac{1}{N-M+1}\sum_{n=1}^{N-M+1}\normalizedfunctional(X^*_n)}$.

Note that the bootstrap sample contains overlapping samples, at two different levels. Not only are the windows $X_1,\ldots, X_N$ overlapping, but also the different blocks can overlap.

The purpose of this section is to prove that the empirical mean $\hat{F}_M(S)$ converges to $F_M(S)$ and that we can approximate the distribution of $\hat{F}_M$ by that of $F^*_M$, as $N\rightarrow\infty$.
The core idea is to control how the dependence between $X_1$ and $X_{1+k}$ changes as $k$ increases. For this, we recall the definition of $\beta$-mixing coefficients~\cite[Section 1.2]{dedecker_weak_2007}.

For a stationary sequence $(X_n)_{n\in\N}$, denote by $\sigma_{a,b}$ the $\sigma$-algebra generated by $X_a,\ldots X_b$. The $k$-th $\beta$-mixing coefficient is
\begin{equation*}
\beta_X(k) = \tfrac{1}{2}\sup_{\substack{\mathcal{A}\subset \sigma_{-\infty, 0},\\ \mathcal{B}\subset \sigma_{k,\infty}}}\sum_{A\in \mathcal{A}, B\in\mathcal{B}} \vert P(A\cap B) - P(A)P(B)\vert,
\end{equation*}
where $\mathcal{A},\,\mathcal{B}$ are countable partitions of the sample space. We say that $(X_n)_{n\in\N}$ is \emph{absolutely regular} (or $\beta$-mixing) if $\beta_X(k)\rightarrow 0$ as $k\rightarrow\infty$. A process for which $\beta(k)\leq a^{k}$, for some $0<a<1$ is called \emph{exponentially $\beta$-mixing}.
\begin{theorem}
	\label{thm:main_clt}
	Consider $(\gamma_n)_{n=1}^N$ as in~\eqref{eq:gamma_Markov_chain} with $(V_n)_{n=1}^N$ as in Model 1 or 2.
	Assume that $W$ is exponentially $\beta$-mixing.
	% Gaussian approximation
	Then, 
	\begin{equation}
	\label{eq:main_clt_gaussian_approx}
	\sqrt{N-M+1}(\hat{F}_M(t)- F_M(t)) \rightarrow G_d
	\end{equation}
	where $G_d$ is a zero--mean Gaussian process with covariance
	\begin{equation}
	\label{eq:limit_covariance}
	(s,t)\mapsto \lim_{k\rightarrow\infty}\sum_{n=1}^\infty\mathrm{cov}(\normalizedfunctional(X_k)(s), \normalizedfunctional(X_n)(t)).
	\end{equation}
	% Bootstrap
	Then, if $L(N)\rightarrow \infty$  and $L(N)=\bigO(N^{1/2 - \epsilon})$ for some $\epsilon>0$ as $N\rightarrow\infty$, then
	\begin{equation}
	\label{eq:main_clt_bootstrap}
	\sqrt{N-M+1}(\hat{F}^* - \hat{F}) \rightarrow^* G_d(t)\qquad \text{ in probability,}
	\end{equation}
\end{theorem}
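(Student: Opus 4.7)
The plan is to recognize both claims as instances of the functional CLT and moving-block bootstrap consistency for empirical processes under $\beta$-mixing, for which results due to Radulović~\cite{radulovicBootstrapEmpiricalProcesses1996}, B\"uhlmann~\cite{buhlmannBlockwiseBootstrapGeneral1995}, and Kosorok~\cite{kosorok_introduction_2008} apply. The real work is to verify that the windowed sample $(X_n)$ is stationary and exponentially $\beta$-mixing in $\R^M$, and that the map $x \mapsto \normalizedfunctional(x)(\cdot)$ produces a uniformly bounded and sufficiently regular $C(\FunctionalDomain)$-valued process so that the coordinate class $\mathcal{F} = \{x \mapsto \normalizedfunctional(x)(t) : t \in \FunctionalDomain\}$ is Donsker.

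First, I would establish exponential $\beta$-mixing of $(V_n)$. Under Model~1 this is trivial. Under Model~2, the pointwise lower bound~\eqref{eq:borel_lower_bound_density} together with continuity of $v \mapsto \density{v}(x)$ and compactness of $\VInterval$ yields a Doeblin-type uniform minorization of $\TPK^k$ for some finite $k$; by standard Markov chain arguments (see Doukhan~\cite{doukhanMixing1995}), this gives geometric ergodicity, hence $\beta_V(k) \leq C \rho^k$ for some $\rho \in (0,1)$. Second, using the $1$-periodicity of $\pattern$, each window $X_n$ is a measurable function of $(\gamma_n \bmod 1,\, V_n,\ldots,V_{n+M-2},\, W_n,\ldots, W_{n+M-1})$. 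The joint chain $(\gamma_n \bmod 1,\, V_n)$ on the compact space $[0,1) \times \VInterval$ inherits geometric ergodicity: the minorization on $V$, combined with the fact that after finitely many steps the accumulated increment $h \sum V_k$ covers an interval of length $\geq 1$ (since $\vmin > 0$), spreads the $\gamma$-coordinate uniformly modulo~$1$. Initialising with the invariant law makes the chain stationary; combined with the independent exponentially $\beta$-mixing noise $W$, this gives $\beta_X(k) \leq C' {\rho'}^{(k-M)_+}$ for some $\rho' \in (0,1)$.

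Third, I would verify regularity of the functional. The a.s.\ bound $\|W\|_\infty \leq (A_\pattern - \epsilon - q)/2$ guarantees that $D(X_n)$ contains at least one point of persistence $\geq q$, so $\pers_{p,\epsilon}^p(X_n) \geq q^p$ almost surely; consequently $\normalizedfunctional(X_n)$ is well-defined and uniformly bounded in $C(\FunctionalDomain)$ by~\eqref{eq:kernel_lipschitz} and~\eqref{eq:kernel_bounded_on_diag}. The family $\mathcal{F}$ is uniformly bounded and equicontinuous in $t$ thanks to the Lipschitzness of $\kernel$; combined with the $\ell^\infty$-stability of persistence diagrams (Section~\ref{sec:persistence_results}), this yields finite bracketing entropy of $\mathcal{F}$ under $L^2(P)$, which is the Donsker-type regularity required by~\cite{radulovicBootstrapEmpiricalProcesses1996}. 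With these ingredients in place, Radulović's CLT for $\beta$-mixing empirical processes gives~\eqref{eq:main_clt_gaussian_approx}, and absolute summability of the covariance series~\eqref{eq:limit_covariance} follows from uniform boundedness plus exponential mixing. For~\eqref{eq:main_clt_bootstrap}, B\"uhlmann's moving-block bootstrap functional CLT applies under the stated block-length condition $L \to \infty$, $L = \bigO(N^{1/2-\epsilon})$, yielding convergence in probability of the bootstrap law to $G_d$.

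The main obstacle is the mixing transfer in the second step: the minorization is provided only on the $V$-component, but must be propagated to the joint chain $(\gamma_n \bmod 1,\, V_n)$, which requires a careful small-set argument exploiting both the density bound on $V$ and the ``spreading modulo one'' effect of the $\gamma$-dynamics. The Donsker regularity of $\mathcal{F}$ is the second nontrivial verification and relies essentially on the stability results of Section~\ref{sec:persistence_results}; the remaining ingredients are standard once these two points are in hand.
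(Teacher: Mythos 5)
Your proposal tracks the paper's proof closely: stationarize via $\pattern(\gamma_n)=\pattern(\Frac(\gamma_n))$, establish exponential $\beta$-mixing of the joint Markov chain on $[0,1)\times\VInterval$ via a Doeblin-type minorization exploiting the ``spreading modulo one'' of the accumulated increment, transfer the mixing to $(S_n)$ and then to the windows $(X_n)$, and close with a bracketing-entropy bound plus a functional CLT and the moving-block bootstrap. The one structural deviation is that you propose first proving $\beta$-mixing of $(V_n)$ alone and then \emph{lifting} it to the joint chain $(\Frac(\gamma_n),V_n)$; the paper instead establishes the Doeblin condition directly on the joint chain (Proposition~\ref{prop:gamma_beta_mixing}, Appendix~\ref{appendix:mixing_proof}). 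This matters: mixing of $V$ alone does not formally imply mixing of the pair, because $\gamma_n$ depends on the entire history of $V$, so the ``lift'' you defer to a ``careful small-set argument'' is in fact the whole content of the hard step. Also, ``the accumulated increment $h\sum V_k$ covers an interval of length $\geq 1$ since $\vmin>0$'' is not quite what is needed — the deterministic increment exceeding $1$ is easy; what the paper actually proves (Steps~1--8) is that the \emph{distribution} of the $n$-step transition admits a density bounded below on a region whose horizontal cross-sections have length at least~$1$, which is considerably more delicate. Your citations differ (Radulovi\'c vs.\ the paper's use of Kosorok Theorem~11.22 for the CLT), but both are equivalent routes for $\beta$-mixing empirical processes with bracketing entropy, and the paper itself remarks that Radulovi\'c's result would work. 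Finally, the paper proves stationarity of $(\Frac(\gamma_n))_n$ explicitly via Lemma~\ref{lemma:convolution} ($\Frac(U+Z)\sim U$ for $U$ uniform) rather than invoking ``the invariant law,'' which is a small but useful precision; otherwise your sketch is sound.
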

This result is a functional central limit theorem, similar to many in the literature of topological data analysis, see for example~(\cite{chazal_stochastic_2014}, \cite[Proposition 2 and 3]{berry_functional_2018}), except that the samples are not independent. For \iid~data, it is sufficient to control the complexity of the functional family. Since this aspect has been covered extensively, we only recall Proposition~\ref{prop:bracketing_number_silhouette} for completeness.
The novel aspect of Theorem~\ref{thm:main_clt} is the consideration of dependence and it is what we treat with more care.
The rest of this section is devoted to a proof of Theorem~\ref{thm:main_clt}, with details being differed to appendices.
\begin{proof}[{Sketch of proof of Theorem~\ref{thm:main_clt}}]
	Consider $(\gamma_n)_{n\in\N}$ as in~\eqref{eq:gamma_Markov_chain} with Model 1 or 2. Notice that this series is not stationary, because $P(\gamma_n<\gamma_{n+1})=1$.
	However, the crucial observation is that composition $(\pattern(\gamma_n))_{n\in\N}$ is stationary. In fact, it can be written as $\pattern(x) = \pattern(\Frac(x))$, through $\Frac(x) = x- \lfloor x\rfloor$ the fractional part of a real number.
	In Appendix~\ref{appendix:mixing_proof}, we show that $(\Frac(\gamma_n))_{n\in\N}$ is exponentially $\beta$-mixing (Proposition~\ref{prop:gamma_beta_mixing}).
	While this is not a surprising result, it is the most technical part of the proof. We show a Doeblin-type condition:
	we find a non-trivial measure which lower-bounds the $n$-step transition measure of $(\Frac(\gamma_n), V_n)$, uniformly in the initial conditions $(\Frac(\gamma_0), v_0)$.
	With the assumptions on the kernel in our model, we show that, for $n$ sufficiently large, this lower--bound can be taken to be a uniform measure on $[0,1]\times\VInterval$ with small but non--zero mass. The fact that the process is $\beta$-mixing then follows from general results in dependence theory.
	
	Next, we analyze how the dependence of $(\pattern(\gamma_n))_{n\in\N}$ and $(W_n)_{n\in\N}$ shapes the dependence of $(S_n)_{n\in\N}$ and that between the windows $X_1,\ldots, X_{N-M+1}$. Specifically, Appendix~\ref{appendix:mixing_by_measurable_mapping} contains a proof of the following inequality
	\begin{equation*}
	\beta_X(k) \leq \beta_S(k -(M+1)) \leq \beta_{\Frac(\gamma)}(k-(M+1)) + \beta_W(k -(M+1)),\qquad \text{ for $k\geq M+1$}.
	\end{equation*}
	Since $(W_n)_{n\in\N}$ is exponentially mixing by assumption, $(X_n)_{n\in\N}$ is exponentially-mixing.
	
	The Gaussian approximation~\eqref{eq:main_clt_gaussian_approx} is a consequence of Theorem~\ref{thm:kosorok}. By the arguments above, the mixing condition~\eqref{eq:beta_condition} is verified for $X$ for any $r>2$. It remains to verify that the bracketing entropy of the functional family $\{\normalizedfunctional_t\}_{t\in \T}$ is controlled. This is done in Proposition~\ref{prop:bracketing_number_silhouette}.
	
	The approximation of the distribution of the empirical mean by the bootstrap distribution~\eqref{eq:main_clt_bootstrap} is a consequence of Theorem~\ref{thm:buhlmann}, for which we only need the aforementioned results.
\end{proof}

\begin{remark}
The literature of functional central limit theorems for dependent data is rich in results for various functional classes and dependence assumptions. We believe it might be possible to use more recent and stronger results than Theorem~\ref{thm:buhlmann}. This would allow us to relax the decay of $\beta_W$ from an exponential to a polynomial one. For instance,~\cite[Theorem 1]{radulovicBootstrapEmpiricalProcesses1996} is written for VC-classes functionals, but the proof seems to rely on the bracketing entropy bound that the functionals considered in the present work also satisfy.
\end{remark}

\subsection{Discussion}
Theorem~\ref{thm:main_clt} motivates the use of $\normalizedfunctional$ as a descriptor of a phase--modulated, periodic signal. A possible application of the asymptotic guarantees is the construction of valid confidence intervals. While it is tempting to use the proposed framework to test for $\pattern_1=\pattern_2$ based on observations $S_k=\pattern_k(\gamma_k) + W_k$, we do not provide theoretical guidance on how to calibrate such a test.

We do not present a theory for the choice of the window length $M$. Increasing $M$ reduces the probability of not capturing a whole period in a window of length $M$. In addition, it also reduces the variance of the signature due to a non--integer number of periods in $X$ (Lemma~\ref{thm:convergence_to_limit} provides a bound in the noiseless case $W=0$), but it reduces the total number of observations.

Modeling $((\gamma_n, V_n))_{n\in \N}$ as a Markov Chain of order 1 is restrictive. For applications where $\gamma_n$ represents a position in time, we should rather specify $\gamma_n$ as generated by the acceleration $(a_n)_{n\in\N}$, itself a Markov chain, possibly with hidden states. We believe that under similar ergodicity assumptions, similar kinds of arguments should be sufficient to show that such models lead to exponentially-mixing reparametrisation sequences $(\Frac(\gamma_n))_n$.

\section{Persistent homology and its functional representations}
\label{sec:persistence_results}
We provide basic background information on homology of sublevel sets and total $p$-persistence in subsection~\ref{sec:sublevelset_persistence}. We also introduce the truncated $p$-persistence and show some continuity properties. Then, we detail the definition of $\functional$ and $\normalizedfunctional$.

\subsection{Persistent homology for uni-dimensional signals}
\label{sec:sublevelset_persistence}
We briefly describe the construction of the persistence diagram of sub level--sets of a continuous function $h:\X\rightarrow \R$.
For $t\in\R$, we consider the $t$-sublevel set of $h$, $\X_t = h^{-1}(\rbrack -\infty, t\rbrack)$ and we define the module
\begin{equation}
\label{eq:persistent_module_definition}
V_t =
\begin{cases*}
H_0(\X_t),&\text{ if $t<\max h$}\\
0, &\text{ otherwise},
\end{cases*}
\end{equation}
where $H_0$ is the $0$-dimensional singular homology. For any $s\leq t<\max h$, the inclusion $\X_s\rightarrow \X_t$ induces a morphism between the singular homology groups $\iota_{s}^{t}: V_s\rightarrow V_t$. For $t\geq \max h$, $\iota_{s}^{t}$ is the zero morphism. We call $\V = ((V_t)_{t\in\R}, (\iota_{s}^{t})_{s<t\in\R})$ the persistence module associated to $h$.

When $\X$ is compact and $h$ continuous, the persistence module is q-tame~\cite[Theorem 3.33]{chazalStructureStabilityPersistence2016} and we define the persistence diagram $D(h)$ to be the multi--set associated to the rectangular measure $\mu_\V$ by~\cite[Theorem 3.19]{chazalStructureStabilityPersistence2016}. In our case, $\X=[0,T]$, so the persistence diagram is well-defined and encodes the values of local minima and maxima, with the following properties
\begin{itemize}
	\item each point from $\Delta \coloneqq \{(r,r)\mid r\in\R\}$ is in $D(h)$ with infinite multiplicity,
	\item for any $s<t\in\R$, $\vert\{(b,d)\mid b\leq s<t\leq d\}\vert =\rank{V}{s}{t}<\infty$.
\end{itemize}
For more details, see~\cite{chazalStructureStabilityPersistence2016}.

The above describes the persistence diagram of a continuous function $h:[0,T]\rightarrow\R$. When the input data is a time series of length $M$, we can define a function by discretizing $[0,T]$, prescribing the values at the nodes and linearly interpolating in between. Carrying out the construction described above, we obtain a persistence diagram that has at most $\tfrac{M}{2}$ points. In that case, $\dim(H_0(\X_r))\leq M<\infty$ for all $r\in\R$, so the persistence module has an interval decomposition: it is isomorphic to $(\oplus_{(b,d)\in L} I_{\lbrack b,d\lbrack}(r))$, where $I_{\lbrack b,d\lbrack}$ is an interval module. Representing each interval by its endpoints, its diagram $D_h$ is the multiset $\{(b,d)\mid (b,d)\in L\}$.

One distance which is often used to compare diagrams is the bottleneck distance
\begin{equation*}
d_b(D_1,D_2) = \inf_{\Gamma} \sup_{x\in D_1\cup\Delta} \Vert x-\Gamma(x)\Vert_\infty,
\end{equation*}
where $\Gamma: D_1\cup\Delta \rightarrow D_2\cup\Delta$ is a bijection between the two diagrams, which allows some points to be matched to the diagonal $\Delta$. With respect to the supremum norm between functions, the persistence diagram is stable in that distance.
\begin{theorem}[{\cite{cohen-steinerStabilityPersistenceDiagrams2007, chazalStructureStabilityPersistence2016}}]
	\label{thm:bottleneck_stability}
	For two functions $f,g:\X\rightarrow\R$ with persistence diagrams $D_f$ and $D_g$ respectively,
	\begin{equation*}
	d_b(D_f, D_g)\leq \Vert f-g\Vert_\infty.
	\end{equation*}
\end{theorem}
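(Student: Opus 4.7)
The plan is to invoke the classical interleaving argument of Cohen-Steiner--Edelsbrunner--Harer, extended to the $q$-tame setting by Chazal--de Silva--Glisse--Oudot via rectangle measures. Set $\epsilon = \Vert f-g\Vert_\infty$. The first step is purely set-theoretic: for every $x\in\X$ and $t\in\R$, the bound $|f(x)-g(x)|\leq\epsilon$ gives $f(x)\leq t\Rightarrow g(x)\leq t+\epsilon$, so the sublevel sets satisfy
\begin{equation*}
f^{-1}((-\infty,t])\subseteq g^{-1}((-\infty,t+\epsilon])\quad\text{and}\quad g^{-1}((-\infty,t])\subseteq f^{-1}((-\infty,t+\epsilon]).
\end{equation*}
Applying $H_0$ to these inclusions and using functoriality produces morphisms $\phi_t: V^f_t\rightarrow V^g_{t+\epsilon}$ and $\psi_t: V^g_t\rightarrow V^f_{t+\epsilon}$ that commute with the internal inclusion-induced maps of each module. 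This is exactly the data of an $\epsilon$-interleaving between the persistence modules $\V^f$ and $\V^g$ defined by~\eqref{eq:persistent_module_definition}.

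The second step converts this interleaving into a bound on $d_b(D_f,D_g)$. Since $\X=[0,T]$ is compact and $f,g$ are continuous, both modules are $q$-tame and their diagrams are defined from the rectangle measures $\mu_{\V^f}$ and $\mu_{\V^g}$, as recalled in Section~\ref{sec:sublevelset_persistence}. The box lemma of~\cite{chazalStructureStabilityPersistence2016} shows that the $\epsilon$-interleaving entails $\mu_{\V^f}(R)\leq \mu_{\V^g}(R^\epsilon)$ and symmetrically, for every rectangle $R\subset\{s<t\}$ and its $\epsilon$-thickening $R^\epsilon$. A Hall-type matching argument (the algebraic stability theorem) then produces a partial matching between $D_f\cup\Delta$ and $D_g\cup\Delta$ of $\ell^\infty$-cost at most $\epsilon$, yielding the claim.

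The main obstacle is the second step: the interleaving is immediate from functoriality and the pointwise bound on $|f-g|$, but passing from interleavings to bottleneck matchings is the combinatorial heart of the theorem and is where all of the nontrivial work lies. Since the statement is cited from~\cite{cohen-steinerStabilityPersistenceDiagrams2007,chazalStructureStabilityPersistence2016}, we would simply invoke the algebraic stability theorem proved there rather than reproducing the matching construction.
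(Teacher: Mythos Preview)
The paper does not supply a proof of Theorem~\ref{thm:bottleneck_stability}; it is stated as background and attributed to \cite{cohen-steinerStabilityPersistenceDiagrams2007,chazalStructureStabilityPersistence2016}. Your outline is exactly the standard argument from those references (sublevel-set inclusions $\Rightarrow$ $\epsilon$-interleaving $\Rightarrow$ algebraic stability $\Rightarrow$ bottleneck bound), so there is nothing to compare against and your sketch is correct.

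One point worth flagging: the paper uses the modified module~\eqref{eq:persistent_module_definition}, which is truncated to $0$ for $t\geq\max h$, rather than the usual $H_0(\X_t)$. You invoke the interleaving for these modified modules without comment. This does work, but it is not entirely automatic: when $t<\max f$ yet $t+\epsilon\geq\max g$, the target $V^g_{t+\epsilon}$ is zero and you must set $\phi_t=0$ rather than use the inclusion-induced map. The interleaving triangles still commute because $\vert\max f-\max g\vert\leq\epsilon$ forces $V^f_{t+2\epsilon}=0$ in that range, so the structure map $\iota^f_{t,t+2\epsilon}$ is also zero. Alternatively, one can deduce stability for the modified diagrams directly from the classical theorem: the only change is that the essential point $(\min f,\infty)$ becomes $(\min f,\max f)$, and in any optimal matching the two essential points must pair with each other at cost $\vert\min f-\min g\vert\leq\epsilon$; replacing this by $(\min f,\max f)\leftrightarrow(\min g,\max g)$ keeps the cost at most $\epsilon$. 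The paper alludes to this in the Remark following the theorem but does not spell it out either.
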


\begin{remark}
	The persistence module that we define in~\eqref{eq:persistent_module_definition} is slightly different to those usually considered in the literature. For $\X=[0,T]$ and $t\geq \max h$, $H_0(\X_t) = H_0([0,T])$, which is not trivial. Using the usual definition $(H_0(\X_t))_{t\in\R}$, we would obtain exactly one essential point in the persistence diagram. By modifying the module and setting $V_t\equiv 0$ for $t\geq \max h$, we make sure that there are no essential components. While it changes the decorated persistence diagrams, for the usual (undecorated) diagram, it amounts to setting the death value for the essential component to $\max h$. The module is still locally--finite, stable in the bottleneck distance. The choice of~\eqref{eq:persistent_module_definition} is motivated by the proof Lemma~\ref{lemma:limit_diagram}.
\end{remark}

\subsection{Total truncated $p$-persistence}
The total persistence of a persistence diagram quantifies the oscillations of the filtering function. It is similar to total variation for functions on the interval~\cite{plonka_relation_2016}. The \emph{persistence} of $(b,d)\in \R^2$ is $w(b,d)\coloneqq d-b$. For $p\in\N^+$, the total $p$-persistence of a persistence diagram $D$ is the sum of $p$-powers of the lifetimes of points, 
$\pers_p(D) = \left(\sum_{(x,y)\in D} (d-b)^p\right)^{1/p}.$

In the case of sublevel set persistence, points with small persistence might be attributed to noise and quantify the regularity of the function, while the more persistent ones capture the biggest oscillations of $\pattern$. The functionals we propose in Section~\ref{sec:functionals} use persistence and total persistence to give different weights to certain features, reflecting the intuition given above. In order to study the stability of the signatures with respect to the generating process, we need some lower-- and upper-bounds, as well as stability of total persistence with respect to the input function.

Continuous functions on compact domains are bounded and attain their extremal values, but, similarly to total variation, it is not enough to bound their total persistence because of possible small oscillations. In fact, the total $p$-persistence of $\alpha$-H\"older functions is finite for $p>1/\alpha$, but it is not continuous for functions with regularity strictly less than Lipschitz~\cite{perez_c0-persistent_2022}.
As a remedy,~\cite{perez_c0-persistent_2022} suggest to truncate the persistence of $D\cap \Delta_\epsilon$, for some $\epsilon>0$, where $\Delta_\epsilon\coloneqq\{(b,d)\in D\mid d-b\geq \epsilon\}$. This leads to a bounded total persistence, but continuity with respect to the sup norm is lost, even on very regular functions.

To guarantee both boundedness and continuity, we introduce the truncated persistence. Let $w_\epsilon(b,d)\coloneqq (d-b-\epsilon)_+$, where $(a)_+ = \max(a,0)$ denotes the positive part. The $\epsilon$-truncated total $p$-persistence is
\begin{equation*}
\pers_{p,\epsilon}(D) = \left(\sum_{(x,y)\in D} w_\epsilon(b,d)^p\right)^{1/p}
\end{equation*}
and $\support(w_\epsilon)\subset \Delta_\epsilon \coloneqq \{(b,d)\in \R^2\mid d-b\geq \epsilon\}$.

We can think of $\epsilon$-truncated persistence as shifting the diagonal by $\epsilon$ in the normal direction, $\tfrac{1}{\sqrt{2}}(-1,1)$,
$\pers_{p,\epsilon}^p(D) = \pers_p^p(\{(b-\epsilon/2, d-\epsilon/2)\mid (b,d)\in D\})$, as illustrated in Figure~\ref{fig:truncated_persistence}. Proposition~\ref{prop:truncated_persistence_continuity_bottleneck} shows that truncated persistence is continuous in the bottleneck distance between diagrams. Note that the modulus of continuity in the proof is not uniform, since it depends on the number of points and the maximal persistence of a point in the diagram.
\begin{proposition}
	\label{prop:truncated_persistence_continuity_bottleneck}
	The $\epsilon$-truncated $p-$persistence is continuous with respect to the bottleneck distance. 
\end{proposition}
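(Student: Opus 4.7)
The plan is to translate the bottleneck bound directly into a bound on $|\pers_{p,\epsilon}^p(D_1) - \pers_{p,\epsilon}^p(D_2)|$ by using a near-optimal matching and exploiting the fact that only points with persistence at least $\epsilon$ contribute to the functional. Fix $\delta > d_b(D_1, D_2)$ with $\delta < \epsilon/2$ and let $\Gamma:D_1\cup\Delta\to D_2\cup\Delta$ be a bijection realising this bound, i.e.\ $\Vert x-\Gamma(x)\Vert_\infty\leq \delta$ for every $x$. The first observation is that any point $(b,d)$ with $d-b\geq\epsilon$ has $\Vert\cdot\Vert_\infty$-distance to $\Delta$ at least $\epsilon/2 > \delta$, so such a point cannot be matched to the diagonal. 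Consequently, $\Gamma$ restricts to a bijection between the finitely many points $\support(w_\epsilon)\cap D_1$ and some subset of $D_2$ (and symmetrically for $\Gamma^{-1}$); all points we care about are genuinely paired off-diagonal.

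Next, partition the points contributing to $\pers_{p,\epsilon}^p$ on either side according to whether their partner also has $w_\epsilon$-value positive. Define
\[
U_1 = \{x\in D_1\mid w_\epsilon(x)>0,\ w_\epsilon(\Gamma(x))>0\},\quad V_1 = \{x\in D_1\mid w_\epsilon(x)>0,\ w_\epsilon(\Gamma(x))=0\},
\]
and $V_2\subset D_2$ analogously. Since $\Gamma$ is a bijection, one then has the identity
\[
\pers_{p,\epsilon}^p(D_1) - \pers_{p,\epsilon}^p(D_2) = \sum_{x\in U_1}\bigl(w_\epsilon(x)^p - w_\epsilon(\Gamma(x))^p\bigr) + \sum_{x\in V_1} w_\epsilon(x)^p - \sum_{y\in V_2} w_\epsilon(y)^p .
\]
For $x\in U_1$, use that $w_\epsilon$ is $2$-Lipschitz with respect to $\Vert\cdot\Vert_\infty$ (since $(b,d)\mapsto d-b-\epsilon$ has $\Vert\cdot\Vert_\infty$-Lipschitz constant $2$ and the positive part is $1$-Lipschitz), combined with $|a^p-b^p|\leq p\max(a,b)^{p-1}|a-b|$; this gives $|w_\epsilon(x)^p - w_\epsilon(\Gamma(x))^p|\leq 2p M^{p-1}\delta$, where $M$ is the maximal persistence in $D_1\cup D_2$. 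For $x\in V_1$, the pairing gives $d_{\Gamma(x)}-b_{\Gamma(x)}\leq \epsilon$ while $d_x-b_x\leq d_{\Gamma(x)}-b_{\Gamma(x)} + 2\delta$, so $w_\epsilon(x)\leq 2\delta$ and hence $w_\epsilon(x)^p\leq (2\delta)^p$. The same bound holds on $V_2$.

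Putting these estimates together and letting $N = |\support(w_\epsilon)\cap (D_1\cup D_2)|$, which is finite because the persistence modules are q-tame, one obtains
\[
|\pers_{p,\epsilon}^p(D_1)-\pers_{p,\epsilon}^p(D_2)|\leq N\bigl(2pM^{p-1}\delta + (2\delta)^p\bigr).
\]
Finally, using the elementary inequality $|a^{1/p}-b^{1/p}|\leq |a-b|^{1/p}$ for non-negative reals and $p\geq 1$, the same bound taken to the power $1/p$ controls $|\pers_{p,\epsilon}(D_1)-\pers_{p,\epsilon}(D_2)|$, and the right-hand side tends to $0$ as $\delta\to d_b(D_1,D_2)\to 0$. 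The main technical point to handle with care is the set of boundary points $V_1,V_2$, whose partners straddle the threshold $d-b=\epsilon$; without the truncation these would produce an uncontrolled contribution, but the observation $w_\epsilon\leq 2\delta$ on them is exactly what makes the argument go through. As anticipated in the statement, the modulus of continuity produced this way is not uniform: it involves both $N$ and $M$, which depend on the diagrams.
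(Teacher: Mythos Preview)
Your proof is correct and follows the same approach as the paper: a near-optimal matching, the $2$-Lipschitz property of $w_\epsilon$, and the mean-value estimate $|a^p-b^p|\leq p\max(a,b)^{p-1}|a-b|$; your explicit $U_1,V_1,V_2$ decomposition is not needed (the single inequality already covers the boundary case) but does no harm. One point to make explicit for continuity at a fixed $D_1$: you need $N$ and $M$ to remain bounded as $D_2$ varies, not just finite for each $D_2$; the paper handles this by restricting to $d_B(D_1,D_2)<\epsilon/4$ and bounding the number of contributing terms by $|D_1\cap\Delta_{\epsilon/4}|$ and the maximal persistence by $U+\epsilon/2$, constants depending on $D_1$ alone.
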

\begin{proof}
	Consider a persistence diagram $D_1$. By the second property from the persistence diagrams, we have $M\coloneqq\vert D_1\cap\Delta_{\epsilon/4}\vert<\infty$ and $d-b \leq U<\infty$, for some $U\in\R$. Let $D_2$ be such that $d_B(D_1,D_2)<\epsilon/4$. Then, $\vert D_2\cap\Delta_{\epsilon}\vert \leq \vert D_1\cap\Delta_{\epsilon/2}\vert\leq M$ and the persistence of a point in $D_2$ is bounded by $U+\epsilon/2$. Trivially, the truncated persistence of a point is 2-Lipschitz,
	\begin{equation*}
	\vert w_\epsilon(b,d) - w_\epsilon(b', d') \vert \leq (d-b -\epsilon)_+ - (b' - d' - \epsilon)_+ \leq \vert d-b-(d' - b')\vert\leq 2\Vert (b,d) - (b', d')\Vert_\infty.
	\end{equation*}
	Then, we use the technique from the proof of the~\cite[Total Persistence Stability Theorem]{cohen-steiner_lipschitz_2010}: writing $\vert x_2^p-x_1^p\vert = \vert p\int_{x_1}^{x_2} t^{p-1}dt\vert \leq p \vert x_2 -x_1\vert \max(x_1^{p-1}, x_2^{p-1})$, we get
	\begin{align*}
	\left\vert \sum_{x\in D_1} w_\epsilon(x)^p - w_\epsilon(\Gamma(x))^p \right\vert
	\leq& p\sum_{D_1} \vert w_\epsilon(x) - w_\epsilon(\Gamma(x))\vert (w_\epsilon(x)^{p-1} + w_\epsilon(\Gamma(x))^{p-1})\\
	\leq& 4p M(U+\epsilon/2)^{p-1} d_B(D_1,D_2).
	\end{align*}
\end{proof}

Let us now go back to functions: by abuse of notation, we will define $\pers_{p,\epsilon}(h)\coloneqq\pers_{p,\epsilon}(D_h)$. In the special case of H\"older functions, their $\epsilon$-total truncated $p$-persistence is bounded.
\begin{proposition}
	\label{prop:upper_bound_p_persistence}
	Let $h\in C^\alpha_\Lambda([0,T], \R)$. For $p\geq 0$ such that $(p-1)\alpha >1$,
	\begin{equation*}
	\pers_{p,\epsilon}^p(h)\leq (A_h-\epsilon)^p\left(1+pT\left(\tfrac{2\Lambda}{\epsilon}\right)^{1/\alpha}\right)
	\eqqcolon C_{p, \Lambda,\alpha,T},
	\end{equation*}
	where $A_h\coloneqq \max h- \min h$ is the amplitude of $h$.
\end{proposition}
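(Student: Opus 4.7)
My plan is to bound $\pers_{p,\epsilon}^p(h)$ by combining a layer-cake identity with a H\"older-based count of ``tall'' persistence pairs. Let $N(s)\coloneqq |\{(b,d)\in D_h: d-b\geq s\}|$ denote the number of off-diagonal diagram points with lifetime at least $s$. Using $x^p=\int_0^x p u^{p-1}\,du$ termwise, exchanging the finite sum with the integral, and noting that every lifetime satisfies $d-b\leq \max h-\min h=A_h$,
\begin{equation*}
\pers_{p,\epsilon}^p(h)=\sum_{(b,d)\in D_h}(d-b-\epsilon)_+^p=\int_0^{A_h-\epsilon} p u^{p-1} N(u+\epsilon)\,du.
\end{equation*}

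The heart of the argument is then to establish the uniform count
\begin{equation*}
N(s)\leq T(2\Lambda/s)^{1/\alpha},\qquad s>0.
\end{equation*}
Each diagram point with lifetime at least $s$ is realised by a local minimum and a local maximum at times whose $h$-values differ by at least $s$, so the $\alpha$-H\"older hypothesis forces those two times to be at least $(s/\Lambda)^{1/\alpha}$ apart. To promote this per-pair estimate to a uniform count I would invoke the non-crossing (nested-or-disjoint) structure of one-dimensional sub-level-set persistence pairings and charge each pair to a distinct monotone piece of $h$ of amplitude at least $s/2$; such a piece has width at least $(s/(2\Lambda))^{1/\alpha}$ by H\"older, and disjointness of the charged pieces gives the stated count.

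To conclude, the monotonicity $N(u+\epsilon)\leq N(\epsilon)\leq T(2\Lambda/\epsilon)^{1/\alpha}$ valid for $u\in[0,A_h-\epsilon]$ plugged into the layer cake yields
\begin{equation*}
\pers_{p,\epsilon}^p(h)\leq T(2\Lambda/\epsilon)^{1/\alpha}\int_0^{A_h-\epsilon}p u^{p-1}\,du = T(2\Lambda/\epsilon)^{1/\alpha}(A_h-\epsilon)^p,
\end{equation*}
which is majorised by the stated $(A_h-\epsilon)^p(1+pT(2\Lambda/\epsilon)^{1/\alpha})$.

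The main obstacle is the counting step: a naive ``one H\"older time-interval per pair'' argument double-books time as soon as pairs are nested, which is common. Extracting the clean $O(T(\Lambda/s)^{1/\alpha})$ bound therefore requires exploiting the non-crossing structure of the pairing on $\R$ together with a careful charging of each pair to a disjoint monotone piece of $h$; this bookkeeping is where the factor $2$ in $2\Lambda$ enters. The hypothesis $(p-1)\alpha>1$ plays no role in the derivation I propose and is presumably needed for adjacent statements elsewhere in the paper (for instance the $\pers_{p-2,\epsilon}^{p-2}$ term that appears in Section~\ref{sec:continuous_signatures_properties_noise}).
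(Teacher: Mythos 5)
Your proof is correct but takes a genuinely different route from the paper's. Both arguments open with the same layer-cake identity $\pers_{p,\epsilon}^p(h)=p\int_0^{A_h-\epsilon}u^{p-1}N(u+\epsilon)\,du$ combined with a H\"older count of persistence pairs, and the paper cites~\cite{perez_c0-persistent_2022} (Theorem 4.13) for the bound $N(\tau)\leq T(2\Lambda/\tau)^{1/\alpha}+1$ rather than re-deriving it — so you should cite that result instead of re-proving the charging argument. The divergence is in what you do next: the paper keeps the $\tau$-dependent count inside the integral and uses the hypothesis $(p-1)\alpha>1$ (i.e.\ $1/\alpha\leq p-1$) to control $\int_\epsilon^{A_h}(\tau-\epsilon)^{p-1}\tau^{-1/\alpha}\,d\tau$ via $(A_h/\tau)^{1/\alpha}\leq(A_h/\tau)^{p-1}$, whereas you simply invoke monotonicity $N(u+\epsilon)\leq N(\epsilon)$ and pull the constant out. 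Your shortcut is more elementary, avoids the hypothesis $(p-1)\alpha>1$ altogether (contrary to your guess, the paper does use it inside this very proof), and actually produces a sharper constant: $T(2\Lambda/\epsilon)^{1/\alpha}$ in place of $pT(2\Lambda/\epsilon)^{1/\alpha}$. Two small points of care: first, you drop the ``$+1$'' from the count $N(s)\leq T(2\Lambda/s)^{1/\alpha}+1$; this $+1$ is not superfluous (a nonempty domain always admits one covering ball, regardless of $T$), and it is exactly what produces the leading $1$ in $(1+pT(2\Lambda/\epsilon)^{1/\alpha})$. Keeping it, your method gives $(1+T(2\Lambda/\epsilon)^{1/\alpha})(A_h-\epsilon)^p$, still majorised by the stated bound since the hypothesis forces $p>2>1$. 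Second, the base case $A_h\leq\epsilon$ (where the diagram is empty after truncation) should be dispatched separately, as the paper does.
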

By compacity of $[0,T]$, $A_h$ is finite so the upper-bound is not trivial.
Using Proposition~\ref{prop:truncated_persistence_continuity_bottleneck}, we could immediately show that total $p$-persistence is also continuous with respect to the input function. However, we can show that it is Lipschitz, following the proof of~\cite[Lemma 3.20]{perez_c0-persistent_2022}. The argument is based on a H\"olders' inequality and the uniform upper-bound on persistence from Proposition~\ref{prop:upper_bound_p_persistence}. The proof of Propositions~\ref{prop:upper_bound_p_persistence} and~\ref{prop:continuity_truncated_persistence} are presented in Appendix~\ref{appendix:proof_continuity_truncated_persistence}.
\begin{proposition}[{Continuity of truncated $p$--persistence}]
	\label{prop:continuity_truncated_persistence}
	The total truncated persistence ${\pers_{p,\epsilon}^p: C([0,T],\R)\rightarrow \R}$ is continuous.
	In addition, $\pers_{p,\epsilon}^p$ is Lipschitz over H\"older functions: for any $f,g\in C^\alpha_\Lambda([0,T])$ such that $p-1>\tfrac{1}{\alpha}$,
	\begin{equation*}
	\begin{aligned}
	\vert \pers_{p,\epsilon}^p(f) - \pers_{p,\epsilon}^p(g)\vert
	&\leq p \Vert f-g\Vert_\infty \left(\pers_{p-1,\epsilon}^{p-1}(f) + \pers_{p-1,\epsilon}^{p-1}(g)\right)\\
	&\leq C_{p-1,\Lambda,\alpha,T}\Vert f-g\Vert_\infty.
	\end{aligned}
	\end{equation*}
\end{proposition}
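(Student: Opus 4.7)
The statement has two parts: continuity on $C([0,T],\R)$ and a quantitative Lipschitz estimate over Hölder classes. For the continuity part, my plan is simply to compose two results already in the excerpt. Given $f_n \to f$ uniformly, the bottleneck stability theorem (Theorem~\ref{thm:bottleneck_stability}) yields $d_b(D_{f_n}, D_f) \le \|f_n - f\|_\infty \to 0$, and then Proposition~\ref{prop:truncated_persistence_continuity_bottleneck} transports this convergence to $\pers_{p,\epsilon}^p(f_n) \to \pers_{p,\epsilon}^p(f)$. No extra work is needed beyond citing these two results.

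For the Lipschitz bound, I would follow the matching/telescoping strategy hinted at in the excerpt. First, invoke Theorem~\ref{thm:bottleneck_stability} to fix an (almost) optimal bijection $\Gamma: D_f \cup \Delta \to D_g \cup \Delta$ with $\sup_{x} \|x - \Gamma(x)\|_\infty \le \|f-g\|_\infty$, and write
$$\pers_{p,\epsilon}^p(f) - \pers_{p,\epsilon}^p(g) = \sum_{x \in D_f \cup \Delta} \bigl( w_\epsilon(x)^p - w_\epsilon(\Gamma(x))^p \bigr),$$
noting that $w_\epsilon$ vanishes on $\Delta$, so points paired to the diagonal on either side contribute a single nonzero term and the diagonal-to-diagonal pairs drop out; both sides are finite sums thanks to Proposition~\ref{prop:upper_bound_p_persistence}. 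Next, combine the $2$-Lipschitz property $|w_\epsilon(x) - w_\epsilon(\Gamma(x))| \le 2\|x - \Gamma(x)\|_\infty$ (already used in the proof of Proposition~\ref{prop:truncated_persistence_continuity_bottleneck}) with the integral identity $|a^p - b^p| \le p|a-b|\,\max(a,b)^{p-1} \le p|a-b|(a^{p-1}+b^{p-1})$ valid for $a,b\ge 0$. Applying this termwise and summing, using $\|x-\Gamma(x)\|_\infty \le \|f-g\|_\infty$, gives (up to the factor $2$ that can be absorbed into the matching convention)
$$\vert \pers_{p,\epsilon}^p(f) - \pers_{p,\epsilon}^p(g)\vert \le p \|f-g\|_\infty \Bigl( \pers_{p-1,\epsilon}^{p-1}(f) + \pers_{p-1,\epsilon}^{p-1}(g) \Bigr),$$
where the second summand arises because $\Gamma$ restricted to the off-diagonal part of $D_f$ lands injectively in $D_g \cup \Delta$, and the diagonal targets again contribute $0$.

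The second inequality in the statement then follows directly by bounding both $\pers_{p-1,\epsilon}^{p-1}(f)$ and $\pers_{p-1,\epsilon}^{p-1}(g)$ by $C_{p-1,\Lambda,\alpha,T}$ via Proposition~\ref{prop:upper_bound_p_persistence}, which is legitimate under the hypothesis $p-1>1/\alpha$ (adapted to the exponent $p-1$). I expect the main obstacle to be the bookkeeping around $\Gamma$: specifically, verifying that the reindexed sum $\sum_{x \in D_f} w_\epsilon(\Gamma(x))^{p-1}$ is bounded by $\pers_{p-1,\epsilon}^{p-1}(g)$ regardless of how $\Gamma$ pairs off-diagonal points of $D_f$ with the diagonal on the $g$-side, and vice versa. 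This is ultimately routine because $w_\epsilon|_\Delta \equiv 0$, but it must be spelled out cleanly to justify pushing an $\ell^\infty$-bottleneck estimate through an $\ell^1$-type sum over the diagrams.
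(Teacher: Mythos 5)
Your proposal is correct and takes essentially the same route as the paper. For the continuity claim you simply compose Theorem~\ref{thm:bottleneck_stability} with Proposition~\ref{prop:truncated_persistence_continuity_bottleneck}, a shortcut the paper itself notes is available (though its written proof re-derives a slightly different, $f$-dependent bound); for the Lipschitz estimate you use the identical matching/telescoping argument with the $2$-Lipschitz bound on $w_\epsilon$ and the difference-of-$p$-powers inequality, and you correctly flag the factor of $2$ from the $\Vert\cdot\Vert_\infty$-to-persistence conversion, which also appears in the paper's own derivation (giving $2p$ rather than the $p$ in the proposition statement).
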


\begin{figure}
	\centering
	\begin{minipage}{0.4\textwidth}
		\includestandalone[width=.9\textwidth]{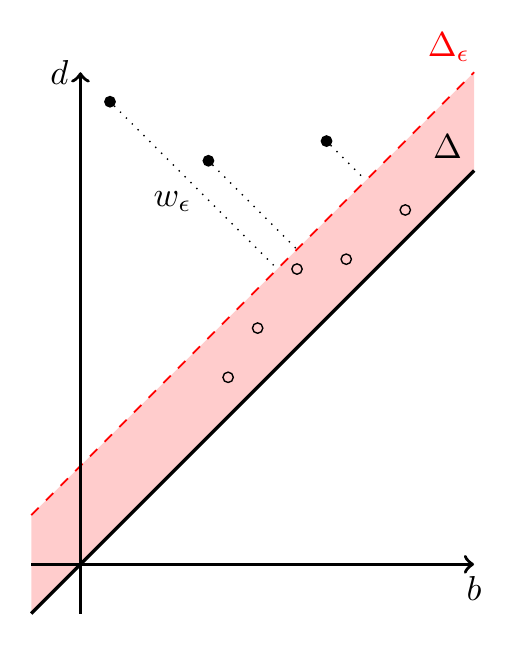}
	\end{minipage}
	\begin{minipage}{0.5\textwidth}
		\centering
		\includegraphics[width=.65\textwidth]{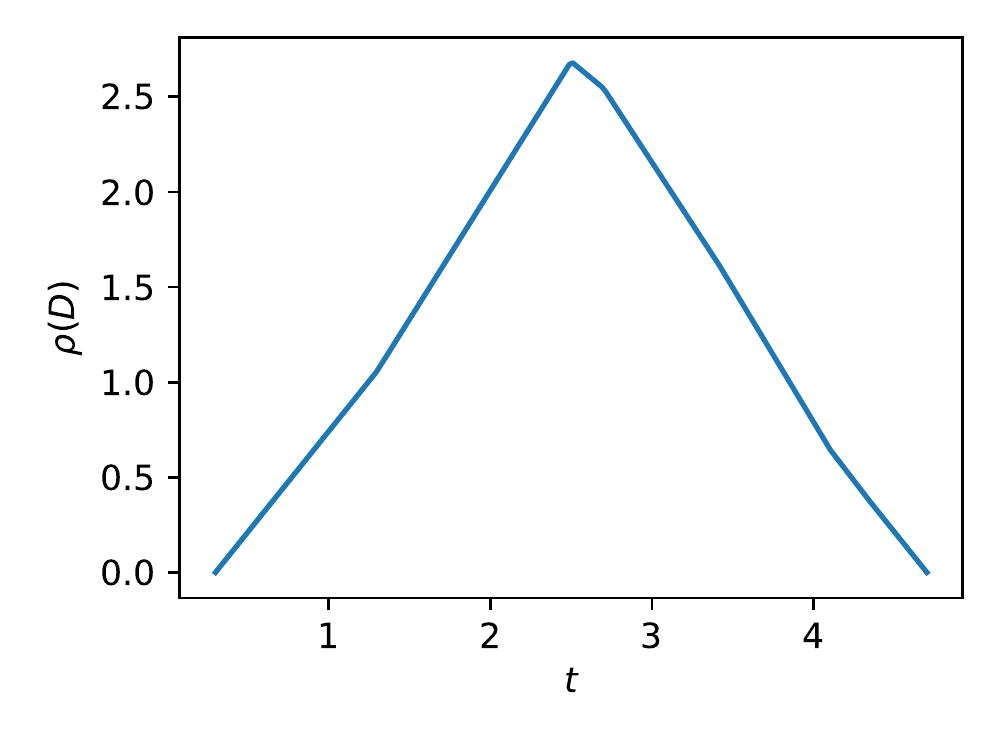}
		\includegraphics[width=.69\textwidth]{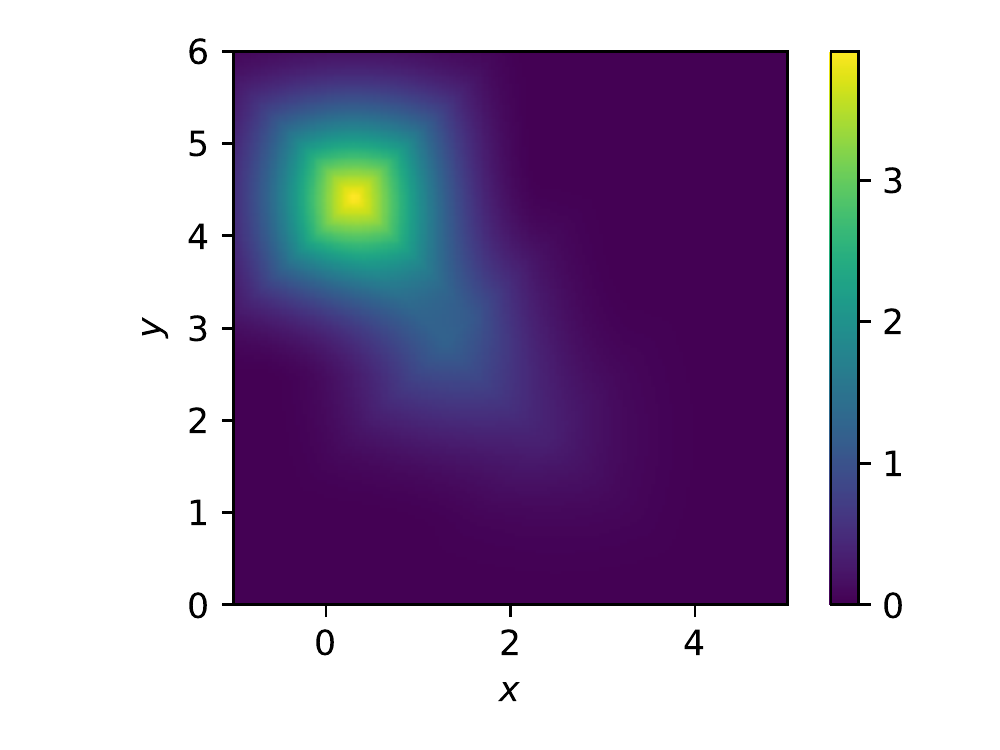}
	\end{minipage}
	\caption{On the left, a persistence diagram and $\Delta_\epsilon$, the diagonal shifted by $\epsilon=1$. The truncated persistence $w_\epsilon$ is twice the distance to $\Delta_\epsilon$. The points in the shaded region have zero truncated persistence. On the right, two functionals evaluated at the $\normalizedfunctional$ with $\kernel^{s}$ at the top and $\functional$ with $\kernel^{pi,t}$ $(\sigma=1, r=1.1)$, both weighted by the truncated persistence $w_\epsilon$ with $p=2$.}
	\label{fig:truncated_persistence}
\end{figure}
Finally, we give a lower--bound for the truncated persistence. Such a result will be necessary to show the continuity of the class of normalized functionals $\normalizedfunctional$, which we define in Section~\ref{sec:functionals}.
\begin{proposition}[{Lower-bound on $p-$persistence}]
	\label{prop:lower_bound_p_persistence}
	For continuous functions $f, W:[0, T]\rightarrow \R$,
	\begin{equation*}
	\pers_{p,\epsilon}^p(f+W) \geq \pers_{p,\epsilon + A_W}^p(f).
	\end{equation*}
\end{proposition}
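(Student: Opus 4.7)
The plan is to reduce to an application of the bottleneck stability theorem (Theorem~\ref{thm:bottleneck_stability}) after removing the mean of $W$, and then to track how individual points move under the resulting matching.

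\textbf{Step 1 (invariance under additive constants).} Adding a constant to a function shifts every sublevel set horizontally, hence shifts every point of the persistence diagram along the diagonal. In particular, $w_\epsilon(b,d)$ is unaffected by simultaneously translating $b$ and $d$ by the same constant. I would begin by observing
\begin{equation*}
\pers_{p,\epsilon}^p(f+W) = \pers_{p,\epsilon}^p(f + W - c) \quad \text{for any } c \in \R,
\end{equation*}
and choose $c = \tfrac{1}{2}(\max W + \min W)$, so that $\Vert W - c\Vert_\infty = A_W/2$.

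\textbf{Step 2 (bottleneck matching).} By Theorem~\ref{thm:bottleneck_stability} applied to $f$ and $f + (W-c)$, there is a bijection $\Gamma : D(f)\cup \Delta \to D(f+W-c)\cup \Delta$ with $\Vert x - \Gamma(x)\Vert_\infty \leq A_W/2$ for every $x$. Partition the off-diagonal points of $D(f)$ as $A \sqcup B$, where $A$ consists of points matched to off-diagonal points of $D(f+W-c)$ and $B$ of points matched into $\Delta$.

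\textbf{Step 3 (pointwise bound on weights).} For $x = (b,d) \in A$ with $\Gamma(x) = (b',d')$, the sup-norm bound gives $|b-b'|,|d-d'|\leq A_W/2$, hence $d' - b' \geq (d-b) - A_W$, and consequently
\begin{equation*}
w_\epsilon(\Gamma(x)) = (d'-b'-\epsilon)_+ \geq ((d-b)-A_W-\epsilon)_+ = w_{\epsilon+A_W}(x).
\end{equation*}
For $x = (b,d) \in B$, $\Gamma(x)$ lies on the diagonal, so the sup-distance to $\Delta$, which equals $(d-b)/2$, is at most $A_W/2$; thus $d - b \leq A_W$ and $w_{\epsilon+A_W}(x) = 0$.

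\textbf{Step 4 (sum and conclude).} Since $D(f+W-c) \supseteq \Gamma(A)$ and $w_\epsilon \geq 0$,
\begin{equation*}
\pers_{p,\epsilon}^p(f+W-c) \geq \sum_{x\in A} w_\epsilon(\Gamma(x))^p \geq \sum_{x\in A} w_{\epsilon+A_W}(x)^p = \sum_{x\in A\cup B} w_{\epsilon+A_W}(x)^p = \pers_{p,\epsilon+A_W}^p(f),
\end{equation*}
using that the $B$-contribution vanishes. Combined with Step 1, this yields the claim.

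There is no real obstacle: the only points requiring care are (a) the invariance of $\pers_{p,\epsilon}^p$ under additive constants, which follows because $w_\epsilon$ depends only on $d-b$, and (b) the correct handling of points matched to the diagonal, which I treat by showing their truncated weight on the $f$-side vanishes once the truncation level is raised by $A_W$.
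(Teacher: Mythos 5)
Your proof is correct and takes essentially the same approach as the paper's: normalize $W$ so that $\Vert W\Vert_\infty = A_W/2$ (you shift by the midpoint of the range, the paper invokes translation invariance), apply bottleneck stability to get a matching with cost at most $A_W/2$, and track the resulting loss of truncated weight point by point, noting that any $f$-point matched to the diagonal has $d-b\leq A_W$ and hence contributes zero to $\pers_{p,\epsilon+A_W}^p(f)$. The one small stylistic difference is that you assert a matching achieving the bottleneck bound outright, whereas the paper works with an arbitrary matching of cost $c(\Gamma)$, derives $\pers_{p,\epsilon}^p(f+W)\geq\pers_{p,\epsilon+2c(\Gamma)}^p(f)$, and then takes the infimum over $\Gamma$, which sidesteps the (harmless but nontrivial) question of whether the infimum in the bottleneck distance is attained.
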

\begin{proof}
	Since $\pers$ is translation-invariant ($\pers_{p, \epsilon}(f+c) = \pers_{p, \epsilon}(f)$, for any constant $c>0$), we can assume that $A_W = 2\Vert W\Vert_\infty$. Let $\Gamma: D(f)\rightarrow D(f+W)$ be a matching between the diagrams and denote by $c(\Gamma)$ the associated cost. Thanks to the bottleneck stability theorem, $\inf_\Gamma c(\Gamma)\leq\Vert W\Vert_\infty$.
	Then, for any $(b,d)\in D(f)$ and $(b',d')=\Gamma((b,d))\in D(f+W)$, we have $d'-b' \geq d - b - 2c(\Gamma)$ and, for any $\delta>0$, ${D(f)\cap \Delta_{2c(\Gamma) + \delta}\subset \Gamma^{-1}(D(f+W)\cap \Delta_{\delta})}$. Then,
	\begin{align*}
	\pers_{p,\epsilon}^p(f+W)
	&= \sum_{(b',d')\in D(f+W)} w_\epsilon(b',d')^p \\
	&\geq \sum_{(b',d')\in D(f+W) \cap \Delta_{\delta}} w_\epsilon(b', d')^p \\
	&\geq \sum_{(b,d) \in \Gamma^{-1}(D(f+W) \cap \Delta_\delta)} w_\epsilon((b, d) - c(\Gamma)(-1,1))^p \\
	&\geq \sum_{(b,d)\in D(f) \cap \Delta_{2c(\Gamma)+\delta}} w_{\epsilon+2c(\Gamma)}(b,d)^p.
	\end{align*}
	For $\delta=\epsilon$, the last quantity is equal to $\pers_{p, \epsilon+2c(\Gamma)}^p(f)$. By taking the infimum over all matchings $\Gamma$, we obtain $\pers_{p,\epsilon}^p(f+W)\geq \pers_{p, \epsilon+ 2\Vert W\Vert_\infty}^p(f)$.
\end{proof}
The result is very weak, but it is tight. If we take $f$ such that $\max f - \min f = 2\Vert f \Vert_\infty$ and $W=-\alpha f$, then $f+W = (1-\alpha)f$ and $\Vert W\Vert_\infty = \alpha \Vert f\Vert_\infty$. Then, 
$\pers_{p, \epsilon}^p((1-\alpha)f) = \pers_{p, \epsilon + 2\alpha}^p(f)$.

\subsection{Linear and normalized functionals}
\label{sec:functionals}
The space of persistence diagrams is not a vector space and is ill-suited for statistical purposes. It is common to map diagrams to a functional Banach space~\cite{chazalIntroductionTopologicalData2021a}. Many such mapping have been proposed~\cite{carrierePersLayNeuralNetwork2020a, bubenik_statistical_2015, adams_persistence_2017, chung_persistence_2022} and their properties are studied extensively. We will distinguish between linear and normalized functionals.
As it is usually the case with functionals of persistence, we present a general set of assumptions and we show examples of functionals from the literature (or of their adaptation) which fit within the prescribed framework.

Consider $(\FunctionalDomain, d)$ a Euclidean space and let $\FunctionalSpace \subset \{\FunctionalDomain\rightarrow \R\}$ be a functional Banach space.
Finally, let $k:\R^2\rightarrow \FunctionalSpace$ be a map, which to a point $(b,d)$ in the plane associates a function $k(b,d)$.
For a persistence diagram $D$ with $\pers_{p,\epsilon}(D)>0$, the linear and the normalized functionals are of the form
\begin{equation}
\label{eq:definition_normalized_linear_functional}
\functional(D) = \sum_{x\in D} w_\epsilon(x)^p\kernel(x),\qquad \normalizedfunctional(D) = \frac{\functional(D)}{\sum_{x\in D} w_\epsilon(x)^p}.
\end{equation}
Otherwise, $\functional(D)=0=\normalizedfunctional(D)$.
In this work, we are interested exclusively in diagrams of sublevel sets of functions defined on a compact interval. Therefore, we will abuse notation and write $\functional(f)\coloneqq\functional(D(f))$.
Compared to how the functionals are usually introduced, we use the $\epsilon$-truncated $p$-persistence instead of $p$-persistence. As shown below in Proposition~\ref{prop:functional_continuity}, this guarantees their continuity but leads to some problems, notably because truncation can make non--empty diagrams empty. 
\begin{proposition}
	For any continuous function $f:[0,T]\rightarrow \R$ with $\max f - \min f>\epsilon$, the linear and normalized functionals are well-defined.
\end{proposition}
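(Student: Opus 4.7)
The plan is in two parts, corresponding to the two objects in~\eqref{eq:definition_normalized_linear_functional}. For $\functional$, the main point is that only finitely many terms of the defining sum are nonzero, so the sum is a well-defined element of the Banach space $\FunctionalSpace$. For $\normalizedfunctional$, it remains to verify that the denominator $\pers_{p,\epsilon}^p(D(f))$ is strictly positive, and this will follow from exhibiting a single point of $D(f)$ with truncated persistence bounded away from zero.

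First I would argue that $\functional(D(f))$ is a finite sum. Since $f$ is continuous on the compact interval $[0,T]$, all off-diagonal points of $D(f)$ lie in the bounded box $[\min f,\max f]^2$. I would then cover the strip $\{(b,d) : d-b\geq \epsilon\}\cap [\min f,\max f]^2$ by a finite grid of quadrants of the form $\{(b',d') : b'\leq s_i,\, d'\geq t_i\}$ with $s_i<t_i$, which is possible because the strip is bounded and bounded away from the diagonal. By the second (rank-finiteness) property of $D(f)$ recalled in Section~\ref{sec:sublevelset_persistence}, each such quadrant contains only finitely many points of $D(f)$, so $\{x\in D(f) : w_\epsilon(x)>0\}$ is finite. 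The diagonal points contribute $w_\epsilon(x)=0$, so the sum collapses to a finite linear combination of elements of $\FunctionalSpace$ and is well-defined.

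Second, I would produce a point of $D(f)$ with persistence strictly larger than $\epsilon$. Using the convention in~\eqref{eq:persistent_module_definition}, $V_t=0$ for $t<\min f$ (empty sublevel set) and for $t\geq \max f$ (by fiat), while $V_t=H_0(f^{-1}(\rbrack -\infty,t\rbrack))\neq 0$ for $t\in[\min f,\max f)$ by compactness and continuity. For any such $t$, the morphism $V_{\min f}\to V_t$ has rank at least one, so the rank-point correspondence recalled in Section~\ref{sec:sublevelset_persistence} yields a point $(b^*,d^*)\in D(f)$ with $b^*\leq \min f$ and $d^*\geq t$. Letting $t\to(\max f)^-$, and noting that $V_s=0$ for $s<\min f$ forbids $b^*<\min f$ while $V_t=0$ for $t\geq \max f$ forbids $d^*>\max f$, forces $b^*=\min f$ and $d^*=\max f$. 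Under the hypothesis $\max f-\min f>\epsilon$ we get $w_\epsilon(b^*,d^*)=A_f-\epsilon>0$, so $\pers_{p,\epsilon}^p(D(f))>0$ and $\normalizedfunctional(D(f))$ is well-defined.

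The only delicate step is the second one: it hinges on the non-standard convention in~\eqref{eq:persistent_module_definition} which suppresses an essential class at $\max f$ and instead creates an off-diagonal point of persistence exactly $A_f$. With the usual definition of the persistence module, the pairing of the global minimum would give rise to an essential class rather than a bona fide diagram point, and a separate argument would be needed.
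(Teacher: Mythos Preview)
Your proof is correct and complete, but the paper takes a somewhat different route for the finiteness of the sum. Instead of your compactness-plus-$q$-tameness covering argument, the paper exploits the uniform continuity of $f$ on $[0,T]$: choosing $\delta>0$ with $\vert f(t)-f(s)\vert<\epsilon$ whenever $\vert s-t\vert<\delta$, the Persistence Cycle Lemma of~\cite{cohen-steiner_lipschitz_2010} gives the explicit bound $\vert D(f)\cap\Delta_\epsilon\vert\leq T/\delta+1$. This is more specific to sublevel-set persistence of functions on an interval, but yields a quantitative count that feeds directly into the norm estimate for $\functional$ and $\normalizedfunctional$. Your argument, by contrast, works for any $q$-tame module with bounded support and makes no use of the one-dimensionality of the domain, so it is more portable; the price is that you get no explicit bound on the number of contributing points. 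For the second part, you are actually more careful than the paper, which simply uses $\tfrac{w_\epsilon(M_f-m_f)^p}{\sum_{x\in D}w_\epsilon(x)^p}\leq 1$ without spelling out why $(\min f,\max f)\in D(f)$; your rank argument via~\eqref{eq:persistent_module_definition} fills that gap. One small remark: the step ``letting $t\to(\max f)^-$'' tacitly relies on the finiteness from your first part to conclude that a single point $(b^*,d^*)$ works for all $t$, rather than a $t$-dependent sequence of points---you might make this explicit.
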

\begin{proof}
	Since $f$ is continuous on a compact domain, it is also uniformly continuous and bounded. Let $\delta>0$ be such that $\vert f(t)-f(s)\vert < \epsilon$, whenever $\vert s-t\vert<\delta$.
	By the reasoning of the proof of ~\cite[Persistence Cycle Lemma]{cohen-steiner_lipschitz_2010},
	$\vert \omega^{-1}(\rbrack \epsilon, \infty\lbrack)\cap D(f)\vert \leq \tfrac{T}{\delta} + 1$.
	Let $M_f = \max(f),\, m_f = \min(f)$. Then,
	\begin{equation*}
	\Vert\functional(D(f))\Vert_\infty \leq \sum_{(b,d)\in D(f)} w_\epsilon(d-b)^p \Vert\kernel(b,d) \Vert_\infty \leq (\tfrac{T}{\delta}+1)\cdot w(M_f - m_f)\max_{(b,d)\in D(f)\cap \Delta_\epsilon^+}\Vert \kernel(b,d)\Vert_\infty.
	\end{equation*}
	As stated above, the number of points is bounded from above, and so is the total persistence. For the normalized functional,
	\begin{equation*}
	\Vert \normalizedfunctional(D)\Vert \leq (\tfrac{T}{\delta}+1)\max_{(b,d)\in D(f)\cap \Delta_\epsilon^+}\Vert \kernel(b,d)\Vert_\infty
	\overbrace{\tfrac{w_\epsilon(M_f - m_f)^p}{\sum_{x\in D}w_\epsilon(x)^p}}^{\leq 1}.
	\end{equation*}
\end{proof}

We only consider functionals $\normalizedfunctional$ with $k$ which satisfies the following assumptions:
\begin{enumerate}
	\item $\kernel(x)$ has a uniformly bounded support, for all $x\in\R^2$
		\begin{equation}
		\label{eq:kernel_compact_support}
		\exists K\subset \T\text{ compact}, \restr{\kernel(x)}{\T\setminus K} \equiv 0,\qquad \forall x.
		\end{equation}
	\item $k(x)$ is Lipschitz, uniformly over $x\in\R^2$
		\begin{equation}
		\label{eq:kernel_lipschitz_in_time}
		\exists L>0,\ \vert \kernel(x)(t) - \kernel(x)(s)\vert \leq L d(s, t), \qquad \forall x\in\R^2,\, \forall s,t\in \FunctionalDomain.
		\end{equation}
	\item $x\mapsto k(x)$ is Lipschitz
		\begin{equation}
		\label{eq:kernel_lipschitz}
		\exists L_\kernel>0, \Vert \kernel(x) - \kernel(x')\Vert_\FunctionalSpace \leq L_\kernel \Vert x-x'\Vert_\infty,\qquad \forall\ x,x'\in\R^2.
		\end{equation}
	\item $k(x)$ is uniformly-bounded on the diagonal
		\begin{equation}
		\label{eq:kernel_bounded_on_diag}
		\exists C\leq 0,\ \Vert \restr{\kernel}{\Delta}\Vert_\infty \leq C. 
		\end{equation}
\end{enumerate}
As we will see later, hypotheses (\ref{eq:kernel_lipschitz},\ref{eq:kernel_bounded_on_diag}) ensure continuity of the functional, while (\ref{eq:kernel_compact_support},\ref{eq:kernel_lipschitz_in_time}) control the complexity of the family of functionals, in a way adapted to statistical results.

Many functionals proposed in the literature do not satisfy~\eqref{eq:kernel_compact_support} as is. To adapt them this assumption, we precompose the usual kernels with a projection, which we illustrate in Figure~\ref{fig:projection}. Specifically, let $L<U\in\R$ and consider $\Proj_{L,U}:\Delta_{\geq 0}\rightarrow \Delta_{\geq 0}$ the operator which maps points above the diagonal, onto the upper triangle with corner at $(L,U)$
\begin{equation}
\label{eq:projection}
\begin{array}{r r c l}
\Proj_{L,U}: &\Delta_{\geq 0}&\rightarrow&\Delta_{\geq 0}\\
&(b,d)&\mapsto&(b,d) + (1,-1)\min(\max(d-U, L-b, 0), \tfrac{d-b}{2}).
\end{array}
\end{equation}
\begin{figure}
	\centering
	\includestandalone[width=0.3\textwidth]{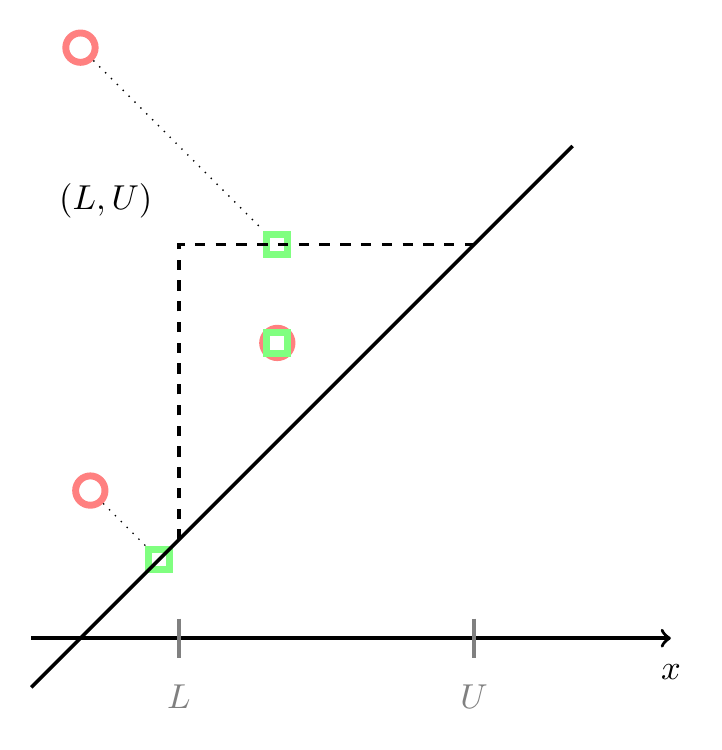}
	\caption{Illustration of the projection in~\eqref{eq:projection}. Points from a diagram and their projections are illustrated with circular and square markers.}
	\label{fig:projection}
\end{figure}
\begin{remark}
	It is common in the topological data analysis literature that the proposed functionals do not satisfy~\eqref{eq:kernel_compact_support}. Instead, it is assumed that all realisations off functionals have a compact support~(\cite{berry_functional_2018}) or that all diagrams have uniformly bounded birth and death values~\cite{chazal_stochastic_2014}. While in some cases, such assumptions are compatible with the model for the data, we do not make such assumptions in Section~\ref{sec:signatures_discrete}.
\end{remark}
Two examples of such functionals are given below and sample realisations are shown in Figure~\ref{fig:truncated_persistence}. The calculations of the Lipschitz constants are carried out in Appendix~\ref{appendix:persistence_image_lipschitz_constant}.
\begin{example}[Persistence Silhouette]
\label{example:persistence_silhouette}
The persistence silhouette~\cite{chazal_stochastic_2014} is a weighted sum of landscape functions $\Lambda_{(b,d)}(t)=\left(\tfrac{d-b}{2} - \vert t-\tfrac{b+d}{2}\vert\right)_+$, for $\FunctionalDomain= \R$. It is clear that $\support(\Lambda^s(b,d))=\lbrack b,d\rbrack$. We set $\kernel^s(b,d)(t) = \Lambda_{(\Proj_{L,U}(b,d))}(t)$., so that $\support(\kernel^s(b,d))\subset [L,U]$. Since $t\mapsto \kernel^s(b,d)$ is piece--wise linear with slopes $0$, $1$ and $-1$, $L=2=L_\kernel$.
The kernel is zero on the diagonal, so $C=0$ is enough to satisfy~\eqref{eq:kernel_bounded_on_diag}.
\end{example}

\begin{example}[Persistence Image]
\label{example:persistence_image}
The kernel that would correspond to the persistence image~\cite{adams_persistence_2017} is
${\kernel^{pi}(b,d)(x,y) = \tfrac{1}{2\pi\sigma^2}\exp\left(-\tfrac{(b-x)^2+(d-y)^2}{2\sigma^2}\right)}$, for some $\sigma>0$.
We propose to modify this functional to have bounded support. Unfortunately, a simple truncation is not enough, because the kernel would not be continuous at the truncation interface. In order to preserve the Lipschitz character, we propose to multiply by the distance to a square of size $2\sigma$ to $(b,d)$, namely, for some $r>1$, set
\begin{equation*}
\kernel^{pi,r}(b,d)(x,y)=\left(2-\tfrac{\Vert \Proj_{L,U}(b,d)-(x,y)\Vert_\infty}{\sigma}\right)_+^r \kernel^{pi}(\Proj_{L,U}(b,d))(x,y)
\end{equation*}
Thus, the original persistence image kernel corresponds to $r=0$ and $L=\infty, U=\infty$.
The function $(x,y)\mapsto \exp(-(x^2+y^2))$ is {$(4/e)$-Lipschitz} and $(x,y)\mapsto \left(2-\tfrac{\Vert (b,d)-(x,y)\Vert_\infty}{\sigma}\right)_+^r$ is $(r2^r/\sigma)$-Lipschitz, for the Minkowski distance. Hence, $L_{k^{pi,r}}=\tfrac{2^{r-1}}{\pi\sigma^3}\left(r + 2\right)$ and $L=\tfrac{2^{r+1}}{\pi e \sigma^3}$.
\end{example}

\begin{remark}
	We note a few differences with PersistenceCurves introduced in~\cite{chung_persistence_2022}. In that article, the aggregation operator can be different from the sum used here. However, the vectorizations are only curves, i.e $\FunctionalDomain=\R$. In addition, for normalized functionals, the authors restrict themselves to kernels of the form $k(b,d)(t) = c\indicator{[b,d]}(t)$, for some $c>0$.
\end{remark}

Continuity of functionals has been studied, notably in~\cite{divolChoiceWeightFunctions2019} and~\cite{chung_persistence_2022}. In the first, it was fully characterized, but only for linear functionals. In the latter, functionals were considered under the $L^1$ metric. Due to the nature of the statistical results in Section~\ref{sec:signatures_discrete}, we are particularly interested in $\Vert\cdot\Vert_\infty$, so we repeat the proof of~\cite[Theorem 3]{divolChoiceWeightFunctions2019} for linear functionals $\functional$ and we derive results for normalized functionals $\normalizedfunctional$.

\begin{proposition}
	\label{prop:functional_continuity}
	Suppose that the persistence of any point in $D_1$ and $D_2$ is bounded by a uniform constant $U$ and that $\kernel$ satisfies~\eqref{eq:kernel_compact_support},~\eqref{eq:kernel_lipschitz} and~\eqref{eq:kernel_bounded_on_diag}. Then,
	\begin{align}
	\Vert \functional(D_1) - \functional(D_2)\Vert_\infty \leq&   \left(L_\kernel \pers_{p,\epsilon}^p(D_1) + p(L_\kernel U + C)(\pers_{p-1,\epsilon}^{p-1}(D_1) + \pers_{p-1,\epsilon}^{p-1}(D_2))\right)d_B(D_1,D_2),\label{eq:functional_bottleneck_bound}\\
	\Vert \normalizedfunctional(D_1) - \normalizedfunctional(D_2)\Vert_\infty \leq& \left(L_\kernel + 2p(L_\kernel U + C)\frac{\pers_{p-1,\epsilon}^{p-1}(D_1) + \pers_{p-1,\epsilon}^{p-1}(D_2)}{\pers_{p,\epsilon}^p(D_1)}\right) d_B(D_1, D_2).\label{eq:normalizedfunctional_bottleneck_bound}
	\end{align}
\end{proposition}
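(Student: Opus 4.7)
The plan is to fix, for any $\eta>0$, a matching $\Gamma\colon D_1\cup\Delta\to D_2\cup\Delta$ whose supremum cost is at most $d_B(D_1,D_2)+\eta$, decompose the two functionals along the matched pairs, and separate the variation of the weight $w_\epsilon^p$ from that of the kernel $\kernel$. I pad $D_1$ and $D_2$ with the required diagonal insertions so that both sums are indexed by a single matching; since $w_\epsilon$ vanishes on $\Delta$, this does not alter $\functional(D_i)$. A key auxiliary bound used repeatedly is $\|\kernel(y)\|_\infty\leq L_\kernel U+C$ for any $y$ whose persistence is bounded by $U$: it follows from~\eqref{eq:kernel_lipschitz} applied between $y$ and its diagonal projection (at $\ell^\infty$-distance at most $U/2$) combined with~\eqref{eq:kernel_bounded_on_diag}.

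For the linear functional, I use the add-and-subtract identity
\begin{equation*}
w_\epsilon(x)^p\kernel(x)-w_\epsilon(\Gamma(x))^p\kernel(\Gamma(x))=w_\epsilon(x)^p[\kernel(x)-\kernel(\Gamma(x))]+[w_\epsilon(x)^p-w_\epsilon(\Gamma(x))^p]\kernel(\Gamma(x)).
\end{equation*}
Summing the first piece and using~\eqref{eq:kernel_lipschitz} yields the bound $L_\kernel\pers_{p,\epsilon}^p(D_1)(d_B+\eta)$ in $\|\cdot\|_\infty$. For the second piece I recycle the mean-value argument from the proof of Proposition~\ref{prop:truncated_persistence_continuity_bottleneck}: the estimate $|a^p-b^p|\leq p|a-b|(a^{p-1}+b^{p-1})$ combined with the Lipschitz character of $w_\epsilon$ gives $|w_\epsilon(x)^p-w_\epsilon(\Gamma(x))^p|\leq p\|x-\Gamma(x)\|_\infty(w_\epsilon(x)^{p-1}+w_\epsilon(\Gamma(x))^{p-1})$ (up to the universal $2$-Lipschitz constant of $w_\epsilon$). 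Summing together with the uniform bound on $\|\kernel(\Gamma(x))\|_\infty$ above yields the second term $p(L_\kernel U+C)(\pers_{p-1,\epsilon}^{p-1}(D_1)+\pers_{p-1,\epsilon}^{p-1}(D_2))(d_B+\eta)$. Letting $\eta\to 0$ gives~\eqref{eq:functional_bottleneck_bound}.

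For the normalized functional I rely on the quotient identity
\begin{equation*}
\normalizedfunctional(D_1)-\normalizedfunctional(D_2)=\frac{\functional(D_1)-\functional(D_2)}{\pers_{p,\epsilon}^p(D_1)}+\normalizedfunctional(D_2)\cdot\frac{\pers_{p,\epsilon}^p(D_2)-\pers_{p,\epsilon}^p(D_1)}{\pers_{p,\epsilon}^p(D_1)}.
\end{equation*}
The first summand is controlled directly by~\eqref{eq:functional_bottleneck_bound}. For the second, $\|\normalizedfunctional(D_2)\|_\infty\leq L_\kernel U+C$ since $\normalizedfunctional(D_2)$ is a convex combination of the $\kernel(y)$ which all satisfy the same uniform bound. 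The numerator $|\pers_{p,\epsilon}^p(D_1)-\pers_{p,\epsilon}^p(D_2)|$ is controlled by $p(\pers_{p-1,\epsilon}^{p-1}(D_1)+\pers_{p-1,\epsilon}^{p-1}(D_2))d_B$ via the very same mean-value estimate applied to $\sum_x[w_\epsilon(x)^p-w_\epsilon(\Gamma(x))^p]$. Collecting both contributions over the common denominator $\pers_{p,\epsilon}^p(D_1)$ yields~\eqref{eq:normalizedfunctional_bottleneck_bound}.

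The main technical subtlety is the handling of matched pairs in which one side lies on the diagonal: although $w_\epsilon$ vanishes there, $\kernel$ is only bounded by $C$ rather than zero, so the uniform persistence bound $U$ is required to control $\|\kernel(\Gamma(x))\|_\infty$ through its diagonal projection. This is the mechanism by which $U$ enters the final constants, and it is also what forces the appearance of the \emph{asymmetric} weights $\pers_{p-1,\epsilon}^{p-1}(D_1)+\pers_{p-1,\epsilon}^{p-1}(D_2)$ in both bounds. A purely formal point is that the bottleneck infimum need not be attained; the $\eta$-optimal matching above handles this, and one lets $\eta\to 0$ in the final inequality.
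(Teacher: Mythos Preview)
Your proposal is correct and follows essentially the same route as the paper: the same add-and-subtract decomposition along a matching for $\functional$, the same bound $\Vert\kernel(y)\Vert_\infty\leq L_\kernel U+C$ via the diagonal projection, the same mean-value estimate for $|w_\epsilon^p-w_\epsilon^p|$, and the same quotient identity for $\normalizedfunctional$. Your explicit handling of an $\eta$-optimal matching and of the diagonal padding is slightly more careful than the paper's presentation, but the argument is otherwise identical.
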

\begin{proof}
	Let $\Gamma: D_1\rightarrow D_2$ be a matching between the two diagrams. For any $t\in\FunctionalDomain$,
	\begin{align*}
	\vert \functional(D_1)(t) - \functional(D_2)(t)\vert
	\leq& \sum_{x \in D_1} w_\epsilon(x)^p\vert \kernel(x)(t) - \kernel(\Gamma(x))(t)\vert + \kernel(\Gamma(x))(t)\vert w_\epsilon(x)^p - w_\epsilon(\Gamma(x))^p\vert\\
	\leq& \sup_{x\in D_1}\vert k(x)(t)-k(\Gamma(x))(t)\vert \sum_{x\in D_1}w_\epsilon(x)^p\\
	&+ \sup_{x\in D_1}\vert k(\Gamma(x))(t)\vert \sum_{x\in D_1}\vert w_\epsilon(x)^p - w_\epsilon(\Gamma(x))^p\vert\\
	\leq&\ L_\kernel d_B(D_1,D_2)\pers_{p,\epsilon}^p(D_1)\\
	&+ p(L_\kernel U + C)\sum_{x\in D_1}\vert w_\epsilon(x)-w_\epsilon(\Gamma(x))\vert(w_\epsilon(x)^{p-1} +w_\epsilon(\Gamma(x))^{p-1}),
	\end{align*}
	where in the last inequality, we used that $\Vert \kernel(\Gamma(x))\Vert_\infty\leq L_\kernel \Vert (x_1,x_2) - (\tfrac{x_1+x_2}{2}, \tfrac{x_1+x_2}{2})\Vert_\infty + \Vert \kernel\left(\tfrac{x_1+x_2}{2}, \tfrac{x_1+x_2}{2}\right) \Vert_\infty = L_\kernel\tfrac{x_2-x_1}{2}+C$.
	The sum in the second term can be bounded from above by $d_B(D_1,D_2)(\pers_{p-1,\epsilon}^{p-1}(D_1) + \pers_{p-1,\epsilon}^{p-1}(D_2))$.
	
	%Normalized version
	Consider now the normalized version.
	\begin{align*}
	\vert\normalizedfunctional(D_1)(t) - \normalizedfunctional(D_2)(t)\vert
	\leq& \frac{\vert \functional(D_1)(t) - \functional(D_2)(t)\vert}{\sum_{x\in D_1}w_\epsilon(x)^p} + \normalizedfunctional(D_2)\frac{\vert \sum_{x\in D_1}w_\epsilon(x)^p - \sum_{y\in D_2}w_\epsilon(y)^p\vert}{\sum_{x\in D_1}w_\epsilon(x)^p}\\
	\leq& d_B(D_1,D_2)\left(L_\kernel + p(L_\kernel U + C)\frac{\pers_{p-1,\epsilon}^{p-1}(D_1) + \pers_{p-1,\epsilon}^{p-1}(D_2)}{\pers_{p,\epsilon}^p(D_1)}\right)\\
	&+ p(L_\kernel U + C)d_B(D_1,D_2)\frac{\pers_{p-1,\epsilon}^{p-1}(D_1) + \pers_{p-1,\epsilon}^{p-1}(D_2)}{\pers_{p,\epsilon}^p(D_1)}\\
	\leq&\left(L_\kernel + 2p(L_\kernel U+C)\frac{\pers_{p-1,\epsilon}^{p-1}(D_1) + \pers_{p-1,\epsilon}^{p-1}(D_2)}{\pers_{p,\epsilon}^p(D_1)}\right)d_B(D_1,D_2).
	\end{align*}
	Combine $\pers_{p-1,\epsilon}^{p-1}(D_1) + \pers_{p-1,\epsilon}^{p-1}(D_2)\leq 2\max_{k=1,2} \pers_{p-1,\epsilon}^{p-1}(D_k)$ with the observation that the bound is symmetric so that we can have $\pers_{p,\epsilon}^p(D_2)$ in the denominator.
\end{proof}
\begin{remark}
	The result we give for $\functional$ is a special of~\cite[Theorem 3]{divolChoiceWeightFunctions2019}. To see this, notice that using the notations of that article, $Lip(\phi) = L_\kernel$, $A=p$, and $\alpha=p$, where `$p$' is from our work. In their article, $p=\infty$ and $a=1$. In particular, we see exactly that $\Vert \kernel\Vert_\infty \leq L_\kernel U + C$.
\end{remark}

For a function $f$, all points in its persistence diagram have birth value at least $\min f$ and a death value of at most $\max f$, so that $U=2(\max f -\min f)$ is sufficient.
Using Proposition~\ref{prop:truncated_persistence_continuity_bottleneck}, we can conclude from~\eqref{eq:functional_bottleneck_bound} (resp.~\eqref{eq:normalizedfunctional_bottleneck_bound}) that $\functional$ (resp. $\normalizedfunctional$) is continuous with respect to the bottleneck distance, on the space of persistence diagrams. Via stability of the diagram with respect to the input from Theorem~\ref{thm:bottleneck_stability}, it translates to continuity with respect to the input function.

In Section~\ref{sec:signatures_discrete}, we show convergence of functionals on random functions. These results are conditioned on controlling the complexity of the family $\FunctionalFamily \coloneqq (\normalizedfunctional_t)_{t\in K}$, where $\normalizedfunctional_t:\R^M\rightarrow \R$. In particular, we need the bracketing entropy of $\FunctionalFamily$ to be finite. It is a well-known result and consequence of~(\ref{eq:kernel_compact_support},~\ref{eq:kernel_lipschitz_in_time}).
\begin{proposition}
	\label{prop:bracketing_number_silhouette}
	Let $N_{[]}(\epsilon, \FunctionalFamily, \Vert\cdot\Vert)$ denote the bracketing number of $\FunctionalFamily$, with brackets $[u,l]$ of size $\Vert u-l\Vert\leq\epsilon$. Consider $\normalizedfunctional$ as in~\eqref{eq:normalizedfunctional} with $\kernel$ satisfying~(\ref{eq:kernel_compact_support}, \ref{eq:kernel_lipschitz_in_time}). Then, for any probability measure $P$ on $\R^M$ and $r\geq 1$,
	\begin{equation*}
	N_{[]}(\epsilon, \{\normalizedfunctional_t\}_{t\in \FunctionalDomain}, \Vert\cdot\Vert_{L_r(P)})\leq \frac{2^{D+1}L^D\ \mathrm{diam}(K)}{\epsilon^D},
	\end{equation*}
	where $D$ is the dimension of $\FunctionalDomain$.
	As a consequence, the bracketing entropy $J_{[]}(\infty, \FunctionalFamily, \Vert\cdot\Vert_{L_r(P)})$ is finite
	\begin{equation*}
	J_{[]}(\infty, \FunctionalFamily, \Vert\cdot\Vert_{L_r(P)}) \coloneqq \int_0^\infty \sqrt{\log N_{[]}(\epsilon, \FunctionalFamily, \Vert\cdot\Vert_{L_r(P)})} d\epsilon <\infty.
	\end{equation*}
\end{proposition}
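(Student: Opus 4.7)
The plan has three ingredients: a uniform Lipschitz bound on $t \mapsto \normalizedfunctional_t$, a covering of the compact support $K$ to turn this into brackets, and a check that the resulting entropy is integrable near zero and truncates at large $\epsilon$.

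\emph{Step 1: Lipschitz in $t$, uniformly in the input.} Fix $s \in \R^M$ and write $D = D(s)$. If $\pers_{p,\epsilon}^p(D) = 0$ then $\normalizedfunctional_t(s) \equiv 0$ and there is nothing to check; otherwise $\normalizedfunctional_t(s) = \sum_{y \in D} \alpha_y\, \kernel(y)(t)$ is a convex combination with weights $\alpha_y = w_\epsilon(y)^p / \pers_{p,\epsilon}^p(D) \geq 0$ summing to one. Applying \eqref{eq:kernel_lipschitz_in_time} term by term and using the triangle inequality yields
$$|\normalizedfunctional_t(s) - \normalizedfunctional_{t'}(s)| \leq L\, d(t, t'),$$
uniformly in $s$. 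Moreover, \eqref{eq:kernel_compact_support} gives $\normalizedfunctional_t \equiv 0$ for $t \notin K$.

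\emph{Step 2: bracketing via a covering of $K$.} Cover $K \subset \R^D$ by sup-norm balls $B(t_i, \eta)$ of radius $\eta = \epsilon/(2L)$; a standard volumetric estimate bounds the number of centers by $C (\mathrm{diam}(K)/\eta)^D$ for an absolute constant $C$, producing the $\epsilon^{-D}$ dependence claimed. For each $t_i$, the constant shifts $l_i := \normalizedfunctional_{t_i} - \epsilon/2$ and $u_i := \normalizedfunctional_{t_i} + \epsilon/2$ form a bracket of $L_r(P)$-size exactly $\epsilon$ (since $P$ is a probability measure), and by Step~1 every $\normalizedfunctional_t$ with $d(t, t_i) \leq \eta$ lies pointwise between $l_i$ and $u_i$. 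One extra zero bracket handles $t \notin K$.

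\emph{Step 3: finiteness of the entropy integral.} Combining \eqref{eq:kernel_lipschitz_in_time} with \eqref{eq:kernel_compact_support}, each $\kernel(y)$ is $L$-Lipschitz and vanishes outside (hence on the boundary of) $K$, so $\Vert \kernel(y)\Vert_\infty \leq L\, \mathrm{diam}(K) =: B$ uniformly in $y$; therefore $\Vert \normalizedfunctional_t \Vert_\infty \leq B$ uniformly in $t$ and $s$. Consequently the trivial bracket $[-B, B]$ has $L_r(P)$-size $\leq 2B$, so $N_{[]}(\epsilon, \FunctionalFamily, \Vert\cdot\Vert_{L_r(P)}) = 1$ for $\epsilon \geq 2B$ and the integrand vanishes there. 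On $(0, 2B]$, Step~2 gives $\log N_{[]}(\epsilon) \leq D \log(c/\epsilon) + O(1)$, and the substitution $u = \log(c/\epsilon)$ shows that $\int_0^{2B} \sqrt{D \log(c/\epsilon)}\, d\epsilon < \infty$.

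I do not expect a serious obstacle. The one mildly delicate point is the uniform sup-norm bound on $\normalizedfunctional_t$ used to truncate the entropy integral at large $\epsilon$; I extract it from the pair \eqref{eq:kernel_compact_support}--\eqref{eq:kernel_lipschitz_in_time} rather than from \eqref{eq:kernel_bounded_on_diag}, since the latter would only bound $\kernel$ on $\Delta$ and would require an additional a~priori control on the birth--death coordinates of diagrams arising from inputs $s \in \R^M$, which is not available here.
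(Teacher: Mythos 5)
Your proof is correct and follows essentially the same route as the paper's: Lipschitz continuity of $t \mapsto \normalizedfunctional_t$ uniformly in the input (via convexity of the weights), a covering of the compact support $K$ turned into brackets by constant shifts, one extra bracket for $t \notin K$, and finiteness of the entropy integral from the large-$\epsilon$ truncation combined with integrability of $\sqrt{-\log\epsilon}$ near zero. The only cosmetic difference is that the paper cites~\cite[Theorem 9.22]{kosorok_introduction_2008} for the covering-to-bracketing step and truncates the entropy integral by fixing a reference point $t_0 \notin K$ and bounding $\vert\normalizedfunctional_{t_1}\vert \leq L\,d(t_0,t_1)$, whereas you do the covering-to-bracketing computation by hand and derive a uniform sup-norm bound $\Vert\normalizedfunctional_t\Vert_\infty \leq L\,\mathrm{diam}(K)$ directly from~(\ref{eq:kernel_compact_support})--(\ref{eq:kernel_lipschitz_in_time}); these are the same observation packaged slightly differently, and your concluding remark that~(\ref{eq:kernel_bounded_on_diag}) would not suffice here is accurate.
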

\begin{proof}
	First, since $P$ is a probability measure, $\Vert \normalizedfunctional_t\Vert_{L_r(P)} = \left(\int \vert \normalizedfunctional_t\vert^r dP\right)^{1/r}\leq \Vert \normalizedfunctional_t\Vert_\infty \int dP = \Vert \normalizedfunctional_t\Vert_\infty$, so $N_{[]}(\epsilon, \{\normalizedfunctional_t\}_{t\in \FunctionalDomain}, \Vert\cdot\Vert_{L_r(P)})\leq N_{[]}(\epsilon, \{\normalizedfunctional_t\}_{t\in \FunctionalDomain}, \Vert\cdot\Vert_\infty)$.
	Combining~\eqref{eq:kernel_lipschitz_in_time} with the fact that $\normalizedfunctional(x)$ is a weighted average of $\kernel$, for any $x\in\R^M$ and $s,\,t \in \FunctionalDomain$, the normalized functional is $L$-Lipschitz in time
	\begin{equation*}
	\vert\normalizedfunctional_t(x) - \normalizedfunctional_s(x)\vert \leq L d(t,s).
	\end{equation*}
	Let $K$ be given by~\eqref{eq:kernel_compact_support}. Then,~\cite[Theorem 9.22]{kosorok_introduction_2008} states that
	\begin{equation*}
	N_{[]}(2\epsilon L, \{\normalizedfunctional_t\}_{t\in K}, \Vert\cdot\Vert_\infty) \leq N(\epsilon, K, d),
	\end{equation*}
	where $N(\epsilon, K, d)$ is the covering $\epsilon$-number of $(K,d)$.
	By assumption, $\FunctionalDomain$ is of finite dimension that we will denote by $D$. By compacity of $K$, it has a finite diameter, say $U$. Therefore, $N(\epsilon, K, d)\leq \max(1,\tfrac{U}{\epsilon^D}).$
	
	Let $t_0\notin K,\, t_1\in K$. We have that $\normalizedfunctional_{t_1}$ is uniformly bounded,
	\begin{equation*}
	\vert \normalizedfunctional_{t_1}(x)\vert \leq \vert \normalizedfunctional_{t_0}(x)\vert + L d(t_0,t_1) = L d(t_0,t_1),
	\end{equation*}
	so that $\normalizedfunctional_{t_0}=0\in [\normalizedfunctional_{t_1}-\epsilon L, \normalizedfunctional_{t_1}-\epsilon L]$, for $\epsilon > d(t_0, t_1)$.
	The brackets in the proof of~\cite[Theorem 9.22]{kosorok_introduction_2008} are of the form $[\normalizedfunctional_t - \epsilon L, \normalizedfunctional_t + \epsilon L]$, so that $N_{[]}(2\epsilon L, \{\normalizedfunctional_t\}_{t\in \FunctionalDomain}, \Vert\cdot\Vert_\infty) \leq N(\epsilon, K, d)$ for $\epsilon>d(t_0, t_1)$. In particular, one bracket is enough for $\epsilon>\max(U^{1/D}, d(t_0, t_1))$, while, for $\epsilon\leq \max(U^{1/D}, d(t_0, t_1))$, we have
	$N_{[]}(2\epsilon L, \{\normalizedfunctional_t\}_{t\in \FunctionalDomain}, \Vert\cdot\Vert_\infty)\leq 1+ N_{[]}(2\epsilon L, \{\normalizedfunctional_t\}_{t\in K}, \Vert\cdot\Vert_\infty)\leq 1+N(\epsilon, K,d)\leq 2N(\epsilon,K,d)$.
	
	Finally, since $L_r(P)$ is dominated by $\Vert \cdot\Vert_\infty$ for any probability measure $P$,
	\begin{align*}
	J_{[]}(\delta, \{\normalizedfunctional_t\}_{t\in \FunctionalDomain}, L_r(P)) 
	&= \int_{0}^\delta \sqrt{\log(N_{[]}(\epsilon, \{\normalizedfunctional_t\}_{t\in \FunctionalDomain}, L_r(P)))}d\epsilon\\
	&\leq \int_{0}^\delta \sqrt{\log(N_{[]}(\epsilon, \{\normalizedfunctional_t\}_{t\in \FunctionalDomain}, \Vert\cdot\Vert_\infty))}d\epsilon\\
	&\leq \int_{0}^{\min(\delta, 2L\max(U^{1/D}, d(t_0, t_1)))} \sqrt{\log(N(\tfrac{\epsilon}{2L}, K, d))}d\epsilon\\
	&\leq \int_{0}^{\min(\delta, 2L\max(U^{1/D}, d(t_0, t_1)))} \sqrt{\log\left(2^{D+1}U L^D\right) - \frac{1}{D}\log(\epsilon)}d\epsilon.
	\end{align*}
	As $\lim_{\delta\rightarrow 0} \int_\delta^1 \sqrt{-\log(\epsilon)}d\epsilon<\infty$, we conclude that $J_{[]}(\delta, \{\normalizedfunctional_t\}_{t\in \FunctionalDomain}, L_r(P))<\infty$.
\end{proof}

\section{Numerical illustration}
\label{sec:numerical_experiments}
To illustrate the signatures and their stability, we propose to estimate the signatures of processes with different periodic functions. Then, we compare the estimate to the signature of a process with a different reparametrisations.

We will consider periodic functions $\pattern_1$ and $\pattern_4$ defined by
\begin{equation*}
\pattern_\theta = \theta(\sin(6\pi t) + \vert t-\lfloor t\rfloor - \tfrac{1}{2}\vert - \tfrac{1}{2}) + 5\sin(4\pi t),\qquad \text{for }\theta\in\R.
\end{equation*}
The observed signal follows the discrete model~\eqref{eq:model_discrete_signal}, with $T=30$ and a sampling rate of $50$Hz.
The reparametrisations are generated by integrating twice a Markov chain of accelerations, with a truncated Gaussian transition kernel. The noise is a Gaussian process with covariance
\begin{equation*}
\Gamma(s,t)=\sigma^2\exp\left(-\frac{(s-t)^2}{2\tau^2}\right).
\end{equation*}
We fix the temporal scale $\tau$, but we vary $\sigma=0.1,\, 0.5,\, 2.$ to illustrate the impact of noise on the signature.

For $\normalizedfunctional$, we take the silhouette introduced in Example~\ref{example:persistence_silhouette}, where the weights are the $0.2$-truncated $1$-persistence ($\epsilon=0.2$, $p=1$) and we use the projection $\Proj_{-9,9}$ as in~\eqref{eq:projection}. We infer the signatures on $3$-second windows ($M=3\cdot 50$). We construct the $1\%$-confidence intervals by resampling $200$ times, with block lengths of 2 seconds ($L=2\cdot 50$).

In Figure~\ref{fig:different_patterns}, for the same random realization $\gamma_1$, we calculate the empirical signature $\hat{F}$ for $\pattern_1$ and $\pattern_4$, and estimate the corresponding confidence intervals for $F$. For low noise levels, the variance due to the number of observations and the variability in the endpoints is small, compared to the difference between the functionals. As the noise level increases, the observed function looses its recurrent appearance and the signatures become dominated by the noise.
\begin{figure}
	\includegraphics[width=0.99\textwidth]{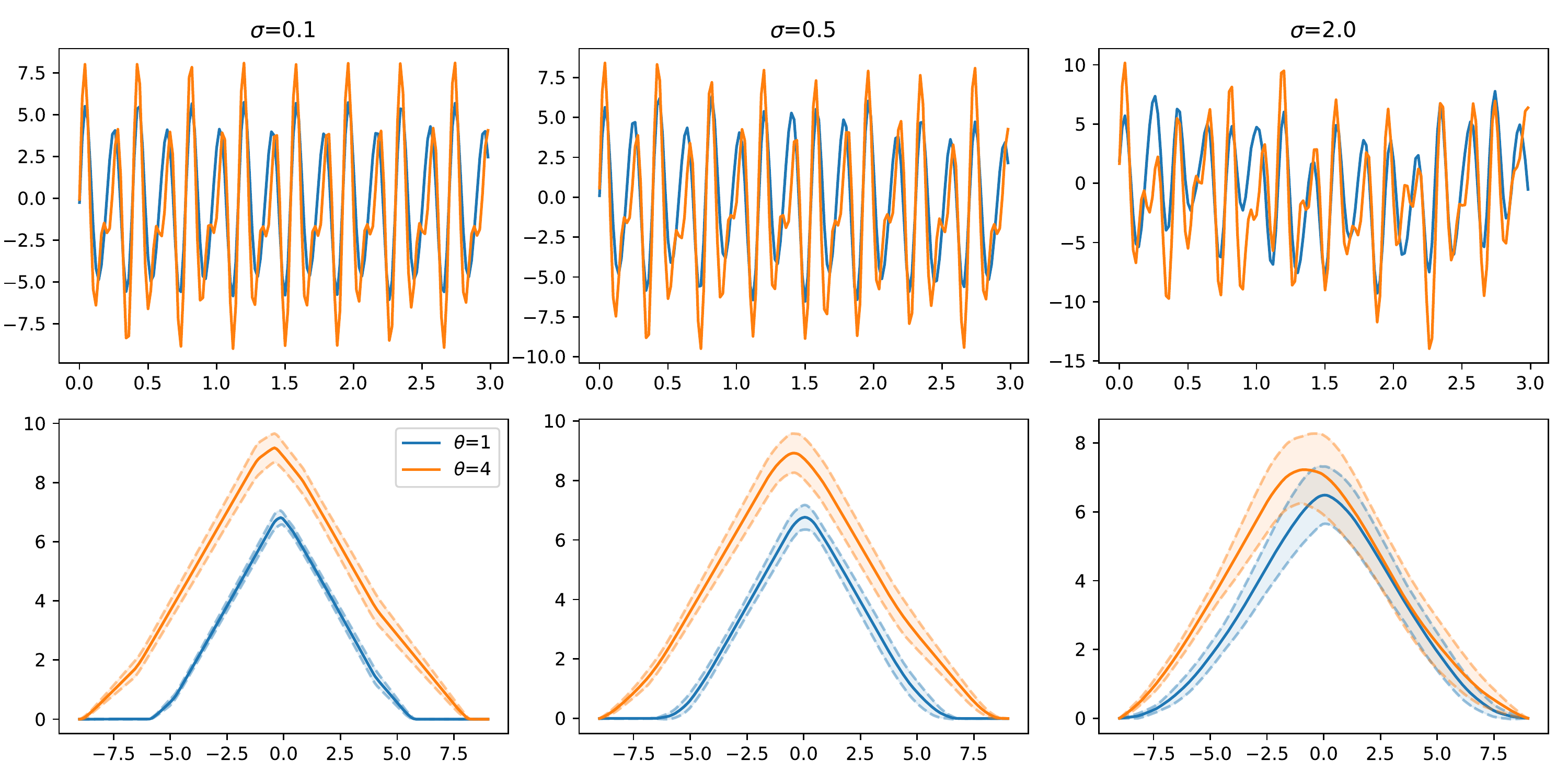}
	\caption{Signatures of $\pattern_1$ and $\pattern_4$, estimated on reparametrized signals described above. The top row shows the first 3-second window from the $30$-second signal, for both functions. The bottom row shows the estimated signatures and the confidence intervals.}
	\label{fig:different_patterns}
\end{figure}

Consider now two observations with the same periodic function $\pattern_1$, but different reparametrisations $\gamma_1,\,\gamma_2$. In Figure~\ref{fig:same_patterns}, we can see that for small values of noise, the signatures are close, what confirms their invariance to reparametrisation. It is worth noting that the signals contain different numbers of periods. For more noisy observations, the signatures lose the robustness.
\begin{figure}
	\includegraphics[width=0.99\textwidth]{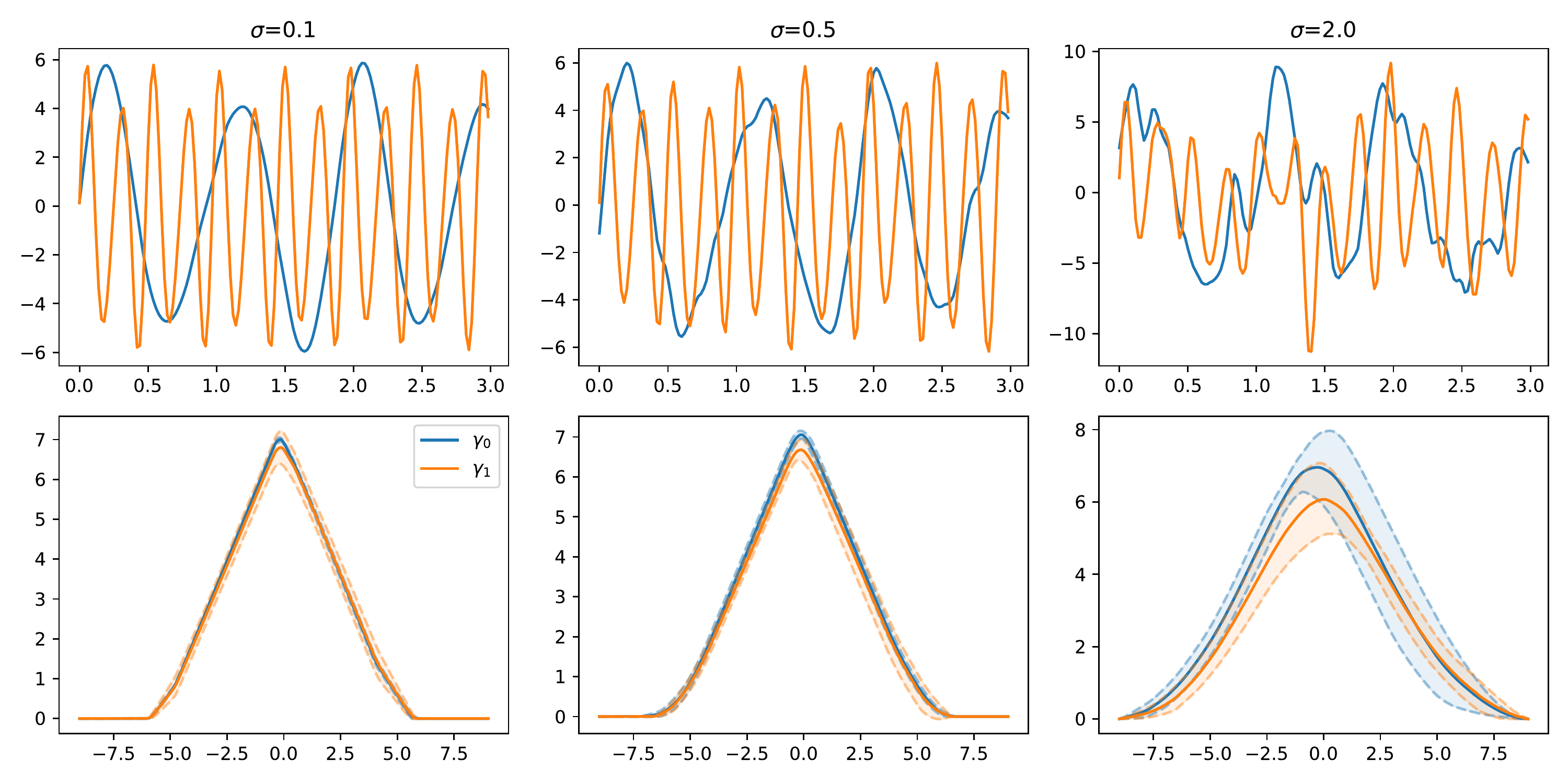}
	\caption{Signatures of $\pattern_1$, estimated on two different reparametrized observations. The top row shows the first 3-second window from the two observed signals. The bottom row shows the estimated signatures and the confidence intervals.}
	\label{fig:same_patterns}
\end{figure}

\section*{Acknowledgments}
The authors are grateful to numerous colleagues for the fruitful discussions and they wish to particularly thank Paul Doukhan, Giovanni Peccati, Alex Delalande, Quentin M\'erigot and Daniel Perez. WR thanks the ANR TopAI chair (ANR--19--CHIA--0001) for financial support.

\bibliographystyle{apalike}
\bibliography{references}

\appendix
\addtocontents{toc}{\protect\setcounter{tocdepth}{0}}

\section{Measurability of functionals}
For our analysis of such signals, in section~\ref{sec:persistence_results}, we will introduce functionals of the form $\normalizedfunctional_t:C([0,T],\R)\rightarrow \R$, $t\in \T$, where the index set $\T$ is a (compact) metric space. Then, we will apply these functions pathwise and study the random variable $\normalizedfunctional(S)$, where $\normalizedfunctional$ is seen as a map $C([0,T],\R)$ to $\R^\T$.
Since $\normalizedfunctional_t$ is applied pathwise, it is not obvious under what conditions $\normalizedfunctional(S)$ is a random variable.
Such considerations could be circumvented by using outer probabilities~\cite{radulovicBootstrapEmpiricalProcesses1996, kosorok_introduction_2008}, but we address them in Proposition~\ref{prop:measurability}.

As a stochastic process, $S: (\SampleSpace, \sAlgebra)\rightarrow (C([0,T], \sigma(\R^{[0,T]})))$ is a random variable on the measured space $(\SampleSpace, \sAlgebra, \eta)$, where $\sigma(\R^{[0,T]})$ is the $\sigma$-algebra generated by the product topology on $\R^{[0,T]}$ and $\eta$ is the law of $S$. In our model, $\eta$ is determined by $\pattern$, $\mu$ and $\nu$.
\begin{proposition}
	\label{prop:measurability}
	Let ${\gamma:(\SampleSpace_r,\sAlgebra_r)\rightarrow (C([0,T],\R),\sigma(\Vert\cdot\Vert_\infty))}$ and ${W:(\SampleSpace_n,\sAlgebra_n)\rightarrow (C([0,T],\R), \sigma(\R^{[0,T]}))}$ be independent random variables and $S=\pattern\circ\gamma + W$ as in~\eqref{eq:model_continuous_signal}. If $f:C([0,T], \R)\rightarrow C(\T,\R)$ is continuous and $C(\T,\R)$ is $\Vert\cdot\Vert_\infty$-separable, then
	$f(S)$
	is $(C(\T,\R), \Vert\cdot \Vert_\infty)$-measurable.
\end{proposition}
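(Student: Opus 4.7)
The plan is to decompose the map $\omega \mapsto f(S(\omega))$ into measurable pieces by first showing that $S$ itself is a random variable into $(C([0,T],\R),\Vert\cdot\Vert_\infty)$ (i.e.\ Borel measurable with respect to the sup-norm), and then invoking continuity of $f$ and separability of $C(\T,\R)$ to conclude. The two possible subtleties are that $W$ is only given as measurable with respect to the product (cylindrical) $\sigma$-algebra, and that $\gamma$ and $W$ are defined on different probability spaces, so we have to work on the product space $\Omega_r\times\Omega_n$ with the product $\sigma$-algebra and the product measure guaranteed by independence.

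First I would recall that $(C([0,T],\R),\Vert\cdot\Vert_\infty)$ is Polish. Since $\Vert g\Vert_\infty=\sup_{t\in \mathbb Q\cap[0,T]} |g(t)|$ for continuous $g$, the sup-norm open balls belong to the $\sigma$-algebra generated by the evaluation maps $g\mapsto g(t)$, and conversely each evaluation map is sup-norm continuous. Hence on $C([0,T],\R)$ the trace of the product $\sigma$-algebra $\sigma(\R^{[0,T]})$ coincides with the Borel $\sigma$-algebra of the sup-norm. In particular $W$, which is measurable for the cylindrical $\sigma$-algebra and takes values in $C([0,T],\R)$, is automatically Borel measurable for $\Vert\cdot\Vert_\infty$. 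The reparametrisation $\gamma$ is Borel measurable by assumption.

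Next I would handle the composition and the sum. The map $H:(C([0,T],\R),\Vert\cdot\Vert_\infty)\to(C([0,T],\R),\Vert\cdot\Vert_\infty)$ defined by $H(g)=\pattern\circ g$ is continuous: if $g_n\to g$ uniformly, then $\{g_n(t), g(t) : n\in\N, t\in[0,T]\}$ lies in a bounded interval, on which $\pattern$ is uniformly continuous, so $\pattern\circ g_n\to \pattern\circ g$ uniformly. Addition $(u,v)\mapsto u+v$ is likewise continuous on $C([0,T],\R)^2$. Since $C([0,T],\R)$ is separable, the Borel $\sigma$-algebra of the product topology equals the product of the two Borel $\sigma$-algebras, so the pair $(\gamma,W):\Omega_r\times\Omega_n\to C([0,T],\R)^2$ is Borel measurable. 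Composing with the continuous map $(g,v)\mapsto H(g)+v$ shows that $S=\pattern\circ\gamma+W$ is a Borel-measurable random variable into $(C([0,T],\R),\Vert\cdot\Vert_\infty)$.

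Finally, $f$ is continuous between metric spaces and therefore Borel measurable; separability of $(C(\T,\R),\Vert\cdot\Vert_\infty)$ guarantees that its Borel $\sigma$-algebra behaves well (in particular is countably generated), so $f(S)$ is a bona fide random variable into $(C(\T,\R),\Vert\cdot\Vert_\infty)$. The only step that demands care is the identification between the cylindrical and the Borel sup-norm $\sigma$-algebras on $C([0,T],\R)$; everything else is a routine chaining of continuous maps, and independence of $\gamma$ and $W$ is used solely to put them on a common product space so that $(\gamma,W)$ is well defined as a single random variable.
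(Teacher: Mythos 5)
Your proof is correct, and the core idea---exploiting separability of $(C([0,T],\R),\Vert\cdot\Vert_\infty)$ to upgrade from evaluation-map (cylindrical) measurability to Borel (sup-norm) measurability---is the same as in the paper, but the decomposition of the argument is genuinely different. The paper defers the upgrade to the end: it first shows $S$ is measurable for the cylindrical $\sigma$-algebra as a sum of $\pattern\circ\gamma$ and $W$, identifies this with weak measurability, and then invokes Pettis' measurability theorem as the key lemma to pass to the Borel $\sigma$-algebra. You instead prove at the outset that the cylindrical $\sigma$-algebra coincides with the sup-norm Borel $\sigma$-algebra on $C([0,T],\R)$ (via $\Vert g\Vert_\infty = \sup_{t\in\mathbb{Q}\cap[0,T]}|g(t)|$ and separability), which promotes $W$ immediately to a Borel-measurable map, and then chain: the pair $(\gamma,W)$ is Borel on the product space because the product of separable Borel spaces has Borel $\sigma$-algebra equal to the product $\sigma$-algebra, and $S$ is obtained by composing with the continuous maps $g\mapsto\pattern\circ g$ and $(u,v)\mapsto u+v$. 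Your route is more self-contained---it does not rely on Pettis' theorem as a black box---and it spells out explicitly the continuity of $g\mapsto\pattern\circ g$, which the paper asserts only tersely. One small remark: once $S$ is Borel measurable and $f$ is continuous, $f(S)$ is automatically Borel measurable; the separability of $C(\T,\R)$ is not needed for that last step, and it enters your argument (and the paper's) only as a downstream convenience, not as a logical prerequisite for measurability of $f(S)$.
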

For the proof, we will need the following lemma.
\begin{lemma}[Pettis' measurability theorem]
	\label{lemma:petts_measurability}
	Consider $h:\SampleSpace\rightarrow E$, where $(E, d_E)$ is a Banach space. If $E$ is separable as a metric space and $h$ is weakly-measurable, then $h$ is measurable with respect to the Borel $\sigma$-algebra induced by $d_E$.
\end{lemma}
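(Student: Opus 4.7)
The plan is to exploit separability of $E$ via Hahn--Banach to reduce Borel measurability of $h$ to the measurability of a countable family of real-valued maps $\phi_n \circ h$, each of which is measurable by the weak-measurability hypothesis. The key observation is that a separable Banach space admits a countable norming subset of its dual, so the norm---and hence every closed ball---can be described by countably many evaluations of continuous linear functionals.

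Concretely, I would proceed in three steps. First, using a countable dense subset $\{x_k\}_{k\in\N} \subset E$ and the Hahn--Banach theorem, select $\phi_k \in E^*$ with $\|\phi_k\| \le 1$ and $\phi_k(x_k) = \|x_k\|$; by density and continuity of each $\phi \in E^*$, the family $\{\phi_k\}$ norms $E$ in the sense that
\begin{equation*}
\|x\|_E = \sup_{k \in \N} |\phi_k(x)|, \qquad \forall x \in E.
\end{equation*}
Second, this representation implies that every closed ball is a countable intersection of preimages under the $\phi_k$:
\begin{equation*}
\overline{B}(y,r) = \bigcap_{k \in \N} \phi_k^{-1}\bigl([\phi_k(y) - r,\, \phi_k(y) + r]\bigr),
\end{equation*}
so weak measurability of $h$ gives
\begin{equation*}
h^{-1}(\overline{B}(y,r)) = \bigcap_{k \in \N} (\phi_k \circ h)^{-1}\bigl([\phi_k(y) - r,\, \phi_k(y) + r]\bigr) \in \sAlgebra.
\end{equation*}
Third, since $E$ is separable, the collection of open balls with rational radii and centers in $\{x_k\}$ is a countable base for the topology, and each such open ball is a countable union of closed balls $\overline{B}(y,s)$ with $s \in \Q$, $s < r$. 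Hence $h^{-1}(U)$ lies in $\sAlgebra$ for every open $U \subset E$, i.e.\ $h$ is Borel measurable.

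The main obstacle---and the only step where separability is essential---is the construction of the countable norming family in Step~1. Without this reduction, the identity $\|x\| = \sup |\phi(x)|$ involves an uncountable supremum over the dual unit ball, which in general does not preserve measurability under countable operations. Once the countable family is in hand, the remaining arguments are purely bookkeeping through countable unions and intersections in the $\sigma$-algebra $\sAlgebra$.
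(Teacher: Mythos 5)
Your proof is correct: the countable norming family obtained from a dense sequence via Hahn--Banach gives $\Vert x\Vert_E=\sup_k\vert\phi_k(x)\vert$, which makes preimages of closed balls measurable under weak measurability, and separability then lets you pass to all open (hence all Borel) sets by countable unions. The paper does not actually prove Lemma~\ref{lemma:petts_measurability}; it states it and attributes it to a lecture course, so there is no in-paper argument to compare against --- your write-up is simply the classical proof of (this direction of) Pettis' theorem, and it fills the gap correctly.
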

\begin{proof}[Proof (proposition~\ref{prop:measurability})]
	First, assume that $S$ is weakly-measurable on $E = C([0,T],\R)$ and that $(C([0,T],\R), \Vert\cdot\Vert_\infty)$ is separable. Using lemma~\ref{lemma:petts_measurability}, we get that $S$ is $\sigma(\Vert\cdot\Vert_\infty)$-measurable.
	Because $f:C([0,T], \R)\rightarrow C(\T,\R)$ is continuous, it is measurable for the two $\sigma$-algebra on the domain and co-domain. This allows us to conclude that $f(S)$ is $(C(\T,\R),\sigma(\Vert\cdot\Vert_\infty))$-measurable.
	
	Let us now verify the assumptions of Lemma~\ref{lemma:petts_measurability}. By continuity of $\pattern$, the composition ${\pattern\circ\gamma: (\SampleSpace_r, \sAlgebra_r)\rightarrow (C([0,T],\R), \sigma(\R^{[0,T]})}$ is measurable. As a sum of two (independent) random variables, $S=\pattern\circ\gamma + W$ is $(C([0,T],\R), \sigma(\R^{[0,T]})$-measurable for $(\SampleSpace, \sAlgebra)$, where $\SampleSpace=\SampleSpace_r\times\SampleSpace_n$ and $\sAlgebra=\sAlgebra_r\otimes \sAlgebra_n$. The product $\sigma$-algebra $\sigma(\R^{[0,T]})$ coincides with that of weak measurability on $\R^{[0,T]}$.
	The space $C([0,T],\R)$ with the topology induced by $\Vert f\Vert_\infty \coloneqq \sup_{x\in[0,T]}\vert f(x)\vert$ is a Banach, separable space.
	Any subspace of a separable metric space is separable, so $S(\SampleSpace)$ is also separable.
\end{proof}
\begin{remark}
	Lemma~\ref{lemma:petts_measurability} and its application to prove the measurability of the process were taken from the course~\cite{steinwartMathematicsGaussianProcesses2022}.
\end{remark}

\section{Invariance of the signature to reparametrisation}
\label{appendix:disintegration}
Consider $(C([0,T],\R),\Vert\cdot\Vert_\infty)$ with the Borel $\sigma$-algebra.
We assume that $\mu_1,\mu_2$ are Borel measures on the restriction of that $\sigma$-algebra to a closed subspace $\Gamma\subset C([0,T])$.
We denote by ${\delta_t: \gamma\mapsto\gamma(t)}$ the evaluation map, we let $\pushforward{\delta_t}\mu_1=\mu_1\circ(\delta_t)^{-1}$ be the measure which characterizes the marginal distribution of $\gamma_1(t)$ and we proceed similarly for $\mu_2$. Note that the evaluation is measurable, as it corresponds to weak-measurability.
Similarly, we denote $\delta_{0,T}: \gamma\mapsto(\gamma(0),\gamma(T))\in\R^2$.
\begin{proposition}
	\label{prop:invariance}
	If the marginals $\pushforward{\delta_{0,T}}\mu_1$ and $\pushforward{\delta_{0,T}}\mu_2$ are equal, then
	$$F(\pattern\circ\gamma_1)=F(\pattern\circ\gamma_2).$$
\end{proposition}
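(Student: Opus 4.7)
The plan is to reduce $\E[\normalizedfunctional(\pattern \circ \gamma)(t)]$ to an integral on $\R^2$ against the endpoint marginal, by disintegrating $\mu_k$ along $\delta_{0,T}$. The heart of the argument is that $\normalizedfunctional(\pattern \circ \gamma)$ depends on $\gamma \in \Gamma$ only through $(\gamma(0),\gamma(T))$.

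First, I would invoke Lemma~\ref{lemma:invariance_to_reparametrisation}: for any two $\gamma, \gamma' \in \Gamma$ with $(\gamma(0), \gamma(T)) = (\gamma'(0), \gamma'(T))$, one has $D(\pattern \circ \gamma) = D(\pattern \circ \gamma')$ and hence $\normalizedfunctional(\pattern \circ \gamma) = \normalizedfunctional(\pattern \circ \gamma')$. Therefore the map
\begin{equation*}
\Phi : \Gamma \to C(\T, \R), \qquad \Phi(\gamma) = \normalizedfunctional(\pattern \circ \gamma)
\end{equation*}
factors as $\Phi = g \circ \delta_{0,T}$ for some $g$ defined on the image $\delta_{0,T}(\Gamma) \subset \R^2$. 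Measurability of $\Phi(\cdot)(t)$ for each fixed $t$ follows from Proposition~\ref{prop:measurability}, combined with the continuity of $\normalizedfunctional$ on $C([0,T], \R)$ obtained from Proposition~\ref{prop:functional_continuity} and Theorem~\ref{thm:bottleneck_stability}.

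Next, I would appeal to disintegration. As a closed subspace of the Polish space $(C([0,T], \R), \Vert \cdot \Vert_\infty)$, $\Gamma$ is itself Polish, hence standard Borel. The map $\delta_{0,T} : \Gamma \to \R^2$ is continuous, hence Borel. The disintegration theorem yields, for each $k=1,2$, a $\nu_k$-a.e.\ uniquely defined measurable family $\{\mu_k^{(x,y)}\}_{(x,y) \in \R^2}$ of probability measures, each concentrated on the fiber $\delta_{0,T}^{-1}(\{(x,y)\}) \cap \Gamma$, with $\nu_k := \pushforward{\delta_{0,T}} \mu_k$, such that
\begin{equation*}
\int_\Gamma h(\gamma) \, d\mu_k(\gamma) = \int_{\R^2} \int h(\gamma) \, d\mu_k^{(x,y)}(\gamma) \, d\nu_k(x,y)
\end{equation*}
for every bounded Borel $h : \Gamma \to \R$. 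Applied with $h = \Phi(\cdot)(t)$, the factorization from the first step makes $h$ constantly equal to $g(x,y)(t)$ on each fiber, so the inner integral reduces to $g(x,y)(t)$ and therefore $F(\pattern \circ \gamma_k)(t) = \int_{\R^2} g(x,y)(t) \, d\nu_k(x,y)$. Since $\nu_1 = \nu_2$ by hypothesis, the two sides are equal for $k=1,2$, yielding the claim.

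The main obstacle I anticipate is verifying the hypotheses of the disintegration theorem in this functional setting. This requires $\Gamma$ to be standard Borel, which is secured by closedness in the Polish space $C([0,T], \R)$; Borel measurability of $\delta_{0,T}$ is immediate from continuity. A secondary technicality is the measurability of $\Phi(\cdot)(t)$, handled by Proposition~\ref{prop:measurability}. Once these two ingredients are in place, the remainder is algebraic manipulation through the disintegration formula.
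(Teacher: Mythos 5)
Your proposal is correct and follows essentially the same route as the paper: both proofs exploit the closed-subspace-of-Polish-space structure of $\Gamma$ to obtain a disintegration (the paper via Bogachev's regular conditional measure theorem), observe via Lemma~\ref{lemma:invariance_to_reparametrisation} that $\gamma\mapsto\normalizedfunctional_t(\pattern\circ\gamma)$ is constant on each fiber $\delta_{0,T}^{-1}(x)$, and then collapse the inner integral and invoke the equality of endpoint marginals. The only cosmetic difference is that you phrase the fiber-constancy as an explicit factorization $\Phi=g\circ\delta_{0,T}$, which is a clean way to package the same observation.
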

\begin{proof}
	We first need to show that we can condition on $(\gamma(0),\gamma(T))$. The space of continuous functions $C([0,T],\R)$ is Polish, and so is $\Gamma$, because it is a closed subspace. Let $\mathcal{A} = \sigma(\delta_{0,T})$ be the $\sigma$-algebra generated by the evaluations. By~\cite[Corollary 10.4.6]{bogachevMeasureTheory2007}, there is a regular conditional measure $((\mu_1)_{x}(d\gamma))_{x\in \R^2}$.
	
	Lemma~\ref{lemma:invariance_to_reparametrisation} implies that $\gamma\mapsto \normalizedfunctional_t(\pattern\circ\gamma)$ is constant on $\delta_{0,T}^{-1}(x)$, for any $x=(s,r)\in\R^2$. For any $t\in\T$, using the regular conditional measure property~\cite[Definition 10.4.1]{bogachevMeasureTheory2007},
	\begin{align*}
	F_t(\pattern\circ\gamma_1)
	&= \int_{\Gamma}\normalizedfunctional_t(\pattern\circ\gamma)\mu_1(d\gamma)\\
	&= \int_{\R^2}\int_{\delta_{0,T}^{-1}(x)}\normalizedfunctional_t(\pattern\circ\gamma) (\mu_1)_{x}(d\gamma)\pushforward{\delta_{0,T}}\mu_1(dx)\\
	&= \int_{\R^2}\int_{\delta_{0,T}^{-1}(x)}\normalizedfunctional_t(\pattern\circ\gamma) (\mu_2)_{x}(d\gamma)\pushforward{\delta_{0,T}}\mu_2(dx)\\
	&= F_t(\pattern\circ\gamma_2).
	\end{align*}
\end{proof}
Since $\gamma_1,\,\gamma_2$ are reparametrisations, we require $\Gamma$ to be included in the space of injective functions. An example is given in~\eqref{eq:gamma_closed_example}.

While it is disappointing to require equality of the marginals $\pushforward{\delta_{0,T}}\mu_1$ and $\pushforward{\delta_{0,T}}\mu_2$ in Proposition~\ref{prop:invariance}, removing this assumption poses a difficulty which we now discuss. Consider $\gamma_1$ and $\gamma_2$ fixed, assume that $R \coloneqq R_1 < R_2$ and let $T_1 = \gamma_2^{-1}(R)$. For continuous functions on an interval, we can only control the stability of the persistence diagram in the bottleneck distance $d_B$ (see Theorem~\ref{thm:bottleneck_stability}).

As an example, consider the case when $R_2 = R + 1$. Then, the distance between the persistence diagrams $d_B(D(\pattern\circ\gamma_1), D(\pattern\circ\gamma_2))$ is of the order of the amplitude $A_\pattern\coloneqq \max\pattern-\min\pattern$, as the multiplicity of the point $(\min\pattern, \max\pattern)$ differs by at least one between both diagrams. The term $\tfrac{\pers_{p, \epsilon}^p(\pattern\circ\gamma_1) + \pers_{p, \epsilon}^p(\pattern\circ\gamma_2)}{\pers_{p, \epsilon}^p(\pattern\circ\gamma_1)}$ is roughly constant ($1\leq\tfrac{R_1 + R_2}{R_1}\leq 3$). Therefore, the fact that the difference between $\normalizedfunctional(\pattern\circ\gamma_1)$ and $\normalizedfunctional(\pattern\circ\gamma_2)$ will be small is not reflected by Proposition~\ref{prop:functional_continuity} which gives a trivial bound.
Instead, let $D_1 = D(\restr{(\phi\circ\gamma_2)}{[0,T_1]})$, $D_2 = D(\restr{(\phi\circ\gamma_2)}{[T_1,T]})$ and consider
\begin{equation}
\label{eq:diagram_cutting_decomposition}
\Vert \normalizedfunctional(\phi\circ\gamma_1) -\normalizedfunctional(\phi\circ\gamma_2) \Vert_\infty
\leq \Vert \normalizedfunctional(D_1) -\normalizedfunctional(D_1\sqcup D_2)\Vert_\infty + 
\Vert \normalizedfunctional(D_1\sqcup D_2) - \normalizedfunctional(\phi\circ\gamma_2)\Vert_\infty.
\end{equation}
Conveniently, a normalized functional of a union of diagrams is a weighted average of the normalized functionals of the individual diagrams
\begin{equation*}
\normalizedfunctional(D_1\sqcup D_2)(t) = \normalizedfunctional(D_1)(t)\frac{\pers_{p,\epsilon}^p(D_1)}{\pers_{p,\epsilon}^p(D_1\sqcup D_2)} + \normalizedfunctional(D_2)(t)\frac{\pers_{p,\epsilon}^p(D_2)}{\pers_{p,\epsilon}^p(D_1\sqcup D_2)},
\end{equation*}
so that
\begin{align*}
\vert \normalizedfunctional(D_1)(t) -\normalizedfunctional(D_1\sqcup D_2)(t)\vert
&= \vert \normalizedfunctional(D_1)(t)\left(\frac{\pers_{p,\epsilon}^p(D_1)}{\pers_{p,\epsilon}^p(D_1\sqcup D_2)} -1\right) + \normalizedfunctional(D_2)(t)\frac{\pers_{p,\epsilon}^p(D_2)}{\pers_{p,\epsilon}^p(D_1\sqcup D_2)}\vert \\
&=\vert \normalizedfunctional(D_1)(t)-\normalizedfunctional(D_2)(t)\vert \frac{\pers_{p,\epsilon}^p(D_2)}{\pers_{p,\epsilon}^p(D_1\sqcup D_2)}\\
&\leq (L_\kernel A_\pattern +C)\frac{\pers_{p,\epsilon}^p(D_2)}{\pers_{p,\epsilon}^p(D_1\sqcup D_2)}.
\end{align*}
We claim that if $\pattern$ is regular enough and $R_2-R$ is small, then so is $\pers_{p, \epsilon}^p(D_2)$. For example, with~\cite[Corollary 4.6]{herbertedelsbrunnerWindowPersistence1D2023}, if $\restr{\pattern}{[R,R_2]}$ is continuous and has finitely many critical points, then the total variation of $\pattern_{[R_2,R]}$ is equal to $\pers_{1, 0}(D_2)$.

Thanks to Proposition~\ref{prop:functional_continuity}, the second term in~\eqref{eq:diagram_cutting_decomposition} is the error made when approximating the diagram of $\restr{\pattern}{[0,R_2]}$ by the union of diagrams of $\restr{\pattern}{[0,R]}$ and $\restr{\pattern}{[R,R_2]}$. For a particularly good cutting point $R$, that is, when $R$ is a global maximum, $D_1\sqcup D_2=D(\phi\circ\gamma_2)$. However, in general, the support of the union of diagrams differs from the diagram of the whole interval. To show stability, we miss the study of $d_B(D_1\sqcup D_2, D(\phi\circ\gamma_2))$. A possible avenue is given by the tools introduced in~\cite{herbertedelsbrunnerWindowPersistence1D2023}.

\section{Proof of Theorem~\ref{thm:signature_stability_to_reparametrisation}}
\label{app:proof_stability}
We start by treating $S$ path--wise. Using Proposition~\ref{prop:functional_continuity} and the bottleneck stability of persistence diagrams,
\begin{align}
\Vert \normalizedfunctional(\pattern\circ\gamma_1 + W) - \normalizedfunctional(\pattern\circ\gamma_2 + W)\Vert
&= \Vert \normalizedfunctional(\pattern + {W}_{\gamma_1^{-1}}) - \normalizedfunctional(\pattern+ {W}_{\gamma_1^{-1}})\Vert\nonumber\\
&\leq L_\kernel\left(1 + 4pU\max_{k=1,2}\frac{\pers_{p-1,\epsilon}^{p-1}(\pattern + {W}_{\gamma_k^{-1}} )}{\pers_{p,\epsilon}^p(\pattern+{W}_{\gamma_k^{-1}})}\right) \Vert {W}_{\gamma_1^{-1}} - {W}_{\gamma_2^{-1}}\Vert_\infty,
\label{eq:functional_difference_same_Rs}
\end{align}
where $L_\kernel$ is a regularity constant of the kernel and $U$ is an upper-bound on the persistence of any point in both diagrams. The persistence of any point in the diagram $D(h)$ of a function $h$ is bounded by $A_h$. Hence, the persistence of a point in $D(\pattern + W)$ is bounded by $U=A_{\pattern + W}\leq A_\pattern + A_W\leq A_\pattern + (A_\pattern - \epsilon - q)\leq 2A_\pattern$.

Next, we obtain an upper--bound of $\max_{k=1,2}\frac{\pers_{p-1,\epsilon}^{p-1}(\pattern + {W}_{\gamma_k^{-1}} )}{\pers_{p,\epsilon}^p(\pattern+{W}_{\gamma_k^{-1}})}$. By Proposition~\ref{proposition:kolmogorov_implies_holder}, we can assume that $W$ has $\alpha$-H\"older paths with a (random) constant $\Lambda_W$, for $\alpha\coloneqq
\tfrac{\min(1,\KolmogR-1)}{\KolmogP}$.
This implies that $\tfrac{1}{\alpha} + 1 < p$ and we use the continuity of truncated persistence from Proposition~\ref{prop:continuity_truncated_persistence} to obtain
\begin{equation}
\label{eq_in_proof:upper_bound_persistence}
\pers_{p-1,\epsilon}^{p-1}(\pattern + {W}_{\gamma_k^{-1}} ) \leq \pers_{p-1,\epsilon}^{p-1}(\restr{\pattern}{[0,T]}) + (p-1)\Vert W\Vert_\infty(\pers_{p-2,\epsilon}^{p-2}(\restr{\pattern}{[0,T]}) + \pers_{p-2,\epsilon}^{p-2}(W_{\gamma_k^{-1}})).
\end{equation}
For any $x\in[0,1]$ and $p\geq 0$, the function $p\mapsto x^p$ is decreasing, so that
\begin{align*}
\pers_{p-1,\epsilon}^{p-1}(\restr{\pattern}{[0,T]})
&= (A_\pattern - \epsilon)^{p-1}\sum_{(b,d)\in D} \max\left(\tfrac{d-b-\epsilon}{A_\pattern-\epsilon},0\right)^{p-1}\\
&\leq (A_\pattern - \epsilon)^{p-1}\sum_{(b,d)\in D} \max\left(\tfrac{d-b-\epsilon}{A_\pattern-\epsilon},0\right)^{p-2}\\
&= (A_\pattern - \epsilon)\pers_{p-2,\epsilon}^{p-2}(\pattern).
\end{align*}
Since $\Vert W\Vert_\infty< (A_\pattern - \epsilon)/2$ and the persistence does not depend on the parametrisation $\pers_{p-2,\epsilon}^{p-2}(W_{\gamma_k^{-1}}) = \pers_{p-2,\epsilon}^{p-2}(W)$, equation~\eqref{eq_in_proof:upper_bound_persistence} becomes
\begin{align*}
\pers_{p-1,\epsilon}^{p-1}(\pattern + {W}_{\gamma_k^{-1}} )
&\leq
(A_\pattern - \epsilon)\pers_{p-2,\epsilon}^{p-2}(\pattern)\left(1+ \tfrac{p-1}{2}\left(1+\tfrac{\pers_{p-2,\epsilon}^{p-2}(W)}{\pers_{p-2,\epsilon}^{p-2}(\pattern)}\right)\right)\\
&\leq
p(A_\pattern - \epsilon)\pers_{p-2,\epsilon}^{p-2}(\pattern)\left(1+ \tfrac{1}{2}\tfrac{\pers_{p-2,\epsilon}^{p-2}(W)}{\pers_{p-2,\epsilon}^{p-2}(\pattern)}\right).
\end{align*}
An upper--bound for the persistence of $W$ is given in Proposition~\ref{prop:upper_bound_p_persistence}
\begin{equation*}
\pers_{p,\epsilon}^p(W)\leq (A_W-\epsilon)^p\left(1+pT\left(\tfrac{2\Lambda_W}{\epsilon}\right)^{1/\alpha}\right),
\end{equation*}
where $\Lambda_W$ is the path--wise H\"older constant of $W$.
The amplitude $A_\pattern$ upper--bounds the persistence of a point and it is also realized as the persistence of a pair of a global minimum and a global maximum, so $\pers_{p-2,\epsilon}^{p-2}(\restr{\pattern}{[0,R]}) \geq (R-2)(A_\pattern-\epsilon)^{p-2}$ and hence
\begin{equation*}
\frac{\pers_{p,\epsilon}^p(W)}{\pers_{p-2,\epsilon}^{p-2}(\pattern)}
\leq \left(\tfrac{A_W-\epsilon}{A_\pattern -\epsilon}\right)^{p-2} (A_W-\epsilon)^2\tfrac{T}{R-2}\left(1+p\left(\tfrac{2\Lambda_W}{\epsilon}\right)^{1/\alpha}\right).
\end{equation*}
Putting the above together, with $p\geq2$,
\begin{align*}
\pers_{p-1,\epsilon}^{p-1}(\pattern + {W}_{\gamma_k^{-1}} )
&\leq
p(A_\pattern - \epsilon)\pers_{p-2,\epsilon}^{p-2}(\pattern)\left(1+ \left(\tfrac{A_W-\epsilon}{A_\pattern -\epsilon}\right)^{p-2} (A_W-\epsilon)^2\tfrac{T}{R-2}\max\left(1,p\left(\tfrac{2\Lambda_W}{\epsilon}\right)^{1/\alpha}\right)\right).
\end{align*}
We have therefore an upper--bound for the numerator. To lower--bound the denominator, we use Proposition~\ref{prop:lower_bound_p_persistence}:
\begin{align*}
\pers_{p,\epsilon}^{p}(\pattern + {W}_{\gamma_k^{-1}})
&\geq \pers_{p,\epsilon + A_W}^{p}(\pattern)\\
&\geq (R-2)(A_\pattern - (A_W+\epsilon))^p \\
&\geq (R-2)(A_\pattern - (A_\pattern - \epsilon + q + \epsilon))^p = (R-2)q^p.
\end{align*}

We conclude that we have an upper--bound $C_{\Lambda_W}$ on $\frac{\pers_{p-1,\epsilon}^{p-1}(\pattern + {W}_{\gamma_k^{-1}} )}{\pers_{p,\epsilon}^p(\pattern+{W}_{\gamma_k^{-1}})}$, that is
\begin{equation*}
C_{\Lambda_W} \coloneqq
L_\kernel	\left(1 + 8p^2A_\pattern
\frac{(A_\pattern - \epsilon)\pers_{p-2,\epsilon}^{p-2}(\pattern)\left(1+ \left(\tfrac{A_W-\epsilon}{A_\pattern -\epsilon}\right)^{p-2} (A_W-\epsilon)^2\tfrac{T}{R-2}\max\left(1,p\left(\tfrac{2\Lambda_W}{\epsilon}\right)^{1/\alpha}\right)\right)}{(R-2)q^p}
\right).
\end{equation*}
As $A_W\leq A_\pattern - \epsilon-q$, the only remaining stochastic term in $C_{\Lambda_W}$ is $\Lambda_W^{1/\alpha}$. Also, the bound only depends on $R$ (which is fixed), but not on $\gamma$ itself.

Let $\pi:\sAlgebra_{r,1}\times\sAlgebra_{r,2}\rightarrow \R$ be a coupling of $\mu_1$ and $\mu_2$. Specifically, $\pi$ is a measure on the product space $(\gammaSpace\times\gammaSpace,\sAlgebra_{r,1}\otimes \sAlgebra_{r,2}),$ such that
$\pi(A,\gammaSpace) = \mu_1(A)$ and $\pi(\gammaSpace, A)=\mu_2(A)$, for all $A\in\sAlgebra$. Then, $\pi\otimes\nu:((A_1,B_1),(A_2,B_2))\mapsto \pi(A_1,A_2)\nu(B_1\cap B_2)$ is a coupling of $\mu_1\otimes\nu$ and $\mu_2\otimes\nu$.
Using the coupling and~\eqref{eq:functional_difference_same_Rs},
\begin{align*}
\vert \E[\normalizedfunctional(\pattern\circ\gamma_1 + W)\mid W] - \E[\normalizedfunctional(\pattern\circ\gamma_2+W)\mid W]\vert
&= \left\vert\E_{(\gamma_1,\gamma_2)\sim\pi}[\normalizedfunctional(\pattern\circ\gamma_1 + W) - \normalizedfunctional(\pattern\circ\gamma_2 + W)\mid W]\right\vert\\
&\leq \E_{(\gamma_1,\gamma_2)\sim\pi} \left[\vert\normalizedfunctional(\pattern\circ\gamma_1 + W) - \normalizedfunctional(\pattern\circ\gamma_2 + W)\vert \mid W\right]\\
& \leq C_{\Lambda_W}
\E[\Vert {W}_{\gamma_1^{-1}} - {W}_{\gamma_2^{-1}}\Vert_\infty \mid W],\\
&\leq C_{\Lambda_W}\Lambda_W\E[\Vert\gamma_1^{-1} - \gamma_2^{-1}\Vert_\infty^\alpha].
\end{align*}
We have thus completely separated the bound into a product, with terms depending on $\nu$ and $(\mu_1,\mu_2)$.

On one hand, it remains to take the expectation with respect to $W$. We bound the moments of $\Lambda_W$ using Theorem~\ref{thm:holder_constant_moments}, obtaining
\begin{align*}
\E[\Lambda_W]&\leq 16\tfrac{\alpha+1}{\alpha}(K_{\KolmogP,\KolmogR})^{1/\KolmogP}\\
\E[\Lambda_W^{1+1/\alpha}] &\leq 6^{\KolmogP+2} K^{(1/\KolmogP + 1/(\KolmogR-1))}_{\KolmogP,\KolmogR}.
\end{align*}

On the other hand, by Jensens' inequality,
${\E[\Vert\gamma_1^{-1} - \gamma_2^{-1}\Vert_\infty^\alpha]\leq \E[\Vert\gamma_1^{-1} - \gamma_2^{-1}\Vert_\infty]^\alpha}$. Using the lower--bound on the modulus of continuity,
\begin{equation*}
\sup_{r\in[0,R]}\vert\gamma_1^{-1}(r) - \gamma_2^{-1}(r)\vert
= \sup_{t\in[0,T]}\vert t - \gamma_2^{-1}(\gamma_1(t))\vert
\leq \sup_{t\in[0,T]}\frac{1}{v_{\min}}\vert\gamma_2(t)-\gamma_1(t)\vert.
\end{equation*}
Taking the infimum over couplings, we obtain the $1-$Wasserstein distance $W_1(\mu_1,\mu_2)$.

\section{Proof of Proposition~\ref{prop:functional_difference_via_bias}}
\label{app:proof_functional_difference_via_bias}
We start by proving a lemma.
\begin{lemma}[Perturbed, path--wise version]
	\label{lemma:perturbed_pathwise_version}
	Consider a continuous perturbation $W\in C^{\alpha}_\Lambda([0,T],\R)$ and set $\delta\coloneqq\Vert W\Vert_\infty$. If $2\delta\leq \max \pattern - \min\pattern$, then
	\begin{equation*}
	\Vert \normalizedfunctional(\pattern + W) - \normalizedfunctional(\pattern)\Vert_\infty \leq
	L_k(P_1 \delta + P_2\delta^2 + P_3\delta^3)\eqqcolon L_k P(\delta),
	\end{equation*}
	where
	\begin{align*}
	P_1 =& 1+ 4A_\pattern C_T C_{p-1,p}^\epsilon(\pattern),\\
	P_2 =& 8C_TC_{p-1,p}^\epsilon(\pattern) + 4pA_\pattern (C_T C_{p-2, p}^\epsilon(\pattern) + \tfrac{C_{p-3,\Lambda,\alpha, T}}{\pers_{p,\epsilon}^p(\pattern)}),\\
	P_3 =& 4p\left(C_T C_{p-2, p}^\epsilon(\pattern) + \tfrac{C_{p-3,\Lambda,\alpha,T}}{\pers_{p,\epsilon}^p(\pattern)}\right),
	\end{align*}
	and
	\begin{equation*}
	C_T = \frac{\lceil T\rceil}{\lfloor T\rfloor -2},\qquad
	C_{p, p'}^\epsilon(\pattern) = \frac{\pers_{p,\epsilon}^p(\pattern)}{\pers_{p',\epsilon}^{p'}(\pattern)},\qquad
	A_\pattern = \Vert \pattern\Vert_\infty.
	\end{equation*}
\end{lemma}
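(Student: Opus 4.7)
The strategy is to instantiate the normalized-functional continuity bound of Proposition~\ref{prop:functional_continuity} at $D_1=D(\pattern)$ and $D_2=D(\pattern+W)$, control the bottleneck distance via the stability theorem, and then expand the persistence ratio that appears by repeated application of the Lipschitz estimate for truncated $p$-persistence (Proposition~\ref{prop:continuity_truncated_persistence}), terminating the recursion at a Hölder upper bound (Proposition~\ref{prop:upper_bound_p_persistence}). The three powers of $\delta=\Vert W\Vert_\infty$ in the statement correspond exactly to the three places $\delta$ enters: from the bottleneck factor, from one expansion of $\pers_{p-1,\epsilon}^{p-1}(\pattern+W)$, and from a second iterated expansion of $\pers_{p-2,\epsilon}^{p-2}(\pattern+W)$.

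First, Theorem~\ref{thm:bottleneck_stability} gives $d_B(D_1,D_2)\leq\delta$, while the hypothesis $2\delta\leq A_\pattern$ implies that every point in $D_1\cup D_2$ has persistence bounded by $U\coloneqq A_{\pattern+W}\leq A_\pattern+2\delta\leq 2A_\pattern$. Substituting into Proposition~\ref{prop:functional_continuity} yields an estimate of the shape
\begin{equation*}
\Vert \normalizedfunctional(\pattern+W)-\normalizedfunctional(\pattern)\Vert_\infty
\leq L_\kernel\,\delta\left(1+ c\, A_\pattern\,\frac{\pers_{p-1,\epsilon}^{p-1}(\pattern)+\pers_{p-1,\epsilon}^{p-1}(\pattern+W)}{\pers_{p,\epsilon}^p(\pattern)}\right),
\end{equation*}
with an explicit numerical constant $c$ (absorbing the $2p$ factor and the diagonal constant $C$). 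I would then use the additivity statement Lemma~\ref{lemma:limit_diagram} to pass from persistences computed on $[0,T]$ to single-period quantities: the lower bound $\pers_{p,\epsilon}^p(\restr{\pattern}{[0,T]})\geq(\lfloor T\rfloor-2)\pers_{p,\epsilon}^p(\pattern)$ and the upper bound $\pers_{q,\epsilon}^q(\restr{\pattern}{[0,T]})\leq\lceil T\rceil\pers_{q,\epsilon}^q(\pattern)$ together produce the factor $C_T=\lceil T\rceil/(\lfloor T\rfloor-2)$ in front of the single-period ratios $C_{p-1,p}^\epsilon(\pattern)$ and $C_{p-2,p}^\epsilon(\pattern)$.

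Next, I would expand $\pers_{p-1,\epsilon}^{p-1}(\pattern+W)$ using Proposition~\ref{prop:continuity_truncated_persistence}:
\begin{equation*}
\pers_{p-1,\epsilon}^{p-1}(\pattern+W)\leq \pers_{p-1,\epsilon}^{p-1}(\pattern)+(p-1)\delta\bigl(\pers_{p-2,\epsilon}^{p-2}(\pattern)+\pers_{p-2,\epsilon}^{p-2}(\pattern+W)\bigr),
\end{equation*}
and then iterate this bound once more on $\pers_{p-2,\epsilon}^{p-2}(\pattern+W)$, this time terminating the chain with Proposition~\ref{prop:upper_bound_p_persistence} applied to the $\alpha$-Hölder function $\pattern+W$ to obtain the constant $C_{p-3,\Lambda,\alpha,T}$. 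Dividing throughout by $\pers_{p,\epsilon}^p(\pattern)$ and tracking the powers of $\delta$, the contributions sort themselves into (i) a constant $\delta$-independent ratio producing the $\delta^1$ term $P_1\delta$, (ii) one factor of $\delta$ from the first Lipschitz expansion producing the $\delta^2$ term $P_2\delta^2$, and (iii) two factors of $\delta$ from the iterated expansion producing the $\delta^3$ term $P_3\delta^3$.

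The main obstacle is bookkeeping: matching the exact numerical constants $1$, $4A_\pattern$, $8$, $4p$, etc., in the $P_i$ requires careful handling of the symmetrization step in Proposition~\ref{prop:functional_continuity} (so that the denominator is $\pers_{p,\epsilon}^p(\pattern)$ rather than $\pers_{p,\epsilon}^p(\pattern+W)$), as well as consistent use of the bound $U\leq 2A_\pattern$ and the crude but sufficient estimate $\pers_{q,\epsilon}^{q}(\pattern)+\pers_{q,\epsilon}^{q}(\pattern+W)\leq 2\max_k\pers_{q,\epsilon}^q(D_k)$. No deep difficulty arises beyond this algebraic accounting: once the expansion tree is set up, the claimed polynomial-in-$\delta$ bound follows by grouping terms.
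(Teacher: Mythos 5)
Your skeleton is essentially the paper's: Proposition~\ref{prop:functional_continuity} combined with $d_B(D(\pattern),D(\pattern+W))\leq\delta$ from Theorem~\ref{thm:bottleneck_stability}, Lemma~\ref{lemma:limit_diagram} to convert $[0,T]$-persistences into per-period ratios (which is indeed where $C_T=\lceil T\rceil/(\lfloor T\rfloor-2)$ comes from), and Proposition~\ref{prop:continuity_truncated_persistence} to expand the numerator. The genuine gap is your termination step: you close the recursion by applying Proposition~\ref{prop:upper_bound_p_persistence} to ``the $\alpha$-H\"older function $\pattern+W$''. The lemma only assumes $W\in C^\alpha_\Lambda$; $\pattern$ is merely continuous and $1$-periodic, so $\pattern+W$ need not belong to $C^\alpha_\Lambda$, and even if $\pattern$ had H\"older regularity its constant would enter the bound, so this route does not produce the constant $C_{p-3,\Lambda,\alpha,T}$ of the statement, which is built from the H\"older data of $W$ alone. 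The paper arranges things differently: it performs a single expansion, writing $\pers_{p-1,\epsilon}^{p-1}(\pattern+W)\leq \pers_{p-1,\epsilon}^{p-1}(\pattern)+p\delta\left(\pers_{p-2,\epsilon}^{p-2}(\pattern)+\pers_{p-2,\epsilon}^{p-2}(W)\right)$, so that the only remaining ``unknown'' persistence is that of the noise $W$ itself, which Proposition~\ref{prop:upper_bound_p_persistence} bounds by $C_{p-3,\Lambda,\alpha,T}$, while all terms involving $\pattern$ are finite constants of the problem. Your double-iteration scheme, as described, has no admissible way to terminate under the stated hypotheses.

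A secondary bookkeeping discrepancy: you freeze $U\leq 2A_\pattern$ at the outset, whereas the paper keeps $U\leq 2(A_\pattern+\delta)$. In the paper the higher powers of $\delta$ arise from multiplying out $(A_\pattern+\delta)\left(2\lceil T\rceil\pers_{p-1,\epsilon}^{p-1}(\pattern)+p\delta(\cdots)\right)$, not from a second iterated expansion; in particular the $\delta$ hidden inside $U$ is what generates the $8C_TC_{p-1,p}^\epsilon(\pattern)$ contribution to $P_2$. With your simplification and your expansion tree you might still obtain a bound of the same polynomial shape in $\delta$, but it would not reproduce the stated coefficients $P_1,P_2,P_3$, and in any case the argument cannot be completed until the termination issue above is fixed, e.g.\ by isolating $\pers_{p-2,\epsilon}^{p-2}(W)$ as the paper does.
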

\begin{proof}
	By the diagram stability theorem, $d_B(D(\pattern + W), D(\pattern))\leq \Vert W\Vert_\infty\leq \delta$. The persistence of a point in $D(\pattern)$ and $D(\pattern+W)$ is bounded by $2A_\pattern$ and $2A_{\pattern + W}\leq 2(A_\pattern + \delta)$ respectively. Using Proposition~\ref{prop:continuity_truncated_persistence}, we also bound $\pers_{p-1,\epsilon}^{p}(\pattern + W)\leq \pers_{p-1,\epsilon}^{p-1}(\pattern) + p\delta(\pers_{p-2,\epsilon}^{p-2}(\pattern) + \pers_{p-2,\epsilon}^{p-2}(W))$. Using the uniform bound on persistence from Proposition~\ref{prop:upper_bound_p_persistence}, $\pers_{p-2,\epsilon}^{p-2}(W)\leq C_{p-3,\Lambda, \alpha, T}$.
	Finally, putting these together with Proposition~\ref{prop:functional_continuity}, we obtain:
	\begin{align*}
	\Vert \normalizedfunctional(\pattern) - \normalizedfunctional(\pattern + W)\Vert
	\leq& L_\kernel \left(1 + 2pU\frac{\pers_{p-1,\epsilon}^{p-1}(\pattern) + \pers_{p-1,\epsilon}^{p-1}(\pattern+W)}{\pers_{p,\epsilon}^p(\pattern)}\right) d_B(D(\pattern), D(\pattern+W))\\
	\leq& \delta L_\kernel \left(1+4p(\Vert\pattern\Vert_\infty + \delta)
	\tfrac{2\lceil T\rceil\pers_{p-1,\epsilon}^{p-1}(\restr{\pattern}{[c, c+1]}) + p\delta(\pers_{p-2,\epsilon}^{p-2}(\pattern) + C_{p-3,\Lambda, \alpha, T})}{(\lfloor T\rfloor-2)\pers_{p,\epsilon}^p(\pattern)}\right)\\
	\leq& L_\kernel\left(1+ 4A_\pattern C_T C_{p-1,p}^\epsilon(\pattern)\right) \delta\ +\\
	&L_\kernel \left(8C_TC_{p-1,p}^\epsilon(\pattern) + 4pA_\pattern (C_T C_{p-2, p}^\epsilon(\pattern) + \tfrac{C_{p-3,\Lambda,\alpha, T}}{\pers_{p,\epsilon}^p(\pattern)})\right)\delta^2\ +\\
	&4L_\kernel p\left(C_T C_{p-2, p}^\epsilon(\pattern) + \tfrac{C_{p-3,\Lambda,\alpha,T}}{\pers_{p,\epsilon}^p(\pattern)}\right)\delta^3.
	\end{align*}
\end{proof}

\begin{proof}[Proof of Proposition~\ref{prop:functional_difference_via_bias}]
	Combining lemma~\ref{lemma:perturbed_pathwise_version} and theorem~\ref{thm:convergence_to_limit},
	\begin{equation*}
	\begin{array}{r c r l}
	\Vert \normalizedfunctional(\pattern\circ\gamma_1 + W_1) - \normalizedfunctional(\pattern\circ\gamma_2 + W_2)\Vert
	&\leq&& \Vert \normalizedfunctional(\pattern + (W_1)_{\gamma_1^{-1}}) - \normalizedfunctional(\restr{\pattern}{[0,R_1]})\Vert \\
	&&+& \Vert \normalizedfunctional(\restr{\pattern}{[0,R_1]}) - \normalizedfunctional(\restr{\pattern}{[0,R_2]})\Vert \\
	&&+& \Vert \normalizedfunctional(\restr{\pattern}{[0,R_2]}) - \normalizedfunctional(\pattern + (W_2)_{\gamma_2^{-2}})\Vert \\
	&\leq&& L_\kernel(P(\delta_1) + P(\delta_2) + 2\tfrac{4}{\min(R_1,R_2)}\normalizedfunctional(\restr{\pattern}{[c,c+1]})) \\
	&\leq&& L_\kernel\left(P(\delta_1) + P(\delta_2) + \tfrac{8}{\min(R_1,R_2)-2}\tfrac{A_\pattern}{2}\right)\\
	&\leq&& L_\kernel \left( P(\max(\delta_1,\delta_2))+ \tfrac{4A_\pattern}{\min(R_1,R_2)-2}\right).
	\end{array}
	\end{equation*}
\end{proof}

\section{Exponential mixing of the reparametrisation process}
\label{appendix:mixing_proof}
\begin{proposition}
	\label{prop:gamma_beta_mixing}
	Consider $(\gamma_n)_{n=1}^N$ as in~\eqref{eq:gamma_Markov_chain} with $(V_n)_{n=1}^N$ as in Model 1 or 2. Then, $\beta_{\Frac(\gamma)}(k)\rightarrow 0$ exponentially fast.
\end{proposition}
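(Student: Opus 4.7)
The plan is to establish a uniform Doeblin-type minorization for the joint Markov chain $Z_n = (\Frac(\gamma_n), V_n)$ on the state space $E = [0,1]\times \VInterval$, from which exponential $\beta$-mixing of $Z_n$ --- and hence of its coordinate projection $\Frac(\gamma_n)$ --- will follow by classical ergodic theory (e.g.~\cite[Chapter 2]{doukhanMixing1995}). Observe that $Z_n$ is Markov with transition kernel
\begin{equation*}
Q((x,v), A\times B) = \TPK(v, B)\,\indicator{\Frac(x+hv)\in A}
\end{equation*}
in Model 2, and the same with $\TPK(v, B)$ replaced by the common law in Model 1. The target is to exhibit $n_0\geq 1$ and $c_0 > 0$ such that $Q^{n_0}((x,v), \cdot) \geq c_0\,\Lebesgue_{[0,1]}\otimes\Lebesgue_{\VInterval}$ uniformly in $(x,v)\in E$; this yields $\beta_Z(k)\leq (1-c_0(\vmax-\vmin))^{\lfloor k/n_0\rfloor}$, and $\beta_{\Frac(\gamma)}(k)\leq \beta_Z(k)$ concludes.

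First I would spread the velocity marginal. In Model 1 this is immediate from $P(V_k\in A)\geq c\,\Lebesgue(A)$ on $\Borel(\rbrack a,b\lbrack)$, so a single step suffices. In Model 2 I would iterate the local lower bound~\eqref{eq:borel_lower_bound_density}: by chaining $m_1\coloneqq\lceil(\vmax-\vmin)/\eta\rceil$ transitions each of which guarantees density at least $\mu_0$ on a $2\eta$-neighborhood, any $v\in\VInterval$ is reachable from any starting $v_0$, and using continuity of $v\mapsto \density{v}$ together with compactness of $\VInterval$ the $m_1$-step density is bounded below by a uniform positive constant on all of $\VInterval$, independently of $v_0$.

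Second, I would spread the fractional-part marginal. Writing $\gamma_{n_0} = \gamma_0 + h\sum_{k=0}^{n_0-1}V_k$, each $V_k$ admits a uniform component: in Model 1 via the Doeblin decomposition $P_{V_k} = c(b-a)\Uniform(a,b) + (1-c(b-a))\tilde{P}$, in Model 2 via~\eqref{eq:borel_lower_bound_density} on a $2\eta$-window. On the event that at least $\ell$ of the first $n_0-1$ velocities fall in their uniform component --- an event of probability bounded below uniformly in $(x_0, v_0)$ --- the conditional law of $h\sum_k V_k$ dominates a convolution of $\ell$ uniform densities on an interval of length $hL$, whose density is bounded below on a central interval of length $\ell h L$. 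Taking $n_0$ so large that $\ell h L \geq 1$, the pushforward by $\Frac$ is then uniformly bounded below on all of $[0,1]$. The two mechanisms are then combined by conditioning on the terminal velocity $V_{n_0}$ (whose density is uniformly bounded below on $\VInterval$ by the first step), bounding the conditional density of $\Frac(\gamma_{n_0})$ by a positive constant on $[0,1]$ via the convolution argument applied to $V_0,\ldots,V_{n_0-1}$, and invoking Fubini to recover the product minorization.

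The main obstacle I anticipate is maintaining the minorization \emph{uniformly in the initial state} $(x_0, v_0)$ while integrating out all intermediate velocities, especially in Model 2 where the transition kernels themselves depend on the previous velocity. The spreading of $V_n$ relies critically on continuity of $v\mapsto \density{v}$ to promote the local lower bound to a uniform one, and the convolution argument for $\Frac(\gamma_{n_0})$ must be carried out conditionally on the endpoint velocity in such a way that the resulting density lower bound does not depend on the conditioning. Once that is done, the final step --- checking that an interval of length $\geq 1/h$ projects by $\Frac$ to a density uniformly bounded below on $[0,1]$ --- is a routine direct computation.
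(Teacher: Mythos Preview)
Your high-level strategy --- a uniform Doeblin minorization for the joint chain $Z_n=(\Frac(\gamma_n), V_n)$ followed by the criterion of Theorem~\ref{theorem:mixing} --- is exactly the paper's, and for Model~1 your convolution argument is essentially the paper's (its Lemma~\ref{lemma:convolution_two_measures} makes the spreading of $h\sum_k V_k$ precise).

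The gap is in Model~2. Your claim that ``the conditional law of $h\sum_k V_k$ dominates a convolution of $\ell$ uniform densities'' does not hold as stated: the uniform component of $V_k$ sits on $[V_{k-1}-\eta, V_{k-1}+\eta]\cap I$, so even under a Nummelin splitting the increments $U_j=V_j-V_{j-1}$ enter the partial sum as $\sum_{k<n} V_k = nV_0 + \sum_{j=1}^{n-1}(n-j)U_j$, with triangular rather than equal weights, and the boundary of $I$ destroys their i.i.d.\ structure. More seriously, Doeblin requires a lower bound on the \emph{joint} density of $(\Frac(\gamma_{n_0}), V_{n_0})$, not on the two marginals separately, and ``condition on $V_{n_0}$ then apply Fubini'' does not deliver this: conditioning on the terminal velocity turns the chain into a bridge whose position marginal you have not controlled, and the product of two marginal lower bounds is not a lower bound on a joint density. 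The paper resolves this by abandoning the marginal decomposition entirely and tracking the joint density directly: it exhibits, by explicit planar geometry, a parallelogram $\Omega^n_{(u,v)}$ on which the $n$-step joint transition density is bounded below, shows that $\Omega^n$ grows in both coordinates by a uniform increment at each iteration (with a separate argument once $\Omega^n$ meets the boundary of $I$), and finally invokes compactness of $[0,1]\times I$ to make the required number of steps uniform in the initial state. Your velocity-spreading idea via continuity and compactness is pleasant, but without a mechanism to couple it to the position coordinate it does not replace that geometric bookkeeping.
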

The proof of this proposition relies on the continuity of the transition kernel with respect to the Lebesgue measure and the use of the following sufficient condition.
\begin{theorem}[{\cite[Section 2.4, Theorem 1]{doukhanMixing1995}}]
	\label{theorem:mixing}
	Let $(Z_n)_n$ be a stationary Markov chain and $\nu$ a non-negative and non-zero measure. If there exists $r\in\N^*$ such that
	\begin{equation}
	\label{eq:mixing_condition}
	P(Z_r\in A\mid Z_0=z)\geq \nu(A),\qquad\text{ for any } z,\text{ and } A \text{ any P-measurable set,}
	\end{equation}
	then $(Z_n)_n$ is $\beta$-mixing and the coefficients decay exponentially fast.
\end{theorem}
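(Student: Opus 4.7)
The plan is to invoke Theorem~\ref{theorem:mixing}, but to do so via \emph{different} Markov chains in the two models, exploiting the fact that in Model 1 the fractional part is already Markovian on its own. In Model 1, the independence of the $V_n$ makes $(\Frac(\gamma_n))_n$ itself a Markov chain on $[0,1]$ with one-step kernel $\Pi(x, A) = \P(\Frac(x + hV_1) \in A)$, and one can try to verify the minorization directly on this $[0,1]$-valued process. In Model 2, $(\Frac(\gamma_n))$ is not Markov, and I would pass to the joint process $Z_n = (\Frac(\gamma_n), V_n)$ on $[0,1] \times \VInterval$, which is; since $\Frac(\gamma_n)$ is a measurable image of $Z_n$, one has $\beta_{\Frac(\gamma)}(k) \leq \beta_Z(k)$ and it suffices to establish the stronger conclusion for $Z$.

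For Model 1, verification of~\eqref{eq:mixing_condition} proceeds by explicit convolution on the torus. Fix $x \in [0,1]$; by the lower bound on $\P(V_1 \in \cdot)$, the one-step kernel satisfies $\Pi(x, \cdot) \geq ch\cdot \restr{\Lebesgue}{J_x}$, where $J_x = \Frac(x + h\rbrack a, b\lbrack)$ is an arc of length $h(b-a)$. Iterating by Fubini, $\Pi^r(x, \cdot)$ dominates $(ch)^r$ times the image on the torus of the $r$-fold convolution of $\restr{\Lebesgue}{\rbrack a, b\lbrack}$. For $r \geq \lceil 1/(h(b-a))\rceil$, this image covers $[0,1]$ with multiplicity at least one, and the resulting lower bound $\nu = \kappa\restr{\Lebesgue}{[0,1]}$ with $\kappa > 0$ is \emph{independent} of $x$. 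Theorem~\ref{theorem:mixing} then gives exponential $\beta$-mixing of $\Frac(\gamma_n)$ directly.

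For Model 2 the same idea must handle the Markov dependence of the $V_n$. Assumption~\eqref{eq:borel_lower_bound_density} gives $\mKernel(v, \cdot) \geq \mu_0 \restr{\Lebesgue}{[v-\eta, v+\eta] \cap \VInterval}$ uniformly in $v$; chaining these local lower bounds (traversing $\VInterval$ in hops of size $\eta$) shows that after $s \geq \lceil (\vmax - \vmin)/\eta \rceil$ transitions, for \emph{any} $v_0 \in \VInterval$, the law of $V_s$ given $V_0 = v_0$ dominates a fixed positive multiple of Lebesgue on a common reference sub-interval $[a', b'] \subset \VInterval$. Running the chain for an additional $r'$ steps while restricting the trajectory to $V_\cdot \in [a', b']$ reduces the analysis to a setting essentially identical to Model 1 (increments with density bounded below on an interval), and produces a lower bound on the marginal of $\Frac(\gamma_{s+r'})$ that is uniform in $(\Frac(\gamma_0), V_0)$; continuity of $v \mapsto \density{v}$ yields the matching lower bound on the marginal of $V_{s+r'}$, and hence the joint minorization~\eqref{eq:mixing_condition} for $Z$.

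The main obstacle is the Model 2 chaining: producing a uniform-in-$z$ bound on the $r$-step joint kernel requires that all starting states $(\cdot, v_0)$ be driven into a common good region in a bounded number of steps with controlled probability, which hinges on continuity of $\density{\cdot}$ and compactness of $\VInterval$. A minor technical caveat is that Theorem~\ref{theorem:mixing} assumes stationarity; this is not restrictive because the Doeblin condition itself yields a unique stationary distribution, and starting from it does not affect the asymptotic $\beta$-mixing rate.
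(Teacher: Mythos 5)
There is a genuine gap, and it is structural: your proposal does not prove the statement at all. The statement to be established is the abstract result that for a stationary Markov chain $(Z_n)_n$, a uniform Doeblin minorization $P(Z_r\in A\mid Z_0=z)\geq \nu(A)$ for some fixed $r$ and a non-trivial measure $\nu$ implies exponential decay of the $\beta$-mixing coefficients. Your text opens with ``the plan is to invoke Theorem~\ref{theorem:mixing}'' and then spends all of its effort verifying the hypothesis~\eqref{eq:mixing_condition} in the paper's two reparametrisation models. That is circular with respect to the stated claim: you assume exactly the implication you were asked to prove, and what you actually sketch is the application (the content of Proposition~\ref{prop:gamma_beta_mixing}, which the paper handles in Appendix~\ref{appendix:mixing_proof}, with Section~\ref{sec:mixing_proof_iid} for Model 1 and the parallelogram construction for Model 2). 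In the paper itself the theorem is a cited result from Doukhan and is not re-proved; a blind proof of it would have to argue, for instance, that the minorization with mass $\delta=\nu(E)>0$ forces the total-variation contraction $\sup_{z,z'}\Vert P^r(z,\cdot)-P^r(z',\cdot)\Vert_{TV}\leq 1-\delta$, hence $\sup_z\Vert P^{kr}(z,\cdot)-\pi\Vert_{TV}\leq(1-\delta)^k$ for the invariant law $\pi$, and then use the identity expressing the $\beta$-coefficient of a stationary Markov chain as the $\pi$-average of $\Vert P^n(z,\cdot)-\pi\Vert_{TV}$ to conclude geometric decay. None of these ingredients appear in your write-up.

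Even judged as a sketch of the application it replaces, the Model 2 part is where the real difficulty sits and where your argument is thinnest. The paper does pass to the joint chain $(\Frac(\gamma_n),V_n)$ as you suggest, but the uniform-in-$(u,v)$ minorization is not obtained by first homogenizing the velocity and then ``reducing to Model 1'': the accumulated displacement $\sum h V_k$ is correlated with the terminal velocity state, so one must lower-bound the \emph{joint} $n$-step kernel on a region whose horizontal cross-sections eventually have length at least one, uniformly in the initial condition. This is precisely what the explicit $\Omega^n_{u,v}$ construction, the boundary corrections and the compactness argument in Steps 1--8 of Appendix~\ref{appendix:mixing_proof} accomplish; asserting that conditioning the trajectory to a reference subinterval ``reduces the analysis to a setting essentially identical to Model 1'' skips over exactly this coupling of position and velocity and the uniformity of the resulting constants.
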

Condition~\eqref{eq:mixing_condition} is called a Doeblin condition. It consists in providing a non-trivial lower--bound on the family of measures $(P^r(z,A))_{z}$. 
We first treat the case where $(V_n)_{n\in\N}$ are all \iid. The case where $(V_n)_{n\in\N}$ is a Markov Chain is similar, but technically more difficult.

\subsection{Model 1}
\label{sec:mixing_proof_iid}
Recall that $\gamma_n = \gamma_{n-1} + V_{n-1}$. In Model 1, $V_n$ is independent from $(V_k)_{k<n}$ and $\gamma_0$, so $(\gamma_n)_{n\in\N}$ is a Markov chain. We will now verify~\eqref{eq:mixing_condition}. Let $r\coloneqq \lceil 2/ (b-a)\rceil$ and $\epsilon=\lfloor \tfrac{b-a}{r}\rfloor$.
	\begin{lemma}
		\label{lemma:convolution_two_measures}
		Consider two measures $\mu_1,\,\mu_2$ such that $\mu_k(A)\geq c_k\Lebesgue(A),$ for $ A\in \Borel([a_k,b_k])$. Then, for any $0<\epsilon<\min(b_1-a_1,b_2-a_2)$, we have that
		$(\mu_1\star\mu_2)(A) \geq c_1c_2\epsilon\Lebesgue(A),$ for any $A\in\Borel([a_1+a_2 +\epsilon, b_1+b_2-\epsilon]).$
	\end{lemma}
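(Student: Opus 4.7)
The plan is to reduce the statement to a density calculation for a convolution of two uniform measures. I would first observe that the hypothesis $\mu_k(A) \geq c_k \Lebesgue(A)$ for all Borel subsets of $[a_k, b_k]$ is equivalent to the measure inequality $\mu_k \geq c_k \Lebesgue|_{[a_k, b_k]}$, so by the standard monotonicity of the convolution product on the cone of nonnegative measures,
\begin{equation*}
\mu_1 \star \mu_2 \;\geq\; c_1 c_2 \,\bigl(\Lebesgue|_{[a_1,b_1]} \star \Lebesgue|_{[a_2, b_2]}\bigr).
\end{equation*}
Hence it suffices to bound from below the right-hand side on Borel subsets of $[a_1+a_2+\epsilon,\, b_1+b_2-\epsilon]$.

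Next I would compute the density of the convolution of two uniform measures on intervals. Writing it as an indicator convolution, the density at a point $z$ is
\begin{equation*}
g(z) \;=\; \Lebesgue\bigl([a_1, b_1] \cap [z - b_2,\, z - a_2]\bigr),
\end{equation*}
which is the classical trapezoidal function: it is supported on $[a_1+a_2, b_1+b_2]$, is $0$ at the endpoints, increases linearly with slope $1$ near $a_1+a_2$ (up to height $\min(b_1-a_1, b_2-a_2)$), stays at its maximum on the middle plateau, and decreases linearly with slope $-1$ near $b_1+b_2$. The key step is then to check that for $z \in [a_1+a_2+\epsilon, b_1+b_2-\epsilon]$ with $\epsilon < \min(b_1-a_1, b_2-a_2)$, one has $g(z) \geq \epsilon$. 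This is immediate by evaluating the intersection length at the two extreme points: at $z = a_1+a_2+\epsilon$, the intersection is $[a_1, a_1 + \epsilon]$ of length $\epsilon$, and symmetrically at $z = b_1+b_2-\epsilon$ the intersection is $[b_1-\epsilon, b_1]$ of length $\epsilon$; on the interior the density is only larger.

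Putting these together, for any Borel set $A \subset [a_1+a_2+\epsilon, b_1+b_2-\epsilon]$ I would write
\begin{equation*}
(\mu_1 \star \mu_2)(A) \;\geq\; c_1 c_2 \int_A g(z)\, dz \;\geq\; c_1 c_2 \,\epsilon\, \Lebesgue(A),
\end{equation*}
which is exactly the desired inequality. There is no real obstacle here: the only subtle point is making sure the condition $\epsilon < \min(b_1-a_1, b_2-a_2)$ keeps us strictly inside the region where both sloping sides of the trapezoid are still controlled by $\epsilon$ (rather than having already reached the plateau); this is precisely the role of the hypothesis.
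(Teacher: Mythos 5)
The paper states Lemma~\ref{lemma:convolution_two_measures} without giving a proof (the text jumps directly to its inductive application), so there is no ``paper's approach'' to compare against; your argument must be judged on its own. It is correct and complete. The reduction $\mu_k \geq c_k\,\Lebesgue|_{[a_k,b_k]}$ on all of $\Borel(\R)$ follows because $\mu_k(B)\geq\mu_k(B\cap[a_k,b_k])\geq c_k\Lebesgue(B\cap[a_k,b_k])$; monotonicity of the convolution of nonnegative measures in each factor then gives the stated lower bound. Your explicit density computation is the standard trapezoid: the ramps $z\mapsto z-(a_1{+}a_2)$ and $z\mapsto (b_1{+}b_2)-z$ meet the plateau at height $\min(b_1{-}a_1,b_2{-}a_2)$, and since $\epsilon$ is strictly below that height, every $z\in[a_1{+}a_2{+}\epsilon,\,b_1{+}b_2{-}\epsilon]$ satisfies $g(z)\geq\epsilon$, while the same inequality $2\epsilon<(b_1{-}a_1)+(b_2{-}a_2)$ guarantees the interval is nonempty. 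Integrating the pointwise bound over $A$ yields the claim. One small stylistic point: you could avoid even invoking the plateau by noting that the trapezoid is concave on its support, so its minimum over the subinterval is attained at an endpoint, where it equals $\epsilon$; this makes the `` only larger on the interior'' remark rigorous in one line.
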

We now apply this Lemma~\ref{lemma:convolution_two_measures} inductively to $\mu_1$ and $\mu_2$ the measures of $\sum_{n=1}^{r_1} V_{r_1}$ and $V_{r_1+1}$ respectively, for $1\leq r_1\leq r-1$. We conclude that $P(\sum_{n=1}^r V_n\in A)\geq c\Lebesgue(A)$ for all $A\in\Borel(B)$, where $B\coloneqq[r(a+\epsilon) - \epsilon, r(b-\epsilon) + \epsilon]$ and $c\coloneqq c_1c_2\epsilon^{r-1}$. Thanks to our choice of $r$ and $\epsilon$, $B$ is an interval of length at least 1.

Let $x_0\in\lbrack0,1\lbrack$ and $A\in\mathcal{B}([0,1])$. We write $\Frac^{-1}(A) = \cup_{k\in\Z} A + k$, where $A+k=\{a+k\mid a\in A\}$. Then,
\begin{align*}
	P(\Frac(\gamma_r)\in A\mid \gamma_0=x_0)
	&= P\left(x_0 + \sum_{n=0}^r V_n\in \Frac^{-1}(A)\right)\\
	&= P\left(\sum_{n=0}^r V_n \in \bigcup_{k\in\Z} (A+k)-x_0\right)\\
	&\geq P\left(\sum_{n=0}^r V_n \in \bigcup_{k\in\Z} (A+k -x_0) \cap B\right) \\
	&\geq c\Lebesgue\left(\bigcup_{k\in\Z} (A+k-x_0) \cap B\right)\\
	&= c\sum_k\Lebesgue(A+k-x_0\cap B),
\end{align*}
where the last equality follows from the fact that $\Lebesgue(A\cap (A+1))=0$, because $A\subset [0,1]$.
Notice that for any set $A + z \cap B = (A\cap (B-z)) + z$ and that $\Lebesgue(A+z) = \Lebesgue(A)$, for any $z\in\R$. Hence, for any $k\in\Z$,
\begin{equation*}
\Lebesgue\left(A+k-x_0 \cap B\right) =  \Lebesgue(k-x_0 + (A\cap (B-k+x_0))) = \Lebesgue(A\cap (B - k - x_0)).
\end{equation*}
Recall that $B$ is an interval of length greater than 1, so $(B-k-x_0)_{k\in\Z}$ is a cover of $\R$. Hence,
\begin{align*}
P(\Frac(\gamma_r)\in A\mid \gamma_0=x_0)
&\geq c\sum_k\Lebesgue(A \cap (B - k - x_0))\\
&\geq c \Lebesgue\left(A\cap \bigcup_k (B - k - x_0)\right)\\
&= c\Lebesgue(A).
\end{align*}
We can therefore set $\mu \coloneqq c\Lebesgue$. The measure does not depend on $x_0$ and it has total mass $c>0$.

We now show that $(\Frac(\gamma_n))_{n\in\N}$ is strictly stationary: for any $K\in\N^*$, $\tau\in\N$ and $n_1,\ldots n_K$, the vectors
$(\Frac(\gamma_{n_1}),\ldots, \Frac(\gamma_{n_K}))\sim(\Frac(\gamma_{n_1+\tau}),\ldots, \Frac(\gamma_{n_K+\tau}))$, where $X\sim Y$ is a shorthand notation for ``$X$ and $Y$ have the same distribution". It is enough to show that for any $K\geq 1$, $(\Frac(\gamma_0),\ldots, \Frac(\gamma_K))\sim(\Frac(\gamma_n),\ldots \Frac(\gamma_{n+K}))$, for any $n\geq 0$.
We write $(\Frac(\gamma_n),\ldots \Frac(\gamma_{n+K})) = \Frac(\Frac(\gamma_0 + \sum_{r=0}^{n-1} V_r) + \Frac(0,V_{n}, \ldots,\sum_{r=n}^{n+K-1} V_r))$ and we analyze the two terms separately. Here, $\Frac$ is applied component--wise.
% Vector part
First, because $(V_n)_{n\in\N}$ are \iid, $\left(\sum_{r=0}^{k} V_r\right) \sim \left(\sum_{r=n}^{n+k} V_r\right)$, for any $n,k \in\N$. Therefore, $(0, V_0,\ldots, \sum_{r=0}^{n-1} V_r)\sim(0,V_{n}, \ldots,\sum_{r=n}^{n+K-1} V_r)$. It also remains true when we apply $\Frac$ component--wise, because it is a measurable mapping $\R^{K+1}\rightarrow\R^{K+1}$.
% Scalar part
Second, we claim the following lemma on the sum of two random variables, one of which is uniform.
\begin{lemma}
	\label{lemma:convolution}
	If $U\sim\Uniform([0,1])$ and $Z$ is a real--valued random variable independent of $U$, then ${\Frac(U+Z)\sim\Frac(U)\sim U}$.
\end{lemma}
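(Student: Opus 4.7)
The plan is to first establish the statement for a deterministic shift, and then extend it to the random shift $Z$ by independence. The key observation is that the map $u \mapsto \Frac(u + w)$, for any fixed $w \in [0,1)$, is a measure-preserving bijection of $[0,1)$ onto itself (it is precisely the rotation by $w$ on the circle $\R/\Z$), hence it sends the uniform distribution to itself.

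\textbf{Step 1 (Deterministic shift).} Fix $z \in \R$ and set $w \coloneqq \Frac(z) \in [0,1)$. Since $U + z$ and $U + w$ differ by the integer $\lfloor z \rfloor$, we have $\Frac(U+z) = \Frac(U+w)$ almost surely. Describe the map $\phi_w \colon [0,1) \to [0,1)$ given by $\phi_w(u) = u + w$ when $u < 1 - w$ and $\phi_w(u) = u + w - 1$ when $u \geq 1 - w$. This map is a bijection and, being piecewise a translation, it preserves the Lebesgue measure on $[0,1)$. Consequently $\phi_w(U)$ is again uniform on $[0,1)$, which gives $\Frac(U+z) \sim U$.

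\textbf{Step 2 (Random shift and conclusion).} Using independence of $U$ and $Z$, apply Fubini's theorem: for any Borel set $A \subset [0,1)$,
\begin{equation*}
P(\Frac(U+Z) \in A) = \int_{\R} P(\Frac(U+z) \in A)\, dP_Z(z) = \int_{\R} \Lebesgue(A)\, dP_Z(z) = \Lebesgue(A) = P(U \in A).
\end{equation*}
This yields $\Frac(U+Z) \sim U$, and since $P(U = 1) = 0$ gives $\Frac(U) = U$ almost surely, also $\Frac(U+Z) \sim \Frac(U)$.

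There is no substantial obstacle; the proof is essentially the standard fact that Haar measure on the compact group $\R/\Z$ is the unique translation-invariant probability measure. The only point requiring minimal care is the joint measurability needed to apply Fubini, which is immediate since $(u,z) \mapsto \Frac(u+z)$ is Borel measurable.
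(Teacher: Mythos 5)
Your proof is correct. It takes a somewhat different route from the paper's: you factor the argument into two clean steps — first show that for a fixed shift $z$ the map $u\mapsto\Frac(u+z)$ is a measure-preserving bijection of $[0,1)$ (rotation on the circle), so $\Frac(U+z)\sim U$, and then integrate over the law of $Z$ by independence and Fubini. The paper instead computes the CDF of $\Frac(U+Z)$ directly: it writes $P(\Frac(U+Z)\leq s)=\sum_{k\in\Z}P(U+Z\in[k,k+s])$, expands each term as a convolution of $\mu_U$ and $\mu_Z$, and uses translation invariance of Lebesgue measure to sum the series to $s$. The underlying mechanism (translation invariance of Lebesgue measure on $[0,1)$ modulo $1$, plus conditioning on $Z$) is the same, but your organization isolates the deterministic measure-preserving fact as a separate step, which is more modular and arguably more transparent; the paper's version is a single direct computation that avoids naming the rotation map explicitly. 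Either proof suffices for the use made of the lemma.
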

Before showing Lemma~\ref{lemma:convolution}, we conclude the proof by applying it to $U=\gamma_0$ and $Z = \sum_{r=0}^{n-1} V_r$. Indeed, $\gamma_0$ is independent from $(V_r)_{r=0}^{n-1}$, so we obtain that
$\Frac(\gamma_0)\sim\Frac(\gamma_0 + \sum_{r=0}^{n-1} V_r).$
Finally, combining the above with $\Frac((0, V_0,\ldots, \sum_{r=0}^{n-1} V_r))\sim\Frac((0,V_{n}, \ldots,\sum_{r=n}^{n+K-1} V_r))$, we have that $\Frac(\gamma_0,\ldots, \gamma_K)\sim \Frac(\gamma_{n}, \ldots, \gamma_{n+K})$.
\begin{proof}[Proof of Lemma~\ref{lemma:convolution}]
	First, it is clear that for $s\leq 0$, $P(\Frac(U+Z)<s)=0$ and that for $s>1$, $1\geq P(\Frac(U+Z)<s)\geq P(\Frac(U+Z)\leq 1) = 1$. 
	For $0<s<1$,
	\begin{equation}
	\label{eq:convolution}
	P(\Frac(U+Z)\leq s) = P\left(U+Z \in \bigcup_{k\in\Z}[k, k+s]\right) = \sum_{k\in\Z} P(U+Z\in [k, k+s]).
	\end{equation}
	Because $U$ and $Z$ are independent, $P(U+Z\in[k,k+s])=(\mu_U\star\mu_Z)([k,k+s])$, where $\mu_U$ and $\mu_Z$ are the probability measures of $U$ and $Z$ respectively and $\star$ denotes their convolution. Note that since $\Lebesgue$ is translation--invariant,
	\begin{align*}
	(\mu_U\star\mu_Z)([k,k+s])
	&= \int_\R \int_0^1 \indicator{[k,k+s]}(z+u)du d\mu_Z(z)\\
	&= \int_\R \Lebesgue([0,1]\cap[k-z,k+s-z]) d\mu_Z(z) \\
	&=\int_\R \Lebesgue([-k, -k+1]\cap[-z, -z+s]) d\mu_Z(z)\\
	&=\int_\R \Lebesgue(\lbrack-k, -k+1\lbrack\cap[-z, -z+s]) d\mu_Z(z)
	\end{align*}
	Going back to~\eqref{eq:convolution},
	\begin{align*}
	P(\Frac(U+Z)\leq s)
	&= \sum_{k\in\Z} \int_\R \Lebesgue(\lbrack-k, -k+1\lbrack\cap[-z, -z+s]) d\mu_Z(z)&\\
	&= \int_\R \sum_{k\in\Z} \Lebesgue(\lbrack-k, -k+1\lbrack\cap[-z, -z+s]) d\mu_Z(z)&\\
	&= \int_\R \Lebesgue([-z,-z+s])d\mu_Z(z)&\\
	&= \Lebesgue([0,s])\int_\R d\mu_Z(z).&\\
	&=s.
	\end{align*}
	Therefore, the distribution function of $\Frac(U+Z)$ is uniform on $[0,1]$ and therefore also equal to that of $\Frac(U)$.
\end{proof}

\subsection{Model 2}
The process $(\Frac \gamma_n)_{n\in\N}$ is defined in~\eqref{eq:gamma_Markov_chain}, via the Markov chain $(V_n)_{n\in\N}$. Recall that this Markov chain has a transition probability kernel $P$, with support included in $I=[\vmin, \vmax]$. Therefore, $(\Frac(\gamma_n))_{n\in\N}$ is not itself a Markov Chain (of order 1). However, the process $((\gamma_n, V_n))_{n\in\N}$ is a Markov Chain. We characterize its distribution and we verify that it satisfies the Doeblin condition~\eqref{eq:mixing_condition}, which takes the remaining of this Section.

Consider now $(\R,\Borel(\R))$ and let $(x,A)\mapsto \indicator{A}(x)$, which is also a transition probability kernel. We define a product kernel on $R\coloneqq\R\times I$, where $I=[\vmin,\vmax]$. It is characterised by the following measure on rectangles
\begin{equation*}
((y,v), (A\times B))\mapsto \indicator{A}(y) \TPK(v, B).
\end{equation*}
More generally, it extends to any set $A\in \Borel(R)$ as $((y,v), (A\times B))\mapsto \TPK(v, A_y)$, where 
\begin{equation}
\label{eq:set_projection}
A_y=\{v\in I\mid (y,v)\in A\}
\end{equation}
is the projection of $A\cap \{x=y\}$ onto the second coordinate.
We define the map $T$
\begin{equation*}
\begin{array}{rrcl}
T:&\R^2&\rightarrow&\R^2\\
&(x,v)&\mapsto&(x+hv, v),
\end{array}
\end{equation*}
and we let $\BiTPK$ be the pull-back of the product kernel $\TPK$ by this map. Explicitly, for $A\in\Borel(R)$,
\begin{equation}
\label{eq:bitpk_definition}
\BiTPK((u,v), A) = \TPK(v, A_{u+hv}).
\end{equation}
In what follows, we show~\eqref{eq:mixing_condition} for the Markov chain $((\Frac\gamma_n, V_n))_{n\in\N}$, which has transition probability kernel $\Frac_\star\BiTPK$.
\begin{figure}
	\noindent\includestandalone[width=1.0\textwidth]{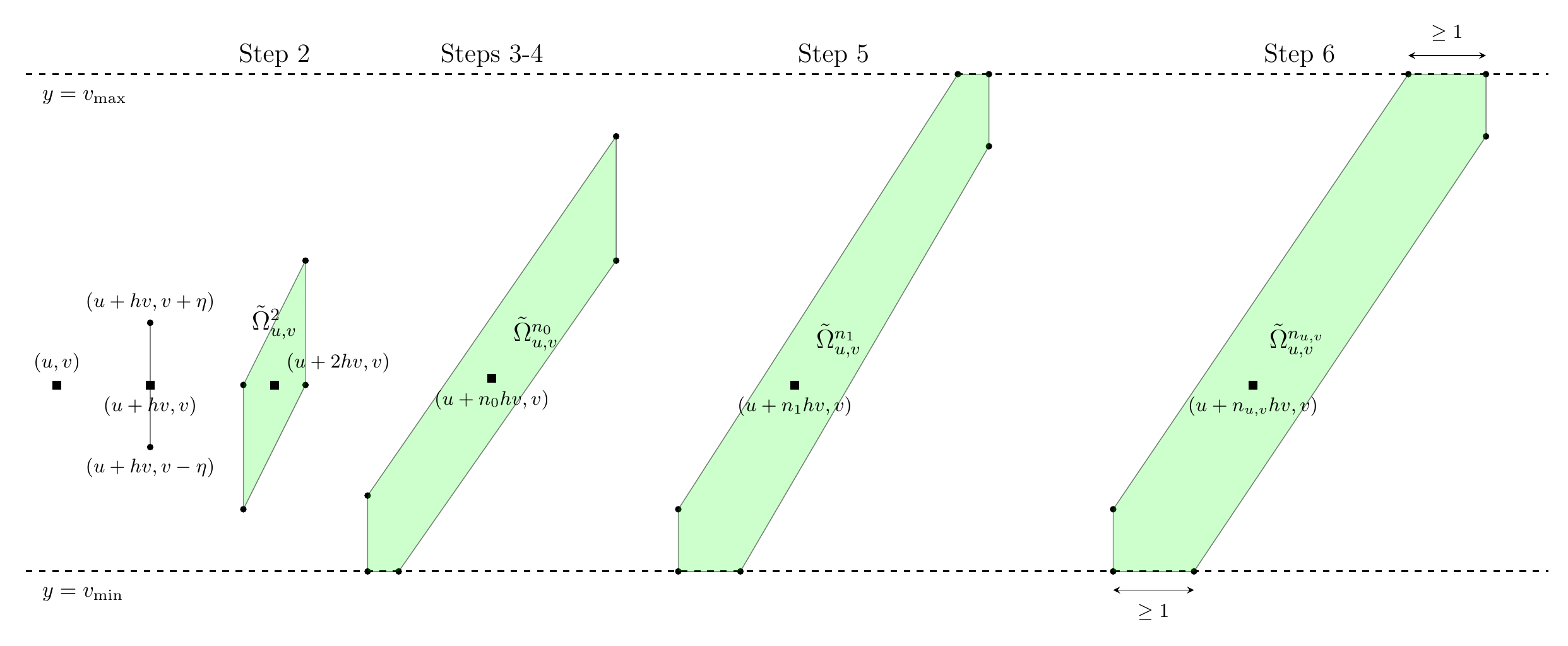}
	\caption{A schematic illustration of the form of a densitys' support. The density lower--bounds $\BiTPK^n((u,v), \cdot))$.}
	\label{fig:mixing_proof_overview}
\end{figure}

Figure~\ref{fig:mixing_proof_overview} illustrates the proof. For $(u,v)\in R$, we show that $\BiTPK^n((u,v), \cdot))$ is lower--bounded by a uniform measure of which we carefully characterise the support, $\Omega^n_{u,v}$. In Steps 1-6, we show that for a certain $n_{u,v}\in\N$, the support of this uniform measure, $\Omega^{n_{u,v}}_{u,v}$, is large enough. In~\nameref{mixing_proof:step_uniform_lower_bound}, we show that $n_{u,v}\leq N\in\N$, for all $(u,v)\in R$. We conclude in~\nameref{mixing_proof:step_conclusion} by showing~\eqref{eq:mixing_condition}. Compared with the \iid~ case treated in Section~\ref{sec:mixing_proof_iid},~\nameref{mixing_proof:step_n2} is the analogue of Lemma~\ref{lemma:convolution_two_measures}, except that the iteration requires the additional Steps 2-5.

\stepparagraph{lower--bound for $\BiTPK^2((u,v),\cdot)$}
\label{mixing_proof:step_n2}
For $(u,v)\in R$ and $(z_1,z_2)\in R$, according to~\eqref{eq:set_projection},
\begin{align*}
([0,z_1]\times[\vmin, z_2])_{u+h(v+y)}
= \begin{cases}
[\vmin,z_2], & \text{ if } u+h(v+y)\in [0,z_1],\\
\emptyset, &\text{ otherwise.}
\end{cases}
\end{align*}
In~\eqref{eq:bitpk_definition}, we observe that integrating with respect to $\BiTPK^2$ amounts to integrating $\TPK$ along a vertical strip, so marginalizing with respect to $(\gamma_1, V_1)$,
\begin{align*}
\BiTPK^2((u,v), \rbrack -\infty, z_1\rbrack\times \lbrack \vmin, z_2\rbrack)
&= \int_R \BiTPK((u,v), dxdy))\BiTPK((x,y), \rbrack -\infty, z_1\rbrack\times \lbrack \vmin, z_2\rbrack)\\
&=\int_I \TPK(v, dy)\TPK(y, (\rbrack -\infty, z_1\rbrack\times \lbrack \vmin, z_2\rbrack)_{u+h(v+y)})\\
&= \int_{\vmin}^{\max((z_1-u)/h - v, \vmax)} \TPK(v,dy)\TPK(y, [\vmin, z_2])
\end{align*}
Differentiating the above expression with respect to $z_1$ and then $z_2$, 
for $z_1\leq u + h(v+\vmax)$, we get
\begin{align*}
\frac{\partial \BiTPK^2((u,v), \rbrack -\infty, z_1\rbrack\times \lbrack \vmin, z_2\rbrack)}{\partial z_1}
&=
\density{v}\left(\tfrac{z_1-u}{h} - v \right) P\left(\tfrac{z_1-u}{h} - v, \rbrack \vmin, z_2\rbrack\right)\\
\density{(u,v)}^{\star2}(z_1,z_2) = \frac{\partial^2 \BiTPK^2((u,v), \rbrack -\infty, z_1\rbrack\times \lbrack 0, z_2\rbrack)}{\partial z_1\partial z_2}
&=
\density{v}\left(\tfrac{z_1-u}{h} - v \right) \density{\tfrac{z_1-u}{h} - v}(z_2).
\end{align*}
As $\density{v}(y)\geq \mu_0 \indicator{[v-\eta, v+\eta]}(y)$, we have
$\density{(u,v)}^{\star2}(z_1,z_2)\geq \mu_0^2$, if
\begin{equation*}
\left\{\begin{aligned}
\frac{z_1-u}{h} -v &\in [\max(\vmin,v-\eta), \min(\vmax, v+\eta)],\\
z_2&\in\left[\max\left(\vmin,\frac{z_1-u}{h} -v - \eta\right), \min\left(\vmax, \frac{z_1-u}{h} -v + \eta\right) \right].
\end{aligned}\right.
\end{equation*}
The above is equivalent to
\begin{equation}
\label{eq:parallelogram_n2}
\left\{\begin{aligned}
z_1 &= u+2hv +k h\eta \\
z_2 &= v + (k+l)\eta,
\end{aligned}\right.\qquad \text{ for some } l\in[-1,1], k\in [-1,1]\cap \left[\tfrac{\vmin-v}{\eta}, \tfrac{\vmax-v}{\eta}\right].
\end{equation}
So, $\BiTPK^2((u,v),\cdot)$ has a density $\density{(u,v)}^{\star 2}$ with respect to the Borel measure on $\R^2$. That density is lower--bounded: for
$(z_1,z_2)\in R\cap\Omega^2_{(u,v)}$, we have $\density{(u,v)}^{\star2}\left(z_1,z_2\right)\geq \mu_0^2$, where
\begin{equation}
\label{eq:omega2_definition}
\Omega^2_{(u,v)}\coloneqq \{(u+2hv, v)+k(h\eta, \eta) + l(0,\eta)\mid k,l\in[-1,1]\}.
\end{equation}
When $\tfrac{\vmin -v}{\eta}<-1$ and $1<\tfrac{\vmax-v}{\eta}$, then $\Omega^2_{(u,v)}\subset R$ and we carry on with the induction to~\nameref{mixing_proof:step_induction}. Otherwise, we go directly to~\nameref{mixing_proof:step_first_boundary} as $\Omega^{n+1}\cap R^c\neq \emptyset$.

\stepparagraph{Lower--bound for $n\geq 3$, while $\Omega^{n}_{(u,v)}\cap R^c = \emptyset$}
\label{mixing_proof:step_induction}
We start by defining the parallelograms $\Omega^{n}_{(u,v)}$ and showing some properties of the vectors that generate them.
Then, by induction that for $n\geq2$, we will show the following statement:
\begin{equation}
\label{eq:bitpk_n_lower_bound}
\parbox{0.8\textwidth}{
	\raggedright
	For $0<\epsilon<\min\left(\tfrac{1}{4}, \tfrac{\eta}{2}(\vmax-\vmin)\right)$ and $\eta<(\vmax-\vmin)/2$,
	$\BiTPK^{n}$~has~a~density~$\density{}^{\star n}$ lower--bounded by $\left(\tfrac{\eta\epsilon}{2}\right)^{n-2}\mu_0^n$ on $\Omega^n_{(u,v)}$.
}
\end{equation}
Our induction is valid while $\Omega^{n}_{u,v}\subset R$ and \nameref{mixing_proof:step_first_boundary} shows how to modify it when it ceases to be the case. Our arguments become progressively more geometric, for what we find the illustration of the proof in Figure~\ref{fig:omega_n} helpful.
\begin{figure}
	\centering
	\noindent\includestandalone[width=1.0\textwidth]{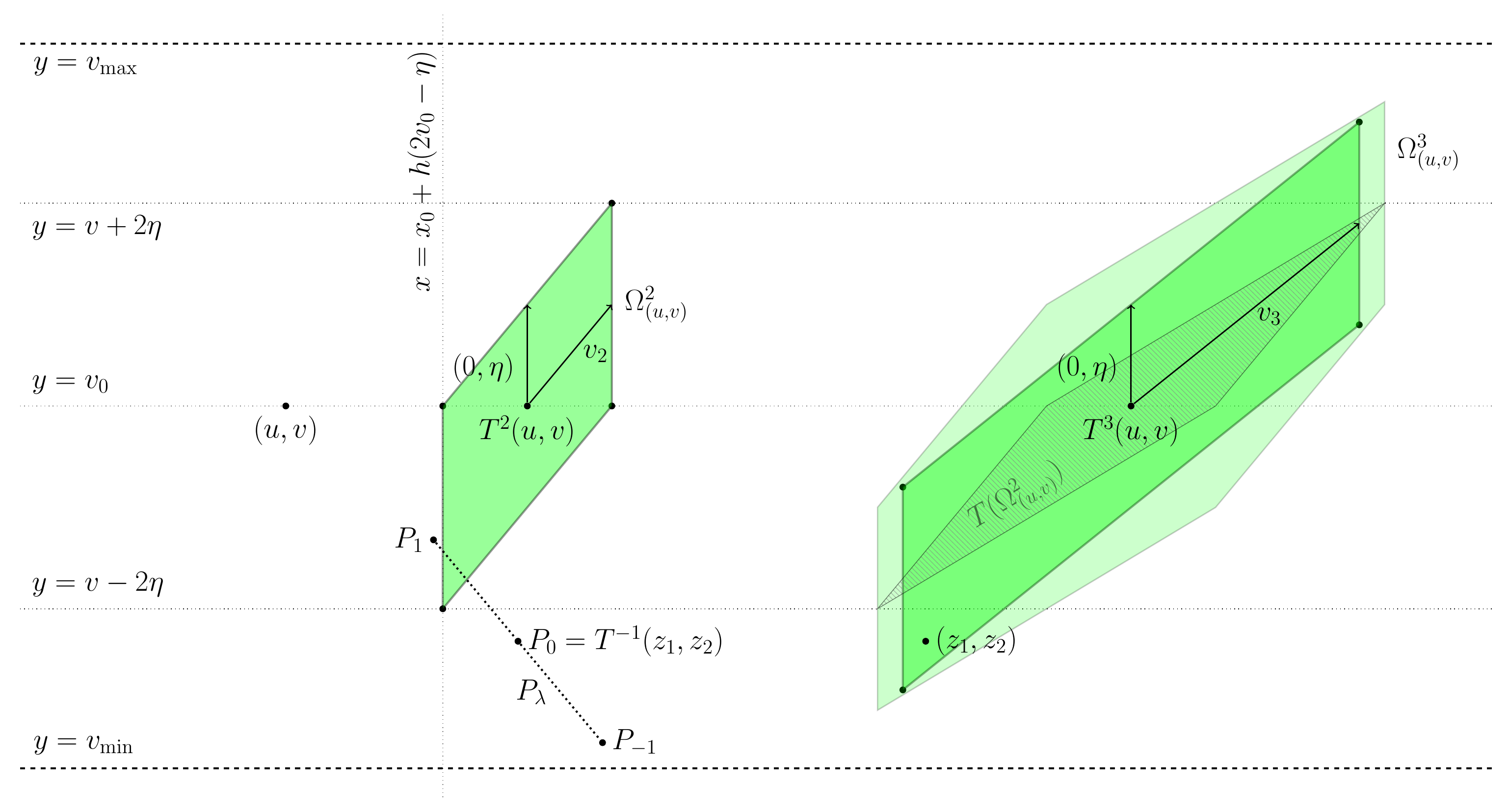}
	\caption{Illustration of $\Omega^n_{(u,v)}$ for $n=2,\,3$ and of the segment $P_\lambda$. Our argument consists in showing that for any $(z_1,z_2)\in\Omega^{n+1}_{(u,v)}$, the length of the intersection of $P_\lambda$ with $\Omega^n_{(u,v)}$ is at least $\eta\epsilon/2$. While the dark green region is $\Omega^3$, the lighter colour shows a larger region where the lower--bound is valid.}
	\label{fig:omega_n}
\end{figure}

To define $\Omega^n_{(u,v)}$, let $v_2\coloneqq T\RowVec{0, \eta} = \RowVec{h\eta,\eta}$ and for $n\geq 3$,
\begin{equation}
\label{eq:vn_definition}
v_n = (1-\epsilon)\left(T\RowVec{0,\eta} + T(v_{n-1})\right)\in\R^2.
\end{equation}
For $n\geq 3$, we define 
\begin{equation}
\label{eq:omegan_definition}
\Omega^{n}_{(u,v)}\coloneqq \left\{T^n\RowVec{u,v} + l\RowVec{0,\eta} + k v_n\mid l,k\in [-1,1]\right\}.
\end{equation}
Notice that if we take $n=2$ in~\eqref{eq:omegan_definition}, we get $\Omega^2_{(u,v)}$ from as defined in~\eqref{eq:omega2_definition}.

While one can obtain an explicit expression of $v_n$, it is of little pratical interest: we only need to ensure that the horizontal component of $v_n$ remains sufficiently large. This is detailed in the proof of Lemma~\ref{lemma:length of segment}.

Since we have shown the statement~\eqref{eq:bitpk_n_lower_bound} for $n=2$, we proceed with the induction step. For $\RowVec{z_1,z_2}\in\Omega^{n+1}_{(u,v)}\cap R$, we calculate
\begin{equation*}
\begin{aligned}
\BiTPK^{n+1}((u,v), \rbrack -\infty, z_1\rbrack\times \lbrack \vmin, z_2\rbrack)
&= \int_{R\cap \{x+yh\leq z_1\}} \BiTPK^{n}((u,v),dxdy) P(y, [\vmin, z_2])\\
&= \int_{R\cap \{x+yh\leq z_1\}}\density{(u,v)}^{\star {n}}(x, y)P(y, [\vmin, z_2])dxdy.
\end{aligned}
\end{equation*}
We can rewrite $R\cap \{x+yh\leq z_1\} = \{(x,y)\in\R\mid y\in I,\, x\leq z_1-yh\}$.
Differentiating with respect to $z_1$, we obtain
\begin{equation*}
\frac{\partial \BiTPK^{n+1}((u,v), \rbrack -\infty, z_1\rbrack\times \lbrack \vmin, z_2\rbrack)}{\partial z_1} =
\int_{y}\density{(u,v)}^{\star {n}}(z_1-yh, y)P(y, [\vmin, z_2])dxdy,
\end{equation*}
where $\density{v}$ is defined in~\eqref{eq:borel_lower_bound_density}.
For $z_2\geq \vmin$, we get
\begin{equation}
\label{eq:density_n}
\density{(u,v)}^{\star n+1}(z_1,z_2) = \frac{\partial^2 \BiTPK^{n+1}((u,v), \rbrack -\infty, z_1\rbrack\times \lbrack \vmin, z_2\rbrack)}{\partial z_1\partial z_2}
= \int_I \density{(u,v)}^{\star {n}}(z_1-yh, y) \density{y}(z_2)dy.
\end{equation}
The expression in~\eqref{eq:density_n} is similar to that from~\nameref{mixing_proof:step_n2}, except that it is integrated over $I$. We can lower--bound the integrand: $\density{(u,v)}^{\star {n}}$ is lower--bounded by $\left(\tfrac{\eta\epsilon}{2}\right)^{n-2}\mu_0^{n}$ on $\Omega^{n}_{(u,v)}$ for $n\geq 2$ and $\density{y}$ by $\mu_0$ on $[y-\eta, y+\eta]\cap I$. To lower-bound $\density{(u,v)}^{\star n+1}$, it remains to lower--bound the length of the integration domain.
For the calculations, we take the following parametrisation of $[z_2-\eta,z_2+\eta]\cap I$,
\begin{equation}
\label{eq:lambda_segment}
\left\{\lambda\in[-1,1]\mid P_\lambda\coloneqq P_0 + \lambda\eta (-h, 1)\in\Omega^{n}_{(u,v)}\right\},\qquad \text{where } P_0 = T^{-1}(z_1,z_2).
\end{equation}
\begin{lemma}
	\label{lemma:length of segment}
	The length of the segment~\eqref{eq:lambda_segment} is at least $\tfrac{\epsilon}{2}$.
\end{lemma}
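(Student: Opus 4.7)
The plan is to apply the map $T$ to reduce the segment condition to a cleaner question about vertical chords, then exploit convexity to bring it down to a finite check at corners. Since $T(\eta(-h,1)) = (0,\eta)$ one has $T(P_\lambda) = (z_1, z_2 + \lambda\eta)$, so $P_\lambda \in \Omega^n_{(u,v)}$ if and only if $(z_1, z_2+\lambda\eta) \in T(\Omega^n_{(u,v)})$. The statement becomes: for every $(z_1, z_2) \in \Omega^{n+1}_{(u,v)}$, the set $\Lambda(z_1,z_2) := \{\lambda \in [-1,1] : (z_1, z_2 + \lambda\eta) \in T(\Omega^n_{(u,v)})\}$ has length at least $\epsilon/2$. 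Both $T(\Omega^n_{(u,v)})$ and $\Omega^{n+1}_{(u,v)}$ are parallelograms centred at $O := T^{n+1}(u,v)$: the former with edges $\pm(h\eta,\eta)$ and $\pm T(v_n)$, the latter with edges $\pm(0,\eta)$ and $\pm v_{n+1}$, where crucially $v_{n+1} = (1-\epsilon)((h\eta,\eta) + T(v_n))$, i.e.\ $v_{n+1}$ is $(1-\epsilon)$ times the diagonal of $T(\Omega^n_{(u,v)})$.

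Next, I would observe that $|\Lambda|$ is concave on its support. If $[y_-(z_1), y_+(z_1)]$ denotes the vertical slice of $T(\Omega^n_{(u,v)})$ at horizontal coordinate $z_1$, then
\[
|\Lambda(z_1,z_2)| = \tfrac{1}{\eta}\bigl(\min(y_+(z_1), z_2+\eta) - \max(y_-(z_1), z_2-\eta)\bigr)_+;
\]
convexity of $T(\Omega^n_{(u,v)})$ makes $y_+$ concave and $y_-$ convex in $z_1$, so the argument of the positive part is concave in $(z_1,z_2)$. Since a concave function attains its minimum on a convex polytope at a vertex, it suffices to check $|\Lambda| \geq \epsilon/2$ at the four corners of $\Omega^{n+1}_{(u,v)}$; this will simultaneously confirm a posteriori that those corners lie in the support $\{|\Lambda|>0\}$, so that the whole polytope does too by convexity of the support.

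By central symmetry of both parallelograms about $O$, antipodal corners give the same value, reducing to the two cases $(k_0, l_0) \in \{(1,1), (1,-1)\}$. At such a corner the horizontal coordinate satisfies $z_1 - O_1 = (1-\epsilon)(h\eta + a'_n)$, i.e.\ $(1-\epsilon)$ times the horizontal half-reach of $T(\Omega^n_{(u,v)})$, where $(a'_n, b'_n) := T(v_n)$. Parameterising the vertical chord of $T(\Omega^n_{(u,v)})$ by the coordinate $t$ along $T(v_n)$, the two edge constraints on $t$ (one from $s\in[-1,1]$ for the $(h\eta,\eta)$-edge, one from $t\in[-1,1]$) reduce the admissible interval to length $\epsilon(1 + h\eta/a'_n)$, and rescaling from $t$ to $\lambda$ via the slope $a_n/h$ and intersecting with $[-1,1]$ yields $|\Lambda| = \epsilon\cdot p_n/(p_n+q_n)$, where $p_n := a_n/(h\eta)$ and $q_n := b_n/\eta$. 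The recursion $p_{n+1} = (1-\epsilon)(1 + p_n + q_n)$, $q_{n+1} = (1-\epsilon)(1 + q_n)$, with base case $p_2 = q_2 = 1$, gives exactly $\epsilon/2$ at $n=2$ (the sharp case) and a strictly larger value for $n \geq 3$.

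The main obstacle is the case analysis in the vertex computation: several auxiliary inequalities — e.g.\ that the lower $t$-bound $(1-\epsilon) - \epsilon h\eta/a'_n$ does not fall below $-1$, and that $a'_n \geq h\eta$ so that the vertical chord of $T(\Omega^n_{(u,v)})$ is tangent to its two non-vertical edges — must be verified uniformly in $n$ from the standing hypotheses $0 < \epsilon < \min(1/4, \tfrac{\eta}{2}(\vmax - \vmin))$ and $\eta < (\vmax-\vmin)/2$. The bookkeeping amounts to propagating $p_n \geq 1-\epsilon$ and a uniform upper bound on $q_n \to (1-\epsilon)/\epsilon$ through the above recursion, which is what ultimately yields the clean constant $\epsilon/2$ independent of $n$.
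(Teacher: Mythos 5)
Your plan is, in substance, the same computation as the paper's proof, re-organized through a concavity-to-vertices reduction and a change of bookkeeping variables. The paper parametrises the admissible $\lambda$-interval directly in $(k,l)$ via the scalar-product characterisation of $\Omega^n_{(u,v)}$, introduces $a_n = \ScalarProdR{\RowVec{1,0}}{v_n}/(h\eta)$ and $b_n = \ScalarProdR{\RowVec{1,0}}{v_n}/\ScalarProdR{\RowVec{1,h}}{v_n}$, and bounds the length of the resulting interval below by $\epsilon/2$ using the inductively-established invariants $a_n\geq 1$ and $b_n\geq 1/2$. Your recursion $p_{n+1}=(1-\epsilon)(1+p_n+q_n)$, $q_{n+1}=(1-\epsilon)(1+q_n)$ with $p_n=(v_n)_1/(h\eta)$, $q_n=(v_n)_2/\eta$ encodes precisely these two invariants ($p_n\geq 1$ and $p_n\geq q_n$; note $b_n\geq 1/2 \iff (v_n)_1\geq h(v_n)_2 \iff p_n\geq q_n$), so the two arguments are equivalent modulo coordinates. (Beware the notational slip: you set $(a'_n,b'_n)\coloneqq T(v_n)$ but the recursion you then write is for the components of $v_n$ itself, not $T(v_n)$.)

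Three things keep this from being a proof rather than a plan. First, the concavity reduction needs a small auxiliary remark: $(\cdot)_+$ of a concave function is not concave, so what you actually want to say is that the pre-truncation chord length $g(z_1,z_2)$ is concave, attains its minimum over the polytope at a vertex, and if that minimum is $\geq\epsilon/2>0$ then $(g)_+ = g\geq\epsilon/2$ everywhere. Second, the formula $|\Lambda|=\epsilon\,p_n/(p_n+q_n)$ only describes one of the two non-antipodal corners; e.g.\ at $n=2$ the corners $(k,l)=(1,1)$ and $(1,-1)$ yield lengths $\epsilon/2$ and $\epsilon$ respectively, so both cases must be computed (they are $\geq\epsilon/2$, but not via the same formula). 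Third, the vertex computations and the auxiliary inequalities you list at the end are exactly the content of the paper's calculation, not a side detail — you have correctly identified the invariants and the right reduction, but the verification you defer is the whole job.
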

For the sake of readablity, we differ the proof of Lemma~\ref{lemma:length of segment} to Section~\ref{appendix:lemma_segment_length}. Finally, going back to~\eqref{eq:density_n}, we have the desired lower--bound
\begin{equation*}
\density{(u,v)}^{\star (n+1)}(z_1, z_2) \geq \left(\tfrac{\eta\epsilon}{2}\right)^{n-2}\mu_0^n \times\mu_0\times \left(\tfrac{\eta\epsilon}{2}\right) = \left(\tfrac{\eta\epsilon}{2}\right)^{(n+1)-2}\mu_0^{n+1},
\qquad\text{ for }(z_1,\,z_2)\in\Omega^{n+1}_{u,v}.
\end{equation*}
In addition, for $\epsilon<1\left/\left(1+\tfrac{3(\vmax-\vmin)}{2\eta}\right)\right.$,
$$\ScalarProdR{\RowVec{0,1}}{(v_{n+1}-v_n)} = (1-\epsilon)\eta - \epsilon\ScalarProdR{\RowVec{0,1}}{v_n}>(1-\epsilon)\eta - \epsilon(\vmax-\vmin)> \epsilon\tfrac{\vmax-\vmin}{2}.$$
The height of $\Omega^n_{u,v}$ grows with $n$, by at least a constant, positive term. Hence, it eventually reaches ${\vmax-\vmin}$, in which case $\Omega^n\cap R^c\neq \emptyset$.

\stepparagraph{First non--empty intersection with the boundary}
\label{mixing_proof:step_first_boundary}
Let $n_0\coloneqq\min\{n\in\N\mid \mu(\Omega^n\cap R^c)>0\}$. Without loss of generality, $\Omega^{\nzero}$ extends beyond $\vmin$. We will now construct a region $\tilde{\Omega}^{\nzero+1}\subset R$ such that $\density{(u,v)}^{\star (\nzero+1)}\geq \left(\tfrac{\eta\epsilon}{2}\right)^{\nzero-2}\mu_0^{\nzero}$ on $\tilde{\Omega}^{\nzero+1}$ and for which $\tilde{\Omega}^{\nzero+1}\cap (\R\times\{\vmin\})$ is lower--bounded. Since we can choose $\eta$ arbitrarily small, we can treat the lower and upper boundaries independently, so we focus on the construction of $\tilde{\Omega}^{\nzero+1}_{u,v}$ on the boundary $\R\times\{\vmin\}$ first.

For $P\in\R\times\{\vmin\}$, we consider $P_\lambda$ as in~\eqref{eq:lambda_segment}, under the constraint that the integration segment lies within $R$, that is, $\{\lambda\in[0,1]\mid P_\lambda\in \Omega^{\nzero}\}$. We denote the length of this segment by $L(P)$
\begin{equation*}
L(P)\coloneqq\left\vert\left\{\lambda\in[-1,1]\mid P + \eta\lambda\RowVec{-h,1}\in\Omega^{\nzero}\cap R\right\}\right\vert,
\end{equation*}
and we let $A,B$ be the endpoints of $\Omega^{\nzero}\cap (\R\times\{\vmin\})$.
We rely on the following claim, whose proof is in Section~\ref{proof:claim_DE_geq_AB}. Figure~\ref{fig:boundary_scenarios} illustrates the situation.
\begin{equation}
\label{claim:DE_geq_AB}
\parbox{0.8\textwidth}{
	\raggedright The set $\{P\in\Omega^{\nzero}\cap \R\times\{\vmin\}\mid L(P)\geq \tfrac{\epsilon}{2}\}$ is not empty. If we denote by $D=A+\RowVec{x_D,0}$ and $E=A+\RowVec{x_E,0}$ its right- and left-endpoints, then for some $c_0>0$ independent of $(u,v)$, we have
	$$x_B + c_0\leq x_D - x_E.$$}
\end{equation}
\begin{figure}
	\centering
	\noindent\includestandalone[width=1.0\textwidth]{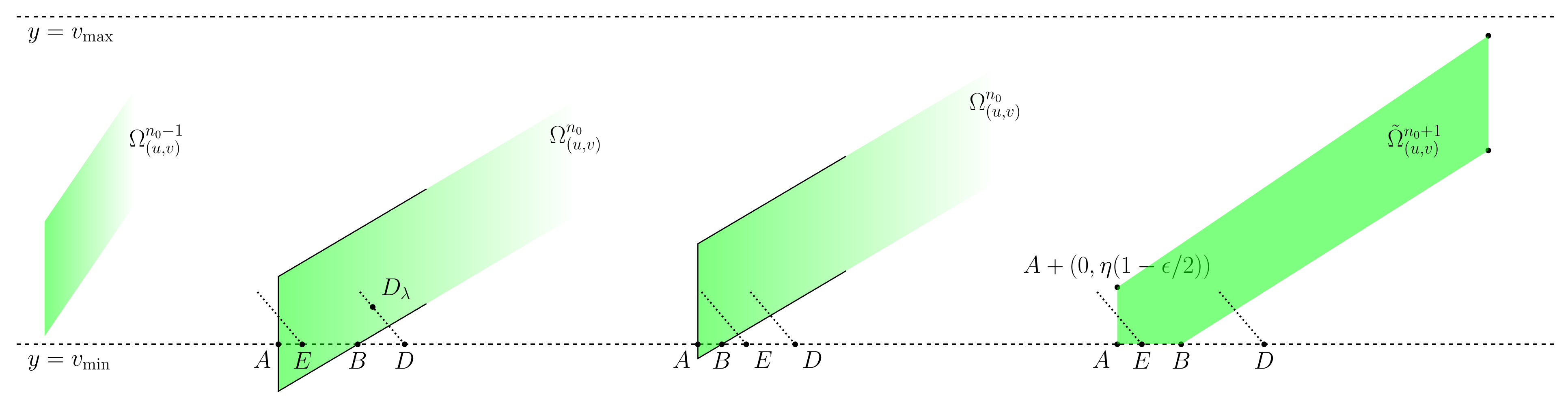}
	\caption{Illustration of~\nameref{mixing_proof:step_first_boundary} and the proof of~\eqref{claim:DE_geq_AB}. The leftmost polygon represents $\Omega^{\nzero-1}$, at the iteration before the first non-trivial intersection occurs. The two middle parallelograms illustrate the two cases, $x_E\leq x_B$ and $x_B\leq x_E$ respectively, from the proof of~\eqref{claim:DE_geq_AB}. On the right, the bottom part of the polygon $\tilde{\Omega}^{\nzero+1}$ as constructed in~\nameref{mixing_proof:step_first_boundary}. The dashed lines represent the integration segments, whose length is measured by $L$.}
	\label{fig:boundary_scenarios}
\end{figure}
In particular, $L(P)\geq \tfrac{\epsilon}{2}$ implies that $\density{(u,v)}^{\star \nzero+1}(P)\geq \left(\tfrac{\eta\epsilon}{2}\right)^{\nzero-1}\mu_0^{\nzero+1}$.
By convexity of $\Omega^{\nzero}\cap R$, we have $L(P)\geq \tfrac{\epsilon}{2}$ for $P$ on the segment $T(E)T(D)$, so we can include that segment in $\tilde{\Omega}^{\nzero+1}$. As $L(E)=L(P)$, where $P= E + (1-\epsilon/2)k\eta\RowVec{-h,1},$ for $k\in[0,1]$, we have the same lower--bound on the density holds on $T(P)$. So, we can include a segment of height $\eta(1-\epsilon/2)$ above $T(E)$ in $\tilde{\Omega}^{\nzero+1}$.
Therefore, we can define $\tilde{\Omega}^{\nzero+1}$ as the polygon with vertices $T(E)+\RowVec{0,\eta(1-\epsilon/2)}$, $T(E)$, $T(D)$, $T^{\nzero+1}\RowVec{u,v}-\RowVec{0,\eta} + v_{\nzero+1}$ and $T^{\nzero+1}\RowVec{u,v}+\RowVec{0,\eta} + v_{\nzero+1}$.

We have obtained a convex pentagon $\tilde{\Omega}^{\nzero+1}$ on which $\BiTPK^{\nzero+1}((u,v),\cdot)$ is lower--bounded by a measure with density lower--bounded by $\left(\tfrac{\eta\epsilon}{2}\right)^{\nzero-1}\mu_0^{\nzero+1}$. Because $T$ preserves lengths on horizontal cross-sections,~\eqref{claim:DE_geq_AB} implies that the length of $T(D)T(E)$ is equal to that of $ED$, which is longer  by $c_0=\eta h/4$ than the intersection at $\nzero$.

\stepparagraph{Induction for $n > \nzero+1$}
\label{mixing_proof:step_boundary_iteration}
Assume that $\tilde{\Omega}^{\nzero+1}\cap(\R\times\{\vmax\})=\emptyset$. As a consequence of calculations for~\nameref{mixing_proof:step_induction}, $\tilde{\Omega}^{n}_{u,v}$ is growing upwards. Indeed, the calculations rely on Assumption~\eqref{eq:borel_lower_bound_density} and the fact that $v_n$ has a horizontal component whose length we control. Therefore, they adapt to $\tilde{\Omega}^{n}_{u,v}$, with $v_n$ being the vector from $T(D)$ to ${T^{\nzero+1}\RowVec{u,v}-\RowVec{0,\eta} + v_{\nzero+1}}$.

In addition,~\eqref{claim:DE_geq_AB} still holds. Indeed, redefine $A,\,B,\,D$ and $E$, except with $\nzero$ replaced by $\nzero+1$ in the expression of $L$. We notice that $A,\,B$ coincide with $T(E)$ and $T(D)$ from the previous iteration. Because $AB$ is now of length at least $c_0=h\eta/4$, the proof is easier as we fall in the first case. We define $\tilde{\Omega}^{\nzero+2}$ as in~\nameref{mixing_proof:step_first_boundary}. 

We can now iterate this procedure, obtaining a lower--bound of $\density{u,v}^{\star n}$ by a uniform constant, on a convex and polygonal domain $\tilde{\Omega}^{n}$. Crucially, both the height of $\tilde{\Omega}^n$ and the length of its intersection with $\R\times\{\vmin\}$ grow, by uniformly lower--bounded amounts.

\stepparagraph{Intersection with both boundaries}
\label{mixing_proof:step_boundaries}
For some $\nOne\in\N$, the intersection $\tilde{\Omega}^{\nOne}_{u,v}\cap (\R\cap\{\vmax\})$ is not trivial. By a procedure analogue to that in~\nameref{mixing_proof:step_first_boundary}, we can define $\tilde{\Omega}^{\nOne+1}$, which non--trivially intersects both boundaries. Using the procedure from~\nameref{mixing_proof:step_boundary_iteration}, it is clear that the intersection will not only remain non--trivial with $n$, but also increase.

\stepparagraph{Cross-sections with length at least $1$}
\label{mixing_proof:step_cross_sections}
By definition, $\tilde{\Omega}^n$ is delimited by a convex, polygonal domain.
The length of any horizontal cross-section of $\tilde{\Omega^n}$ is lower--bounded by the minimum of the lengths of the intersections with the lower and upper boundaries\footnote{To see this, consider the parallelogram on the 4 vertices of $\tilde{\Omega}^n$ which belong to the boundary. That parallelogram is included in $\tilde{\Omega^n}$ by convexity, so the lengths of the horizontal sections between the length of both bases.}. Recall that by~\nameref{mixing_proof:step_boundary_iteration}, these two are increasing, and this by at least $h\eta/4$ at each iteration. Hence, for some $n=n_{u,v}$, all horizontal sections of $\tilde{\Omega}^{n_{u,v}}_{u,v}$ are of length at least 1.

By construction of $\tilde{\Omega}^{n}_{(u,v)}$, we have obtained a region such that for any $n\geq n_{u,v}$,,
\begin{enumerate}
	\item $\BiTPK^{n}((u,v), \cdot)$ is lower--bounded by $\left(\frac{\eta\epsilon}{2}\right)^{n-2}\mu_0^{n}\mu$ on $\tilde{\Omega}^n_{u,v}$, ($\mu$ being the Lebesgue measure)
	\item $\{\tilde{\Omega}^n_{(u,v)} + (k,0)\}_{k\in \mathbb{Z}}$ is a cover of $R$.
\end{enumerate}

\stepparagraph{Uniform lower--bound}
\label{mixing_proof:step_uniform_lower_bound}
We now show that we can choose a uniform $N\in\N$, such that $n_{u,v}\leq N$ for all $(u,v)\in R$. Fix $(u,v)\in R$ and let $\bar{\Omega}^2_{u,v}$ be defined as in~\eqref{eq:parallelogram_n2}, except with $\tfrac{\eta}{2}$ instead of $\eta$. We can then perform~\nameref{mixing_proof:step_n2} to~\nameref{mixing_proof:step_cross_sections}, so we obtain a domain $\bar{\Omega}^{\bar{n}_{u,v}}_{u,v}$ with cross-sections of length at least 1, for some $\bar{n}_{u,v}\geq n_{u,v}$.

Notice that the shrinked parallelogram at $n=2$ is contained in parallelograms for different initial conditions. Specifically, we have $\bar{\Omega}^2_{u,v}\subset\Omega^{2}_{x,y}$ for $(x,y)\in C_{u,v}$, where $C_{u,v} = T^{-2}(\bar{\Omega}^2_{u,v})$. In particular, $n_{x,y}\leq\bar{n}_{u,v}$, for all $(x,y)\in C_{u,v}$. Since $(\overset{\circ}{C_{u,v}})_{(u,v)\in [0,1]\times I}$ is an open cover of $[0,1]\times I$, by compacity, we can find a finite cover $\{C_{u_k,v_k}\}_{k=1}^K$. 
Clearly, $N=\max_{1\leq k\leq K} \bar{n}_{u_k,v_k}<\infty$ gives a uniform bound on $(n_{u,v})_{(u,v)\in[0,1]\times I}$. The bound is also valid on $R\times I$, because the whole construction is invariant with respect to horizontal translations.

Finally, for $(u,v)\in R$, we have that $\BiTPK^{N}((u,v), \cdot)$ is lower--bounded by $\left(\frac{\eta\epsilon}{2}\right)^{N-2}\mu_0^{N}\mu$, on $\Omega_{u,v}$ and  $\{\Omega_{(u,v)} + (k,0)\}_{k\in \mathbb{Z}}$ is a cover of $R$, where $\Omega_{u,v}\coloneqq\tilde{\Omega}^N_{u,v}$.

\stepparagraph{Conclusion}
\label{mixing_proof:step_conclusion}
We can now go back to $(\Frac\gamma_n, V_n)$. By lower--bounding $\BiTPK^N$ with a uniform measure, we can use the same arguments as in Section~\ref{sec:mixing_proof_iid} to conclude. For $A\in\Borel(\lbrack0,1\rbrack\times I)$, we have
\begin{align*}
\Frac_\star\BiTPK^{N}((u,v), A)
&= \BiTPK^{N}((u,v),\Frac^{-1}(A))&\\
&\geq \BiTPK^{N}((u,v), \Frac^{-1}(A)\cap \Omega_{(u,v)})&\\
&\geq C\mu(\Frac^{-1}(A)\cap \Omega_{(u,v)})& (\text{minorating on $\Omega_{(u,v)}$}) \\
&= C\mu(\cup_{k\in Z} A+(k,0)\cap \Omega_{(u,v)})& (\{A+(k,0)\}_k \text{ disjoint}) \\
&= C\sum_{k\in Z}\mu(A+(k,0)\cap \Omega_{(u,v)}) \\
&= C\sum_{k\in Z}\mu(A\cap (\Omega_{(u,v)}-(k,0)))& (\mu \text{ translation--invariant})\\
&\geq C\mu(\cup_{k\in Z} A\cap (\Omega_{(u,v)}-(k,0))) \\
&= C\mu(A)& (\{\Omega_{(u,v)} + (k,0)\}_{k\in \mathbb{Z}}\text{ is a cover of $R$}),
\end{align*}
where $ C = C_{\eta,\epsilon,\mu_0, N} \coloneqq \left(\frac{\eta\epsilon}{2}\right)^{N-2}\mu_0^{N}$.
The lower--bound is uniform in $(u,v)$ and also shows that the measure is non-trivial. We conclude the proof of Proposition~\ref{prop:gamma_beta_mixing} by applying Theorem~\ref{theorem:mixing}.
\subsubsection{Proof of Lemma~\ref{lemma:length of segment}}
\label{appendix:lemma_segment_length}
We recall that for some $l,k\in[-1,1]$,
\begin{equation*}
\RowVec{z_1,z_2} = T^{n+1}\RowVec{u,v} + l\RowVec{0,\eta} + kv_n,
\end{equation*}
where $v_n$ is given in~\eqref{eq:vn_definition}, so
\begin{equation}
\label{eq:plambda_definition}
P_\lambda
\coloneqq T^{-1}\RowVec{z_1,z_2+\lambda\eta}
= T^n\RowVec{u,v} + \eta(l+\lambda)\RowVec{-h,1} + k(1-\epsilon)\left(\RowVec{0,\eta}+v_n\right).
\end{equation}
For a parallelogram $\Omega$ generated by vectors $x,\,y$ and centered around the origin, we have
\begin{equation}
\label{eq:parallelogram_containment_center}
P \in \Omega \iff
\left\{\begin{array}{ccccc}
\ScalarProd{x}{y^\perp} &\leq& \ScalarProd{P}{y^\perp} &\leq& \ScalarProd{-x}{y^\perp}\\ 
\ScalarProd{-y}{x^\perp} &\leq& \ScalarProd{P}{x^\perp} &\leq& \ScalarProd{y}{x^\perp},\\
\end{array}
\right.
\end{equation}
where $(x_1,x_2)^\perp = (x_2,-x_1)$.
Combining~\eqref{eq:plambda_definition} with~\eqref{eq:parallelogram_containment_center}, we have
that $P_\lambda \in \Omega^{n}_{(u,v)}$ if and only if
\begin{align*}
&
\left\{\begin{array}{c}
\eta \ScalarProdR{\RowVec{0,1}}{v_n^\perp} \leq
\eta(l+\lambda) \ScalarProdR{\RowVec{-h,1}}{v_n^\perp}
+ k(1-\epsilon)\left(\eta\ScalarProdR{\RowVec{0,1}}{v_n^\perp} +\ScalarProdR{v_n}{v_n^\perp}\right)
\leq - \eta\ScalarProdR{\RowVec{0,1}}{v_n^\perp}\\ 
-\eta\ScalarProdR{v_n}{\RowVec{0,1}^\perp} \leq
\eta^2(l+\lambda)\ScalarProdR{\RowVec{-h,1}}{\RowVec{0,1}^\perp} + k(1-\epsilon)\left(\eta^2\ScalarProdR{\RowVec{0,1}}{\RowVec{0,1}^\perp} + \eta\ScalarProdR{v_n}{\RowVec{0,1}^\perp}\right)
\leq \eta\ScalarProdR{v_n}{\RowVec{0,1}^\perp}\\
\end{array}
\right.\\
\iff&
\left\{\begin{array}{ccccc}
\ScalarProdR{\RowVec{1,0}}{v_n}(-1+k(1-\epsilon)) &\leq&
-(l+\lambda)\ScalarProdR{\RowVec{1,h}}{v_n}
&\leq& \ScalarProdR{\RowVec{1,0}}{v_n}(1+k(1-\epsilon))\\ 
\ScalarProdR{v_n}{\RowVec{1,0}}(-1-k(1-\epsilon)) &\leq&
(-h\eta)(l+\lambda)
&\leq& \ScalarProdR{v_n}{\RowVec{1,0}}(1-k(1-\epsilon)).\\
\end{array}
\right.
\end{align*}
As $\ScalarProdR{\RowVec{1,h}}{v_n}>0$ and denoting
\begin{equation*}
a_n\coloneqq\frac{1}{h\eta}\ScalarProdR{\RowVec{1,0}}{v_n},\qquad
b_n \coloneqq 1-\frac{\ScalarProdR{\RowVec{0,h}}{v_n}}{\ScalarProdR{\RowVec{1,h}}{v_n}},
\end{equation*}
we have
\begin{equation*}
P_\lambda\in\Omega^n
\iff
\left\{\begin{array}{ccccc}
b_n(-1-k(1-\epsilon)) -l&\leq&
\lambda
&\leq& b_n(1-k(1-\epsilon)) -l\\ 
a_n(-1+k(1-\epsilon)) -l &\leq&
\lambda
&\leq& a_n(1+k(1-\epsilon))-l.\\
\end{array}
\right.
\end{equation*}
Finally, taking into account that $\lambda\in[-1,1]$, we obtain that
\begin{equation*}
\lambda\in[\max(-1, b_n(-1-k(1-\epsilon))-l, a_n(-1+k(1-\epsilon)) -l),
\min(1, b_n(1-k(1-\epsilon))-l, a_n(1+k(1-\epsilon)) -l)],
\end{equation*}
which is of length
\begin{align*}
\min(&1, b_n(1-k(1-\epsilon))-l, a_n(1+k(1-\epsilon)) -l)+\\
+ \min(&1, b_n(1+k(1-\epsilon))+l, a_n(1-k(1-\epsilon))+l)=\\
=\min(&2\min(1,a_n,b_n), (a_n+b_n)(1-k(1-\epsilon)),
1+b_n(1+k(1-\epsilon))+l,\\
&1+a_n(1-k(1-\epsilon))+l, 1+b_n(1-k(1-\epsilon))-l, 1+a_n(1+k(1-\epsilon)-l
)
\end{align*}
We claim that for $n\geq 2$,
\begin{equation}
\label{eq:scalar_product_controls}
a_n \geq 1,\qquad
b_n \geq\frac{1}{2}.
\end{equation}
Combining~\eqref{eq:scalar_product_controls} with $l,k\in[-1,1]$, $0<\epsilon\leq \tfrac{1}{2}$, we conclude that
the length of~\eqref{eq:lambda_segment} is at least $\tfrac{\epsilon}{2}$.

It remains to show~\eqref{eq:scalar_product_controls}. For $a_n$, we proceed by induction. Using $v_2=T\RowVec{0,\eta}$ for $n=2$ and
$v_3 = (1-\epsilon)\eta\RowVec{3h,2}$, we verify that $a_2,\,a_3\geq 1$. Notice that $\ScalarProdR{\left(T\RowVec{x,y}\right)}{\RowVec{1,0}}= \ScalarProdR{\left(\RowVec{x,y} + \RowVec{yh,0}\right)}{\RowVec{1,0}} =\ScalarProdR{\RowVec{x,y}}{\RowVec{1,h}}$.
Then,
\begin{equation*}
\ScalarProdR{v_{n+1}}{\RowVec{1,0}}
= (1-\epsilon)\ScalarProdR{\left(T\RowVec{0,\eta} + T(v_n)\right)}{\RowVec{1,0}}
= (1-\epsilon)\left[h\eta + \ScalarProdR{v_n}{\RowVec{1,h}}\right].
\end{equation*}
Using the induction hypothesis, $\ScalarProdR{v_n}{\RowVec{1,0}}\geq h\eta$ combined with $\ScalarProdR{v_n}{\RowVec{0,h}}\geq0$,
\begin{equation*}
\ScalarProdR{v_{n+1}}{\RowVec{1,0}} \geq 2h\eta(1-\epsilon)\geq h\eta,
\end{equation*}
since $\epsilon\leq\tfrac{1}{2}$.

For $b_n$, we can calculate directly $b_2=\tfrac{h\eta}{h\eta+h\eta} = \tfrac{1}{2}$. For $n\geq 3$, we can express $v_n$ using~\eqref{eq:vn_definition}, so that
\begin{align*}
\frac{\ScalarProdR{\RowVec{1, 0}}{v_{n}}}{\ScalarProdR{\RowVec{1,h}}{v_n}}
=& \frac{\ScalarProdR{\RowVec{1, 0}}{\left(T\RowVec{0,\eta} + T(v_{n-1})\right)}}{\ScalarProdR{\RowVec{1,h}}{\left(T\RowVec{0,\eta} + T(v_{n-1})\right)}}\\
=& \frac{h\eta + \ScalarProdR{\RowVec{1, 0}}{T(v_{n-1})}}{2h\eta + \ScalarProdR{\RowVec{1,h}}{T(v_{n-1})}}\\
=& \frac{1}{2} + \frac{1}{2}\frac{\ScalarProdR{\left(\RowVec{1, 0}-\RowVec{0,h}\right)}{T(v_{n-1})}}{2h\eta + \ScalarProdR{\RowVec{1,h}}{T(v_{n-1})}}\\
\geq&\frac{1}{2},
\end{align*}
where the last inequality follows from
\begin{equation*}
\ScalarProdR{\left(\RowVec{1, 0}-\RowVec{0,h}\right)}{T(v_{n-1})} = \ScalarProdR{\RowVec{1, 0}}{v_{n-1}} + \ScalarProdR{\RowVec{0,h}}{v_{n-1}} - \ScalarProdR{\RowVec{0,h}}{v_{n-1}} \geq 0.
\end{equation*}
{\flushright\qedsymbol}

\subsubsection{Proof of~\eqref{claim:DE_geq_AB}}
\label{proof:claim_DE_geq_AB}
Notice first that $L(A)=0$ and $L(P)=0$ for any $P\in R\times\{\vmin\}$ to the left of $A$, so that $0\leq x_B, x_D, x_E$. Second, consider $P= A + \RowVec{x_B+(1-\epsilon/2)\eta h\tfrac{1}{1-b},0}$. As $L(P)=\tfrac{\epsilon}{2}$, we have that $\{P\mid L(P)\geq\tfrac{\epsilon}{2}\}\neq \emptyset$, so $D$ and $E$ exist. In addition, we know that $x_D\geq x_B+(1-\epsilon/2)\eta h\tfrac{1}{1-b}$. In particular, $b\leq 1$ implies that $x_B<x_D$.

Since $A\in\Omega^{\nzero}_{u,v}$, we can write $A=T^{\nzero}\RowVec{u,v} + l_A\eta\RowVec{0,\eta} - v_{\nzero}$. By definition, $\nzero$ is the first time such that $\Omega^\nzero\cap R^c$ has non-trivial measure, so, using the relation between $\Omega^{\nzero-1}$ and $\Omega^{\nzero}$, we can conclude that $l_A\leq 0\leq 1-\epsilon/2$.

We distinguish two cases, depending which one of $\eta\epsilon h/2$ or $x_B$ is greater.
First, if $\eta h\epsilon/2\leq x_B$, then $x_E=\eta h\epsilon/2$. Indeed, the triangle formed by $A$, $E$ and $A + \RowVec{0,\eta\epsilon/2}$ is in $\Omega^{\nzero}$, so $L\left(A+\RowVec{\eta h\epsilon/2,0}\right)\geq \epsilon/2$. Therefore,
\begin{equation*}
\begin{aligned}
x_D - x_E
&\geq x_B + (1-\tfrac{\epsilon}{2})\eta h\tfrac{1}{1-b} - \eta h\tfrac{\epsilon}{2}\\
&\geq x_B +\eta h (2(1-\tfrac{\epsilon}{2}) - \tfrac{\epsilon}{2})\\
&\geq x_B + \tfrac{5}{4}\eta h,
\end{aligned}
\end{equation*}
where in the last two inequalities, we have use that $\tfrac{1}{2}\leq b$ and $\epsilon\leq \tfrac{1}{2}$.

Next, if $x_B<\eta h\epsilon/2$, then $\eta h\epsilon/2\leq x_E$. So, for $x\leq \eta h$,
\begin{equation}
\label{eq:LA_formula}
L\left(A+\RowVec{x,0}\right) = \frac{1}{\eta h}\left(x - \tfrac{\ScalarProdR{\RowVec{0, h}}{v_n}}{\ScalarProdR{\RowVec{1, h}}{v_n}}\ (x-x_B)_+\right) = \frac{1}{\eta h}(xb  -x_B(1-b)).
\end{equation}
Notice that $L\left(A+ \RowVec{\eta h,0}\right) \geq \frac{\epsilon}{2}$ for $\epsilon \leq \frac{2}{5}$ small enough,
\begin{align*}
L\left(A+ \RowVec{\eta h,0}\right) - \frac{\epsilon}{2}
&= \frac{1}{\eta h}(\eta h b - x_B(1-b)) - \frac{\epsilon}{2}\\
&= b - (1-b)\frac{x_B}{\eta h} - \frac{\epsilon}{2}\\
&\geq b-(1-b)\frac{\epsilon}{2} - \frac{\epsilon}{2}\\
&= b\left(1+\frac{\epsilon}{2}\right) - \frac{3}{2}\epsilon\\
&\geq \frac{1}{2}\left(1-\frac{5}{2}\epsilon\right),
\end{align*}
so $x_E\leq \eta h$. Using~\eqref{eq:LA_formula}, we find that $x_E=\frac{1}{b}(\eta h\epsilon/2 + x_B(1-b))$. Finally,
\begin{align*}
x_D - x_E - x_B
&= x_B\left(1-\tfrac{1-b}{b}\right) + \eta h\tfrac{1-\epsilon/2}{1-b} - \tfrac{1}{b}\eta h\epsilon/2- \eta h\tfrac{\epsilon}{2}\\
&= x_B(2-\tfrac{1}{b}) + \eta h\left(\tfrac{1}{1-b}+\tfrac{\epsilon}{2}(\tfrac{1}{b} - \tfrac{1}{1-b})\right)- \eta h\tfrac{\epsilon}{2}\\
&= x_B(2-\tfrac{1}{b}) + \tfrac{\eta h}{1-b}\left(1-\tfrac{\epsilon}{b}(b-\tfrac{1}{2}))\right)- \eta h\tfrac{\epsilon}{2}.
\end{align*}
Since $\tfrac{1}{2}\leq b\leq 1$, we have $\tfrac{b-1/2}{b}\leq 1$ and $\tfrac{1}{1-b}\geq 2$, so that
\begin{equation*}
x_D - x_E - x_B
\geq 2(1-\epsilon)\eta h - \eta h\epsilon/2\geq \eta h(1-\tfrac{3}{2}\epsilon).
\end{equation*}
Combining the two cases with $\epsilon<\tfrac{1}{2}$, we conclude that
\begin{equation*}
x_D-x_E\geq x_B + \eta h\min\left(\left(1-\tfrac{\epsilon}{2}\right), \tfrac{5}{4}\right) \geq x_B + \tfrac{1}{4}\eta h.
\end{equation*}
{\flushright{\qedsymbol}}

\section{Mixing-preserving operations: mixing coefficients of $(X_n)_{n\in\N}$}
\label{appendix:mixing_by_measurable_mapping}
\begin{proposition}
	\label{prop:mixing_by_measurable_mapping}
	Let $X_n$ be as in\eqref{eq:signal_window_definition}. For any $k\in \N$,
	\begin{equation*}
	\beta_X(k+M-1) \leq \beta_S(k) \leq \beta_{\Frac(\gamma)}(k) + \beta_W(k).
	\end{equation*}
\end{proposition}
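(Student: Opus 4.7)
The plan is to split the two-sided inequality into three ingredients: (i) identify the window $\sigma$-algebras with shifted $S$-algebras, (ii) reduce $S$ to the pair $(\Frac(\gamma),W)$ via measurability, and (iii) split the resulting joint mixing coefficient using independence of $\gamma$ and $W$.

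For the left inequality, I would observe that since $X_n=(S_n,\ldots,S_{n+M-1})$, we have $\sigma^X_{-\infty,0} = \sigma^S_{-\infty,M-1}$ and $\sigma^X_{k+M-1,\infty} = \sigma^S_{k+M-1,\infty}$. The gap between these two $S$-indices is exactly $k$; combined with the stationarity of $(S_n)_n$---which follows from the stationarity of $(\Frac(\gamma_n))_n$ (established in Proposition~\ref{prop:gamma_beta_mixing}) and of $W$, together with their independence---this gives $\beta_X(k+M-1)=\beta_S(k)$ directly from the definition, in particular the desired inequality.

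For the right inequality, set $Z_n\coloneqq(\Frac(\gamma_n),W_n)$. By $1$-periodicity of $\pattern$, $S_n=\pattern(\Frac(\gamma_n))+W_n=h(Z_n)$ with $h(y,w)=\pattern(y)+w$ measurable, so $\sigma^S_{a,b}\subset\sigma^Z_{a,b}$ for every $a,b$. Since any partition of a sub-$\sigma$-algebra is also a partition of the larger one, the supremum defining $\beta$ only grows, yielding $\beta_S(k)\leq\beta_Z(k)$. It then remains to show $\beta_Z(k)\leq\beta_{\Frac(\gamma)}(k)+\beta_W(k)$, which exploits that the processes $(\Frac(\gamma_n))_n$ and $(W_n)_n$ are independent. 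Given partitions $\mathcal{A}\subset\sigma^Z_{-\infty,0}$ and $\mathcal{B}\subset\sigma^Z_{k,\infty}$, the idea is to refine them to product partitions $\{A^\gamma_i\cap A^W_j\}_{i,j}$ and $\{B^\gamma_r\cap B^W_s\}_{r,s}$; refinement only increases $\sum|P(A\cap B)-P(A)P(B)|$ by the triangle inequality. On product atoms, independence factorises both joint and product probabilities, so the elementary bound $|xy-x'y'|\leq|x-x'|\,y+x'\,|y-y'|$ applies term-by-term with $x=P(A^\gamma_i\cap B^\gamma_r)$, $x'=P(A^\gamma_i)P(B^\gamma_r)$, $y=P(A^W_j\cap B^W_s)$, $y'=P(A^W_j)P(B^W_s)$. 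Summing over $i,j,r,s$, the cross sums $\sum_{j,s}P(A^W_j\cap B^W_s)=1=\sum_{i,r}P(A^\gamma_i)P(B^\gamma_r)$ collapse, yielding
$$\sum_{A,B}|P(A\cap B)-P(A)P(B)|\;\leq\;2\beta_{\Frac(\gamma)}(k)+2\beta_W(k),$$
and dividing by the factor $2$ in the definition of $\beta$ concludes.

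The main obstacle I anticipate is justifying the product-refinement step rigorously: the atoms of an arbitrary countable partition of $\sigma^Z_{-\infty,0}$ need not be rectangles $A^\gamma\cap A^W$. The cleanest remedy is to invoke the equivalent characterisation of $\beta$ as one-half the total variation distance between the joint and product laws, which encodes the supremum over partitions intrinsically; then the independence factorisation and the elementary inequality above deliver the bound without further combinatorics on partitions. Steps (i) and (ii) are essentially immediate from the definitions.
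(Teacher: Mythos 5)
Your proof takes essentially the same route as the paper, which factors the argument into a window lemma ($\beta_X(k+M-1)\leq\beta_S(k)$, via the identification $\sigma^X_{n_1,n_2}=\sigma^S_{n_1,n_2+M-1}$) and a splitting lemma for sums of independent processes ($\beta_{U+V}(k)\leq\beta_U(k)+\beta_V(k)$), combined with $\beta_{\pattern(\gamma)}\leq\beta_{\Frac(\gamma)}$ since $\pattern(\gamma_n)$ is a measurable function of $\Frac(\gamma_n)$. Your version differs cosmetically in that you decompose $S_n=h(\Frac(\gamma_n),W_n)$ and split $\beta_Z$ for the pair $Z_n=(\Frac(\gamma_n),W_n)$ rather than splitting $\beta_{U+V}$ for the sum — these are interchangeable, since passing from the pair to the sum is itself a measurable map and preserves the bound. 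One point in your favour: you correctly flag that the supremum in the definition of $\beta$ ranges over arbitrary countable partitions of $\sigma^Z_{-\infty,0}$ and $\sigma^Z_{k,\infty}$, not only over product partitions, and that the paper's Lemma on $\beta_{U+V}$ silently restricts to rectangles; your remedy via the total-variation characterisation $\beta=\tfrac12\Vert P_{\text{joint}}-P_{\text{prod}}\Vert_{TV}$ is the right way to make that step watertight, using $\Vert \mu_1\otimes\nu_1-\mu_2\otimes\nu_2\Vert_{TV}\leq\Vert\mu_1-\mu_2\Vert_{TV}+\Vert\nu_1-\nu_2\Vert_{TV}$. Your stronger claim $\beta_X(k+M-1)=\beta_S(k)$ (rather than $\leq$) is also correct given stationarity of $(S_n)_n$, although only the inequality is needed.
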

The proposition is a consequence of Lemmata~\ref{lemma:mixing_sum} and~\ref{lemma:mixing_vector}, combined with the fact that $\pattern$ is continuous, so $\beta_{\pattern(\gamma)}(k)\leq \beta_{\Frac(\gamma)}(k)$. The proofs of the lemmata essentially consist in manipulating the definitions.
\begin{lemma}
	\label{lemma:mixing_sum}
	For two random variables $U:(\Omega_U, \sigma^U)\rightarrow \R$, $V:(\Omega_V, \sigma^V)\rightarrow \R$ with $(\beta_U(k))_{k\in\N},\,(\beta_V(k))_{k\in\N}\in\ell^1$ summable, we have
	${\beta_{U+V}(k)\leq \beta_{U}(k)+\beta_V(k)}$.
	If $U$ and $V$ are defined on the same probability space, but are independent, the same holds true.
\end{lemma}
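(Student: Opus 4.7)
The plan is to reduce the claim to a tensorization inequality for total variation, using the well-known characterization
\[
\beta_X(k) \;=\; \bigl\| \P_{(\sigma_{-\infty,0}^X,\, \sigma_{k,\infty}^X)} \;-\; \P_{\sigma_{-\infty,0}^X} \otimes \P_{\sigma_{k,\infty}^X} \bigr\|_{TV},
\]
where $\P_{(\sigma_{-\infty,0}^X,\sigma_{k,\infty}^X)}$ denotes the joint law of past and future on the product space. This is equivalent (up to the factor $\tfrac{1}{2}$) to the definition stated in Section~\ref{sec:signatures_discrete}; see~\cite{doukhanMixing1995, dedecker_weak_2007}.

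First, I would reduce the sum to the joint process. Consider the sequence $(U_n, V_n)_{n\in\N}$ taking values in $\R^2$. Since $(U_n+V_n)$ is a measurable (linear) function of $(U_n, V_n)$, one has the inclusion of $\sigma$-algebras $\sigma_{a,b}^{U+V} \subset \sigma_{a,b}^{(U,V)}$ for every $a \leq b$. A supremum taken over countable partitions of a sub-$\sigma$-algebra is dominated by the supremum over the larger one, so
\[
\beta_{U+V}(k) \;\leq\; \beta_{(U,V)}(k).
\]

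Second, I would establish $\beta_{(U,V)}(k) \leq \beta_U(k) + \beta_V(k)$ under independence of $U$ and $V$ (which we either assume, or which we obtain automatically when $U$ and $V$ are defined on separate probability spaces and we work on the product space). By independence, the joint past-future law factors,
\[
\P^{(U,V)}_{\text{past, future}} \;=\; \P^{U}_{\text{past, future}} \otimes \P^{V}_{\text{past, future}},
\]
and likewise $\P^{(U,V)}_{\text{past}} \otimes \P^{(U,V)}_{\text{future}} = \bigl(\P^{U}_{\text{past}} \otimes \P^{U}_{\text{future}}\bigr) \otimes \bigl(\P^{V}_{\text{past}} \otimes \P^{V}_{\text{future}}\bigr)$. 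Applying the standard tensorization bound
\[
\| \mu_1 \otimes \mu_2 - \nu_1 \otimes \nu_2 \|_{TV} \;\leq\; \|\mu_1-\nu_1\|_{TV} + \|\mu_2-\nu_2\|_{TV},
\]
(which itself follows from a one-step triangle inequality after inserting the auxiliary measure $\nu_1 \otimes \mu_2$) yields $\beta_{(U,V)}(k) \leq \beta_U(k) + \beta_V(k)$.

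Combining both steps gives the claimed bound $\beta_{U+V}(k) \leq \beta_U(k) + \beta_V(k)$; the summability hypothesis is not used in the pointwise inequality but ensures the result is meaningful for the downstream CLT applications. The only genuinely delicate point is the first reduction, since passing from a sub-$\sigma$-algebra to a larger one requires verifying that the supremum in the definition is truly monotone in the $\sigma$-algebra; this is immediate once one writes $\beta$ as a TV distance, since restricting $\sigma$-algebras corresponds to pushing forward laws, which cannot increase the TV distance. Everything else is bookkeeping, and the result for the joint process also justifies Lemma~\ref{lemma:mixing_vector} used in Proposition~\ref{prop:mixing_by_measurable_mapping}.
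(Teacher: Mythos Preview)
Your proof is correct and takes a genuinely different route from the paper's. The paper works directly with the partition definition of $\beta$: it restricts to rectangle partitions $A_U\times A_V$, $B_U\times B_V$ of the product space, writes
\[
P((A_U{\times}A_V)\cap(B_U{\times}B_V)) - P(A_U{\times}A_V)P(B_U{\times}B_V)
\]
as a telescoping sum of two terms, and then sums over the partition elements, collapsing the marginal sums to $1$ and recovering $\beta_U(k)+\beta_V(k)$. Your argument instead passes through the total-variation characterization of $\beta$, first reducing $\beta_{U+V}\le\beta_{(U,V)}$ by $\sigma$-algebra monotonicity, and then invoking the tensorization bound $\|\mu_1\otimes\mu_2-\nu_1\otimes\nu_2\|_{TV}\le\|\mu_1-\nu_1\|_{TV}+\|\mu_2-\nu_2\|_{TV}$. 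Your route is cleaner and makes transparent that the $\ell^1$ hypothesis on $(\beta_U(k))_k,(\beta_V(k))_k$ is not needed for the pointwise inequality; the paper invokes summability to justify regrouping the partition sums, but in fact those double sums are already absolutely convergent (each is bounded by $2$) regardless. One small aside: your closing remark that this ``also justifies Lemma~\ref{lemma:mixing_vector}'' is not quite on point---that lemma concerns windowing a single process, not tensor products, and is proved by a separate $\sigma$-algebra identity.
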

\begin{proof}
	Define $Z\coloneqq U+V$. Then, $Z$ is $(\Omega_Z, \sigma^Z)$-measurable, where $\Omega_Z=\Omega_U\times\Omega_V$ and $\sigma^Z = \sigma^U\otimes\sigma^V$.
	As $\sigma^Z$ is generated by products of elements from $\sigma^U$ and $\sigma^V$, we only need to consider (countable) partitions $\mathcal{A}_U,\mathcal{B_U}$ and $\mathcal{A}_V,\mathcal{B}_V$ of $\sigma^U_{-\infty,0},\sigma^U_{k,\infty}$ and $\sigma^V_{-\infty,0},\sigma^V_{k,\infty}$ respectively.
	For any $A_U\in\mathcal{A}_U,A_V\in\mathcal{A}_V$ and $B_U\in\mathcal{B}_U,B_V\in\mathcal{B}_V$, by definition of the product probability measure, 
	\begin{align*}
	P((A_U{\times} A_V) \cap (B_U\times B_V)) {-} P(A_U{\times} A_V) P(B_U{\times} B_V)
	&= (P_U(A_U\cap B_U) {-} P_U(A_U)P_U(B_U))P_V(A_V\cap B_V)\\
	&{+} P_U(A_U)P_U(B_U)(P_V(A_V\cap B_V){-}P_V(A_V)P_V(B_V)).
	\end{align*}
	Since $\beta_U$ is is summable, $\sum_{A_U, B_U}\vert P_U(A_U\cap B_U) - P_U(A_U)P_U(B_U)\vert<\infty$ (idem for $V$), so we can regroup terms and
	\begin{small}
		\begin{equation*}
		\begin{array}{rlc}
		{\sum\limits_{\substack{A_U\in\mathcal{A}_U, A_V\in\mathcal{A}_V,\\ B_U\in\mathcal{B}_U,B_V\in\mathcal{B}_V}}} P((A_U{\times} A_V) \cap (B_U{\times} B_V)) - P(A_U{\times} A_V) P(B_U{\times} B_V)
		{=}& \multicolumn{2}{l}{\sum\limits_{\substack{A_U\in\mathcal{A}_U,\\ B_U\in\mathcal{B}_U}}(P_U(A_U\cap B_U) {-} P_U(A_U)P_U(B_U))}\\
		&{\times}\sum\limits_{\substack{A_V\in\mathcal{A}_V,\\ B_V\in\mathcal{B}_V}} P_V(A_V\cap B_V)&(=1)\\
		&{+} \sum_{\substack{A_U\in\mathcal{A}_U,\\ B_U\in\mathcal{B}_U}}P_U(A_U)P_U(B_U)&(=1)\\
		&\multicolumn{2}{l}{\times\sum\limits_{\substack{A_V\in\mathcal{A}_V,\\ B_V\in\mathcal{B}_V}}(P_V(A_V\cap B_V){-}P_V(A_V)P_V(B_V))}\\
		\leq&\beta_U(k) +\beta_V(k).&
		\end{array}
		\end{equation*}
	\end{small}
	We conclude by taking the sup over partitions of $\Omega_Z.$
\end{proof}
\begin{lemma}
	\label{lemma:mixing_vector}
	Consider $S=(S_i)_{i\in\N}$ with coefficients $\beta_S(k)$ and define $X_n=(S_{n},\ldots, S_{n+M-1})$.
	Then, $${\beta_X(k+M-1)\leq \beta_S(k).}$$
\end{lemma}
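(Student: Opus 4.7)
The plan is to identify the $\sigma$-algebras generated by blocks of $X$ with those generated by longer blocks of $S$, then exploit the monotonicity of the $\beta$-coefficient under $\sigma$-algebra inclusion together with stationarity of $S$.

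First, I would observe that since $X_n = (S_n, \ldots, S_{n+M-1})$ is a Borel-measurable function of $(S_n, \ldots, S_{n+M-1})$, the past $\sigma$-algebra of $X$ up to time $0$ satisfies
\begin{equation*}
\sigma^X_{-\infty, 0} = \sigma(X_n : n \leq 0) \subset \sigma(S_m : m \leq M-1) = \sigma^S_{-\infty, M-1},
\end{equation*}
and similarly the future $\sigma$-algebra from time $k+M-1$ satisfies
\begin{equation*}
\sigma^X_{k+M-1, \infty} = \sigma(X_n : n \geq k+M-1) \subset \sigma(S_m : m \geq k+M-1) = \sigma^S_{k+M-1, \infty}.
\end{equation*}

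Next, I would use that any countable measurable partition of $\sigma^X_{-\infty, 0}$ is in particular a countable measurable partition contained in $\sigma^S_{-\infty, M-1}$, and similarly on the future side. Since $\beta_X(k+M-1)$ is defined as a supremum over pairs of such partitions, enlarging the allowed class only increases the supremum, so
\begin{equation*}
\beta_X(k+M-1) \leq \tfrac{1}{2}\sup_{\substack{\mathcal{A}\subset \sigma^S_{-\infty, M-1}\\ \mathcal{B}\subset \sigma^S_{k+M-1,\infty}}} \sum_{A\in\mathcal{A}, B\in\mathcal{B}} \vert P(A\cap B) - P(A)P(B)\vert.
\end{equation*}

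Finally, by stationarity of $S$, shifting all indices by $-(M-1)$ leaves the joint distribution of $(S_n)_{n\in\Z}$ unchanged, so the right-hand side coincides with the corresponding expression where $\sigma^S_{-\infty, M-1}$ is replaced by $\sigma^S_{-\infty, 0}$ and $\sigma^S_{k+M-1,\infty}$ by $\sigma^S_{k,\infty}$, which is exactly $\beta_S(k)$. No real obstacle is expected here; the only subtlety is pedantic bookkeeping of indices to confirm that the gap between the past and future blocks in $S$ is $k$.
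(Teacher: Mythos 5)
Your proof is correct and takes essentially the same approach as the paper: both arguments identify the $\sigma$-algebra generated by a block of $X$'s with one generated by a correspondingly longer block of $S$'s, then observe that $\beta_X(k+M-1)$ is a supremum over a subclass of the partitions appearing in the supremum defining $\beta_S(k)$. Your explicit invocation of stationarity to shift the index window back by $M-1$ is a small bookkeeping step that the paper leaves implicit, but the core argument is the same.
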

\begin{proof}
	First, note that the $\sigma$-algebra generated by a vector coincides with the $\sigma$-algebra generated by its components
	\begin{align*}
	\sigma(X_{n_1},\ldots X_{n_2})&=\sigma((S_{n_1},\ldots, S_{n_1+M-1}), \ldots, (S_{n_2},\ldots, S_{n_2 +M-1}))\\
	&= \sigma(S_{n_1},\ldots,S_{n_2+M-1})\\
	&= \sigma^S_{n_1,n_2+M-1}.
	\end{align*}
	
	Then, any partition $\mathcal{A}\subset\sigma^X_{n_1,n_2}$ is also in $\sigma^S_{n_1,n_2+M-1}$. Since $\beta_X$ is defined as a sup over such partitions, $\beta_X(k+M-1)\leq \beta_S(k)$.
	For $k\leq M$, we can take $\mathcal{A}=\mathcal{B}\subset \sigma(S_k)$. Since $S_k$ is a continuous random variable, $\beta_X(k)=1$.
\end{proof}

\section{Gaussian approximation for dependent data}
\label{appendix:Gaussian_approximation_results}
\begin{theorem}[{\cite[Theorem 11.22]{kosorok_introduction_2008}}]
	\label{thm:kosorok}
	Let $(X_n)_{n\in\N}\subset\R^d$ be a stationary sequence and consider a functional family $\FunctionalFamily=(F_t)_{t\in \FunctionalDomain}$ with finite bracketing entropy. Suppose there exists $r\in \rbrack2,\infty\lbrack$, such that
	\begin{equation}
	\label{eq:beta_condition}
	\sum_{k=1}^\infty k^{\tfrac{2}{r-2}} \beta_X(k)<\infty,
	\end{equation}
	Then, $\sqrt{N}(\hat{F}_t- F_t)$ converges to a tight, zero--mean Gaussian $G_d$ process with covariance~\eqref{eq:limit_covariance}.
\end{theorem}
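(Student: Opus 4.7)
The plan is to follow the standard empirical-process strategy: establish convergence of the finite-dimensional marginals of $\sqrt{N}(\hat{F}_t - F_t)$ to a centered Gaussian with the claimed covariance, and then upgrade this to functional convergence in $\ell^\infty(\FunctionalDomain)$ by verifying asymptotic equicontinuity with respect to a suitable pseudometric induced by $\FunctionalFamily$. Because the sample is $\beta$-mixing rather than i.i.d., all classical tools (CLTs, maximal inequalities, symmetrization) must be replaced by their dependent-data counterparts, and the interplay between the mixing rate and the moment parameter $r$ is what dictates the assumption~\eqref{eq:beta_condition}.

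First, I would treat the finite-dimensional distributions. Fix $t_1,\ldots,t_m \in \FunctionalDomain$ and apply the Cram\'er--Wold device: for every $\lambda\in\R^m$, the partial sum $\sum_{n=1}^N \sum_{j=1}^m \lambda_j (F_{t_j}(X_n) - \E F_{t_j}(X_n))$ is a sum of a stationary $\beta$-mixing sequence of real random variables. The $\beta$-mixing CLT of Doukhan--Massart--Rio (or equivalently Rio's covariance inequality combined with Bernstein blocking) yields asymptotic normality provided there is a finite $r>2$ such that $\E|F_t(X_1)|^r<\infty$ and $\sum_k k^{2/(r-2)}\beta_X(k)<\infty$, which is exactly~\eqref{eq:beta_condition}; the resulting variance is the absolutely convergent series in~\eqref{eq:limit_covariance} by dominated convergence combined with the covariance bound $|\mathrm{cov}(g(X_0),h(X_k))|\lesssim \|g\|_r\|h\|_r\,\beta_X(k)^{1-2/r}$. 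The envelope condition needed for finite $r$-th moments comes for free from the bracketing entropy hypothesis, since a finite bracket at level $\infty$ yields a bounded envelope.

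Second, I would establish asymptotic equicontinuity. The key ingredient is a maximal inequality for $\beta$-mixing partial sums indexed by a class with finite bracketing entropy. One approach is the big-block/small-block decomposition: partition $\{1,\ldots,N\}$ into alternating blocks of sizes $p_N$ and $q_N$ with $q_N/p_N\to 0$; couple the big blocks with independent copies at total variation cost $O((N/p_N)\beta_X(q_N))$; then apply a classical bracketing maximal inequality (e.g., Ossiander's or Pollard's) to the i.i.d.\ surrogate. The summability~\eqref{eq:beta_condition} controls the coupling error, and the finite bracketing integral $J_{[]}(\infty,\FunctionalFamily,L_r(P))<\infty$ controls the supremum over the class. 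Combining these yields, for every $\eta>0$, a $\delta>0$ such that
\begin{equation*}
\limsup_{N\to\infty} \P^*\left(\sup_{d_P(s,t)<\delta} \sqrt{N}|(\hat{F}_s-F_s)-(\hat{F}_t-F_t)|>\eta\right)<\eta,
\end{equation*}
where $d_P$ is the $L_2(P)$ pseudometric on $\FunctionalFamily$. Together with total boundedness of $\FunctionalFamily$ in $d_P$ (again from the bracketing entropy), this is equivalent to tightness of the sequence in $\ell^\infty(\FunctionalDomain)$.

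The main obstacle is the maximal inequality in the dependent setting: one must ensure that the blocking parameters $(p_N,q_N)$ can be chosen so that simultaneously (i) $(N/p_N)\beta_X(q_N)\to 0$ to make the coupling negligible, (ii) $q_N/p_N\to 0$ so that the independent-block CLT is tight, and (iii) the small-block contribution to the supremum is $o_\P(1)$. Balancing these under only the summability condition~\eqref{eq:beta_condition}, without assuming a specific decay rate of $\beta_X$, is what forces the exponent $2/(r-2)$ and is the most delicate step; once it is in place, combining finite-dimensional convergence and asymptotic equicontinuity via the standard portmanteau argument delivers weak convergence of $\sqrt{N}(\hat{F}-F)$ to the stated tight Gaussian process $G_d$.
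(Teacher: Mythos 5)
You should note first that the paper does not prove this statement at all: Theorem~\ref{thm:kosorok} is imported verbatim from the cited reference (Kosorok, Theorem 11.22), whose proof in turn rests on the Arcones--Yu/Doukhan--Massart--Rio empirical CLT for $\beta$-mixing sequences. Your outline reconstructs exactly the architecture of that proof: Cram\'er--Wold plus a mixing CLT and Rio-type covariance bounds for the finite-dimensional laws, then independent-block coupling (Berbee's lemma) combined with an Ossiander-type bracketing maximal inequality to get asymptotic equicontinuity in the $L_2(P)$ pseudometric, with~\eqref{eq:beta_condition} mediating the trade-off between block sizes, coupling cost and moment level $r$. So the route is the right one and matches the source the paper relies on.

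The weakness is that, as a proof, the decisive step is described rather than carried out. You yourself flag the balancing of $(p_N,q_N)$ under the bare summability hypothesis~\eqref{eq:beta_condition} as ``the most delicate step''; but that step \emph{is} the theorem. In particular, the coupling bound $O((N/p_N)\beta_X(q_N))$ only controls the total-variation distance between the block variables and their independent surrogates, not the supremum of the empirical process over $\FunctionalFamily$; transferring the bracketing maximal inequality through the coupling, handling the small-block remainder uniformly over the class, and showing that the exponent $2/(r-2)$ (together with $J_{[]}(\infty,\FunctionalFamily,L_r(P))<\infty$) suffices without any assumed decay rate for $\beta_X$ is precisely the technical content of Arcones--Yu, and none of it is executed here. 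Two smaller points: the claim that finite bracketing entropy ``for free'' gives the $L_r$ envelope needed for the moment condition deserves a sentence (it holds because a single finite bracket at some level yields an $L_r$ envelope, and in this paper the functionals are in fact uniformly bounded); and the absolute convergence of the series in~\eqref{eq:limit_covariance} should be justified via the covariance inequality together with~\eqref{eq:beta_condition}, which you assert but do not check. As a citation-level justification your sketch is sound; as a standalone proof it has a genuine gap at the maximal-inequality/equicontinuity stage, and the honest fix is to do what the paper does and invoke the published result.
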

\begin{theorem}[{\cite[Theorem1]{buhlmannBlockwiseBootstrapGeneral1995}}]
	\label{thm:buhlmann}
	Let $(X_n)_{n\in\N}\subset\R^d$ be a stationary sequence and consider a functional family $\FunctionalFamily=(F_t)_{t\in \FunctionalDomain}$ with finite bracketing entropy.
	Suppose that $\beta_X(k)\xrightarrow[k\to\infty]{} 0$ decrease exponentially and that $\FunctionalFamily$ satisfies~{(\ref{eq:kernel_compact_support},\ref{eq:kernel_lipschitz})}.
	Let the bootstrap sample be generated with the
	Moving Block Bootstrap,
	where the block size $L(n)$ satisfying $L(n)\rightarrow \infty$ and $L(n)=\bigO(n^{1/2-\epsilon})$ for some $0<\epsilon<\tfrac{1}{2}$. Then,
	\begin{equation*}
	\sqrt{N}(\hat{F}_N^* - \E^*[\hat{F}_N^*]) \rightarrow^* G_d \qquad \text{ in probability,}
	\end{equation*}
	where $G_d$ is the zero-mean Gaussian Process with the covariance~\eqref{eq:limit_covariance}.
\end{theorem}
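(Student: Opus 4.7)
This statement is Theorem~1 of~\cite{buhlmannBlockwiseBootstrapGeneral1995} specialized to our functional family, so I would follow B\"uhlmann's overall strategy. Convergence in distribution of a process in $\ell^\infty(\FunctionalDomain)$ reduces to (i) convergence of the finite--dimensional distributions, and (ii) asymptotic equicontinuity of the bootstrap empirical process with respect to the intrinsic pseudometric of the Gaussian limit. The bracketing entropy bound from Proposition~\ref{prop:bracketing_number_silhouette} (a consequence of~\eqref{eq:kernel_compact_support} and~\eqref{eq:kernel_lipschitz}) is the metric entropy ingredient, while the exponential $\beta$--mixing of $(X_n)$ is what makes the dependence manageable in both steps.

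For (i), I would exploit the conditional i.i.d.\ structure of the moving block bootstrap. Writing $N = BL$ up to a vanishing remainder and setting $T_j(t) = \sum_{i=0}^{L-1} F_t(X_{n_j + i})$, the starting indices $n_1,\ldots, n_B$ being i.i.d.\ uniform on $\{1,\ldots,N-L+1\}$ makes $(T_j(t))_{j=1}^B$ conditionally i.i.d.\ given the data. Then
\[
\sqrt{N}\bigl(\hat{F}_N^*(t) - \E^*[\hat{F}_N^*(t)]\bigr) = \tfrac{1}{\sqrt{N}}\sum_{j=1}^{B}\bigl(T_j(t) - \E^*T_j(t)\bigr),
\]
to which I would apply the Lindeberg--Feller CLT in $\R^k$ conditional on $(X_n)$, for each finite set $(t_1,\ldots,t_k)$. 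The conditional covariance is the standard block long--run variance estimator, which converges in probability to~\eqref{eq:limit_covariance} using exponential mixing together with the uniform boundedness of $F$ that follows from~\eqref{eq:kernel_compact_support} and~\eqref{eq:kernel_lipschitz}. The Lindeberg condition is immediate since $|T_j(t)| \leq L\|F\|_\infty$ is $o(\sqrt{N})$ under $L = O(N^{1/2-\epsilon})$.

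For (ii), I would run a chaining argument driven by the brackets from Proposition~\ref{prop:bracketing_number_silhouette}. The block-sum functionals $(T_j(t) - \E^*T_j(t))/\sqrt{N}$ inherit brackets whose $L^2(P^*)$--radius is controlled in terms of the original bracket radius, provided one has a Rosenthal/Bernstein-type inequality for sums of exponentially $\beta$--mixing variables. A conditional maximal inequality for i.i.d.\ block draws then bounds $\E^*\sup_{d(s,t)<\delta}\sqrt{N}|\hat{F}_N^*(t)-\hat{F}_N^*(s) - \E^*[\hat{F}_N^*(t)-\hat{F}_N^*(s)]|$, which vanishes as $\delta \to 0$ thanks to the finite bracketing entropy integral. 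This step is the main obstacle: the growth condition $L = O(N^{1/2-\epsilon})$ is precisely what balances $L$ being large enough that the block variance approximates the long--run variance against $L$ being small enough that the chaining tail bound remains summable, and the exponential-mixing hypothesis is used essentially to obtain the sharp block-sum moment bounds that close the argument.
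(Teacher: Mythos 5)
The paper does not prove this statement: it is stated as a direct citation of Theorem~1 of~\cite{buhlmannBlockwiseBootstrapGeneral1995} and relegated to Appendix~\ref{appendix:Gaussian_approximation_results} as a black-box ingredient used in the proof of Theorem~\ref{thm:main_clt}. There is therefore no in-paper proof against which to compare your argument. Your sketch is a reasonable reconstruction of the standard blockwise-bootstrap functional CLT strategy (conditional Lindeberg--Feller for the fidis of conditionally i.i.d.\ block sums, plus a bracketing chaining argument for asymptotic equicontinuity, with the block-length growth condition $L=\bigO(N^{1/2-\epsilon})$ trading off bias of the block long-run variance against the moment bounds needed to close the chaining), and it correctly identifies where the hypotheses (exponential $\beta$-mixing, finite bracketing entropy via Proposition~\ref{prop:bracketing_number_silhouette}) enter. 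But be aware that you are proving a cited external result, not recovering an argument the authors present; the paper's own contribution here is only verifying that its functional family and mixing rates satisfy the hypotheses so that the cited theorem applies.

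One caveat worth flagging in your sketch: the equicontinuity step (ii) is where the real technical work in B\"uhlmann's paper lies, and you gesture at ``a Rosenthal/Bernstein-type inequality for sums of exponentially $\beta$-mixing variables'' and ``a conditional maximal inequality for i.i.d.\ block draws'' without specifying them. These are exactly the ingredients that are nontrivial to assemble; in particular, one must control the $L^2(P^*)$-radius of bootstrap brackets uniformly in $N$ and $L$ simultaneously, not just pointwise, and establishing that requires a careful coupling or blocking argument that your outline elides. If you intend this as an actual proof rather than a roadmap, that step needs to be made precise.
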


\section{Proofs of Propositions~\ref{prop:upper_bound_p_persistence} and~\ref{prop:continuity_truncated_persistence}}
\label{appendix:proof_continuity_truncated_persistence}
\begin{proof}[Proof of Proposition~\ref{prop:upper_bound_p_persistence}]
We first note that when $A_h\leq\epsilon$, then $\pers_{p,\epsilon}^p(h)=0$.
For the non-trivial case, we follow the proof of Theorem 4.13 in~\cite{perez_c0-persistent_2022}. An upper-bound of the covering number of the image of $h$, at radius $\tau>0$ is $T(2\Lambda/\tau)^{1/\alpha}+1$, so that
\begin{equation*}
\pers_{p,\epsilon}^p(h)
\leq p\int_\epsilon^{A(f)} \left(T\left(\frac{2\Lambda}{\tau}\right)^{1/\alpha}+1\right) (\tau-\epsilon)^{p-1}d\tau
= (A_h-\epsilon)^p + pT(2\Lambda)^{1/\alpha}\int_\epsilon^{A(f)} \frac{(\tau-\epsilon)^{p-1}}{\tau^{1/\alpha}}d\tau
\end{equation*}
We recall that since $\frac{A_h}{\tau}\geq 1$ and $\frac{1}{\alpha}\leq p-1$, $(\frac{A_h}{\tau})^{1/\alpha}\leq (\frac{A_h}{\tau})^{p-1}$, so
\begin{equation*}
\frac{(\tau-\epsilon)^{p-1}}{\tau^{1/\alpha}} = \frac{1}{A_h^{1/\alpha}}\left(\frac{A_h}{\tau}\right)^{1/\alpha}(\tau-\epsilon)^{p-1} \leq
A_h^{p-1-1/\alpha}\left(1 - \frac{\epsilon}{\tau}\right)^{p-1}.
\end{equation*}
Finally, by recognizing that $1-\epsilon/\tau\leq 1-\epsilon/A_h,$ we obtain
\begin{align*}
\pers_{p,\epsilon}^p(h)
&\leq (A_h-\epsilon)^p + pT(2\Lambda)^{1/\alpha}A_h^{p-1-1/\alpha} (1-\epsilon/A_h)^{p-1} (A_h-\epsilon)\\
&\leq(A_h-\epsilon)(1-\epsilon/A_h)^{p-1}[A_h^{p-1}+ pT (2\Lambda)^{1/\alpha}A_h^{p-1-1/\alpha}]\\
&\leq(A_h-\epsilon)^p\left(1+pT\left(\tfrac{2\Lambda}{A_h}\right)^{1/\alpha}\right)\\
&\leq(A_h-\epsilon)^p\left(1+pT\left(\tfrac{2\Lambda}{\epsilon}\right)^{1/\alpha}\right),
\end{align*}
where we have used that $\epsilon^{1/\alpha} \leq A_h^{1/\alpha}$
\end{proof}

By H\"older continuity, $A_h\leq T^\alpha \Lambda$, so the ratio $\tfrac{T\Lambda^{1/\alpha}}{A_h^{1/\alpha}}\geq 1$ denotes how small the amplitude of $h$ is relative to what it could be, under the H\"older assumption. Interestingly, that term increases as $A_h$ gets smaller, but the whole bound is indeed increasing in $A_h$, which is of the order of $A_h^p + A_h^{2-1/\alpha}$.

\begin{proof}[Proof of Proposition~\ref{prop:continuity_truncated_persistence}]
Let $f,g\in C([0,T])$ such that $\Vert f-g\Vert_\infty<\epsilon/4$. Let $\Gamma:D(f)\rightarrow D(g)$ be a matching. Recall that $\vert w_\epsilon(b,d) - w_\epsilon(\eta_b, \eta_d)\vert \leq \vert b-\eta_b\vert + \vert d-\eta_d\vert \leq 2\Vert (b,d)-(\eta_b,\eta_d)\Vert_\infty$. In addition, if $d-b <\epsilon/2$, then both $w_\epsilon(b,d)=0 = w_\epsilon(\Gamma(b,d))$. Using the bound on the difference of $p$-powers as in the proof of Proposition~\ref{prop:truncated_persistence_continuity_bottleneck},
\begin{align*}
\left\vert \sum_{(b,d)\in D(f)} w_\epsilon(b,d)^p - \sum_{(b',d')\in D(g)} w_\epsilon(b',d')^p\right\vert
&\leq p\sum_{(b,d)\in D(f)} \vert w_\epsilon(b,d) - w_\epsilon(\Gamma(b,d))\vert \max \{w_\epsilon(b,d)^{p-1}, w_\epsilon(\Gamma(b,d))^{p-1}\}\\
&\leq 2p \Vert f-g\Vert_\infty \sum_{\substack{(b,d)\in D(f)\\ d-b\geq\epsilon/2}}\max \{w_\epsilon(b,d)^{p-1}, w_\epsilon(\Gamma(b,d))^{p-1}\}\\
&\leq p \left( \sum_{\substack{(b,d)\in D(f)\\ d-b\geq\epsilon/2}} (w_\epsilon(b,d)+2\epsilon/4)^{p-1}\right) \Vert f-g\Vert_\infty.
\end{align*}
Since $f$ is continuous on a compact domain, it is uniformly continuous, so the right-hand side is finite and depends only on $f$.

For the Lipschitz character, we follow the proof of~\cite[Lemma 3.20]{perez_c0-persistent_2022}. For $f,g\in C^\alpha_\Lambda([0,T])$,
\begin{align*}
\left\vert \sum_{(b,d)\in D(f)} w_\epsilon(b,d)^p - \sum_{(b',d')\in D(g)} w_\epsilon(b',d')^p\right\vert
&\leq p\sum_{(b,d)\in D(f)} \vert w_\epsilon(b,d) - w_\epsilon(\Gamma(b,d))\vert \max \{w_\epsilon(b,d)^{p-1}, w_\epsilon(\Gamma(b,d))^{p-1}\}\\
&\leq 2p \Vert f-g\Vert_\infty\left( \sum_{(b,d)\in D(f)} w_\epsilon(b,d)^{p-1} + \sum_{(b',d')\in D(g)} w_\epsilon(b',d')^{p-1} \right)\\
&= 2p (\pers_{p-1,\epsilon}^{p-1}(D(f)) + \pers_{p-1,\epsilon}^{p-1}(D(g))\Vert f-g\Vert_\infty.
\end{align*}
By Lemma~\ref{prop:upper_bound_p_persistence}, $\pers_{p-1,\epsilon}^{p-1}(D(f))\leq \frac{2^{1/\alpha}}{1-1/(p-1)\alpha} \Lambda^{p-1} T^{(p-1)\alpha-1}$, so that
\begin{equation*}
\vert\pers_{p,\epsilon}^p(D(f)) - \pers_{p,\epsilon}^p(D(g))\vert
\leq \frac{2^{2+1/\alpha}}{1-1/(p-1)\alpha} \Lambda^{p-1} T^{(p-1)\alpha-1}\Vert f-g\Vert_\infty.
\end{equation*}
\end{proof}

\section{Lipschitz constant for $\kernel^{pi}$ and $\kernel^{pi,t}$}
\label{appendix:persistence_image_lipschitz_constant}
First, $(x,y)\mapsto \exp(-(x^2+y^2))$ is $\tfrac{2\sqrt{2}}{e}-$Lipschitz with respect to the Euclidean norm, so $\tfrac{4}{e}-$Lipschitz for the Minkowski norm.
Let us now consider $\kernel^{pi,t}(b,d)(x,y) = \tfrac{1}{2\pi\sigma^2}\left(2-\tfrac{\Vert (b,d) - (x,y)\Vert_\infty}{\sigma}\right)_+^{r}\exp\left(-\tfrac{(b-x)^2+(d-y)^2}{2\sigma^2}\right)$. Then, for ${r>1}$,
\begin{align*}
&\left\vert\left(2-\tfrac{\Vert (b,d) - (x,y)\Vert_\infty}{\sigma} \right)_+^{r} - \left(2-\tfrac{\Vert (b',d') - (x,y)\Vert_\infty}{\sigma}\right)_+^{r}\right\vert
= \left\vert \int_0^1\tfrac{d}{dt} \left(2-\tfrac{\Vert (b,d) + (b'-b, d'-d)t - (x,y)\Vert_\infty}{\sigma}\right)_+^{r}dt\right\vert\\
\leq& \int_0^1\left\vert r\left(2-\tfrac{\vert b+(b'-b)t-x\vert}{\sigma}\right)_+^{r-1}(-1)^{b-x>b'-bt}\tfrac{(b'-b)}{\sigma}\indicator{\vert b + (b'-b)t-x\vert \geq \vert d+(d-d')t-y\vert}\right. \\
&+\left. r\left(2-\tfrac{\vert d+(d'-d)t-y\vert}{\sigma}\right)_+^{r-1}(-1)^{d-y>d'-dt}\tfrac{(d'-d)}{\sigma}\indicator{\vert b + (b'-b)t-x\vert \leq \vert d+(d-d')t-y\vert}\right\vert dt.\\
\leq& \int_0^1\tfrac{r}{\sigma}\left(\left(2-\tfrac{\vert b+(b'-b)t-x\vert}{\sigma}\right)_+^{r-1} \vert b-b'\vert + r\left(2-\tfrac{\vert d+(d'-d)t-y\vert}{\sigma}\right)_+^{r-1}\vert d-d'\vert \right)dt\\
\leq& \tfrac{r}{\sigma}\left((2-\tfrac{\min(\vert b-x\vert, \vert b'-x\vert)}{\sigma})_+^{r-1} \vert b-b'\vert + r(2-\tfrac{\min(\vert d-y\vert, \vert d'-y\vert)}{\sigma})_+^{r-1}\vert d-d'\vert \right)\\
\leq& \tfrac{2r}{\sigma}(2-\tfrac{\min(\Vert(b,d)-(x,y)\Vert_\infty, \Vert(b',d')-(x,y)\Vert_\infty)}{\sigma})_+^{r-1}\Vert (b,d)-(b',d')\Vert_\infty\\
\leq& \tfrac{2^r r}{\sigma} \Vert (b,d)-(b',d')\Vert_\infty.
\end{align*}
Then,
we obtain
\begin{align*}
\vert \kernel^{pi,t}(b,d)(x,y) - \kernel^{pi,t}(b',d')(x,y)\vert
\leq& \tfrac{1}{2\pi\sigma^2}\left\vert \left(2-\tfrac{\Vert (b,d) - (x,y)\Vert_\infty}{\sigma}\right)_+^{r} - \left(2-\tfrac{\Vert (b',d') - (x,y)\Vert_\infty}{\sigma}\right)_+^{r}\right\vert \exp\left(-\tfrac{(b-x)^2+(d-y)^2}{2\sigma^2}\right) \\
&+ \tfrac{1}{2\pi\sigma^2}\left(2-\tfrac{\Vert (b',d') - (x,y)\Vert_\infty}{\sigma}\right)_+^{r} \left\vert \exp\left(-\tfrac{(b-x)^2+(d-y)^2}{2\sigma^2}\right) - \exp\left(-\tfrac{(b'-x)^2+(d'-y)^2}{2\sigma^2}\right) \right\vert\\
\leq& \tfrac{1}{2\pi\sigma^2}\tfrac{2^r r}{\sigma} \Vert (b,d) - (b',d')\Vert_\infty + \tfrac{1}{2\pi\sigma^2} 2^r \tfrac{4}{e}\left\Vert\left(\tfrac{b-x}{\sigma}, \tfrac{d-y}{\sigma}\right) - \left(\tfrac{b'-x}{\sigma}, \tfrac{d'-y}{\sigma}\right)\right\Vert_\infty \\
\leq& \tfrac{2^{r-1}}{\pi\sigma^3}\left(r + 2\right)\Vert (b,d)-(b'd')\Vert_\infty.
\end{align*}

% Removed Moments of teh H\"older constant
\section{Moments of the H\"older constant of a stochastic process}
\label{appendix:holder_moments}
Let $(W_t)_{t\in[0,T]}$ be a stochastic process. A path $t\mapsto W_t(\omega)$ is said to be $\alpha$-H\"older if $\vert W_t(\omega) - W_s(\omega)\vert \leq \Lambda_{W(\omega)}\vert s-t\vert^\alpha$, for any $s,t\in [0,T]$. Many processes, for example Gaussian processes, do not admit a uniform constant.
Based on~\cite{azais_level_2009,hu_multiparameter_2013,mathoverflow_kolmogorov_279085}, we will now give a condition under which $\Lambda_{W,\omega}$ is a random variable and we will calculate its moments.
\begin{proposition}[{\cite[Proposition 1.11]{azais_level_2009}}]
	\label{proposition:kolmogorov_implies_holder}
	Suppose $W$ satisfies~\eqref{eq:Kolmogorov_condition} with $K_{\KolmogP,\KolmogR}$ and let $\alpha\in\rbrack 0, \tfrac{\KolmogR}{\KolmogP}\lbrack$. Then, there exists a version $(V_t)_{t\in[0,1]}$ of $W$ and a random variable $\Lambda_{V,\alpha}>0$, such that, for all $s,t\in[0,1]$,
	\begin{equation*}
	P(\vert V_t - V_s\vert \leq \Lambda_{V,\alpha}\vert t-s\vert^\alpha) = 1 \quad\text{and}\quad P(W(t)=V(t)) = 1.
	\end{equation*}
\end{proposition}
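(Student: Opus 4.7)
The plan is the classical Kolmogorov--Chentsov continuity construction. Working on $[0,1]$, let $\mathcal{D}_n=\{k2^{-n}\colon 0\le k\le 2^n\}$ and $\mathcal{D}=\bigcup_{n\in\N}\mathcal{D}_n$. By Markov's inequality applied to~\eqref{eq:Kolmogorov_condition}, for each consecutive pair at level $n$,
\begin{equation*}
P\!\left(\vert W_{(k+1)2^{-n}}-W_{k2^{-n}}\vert>2^{-n\alpha}\right)\le K_{\KolmogP,\KolmogR}\,2^{-n(1+\KolmogR-\alpha\KolmogP)}.
\end{equation*}
A union bound over $k\in\{0,\dots,2^n-1\}$ yields $P(M_n>2^{-n\alpha})\le K_{\KolmogP,\KolmogR}\,2^{-n(\KolmogR-\alpha\KolmogP)}$, where $M_n=\max_k\vert W_{(k+1)2^{-n}}-W_{k2^{-n}}\vert$. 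Since $\alpha<\KolmogR/\KolmogP$, these probabilities are summable, so by Borel--Cantelli there is an a.s.\ finite integer $N(\omega)$ with $M_n(\omega)\le 2^{-n\alpha}$ for every $n\ge N(\omega)$.

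A standard telescoping (chaining) argument then promotes this coarse-scale bound to Hölder continuity on $\mathcal{D}$. For $s,t\in\mathcal{D}$ with $0<\vert s-t\vert<2^{-N(\omega)}$, pick $n_0\ge N(\omega)$ with $2^{-n_0-1}\le\vert s-t\vert<2^{-n_0}$ and express $s$ and $t$ as finite dyadic refinements past level $n_0$; summing level-by-level gives $\vert W_s(\omega)-W_t(\omega)\vert\le C_\alpha\vert s-t\vert^{\alpha}$ for a constant $C_\alpha$ depending only on $\alpha$. Combining with the finite oscillation of $W(\omega)$ on the coarse grid $\mathcal{D}_{N(\omega)}$ produces an a.s.\ finite random constant $\Lambda_{V,\alpha}(\omega)$ with $\vert W_s(\omega)-W_t(\omega)\vert\le\Lambda_{V,\alpha}(\omega)\vert s-t\vert^\alpha$ for all $s,t\in\mathcal{D}$.

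The version is obtained by continuous extension. On the almost sure event where $W(\omega)\vert_{\mathcal{D}}$ is uniformly continuous, set $V_t(\omega)=\lim_{\mathcal{D}\ni s\to t}W_s(\omega)$, and $V_t\equiv 0$ on the complement. By construction the sample paths of $V$ are $\alpha$-Hölder with constant $\Lambda_{V,\alpha}$. To check that $V$ is a version of $W$, fix $t\in[0,1]$ and a sequence $s_n\in\mathcal{D}$ with $s_n\to t$: Markov's inequality combined with~\eqref{eq:Kolmogorov_condition} gives $P(\vert W_{s_n}-W_t\vert>\varepsilon)\le K_{\KolmogP,\KolmogR}\,\varepsilon^{-\KolmogP}\vert s_n-t\vert^{1+\KolmogR}\to 0$, hence $W_{s_n}\to W_t$ in probability, while the construction forces $W_{s_n}\to V_t$ almost surely; therefore $V_t=W_t$ a.s.

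The main obstacle is the bookkeeping of the random constant $\Lambda_{V,\alpha}$ during the chaining step and the measurability of the events on which the bound holds. Both points are routine in the continuity-of-modifications literature (see, e.g., Kallenberg or Revuz--Yor), so the argument introduces no genuinely new idea beyond verifying that the exponent condition $\alpha<\KolmogR/\KolmogP$ is exactly what makes the Borel--Cantelli series converge.
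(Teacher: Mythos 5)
Your argument is the standard Kolmogorov--Chentsov chaining proof, and it is correct: the Markov/union-bound/Borel--Cantelli step, the dyadic telescoping to get a Hölder bound on $\mathcal{D}$, and the continuous-extension/convergence-in-probability argument to identify $V$ as a version of $W$ are all executed properly, and the exponent condition $\alpha<\KolmogR/\KolmogP$ is used exactly where the series must be summable. The paper does not prove this proposition but cites it as Proposition~1.11 of the reference \texttt{azais\_level\_2009}, whose proof is this same classical argument, so your approach matches the source.
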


\begin{theorem}[{\cite{mathoverflow_kolmogorov_279085}}]
	\label{thm:holder_constant_moments}
	Let $\KolmogP\in \mathbb{N}$ be such that $K_{\KolmogP, \alpha \KolmogP}<\infty$ and $1-\alpha>\tfrac{1}{\KolmogP}$, $\KolmogP\geq2$,
	\begin{equation*}
	\E[\Lambda_{W}] \leq 16\ \tfrac{\alpha + 1}{\alpha} T K_{\KolmogP,\KolmogP\alpha+1}^{1/\KolmogP}.
	\end{equation*}
	In addition,
	\begin{equation*}
	\E[\Lambda_{W}^k] \leq
	\begin{cases}
	\left(2^{3+2/\KolmogP}\ \tfrac{\alpha + 2/\KolmogP}{\alpha}\right)^k K_{\KolmogP,\KolmogP\alpha+1}^{k/\KolmogP},& \text{for }0<k\leq \KolmogP,\\
	\left(2^{3+2/\KolmogP}\ \tfrac{\alpha + 2/\KolmogP}{\alpha}\right)^k K_{k,k(\alpha + 2/\KolmogP)-1},&\text{for } k>\KolmogP.
	\end{cases}
	\end{equation*}
\end{theorem}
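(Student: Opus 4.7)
The plan is the classical Kolmogorov--Chentsov dyadic chaining argument, tracked carefully to yield explicit moment bounds on the Hölder constant itself. Fix the dyadic mesh $D_n = \{kT/2^n : 0\leq k \leq 2^n\}$ and set
\begin{equation*}
M_n := \max_{0\leq k<2^n} |W_{T(k+1)/2^n} - W_{Tk/2^n}|.
\end{equation*}
Summing the hypothesis~\eqref{eq:Kolmogorov_condition} over the $2^n$ dyadic intervals gives $\E[M_n^{\KolmogP}] \leq K_{\KolmogP, r}\, T^{1+r}\, 2^{-nr}$; the index $r = \KolmogP\alpha + 1$ appearing in the statement is precisely the choice that will make the geometric series below converge.

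The pathwise core of the argument is standard. For $s \in [0,T]$, let $s^{(n)}$ be the largest dyadic point of level $n$ with $s^{(n)} \leq s$; then $|W_{s^{(n+1)}} - W_{s^{(n)}}| \leq M_{n+1}$, and telescoping gives $|W_s - W_{s^{(n)}}| \leq \sum_{m>n} M_m$ on the full-measure set where the series converges (as ensured by the Hölder modification constructed in Proposition~\ref{proposition:kolmogorov_implies_holder}). For $s,t \in [0,T]$ with $T/2^{n+1} < |t-s| \leq T/2^n$, the dyadic endpoints $s^{(n)}, t^{(n)}$ differ by at most one step at level $n$, so $|W_t - W_s| \leq 2 \sum_{m\geq n} M_m$. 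Dividing by $|t-s|^\alpha \geq (T/2^{n+1})^\alpha$ and taking the supremum over $s\neq t$ produces the key pathwise bound
\begin{equation*}
\Lambda_W \leq C_\alpha\, T^{-\alpha} \sum_{n\geq 0} 2^{n\alpha} M_n,
\end{equation*}
with an absolute constant $C_\alpha$ of order $2^{1+\alpha}$.

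With this $\ell^1$-representation in hand, the moment estimates follow from Minkowski's inequality applied term-by-term: $(\E[\Lambda_W^k])^{1/k} \leq C_\alpha T^{-\alpha} \sum_{n\geq 0} 2^{n\alpha} (\E[M_n^k])^{1/k}$. For $0 < k \leq \KolmogP$, Lyapunov's inequality gives $(\E[M_n^k])^{1/k} \leq (\E[M_n^{\KolmogP}])^{1/\KolmogP}$, which together with the choice $r = \KolmogP\alpha + 1$ produces a geometric series of ratio $2^{-1/\KolmogP}$; summing it, collecting the $T$-powers, and bounding $1/(1 - 2^{-1/\KolmogP})$ by a quantity of order $(\alpha+1)/\alpha$ yields the first moment bound together with the first sub-case of the second bound. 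For $k > \KolmogP$, Lyapunov goes the wrong way, so I re-apply Kolmogorov's condition directly at order $k$ with the index $r' = k(\alpha + 2/\KolmogP) - 1$, chosen precisely so that $\alpha - r'/k = 1/k - 2/\KolmogP < 0$ (ensuring convergence) and so that the $T$-scaling matches that of the $k \leq \KolmogP$ regime; the resulting geometric series produces the second sub-case, with $K_{k, k(\alpha + 2/\KolmogP)-1}$ appearing to the first power after raising to the $k$-th.

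The main obstacle is the bookkeeping of constants through the chaining step, and in particular verifying that the $T$- and $\alpha$-dependence reduces to the explicit prefactors stated in the theorem. A related subtlety is the transition between the two sub-cases at $k = \KolmogP$: one must check that switching from Lyapunov to the direct $k$-th order Kolmogorov condition produces a consistent $T$-scaling so that the two sub-cases glue together properly.
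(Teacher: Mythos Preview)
Your approach is correct in spirit and would yield moment bounds of the right form, but it is genuinely different from the paper's proof and will not reproduce the stated constants. The paper does not use dyadic chaining at all; it applies the Garsia--Rodemich--Rumsey inequality (Lemma~\ref{lemma:garsia_rodemich_rumsey}) with $G(u)=u^{\KolmogP}$ and $\delta(u)=u^{\alpha+2/\KolmogP}$. This produces the pathwise bound $\Lambda_W \leq 2^{3+2/\KolmogP}\,\tfrac{\alpha+2/\KolmogP}{\alpha}\,B^{1/\KolmogP}$ where $B$ is a single random double integral, and the moment bounds then follow from Jensen (for $k\leq\KolmogP$) and Fubini applied to $\E[B]$ together with~\eqref{eq:Kolmogorov_condition}. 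The factor $\tfrac{\alpha+2/\KolmogP}{\alpha}$ appears directly from integrating $u^{\alpha-1}\,d u$ in the GRR conclusion, which is why the stated constants have exactly that form.

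Your chaining route has one concrete slip: you claim $1/(1-2^{-1/\KolmogP})$ is ``of order $(\alpha+1)/\alpha$'', but this quantity depends only on $\KolmogP$ and behaves like $\KolmogP/\ln 2$ for large $\KolmogP$; it carries no $\alpha$-dependence whatsoever. So the dyadic argument produces a prefactor of order $\KolmogP$ rather than $\tfrac{\alpha+2/\KolmogP}{\alpha}$, and these are not comparable in general (for moderate $\alpha$ and large $\KolmogP$ yours is worse; for small $\alpha$ the paper's is worse). Both are legitimate bounds, but if the goal is to establish the theorem \emph{as stated}, you would need GRR. What your approach buys is that it is entirely elementary and avoids invoking Lemma~\ref{lemma:garsia_rodemich_rumsey} as a black box; what the paper's approach buys is a cleaner one-line reduction to a single integral and the specific constants quoted.
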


\begin{lemma}[Garsia--Rodemich--Rumsey Inequality~{\cite[Lemma 1.1]{hu_multiparameter_2013}}]
	\label{lemma:garsia_rodemich_rumsey}
	Let $G:\R_+ \rightarrow \R_+$ be a non--decreasing function with $\lim_{x\rightarrow\infty}G(x) = \infty$ and $\delta:[0,T]\rightarrow [0,T]$ continuous and non--decreasing with $\delta(0)=0$. Let $G^{-1}$ and $\delta^{-1}$ be lower--inverses.
	Let $f:[0,T]\rightarrow \R$ be a continuous functions such that
	\begin{equation*}
	\int_0^T\int_0^T G\left(\frac{\vert f(x)-f(y)\vert}{\delta(x-y)}\right) dxdy \leq B<\infty.
	\end{equation*}
	Then, for any $s,t\in[0,T]$,
	\begin{equation*}
	\vert f(s) - f(t)\vert \leq 8 \int_{0}^{\vert s-t\vert} G^{-1}(4B/u^2) d\delta(u).
	\end{equation*}
\end{lemma}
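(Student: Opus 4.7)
The plan is to adapt the classical chaining argument of Garsia--Rodemich--Rumsey. Introduce the auxiliary function
\[
I(x) \coloneqq \int_0^T G\!\left(\frac{|f(x)-f(y)|}{\delta(|x-y|)}\right)dy,
\]
which by Fubini satisfies $\int_0^T I(x)\,dx\le B$, so $I$ is finite for a.e.\ $x$. The strategy is a dyadic chaining: fix $s<t$ with $d=t-s$, define the scales
\[
d_0 = d,\qquad \delta(d_{n+1}) = \tfrac{1}{2}\delta(d_n),
\]
so that $d_n\to 0$ and $\delta(d_n)=2^{-n}\delta(d)$. I will build two sequences $s_n\to c$ and $t_n\to c$ with $s_0=s$, $t_0=t$ converging to a common interior point $c$, with $|t_n-t_{n-1}|\le d_n$ and a matching bound on the $s$-side, and then telescope $|f(s)-f(t)|$ along both sequences.

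The inductive step rests on a two-conditions averaging lemma. Given $t_{n-1}$, consider the interval $J\subset[0,T]$ of length $d_n$ adjacent to $t_{n-1}$ on the appropriate side. Define the bad sets
\[
A_1=\Bigl\{y\in J : I(y) > \tfrac{4B}{d_{n-1}}\Bigr\},\quad
A_2=\Bigl\{y\in J : G\!\bigl(\tfrac{|f(t_{n-1})-f(y)|}{\delta(d_n)}\bigr) > \tfrac{4\,I(t_{n-1})}{d_n}\Bigr\}.
\]
Chebyshev's inequality combined with $\int_J I\le B$ shows $|A_1|<d_n/2$; since the integrand defining $A_2$ contributes at most $I(t_{n-1})$ to the whole interval, one also gets $|A_2|<d_n/4$. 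Hence $J\setminus(A_1\cup A_2)$ has positive measure and contains a candidate $t_n$ satisfying both
\[
I(t_n)\le \tfrac{4B}{d_{n-1}},\qquad
|f(t_{n-1})-f(t_n)|\le \delta(d_n)\,G^{-1}\!\Bigl(\tfrac{4\,I(t_{n-1})}{d_n}\Bigr).
\]
Iterating and using $I(t_{n-1})\le 4B/d_{n-2}$ from the previous step gives
\[
|f(t_{n-1})-f(t_n)|\le \delta(d_n)\,G^{-1}\!\Bigl(\tfrac{16B}{d_{n-2}d_n}\Bigr),
\]
and since $d_{n-2}\ge d_n$ (the $d_n$ are decreasing), a crude bound $d_{n-2}d_n\ge d_{n-1}^2$ (or more carefully, replacing the constant) yields $G^{-1}(4B/d_n^2)$ up to the absolute constant.

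Telescoping $f(t)-f(c)=\sum_{n\ge 1}(f(t_{n-1})-f(t_n))$ (continuity of $f$ guarantees convergence) and recognizing the Riemann--Stieltjes sum
\[
\sum_{n\ge 1}\bigl(\delta(d_{n-1})-\delta(d_n)\bigr)\,G^{-1}\!\Bigl(\tfrac{4B}{d_n^2}\Bigr)
= \sum_{n\ge 1}\tfrac{1}{2}\delta(d_{n-1})\,G^{-1}\!\Bigl(\tfrac{4B}{d_n^2}\Bigr),
\]
which is a lower-Riemann--Stieltjes approximation of $\int_0^{d}G^{-1}(4B/u^2)\,d\delta(u)$ (here the monotonicity of $G^{-1}$ and $\delta$ lets one compare dyadic sums to the integral up to a factor of $2$), gives $|f(t)-f(c)|\le 4\int_0^{d}G^{-1}(4B/u^2)\,d\delta(u)$. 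Running the identical construction from $s$ produces the same bound for $|f(s)-f(c)|$, and the triangle inequality yields the desired factor of $8$.

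The main obstacle I expect is the bookkeeping in two places: (i) ensuring the sequences from $s$ and from $t$ can be made to converge to a \emph{common} limit $c$ (handled by choosing both chains to move toward the midpoint of $[s,t]$ at each stage), and (ii) transferring the dyadic sum into the Stieltjes integral in the presence of a general $\delta$, rather than $\delta(u)=u^\alpha$. The latter is what forces the appearance of $d\delta(u)$ in the final bound and is resolved by treating $\delta$ as the change of variable: since $\delta(d_{n-1})-\delta(d_n)=\delta(d_n)$, the dyadic partition of the $\delta$-scale is geometric, and monotone comparison of step functions with the continuous integral gives the result up to the constant absorbed into the factor $8$.
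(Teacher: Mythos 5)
The paper does not prove this lemma at all (it is quoted verbatim from the cited reference), so the only question is whether your chaining argument is correct as a standalone proof; as written it has two genuine gaps. The structural one is the anchoring of the chains: you start the two sequences at the endpoints $s$ and $t$ and try to make them meet at an interior point $c$. First, the very first inductive step bounds $\vert f(t_0)-f(t_1)\vert$ by $\delta(d_1)\,G^{-1}\bigl(4I(t_0)/d_1\bigr)$ with $t_0=t$, and there is no control whatsoever on $I(t)$ (or $I(s)$): the hypothesis only gives $\int_0^T I\leq B$, so $I$ is controlled at almost every point but possibly not at the two prescribed endpoints. Second, the selection argument only places $t_n$ somewhere in a set of positive measure inside an interval of length $d_n$, so the two chains converge to limits $c_1$ and $c_2$ that you cannot force to coincide (moving ``toward the midpoint'' does not fix this), and bounding $\vert f(c_1)-f(c_2)\vert$ is exactly the kind of increment you are trying to control. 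The classical Garsia--Rodemich--Rumsey proof inverts your architecture precisely to avoid both problems: one chooses a single interior pivot $t_0\in(s,t)$ with $I(t_0)\leq B/(t-s)$ by averaging, and builds one chain from $t_0$ down to $s$ and one from $t_0$ up to $t$; the endpoints are reached only as limits, where continuity of $f$ (and never $I(s)$, $I(t)$) is used, and no meeting problem arises because both telescopings start at the same $t_0$.

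The second gap is the constant bookkeeping, which here is not cosmetic because $G$ is an arbitrary non-decreasing function and constants cannot be moved through $G^{-1}$ nor traded against the outer factor $8$. Your set $A_1$ uses the threshold $4B/d_{n-1}$, and Chebyshev then gives $\vert A_1\vert\leq d_{n-1}/4$, not $\vert A_1\vert<d_n/2$: since $\delta(d_{n-1})=2\delta(d_n)$ does not imply $d_{n-1}\leq 2d_n$ for general $\delta$ (e.g.\ $\delta(u)=u^\alpha$ with small $\alpha$), the bad set can swallow all of $J$. The correct thresholds are of the form $2B/d_n$ and $2I(t_{n-1})/d_n$, each giving a bad set of measure $<d_n/2$; with your choices the iteration produces $G^{-1}\bigl(16B/(d_{n-2}d_n)\bigr)$, the ``crude bound'' $d_{n-2}d_n\geq d_{n-1}^2$ is false for general $\delta$, and the best you get is $G^{-1}(16B/d_n^2)$, which is a strictly weaker conclusion than the stated one with $4B$. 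Finally, your Riemann--Stieltjes comparison pairs $G^{-1}(4B/d_n^2)$ with the slab $[d_n,d_{n-1}]$, on which it \emph{dominates} the integrand; the standard argument pairs it with the next slab $[d_{n+1},d_n]$ (via $\delta(t_{n-1})=2\delta(d_n)\leq 4(\delta(d_n)-\delta(d_{n+1}))$), so that the dyadic sum is dominated by $4\int_0^{\vert s-t\vert}G^{-1}(4B/u^2)\,d\delta(u)$ per chain, yielding exactly the factor $8$ with no loss. With the pivot-based architecture and these corrected constants your outline becomes the standard proof; as written, it does not establish the lemma.
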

\begin{proof}[Proof of Theorem~\ref{thm:holder_constant_moments}]
	Consider a path $W(\omega)$ of the stochastic process and set\\
	$B(\omega)\coloneqq \int_0^T\int_0^T G\left(\frac{\vert W_t(\omega)W_s(\omega)\vert}{\delta(t-s)}\right) dtds$, where
	$G(u) = u^{\KolmogP}$ and $\delta(u) = u^{\alpha+2/\KolmogP}$. Then, $G^{-1}(u)= u^{1/\KolmogP}$ and $\tfrac{d}{du}\delta = (\alpha + 2/\KolmogP) u^{\alpha + 2/\KolmogP -1}$. Applying Lemma~\ref{lemma:garsia_rodemich_rumsey},
	\begin{align*}
	\vert W_t(\omega) - W_s(\omega)\vert
	&\leq 8 \int_{0}^{\vert s-t\vert} G^{-1}(4B(\omega)/u^2)d\delta(u)\\
	&\leq 8\int_0^{\vert t-s\vert} \left(\frac{4B(\omega)}{u^2}\right)^{1/\KolmogP} (\alpha + 2/p) u^{\alpha + 2/\KolmogP -1} du\\
	&\leq 8(4B(\omega))^{1/\KolmogP} (\alpha + 2/\KolmogP)\int_0^{\vert t-s\vert}u^{\alpha-1}du\\
	&= 8(4B(\omega))^{1/\KolmogP}\ \tfrac{\alpha + 2/\KolmogP}{\alpha} \vert t-s\vert^\alpha.
	\end{align*}
	As this is valid for any $s,\,t\in[0,T]$, $\Lambda_W(\omega) \leq 8(4B(\omega))^{1/\KolmogP}\ \tfrac{\alpha + 2/\KolmogP}{\alpha}$.
	By Jensens' inequality,
	\begin{equation}
	\label{eq:Jensen}
	\E[\Lambda_W]\leq 2^{3+2/\KolmogP}\ \tfrac{\alpha + 2/\KolmogP}{\alpha} \E[B(\omega)^{1/\KolmogP}]
	\leq 2^{3+2/\KolmogP}\ \tfrac{\alpha + 2/\KolmogP}{\alpha} \E[B(\omega)]^{1/\KolmogP}.
	\end{equation}
	By linearity of expectation,
	\begin{align*}
	\E\left[\int_0^T\int_0^T G\left(\frac{\vert W_t(\omega)W_s(\omega)\vert}{\delta(t-s)}\right) dtds\right]
	&= \int_0^T\int_0^T \frac{\E[\vert W_t(\omega)W_s(\omega)\vert^{\KolmogP}]}{\delta(t-s)^{\KolmogP}} dtds \\
	&= \int_0^T\int_0^T \frac{\E[\vert W_t(\omega)W_s(\omega)\vert^{\KolmogP}]}{\vert t-s\vert^{p\alpha + 2}} dtds\\
	&\leq \int_0^T\int_0^T K_{p,p\alpha + 1}dtds\\
	&= T^2 K_{\KolmogP,\KolmogP\alpha + 1}.
	\end{align*}
	Finally, $\E[\Lambda_W]\leq 2^{3+2/\KolmogP}\ \tfrac{\alpha + 2/\KolmogP}{\alpha} T^{2/\KolmogP} K_{\KolmogP,\KolmogP\alpha+1}^{1/\KolmogP}$, as long as $\KolmogP\alpha + 1 \leq \KolmogP$ and we can simplify the constants if $\KolmogP>2$.
	Consider now the higher moments. If $k\leq \KolmogP$, we can still apply Jensens' inequality in~\eqref{eq:Jensen}:
	\begin{equation*}
	\E[\Lambda_W^k]\leq \left(2^{3+2/\KolmogP}\ \tfrac{\alpha + 2/\KolmogP}{\alpha}\right)^{\KolmogP}\E[B(\omega)^{k/\KolmogP}]
	\leq \left(2^{3+2/\KolmogP}\ \tfrac{\alpha + 2/\KolmogP}{\alpha}\right)^k \E[B(\omega)]^{k/\KolmogP}\leq \left(2^{3+2/\KolmogP}\ \tfrac{\alpha + 2/\KolmogP}{\alpha}\right)^k K_{\KolmogP,\KolmogP\alpha+1}^{k/\KolmogP}.
	\end{equation*}
	However, if $k\geq \KolmogP$,
	\begin{align*}
	\E\left[\left(\int_0^T\int_0^T G\left(\frac{\vert W_t(\omega)W_s(\omega)\vert}{\delta(t-s)}\right) dtds\right)^{k/\KolmogP}\right]
	&= \int_0^T\int_0^T \frac{\E[\vert W_t(\omega)W_s(\omega)\vert^k]}{\delta(t-s)^k} dtds \\
	&= \int_0^T\int_0^T \frac{\E[\vert W_t(\omega)W_s(\omega)\vert^k]}{\vert t-s\vert^{k\alpha + 2k/\KolmogP}} dtds\\
	&\leq \int_0^T\int_0^T K_{k,k(\alpha + 2/\KolmogP)-1}dtds\\
	&= T^2 K_{k,k(\alpha + 2/\KolmogP)-1}.
	\end{align*}
\end{proof}

\end{document}